\newtheorem{theorem}{Theorem}[section]
\newtheorem{thm*}{Theorem}
\newtheorem{lemma}[theorem]{Lemma}
\newtheorem{definition}[theorem]{Definition}
\newtheorem{corollary}[theorem]{Corollary}
\newtheorem{proposition}[theorem]{Proposition}
\newtheorem{example}[theorem]{Example}
\newtheorem{remark}[theorem]{Remark}
\let\oldexample\example
\renewcommand{\example}{\oldexample\normalfont}
\let\oldremark\remark
\renewcommand{\remark}{\oldremark\normalfont}
\newtheorem{subdefinition}{Definition}[theorem]
\def\s{\ensuremath\sigma}
\def\endomorphisms{\ensuremath\sigma_1,\dots,\sigma_n}
\def\basicset{\ensuremath\{\endomorphisms\}}
\def\generalelemt{\ensuremath\sigma_1^{k_1}\cdots\sigma_n^{k_n}}
\def\tauequals{\ensuremath \tau=\generalelemt}
\def\sk{\ensuremath [\sigma]_k}
\def\X{\ensuremath \mathcal{X}}
\def\Y{\ensuremath \mathcal{Y}}
\def\D{\ensuremath \mathcal{D}}
\def\K{\ensuremath \mathcal{K}}
\def\G{\ensuremath \mathcal{G}}
\def\H{\ensuremath \mathcal{H}}
\def\I{\ensuremath \mathbb{I}}
\def\V{\ensuremath \mathbb{V}}
\def\ord{\ensuremath \operatorname{ord}}
\def\id{\ensuremath \operatorname{id}}
\def\Hom{\ensuremath \operatorname{Hom}}
\def\mult{\ensuremath \operatorname{mult}}
\def\unit{\ensuremath \operatorname{unit}}
\def\inv{\ensuremath \operatorname{inv}}
\def\hgt{\ensuremath \operatorname{hgt}}
\tikzset{block/.style={draw, thick, text width=3cm, minimum height=1.5cm, align=center},   
line/.style={-latex} 
}  
\begin{document}

\title{Dimension Polynomials for Affine\linebreak\ Partial Difference Algebraic Groups}

\author{Orla McGrath}
\address{University of Leeds}
\email{O.P.McGrath@leeds.ac.uk}

\subjclass[2010]{Primary: 14L17, Secondary: 14L15, 12H10}

\date{April 14, 2026}

\thanks{This work was financially supported by EPSRC via the Doctoral Training Partnership Studentship, reference 2940851.}

\begin{abstract}
We develop the theory of difference algebraic groups in the case where we have finitely many pairwise commuting difference operators. We show that the defining ideal of a difference algebraic group is finitely generated as a difference ideal, and this result allows us to prove the existence of a dimension polynomial for any partial difference algebraic group.
\end{abstract}
\maketitle
\section*{Introduction}

While an affine algebraic group is a subgroup of the general linear group that is defined by polynomial equations, an affine difference algebraic group is a subgroup of the general linear group that is defined by difference polynomial equations. 

Firstly, for the benefit of the reader with no background in difference algebra, we will state a special case of our main results in the language of algebraic symbolic dynamics. For an affine group scheme $\G$ of finite type (over some field), we can consider the $n$\=/dimensional group shift $\G^{\mathbb{N}^n}$ equipped with the shift maps 
$\s_j\colon \G^{\mathbb{N}^n}\rightarrow \G^{\mathbb{N}^n}$ given by $\s_j((g_\alpha)_{\alpha\in\mathbb{N}^n})=(g_{\alpha+e_j})_{\alpha\in\mathbb{N}^n}$, where $e_j$ is the $j$-th standard basis vector for $1\leq j\leq n$.
An affine difference algebraic group is then analogous to a closed subgroup scheme $G$ of $\G^{\mathbb{N}^n}$ which is closed under each of the shift maps. For each $i\in\mathbb{N}$, let $G[i]$ be the scheme theoretic image of $G$ in $\G^{\mathbb{N}^n_{\leq i}}$, where $\mathbb{N}^n_{\leq i}=\{(\alpha_1,\dots,\alpha_n)\in \mathbb{N}^n\ |\ \alpha_1+\dots+\alpha_n\leq i\}$. In this setup, the main results in this paper are as follows. There exists $m\geq 1$ and some closed subgroup $\H\leq \G^{\mathbb{N}^n_{\leq m}}$ such that $G$ is fully determined by $\H$ and the fact that $G$ is closed under each of the shift maps. That is, \begin{align*}
    G=\{(g_\alpha)_{\alpha\in\mathbb{N}^n}\ |\ (g_\alpha)_{\alpha\in \beta+\mathbb{N}^n_{\leq m}}\in \H\ \text{for all}\ \beta\in \mathbb{N}^n\}.
\end{align*}
Further, there exists a numerical polynomial $\phi(t)\in\mathbb{Q}[t]$ of degree less than or equal to $n$ such that for large enough $i\in\mathbb{N}$, $\phi(i)=\dim(G[i])$.

Now we will give a brief summary of the concepts and results in this paper in the language of difference algebra.
A difference ring ($\s$\=/ring) is a commutative ring along with a set $\s=\basicset$ of pairwise commuting endomorphisms. When $n=1$, this is an ordinary $\s$\=/ring, and when $n>1$, this is a partial $\s$\=/ring. Many standard concepts in commutative algebra have natural difference algebraic analogues. For example, a $\s$\=/field is a $\s$\=/ring where the underlying ring is a field, and a $\s$\=/ideal $I$ of a $\s$\=/ring $R$ is an ideal $I$ of $R$ that is closed under each of the endomorphisms in $\s$. Given a $\s$\=/ring $R$ and some subset $F\subseteq R$, we write $[F]$ for the smallest $\s$\=/ideal of $R$ containing $F$. 
Any composition of the basic endomorphisms $\basicset$ can be written $\s_1^{k_1}\cdots\s_n^{k_n}$ for $k_1,\dots,k_n\in \mathbb{N}$, and we call such a composition a transform of the  $\s$\=/ring $R$. We assign a natural order to the transforms of a $\s$\=/ring, where $\ord(\s_1^{k_1}\cdots\s_n^{k_n})=k_1+\cdots+k_n$.
A $\s$\=/polynomial over a $\s$\=/field $k$ in the $\s$\=/indeterminates $\{y_1,\dots,y_s\}$ is a polynomial over $k$ in the indeterminates $\{\s_1^{k_1}\cdots\s_n^{k_n}(y_i)\ |\ 1\leq i\leq s,\ k_1,\dots,k_n\in \mathbb{N}\}$. We get an induced order on the $\s$\=/polynomials over $k$, where the order of an indeterminate is the order of the corresponding transform, and the order of a $\s$\=/polynomial is the highest order of the indeterminates it contains.

Then, an affine difference algebraic group ($\s$\=/algebraic group) over a $\s$\=/field $k$ is a group defined by $\s$\=/polynomials over $k$. For example, let $k$ be a $\s$\=/field where $\s=\{\s_1,\s_2\}$. Then for any $k$\=/$\s$\=/algebra $R$,  \begin{align}\label{ex: intro example symbolic dynam}
    G(R)=\{r\in R\ |\ r+\s_1(r)+\s_2(r)=0\}
\end{align}
is a group under addition, and hence the functor $G$ from $k$\=/$\s$\=/$\operatorname{Alg}$ to $\operatorname{Sets}$, $R\rightsquigarrow G(R)$ is a $\s$\=/algebraic group over $k$, defined by the $\s$\=/polynomial $y+\s_1(y)+\s_2(y)$. This is a certain analogue to the motivating example in dynamical systems from the introduction of \cite[page x]{schmidt1995dynamical}, which is due to Ledrappier \cite{ledrappier1978unchamp}.

Similarly to the dual correspondence between affine algebraic groups over a field $k$ and finitely generated $k$\=/Hopf algebras, there is a dual correspondence between $\s$\=/algebraic groups $G$ over a $\s$\=/field $k$ and finitely $\s$\=/generated $k$\=/$\s$\=/Hopf algebras $k\{G\}$. Then $\s$\=/closed subgroups of $G$ correspond to $\s$\=/Hopf ideals of $k\{G\}$.

We prove that any $\s$\=/algebraic group can be defined as a $\s$\=/closed subgroup of a general linear $\s$\=/algebraic group by some $\s$\=/Hopf ideal $\I(G)$ of $\s$\=/polynomials, and generalise the result from the ordinary case \cite[Theorem 4.1, page 532]{wibmer2022finiteness} that any such defining $\s$\=/Hopf ideal is finitely $\s$\=/generated. Due to the dual correspondence with $\s$\=/Hopf structure and $\s$\=/group structure, this $\s$\=/finiteness of the defining $\s$\=/ideals extends to any $\s$\=/Hopf ideal in a finitely $\s$\=/generated $k$\=/$\s$\=/Hopf algebra.

\begin{thm*}[Corollary \ref{cor: hopf ideal fin gen}]\label[theorem]{the: ideal intro}
    Any $\s$\=/Hopf ideal of a finitely $\s$\=/generated $k$\=/$\s$\=/Hopf algebra is finitely $\s$\=/generated.
\end{thm*}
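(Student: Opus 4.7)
The plan is to view $A=k\{G\}$ as an ascending union of finitely generated (hence Noetherian) ordinary algebras filtered by order, invoke the stabilization result advertised as the ``first key result'' in the introduction, and then extract finitely many $\sigma$\=/generators from a single Noetherian stage.

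First, fix $\sigma$\=/generators $y_1,\dots,y_s$ of $A$, producing a surjection of $\sigma$\=/Hopf algebras $k\{y_1,\dots,y_s\}\twoheadrightarrow A$ with kernel $\I(G)$. Every $\sigma$\=/Hopf ideal of $A$ is the image of a $\sigma$\=/Hopf ideal $I$ of $k\{y_1,\dots,y_s\}$ containing $\I(G)$, and such an $I$ corresponds to a $\sigma$\=/closed subgroup $H\leq G$, so it suffices to prove that $I$ is finitely $\sigma$\=/generated. For each $i\in\mathbb{N}$, let
\begin{align*}
R_i=k\bigl[\sigma_1^{k_1}\cdots\sigma_n^{k_n}(y_j)\ :\ 1\leq j\leq s,\ k_1+\cdots+k_n\leq i\bigr],
\end{align*}
a polynomial ring in finitely many variables, and set $I_i=I\cap R_i=\I(H_i)$, a Hopf ideal of $R_i$.

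Now apply the stabilization result from the introduction to $H$: there exists $m\in\mathbb{N}$ such that for every $i\geq m$,
\begin{align*}
I_{i+1}=\bigl(I_i,\sigma_1(I_i),\dots,\sigma_n(I_i)\bigr)
\end{align*}
as an ideal of $R_{i+1}$. Since $R_m$ is Noetherian, choose $f_1,\dots,f_N\in R_m$ generating $I_m$ as an ordinary ideal. I claim that $I=[f_1,\dots,f_N]$. The containment $\supseteq$ is immediate as $I$ is a $\sigma$\=/ideal containing the $f_k$. For the reverse containment, show by induction on $i\geq m$ that $I_i\subseteq[f_1,\dots,f_N]$: the base case is the choice of generators; for the step, assuming $I_i\subseteq[f_1,\dots,f_N]$, each $\sigma_j(I_i)$ lies in $[f_1,\dots,f_N]$ because the latter is closed under every $\sigma_j$, so by the stabilization equality $I_{i+1}\subseteq[f_1,\dots,f_N]$ as well. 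Since $I=\bigcup_{i\geq m}I_i$, the claim follows, and pushing $f_1,\dots,f_N$ down to $A$ produces a finite $\sigma$\=/generating set for the original $\sigma$\=/Hopf ideal of $A$.

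All of the substantive work is concentrated in the stabilization equality $I_{i+1}=(I_i,\sigma_1(I_i),\dots,\sigma_n(I_i))$ for $i\gg 0$: that is where the Hopf structure and the specific geometry of the images $G_i$ are genuinely used, and it is the only nontrivial input. Once it is established, the corollary is simply a Noetherian bootstrap along the order filtration, with no further difference\=/algebraic subtlety.
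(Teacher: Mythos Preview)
Your overall strategy---reduce to the stabilization of the order filtration, then bootstrap from a single Noetherian stage---is exactly the paper's, and the induction in your final paragraph is correct. The gap is in the first step: you assert that choosing $\sigma$-generators $y_1,\dots,y_s$ of $A$ produces a surjection of $\sigma$-\emph{Hopf} algebras $k\{y_1,\dots,y_s\}\twoheadrightarrow A$, but choosing $\sigma$-algebra generators only gives a surjection of $k$-$\sigma$-algebras. The only natural Hopf structure on $k\{y_1,\dots,y_s\}$ is the additive one (making it $k\{\mathbb{G}_a^s\}$), and there is no reason your map should respect it; for instance, any presentation of $k\{\mathbb{G}_m\}$ by the free $\sigma$-polynomial ring fails to be a Hopf map. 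Consequently the preimage $I$ need not be a Hopf ideal, the $I_i$ need not define closed \emph{subgroups} of anything, and the stabilization result---which is proved only for sequences of closed subgroups, not arbitrary closed subschemes---does not apply.

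The paper closes this gap via Proposition~\ref{prop: embedding into gln}: every $\sigma$-algebraic group embeds $\sigma$-closedly into $\operatorname{GL}_s$, yielding a genuine surjection of $k$-$\sigma$-Hopf algebras $k\{\operatorname{GL}_s\}\twoheadrightarrow A$. Now the preimage of a $\sigma$-Hopf ideal of $A$ really is a $\sigma$-Hopf ideal of $k\{\operatorname{GL}_s\}$, hence defines a $\sigma$-closed subgroup of $\operatorname{GL}_s$, and Theorem~\ref{the: partial ideal generation property} applies to its Zariski closures. From there your Noetherian bootstrap goes through verbatim (this is Corollary~\ref{cor: defining ideal fin generated}), and pushing forward to $A$ finishes.
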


 One application of \Cref{the: ideal intro} is that any $k$\=/$\s$\=/Hopf subalgebra of a finitely $\s$\=/generated $k$\=/$\s$\=/Hopf algebra is finitely $\s$\=/generated, and this allows us to prove the existence of the quotient of a $\s$\=/algebraic group by any normal $\s$\=/closed subgroup.

In fact, we prove something stronger than \Cref{the: ideal intro}. The ideals $\I(G[i])$ of all defining $\s$\=/polynomials in $\I(G)$ of order up to and including $i\in\mathbb{N}$ form an ascending chain of Hopf ideals. We prove that these chains of Hopf ideals eventually stabilise with respect to the shift maps, in the sense that \begin{align}\label{eq: ideal gen prop intro}
    \I(G[i+1])=(\I(G[i]),\s_1(\I(G[i])),\dots,\s_n(\I(G[i])))
\end{align}
for large enough $i\in\mathbb{N}$. Each $\I(G[i])$ is the defining ideal for an algebraic group $G[i]$, and we call the sequence of closed subgroups $(G[i])_{i\in\mathbb{N}}$ the Zariski closures of the $\s$\=/algebraic group $G$ (with respect to the embedding into the general linear $\s$\=/algebraic group). 
 The fact that the ideals $\I(G[i])$ eventually stabilise with respect to the endomorphisms (\ref{eq: ideal gen prop intro}) can be used to prove that the dimensions of the Zariski closures also stabilise in some nice way, that is, we can find a dimension polynomial. 

\begin{thm*}[Corollary \ref{cor: partial dim poly zariskis}]\label[theorem]{the: dim poly intro}
    Let $G$ be a $\s$\=/algebraic group, and let $(G[i])_{i\in\mathbb{N}}$ be the Zariski closures of $G$ (with respect to some given embedding). Then there exists a numerical polynomial $\phi(t)\in\mathbb{Q}[t]$ of degree less than or equal to $n$ such that for large enough $i\in\mathbb{N}$, $\phi(i)=\dim(G[i])$.
\end{thm*}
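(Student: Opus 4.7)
The plan is to use the earlier stabilization of the ideals $\I(G_i)$: for some $m \geq 0$ and all $i \geq m$,
\[ \I(G_{i+1}) = (\I(G_i), \s_1(\I(G_i)), \dots, \s_n(\I(G_i))). \]
From this, I would derive polynomial growth of $\dim G_i$ via a Hilbert--Samuel style argument.

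First, translate to coordinate rings. Set $R_i := k[G_i]$, a Hopf-algebra quotient of $A_i := k[\G^{\mathbb{N}^n_{\leq i}}]$, and observe that $R = k\{G\} = \varinjlim R_i$. The stabilization yields an explicit recursive description of $R_{i+1}$ in terms of $R_i$ for $i \geq m$: geometrically, $G_{i+1}$ is the intersection inside $\G^{\mathbb{N}^n_{\leq i+1}}$ of $n+1$ pullbacks of $G_i$, one under the natural truncation $\G^{\mathbb{N}^n_{\leq i+1}} \to \G^{\mathbb{N}^n_{\leq i}}$ and the other $n$ under the shift projections induced by $\s_1,\dots,\s_n$. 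Since $G_i$ is a group scheme and therefore equidimensional, $\dim G_i = \dim R_i$, so it suffices to show polynomial behaviour for $\dim R_i$.

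The core step is to form the associated graded $\mathrm{gr}(R) = \bigoplus_i R_i/R_{i-1}$ of the filtration $\{R_i\}$, and to use the stabilization to show that $\mathrm{gr}(R)$ is finitely generated as a module over a polynomial ring $k[x_1, \dots, x_n]$ with each $x_j$ acting as the principal symbol of $\s_j$. The Hilbert--Serre theorem then yields a numerical polynomial $\phi(t)$ of degree at most $n$ with $\dim R_i = \phi(i)$ for $i \gg 0$; the degree bound reflects the growth rate $|\mathbb{N}^n_{\leq i}| = \binom{n+i}{n}$, the maximal possible dimension growth. Alternatively, one can appeal directly to Levin's partial difference dimension polynomial theorem, since by \ref{cor: hopf ideal fin gen} the defining $\s$-ideal $\I(G)$ is finitely $\s$-generated, placing $k\{G\}$ squarely within Levin's framework.

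The main obstacle is the dimension count. Controlling the dimension of the intersection $G_{i+1}$ of $n+1$ pullbacks of $G_i$ is not immediate from the stabilization alone, since naive bounds for intersections are far from sharp. The Hopf structure is essential: because $G_{i+1} \twoheadrightarrow G_i$ is a surjective homomorphism of group schemes, all its fibres are isomorphic to the identity fibre, which is a closed subgroup of $\G^{\mathbb{N}^n_{\leq i+1} \setminus \mathbb{N}^n_{\leq i}}$. This reduces the calculation to understanding the growth of these identity fibres, on which an induction on $n$---with base case the ordinary $n=1$ difference setting, where $\dim G_i$ is eventually linear in $i$---completes the argument.
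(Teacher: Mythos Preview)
Your associated-graded/Hilbert--Serre approach has a genuine gap: Hilbert--Serre controls the $k$-vector-space dimensions of graded pieces, not Krull dimensions. The rings $R_i = k[G_i]$ are typically infinite-dimensional over $k$ (whenever $\dim G_i > 0$), so $\bigoplus_i R_i/R_{i-1}$ is not a graded module with finite-dimensional pieces, and even if you could make sense of it, the conclusion would be about $\dim_k(R_i/R_{i-1})$, not about the Krull dimension $\dim R_i$. Likewise, appealing directly to Levin does not work: Levin's dimension polynomials are for $\s$-modules (vector-space dimension) or $\s$-field extensions (transcendence degree), not for Krull dimensions of arbitrary $k$-$\s$-Hopf algebras. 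The paper establishes the link to Levin only \emph{a posteriori}, in the integral-domain case via the fraction field, and in the additive case via the module picture.

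Your final paragraph is the correct strategy and is essentially the paper's proof: use that $\pi[i+1]\colon G_{i+1}\twoheadrightarrow G_i$ is a quotient of group schemes, so $\dim G_{i+1}-\dim G_i=\dim H_{i+1}$ where $H_{i+1}=\ker(\pi[i+1]|_{G_{i+1}})$, and then show the sequence $(\dim H_i)$ is eventually a polynomial of degree $\le n-1$, after which Lemma~\ref{lem: gi+1-gi implies dimension poly} finishes. What you are missing is the mechanism for the induction on $n$: one must show that the kernels $(H_i)$ can themselves be organised into a generalised difference algebraic group with $n-1$ endomorphisms. The paper does this (Section~\ref{sec: kernels being n-1 groups}) by passing to an inversive closure, observing that $H_i$ lives essentially in $\G(i)=\prod_{\tau\in T_\s(i)}{^\tau}\G$, and twisting by $\s_n^{-i}$ so that $T_\s(i)$ becomes $T_{\s'}[i]$ for $\s'=\{\s_1\s_n^{-1},\dots,\s_{n-1}\s_n^{-1}\}$. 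This twisting is the substantive step you have not supplied; without it the inductive hypothesis cannot be applied to the kernels.
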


While the dimension polynomial for $G$ depends on the embedding we are working with, the degree and leading coefficient of a dimension polynomial for a $\s$\=/algebraic group $G$ are independent of the chosen embedding, and hence are invariants of the $\s$\=/algebraic group $G$. This allows us to define the $\s$\=/dimension, $\s$\=/type and typical $\s$\=/dimension of a $\s$\=/algebraic group (\Cref{def: diff type diff dim etc}).

A significant portion of this paper is dedicated to using induction to extend the result (\ref{eq: ideal gen prop intro}) from the ordinary case \cite[Theorem 4.1, page 532]{wibmer2022finiteness} to the partial case. Since for each $i\geq 1$, $\I(G[i-1])\subseteq\I(G[i])$, there is a (quotient) morphism of algebraic groups $\pi_i\colon G[i]\rightarrow{G[i-1]}$.
In both the ordinary and partial cases, the proof of (\ref{eq: ideal gen prop intro}) is conducted by studying the kernels $(H_{i})_{i\in\mathbb{N}}$ of these $\pi_i$ morphisms. Intuitively, $H_i$ stores the information in $G[i]$ of order exactly $i$. Each $H_{i}$ can be thought of as a closed subgroup of a product of $t_i$ general linear groups, where $t_i$ is the number of transforms of order exactly $i$. In the ordinary case, $t_i=1$ for every $i\in\mathbb{N}$ as the only transform of order $i$ is $\s^i$. However, in the partial case, the number of transforms of order $i$ grows with $i$, in fact $t_i=\binom{i+n-1}{n-1}$. In the ordinary case these kernels become isomorphic (up to some base change), but the main difficulty in the paper is that this is not necessarily true in the partial case. Instead, they form a difference structure with $n-1$ endomorphisms, allowing us to employ induction. However, the difference structure that they form is more general than the structure of Zariski closures, motivating the notion of a generalised $\s$\=/algebraic group.

Note that these results (\Cref{the: ideal intro,the: dim poly intro}) do not hold for general $\s$\=/varieties, one needs the group, and hence the corresponding Hopf structure. For example, consider the $\s$\=/ideal $[xy,x\s(y),x\s^2(y),\dots]$ of the ordinary $\s$\=/polynomial ring in two $\s$\=/indeterminates (which is a finitely $\s$\=/generated $k$\=/$\s$\=/algebra). This is not finitely $\s$\=/generated, and hence defines a $\s$\=/variety $X$ over $k$, where for any $k$\=/$\s$\=/algebra $R$, \begin{align*}
    X(R)=\{(r,s)\in R^2\ |\  r\s^i(s)=0 \text{ for all } i\in\mathbb{N}\},
\end{align*}
 which cannot be defined by finitely many $\s$\=/polynomials. 
As previously stated, the proof of (\ref{eq: ideal gen prop intro}) relies on the kernels of morphisms of algebraic groups, a construction which is not valid for morphisms of general varieties, hence this result fails without the group structure.

That (\ref{eq: ideal gen prop intro}) holds for large $i\in\mathbb{N}$ tells us that any $\s$\=/algebraic group $G$ can be determined (not necessarily uniquely) by an algebraic group. More precisely, a $\s$\=/closed subgroup $G$ of an algebraic group $\G$ is determined by its $m$-th order Zariski closure $G[m]$, where $m\in\mathbb{N}$ is such that (\ref{eq: ideal gen prop intro}) holds for all $i\geq m$. 
This provides a way to describe all additive and multiplicative $\s$\=/algebraic groups. For example, if $G$ is an additive (resp. multiplicative) $\s$\=/algebraic group, the algebraic group $G[m]$ is additive (multiplicative). One can therefore apply known classification results, for example, closed subgroups of $\mathbb{G}_{a}^s$ correspond to finite systems of linear equations, while closed subgroups of $\mathbb{G}_{m}^s$ correspond to sublattices of $\mathbb{Z}^s$.


The study of solutions of both differential and difference polynomials were introduced by Ritt in the 1930s (\cite{ritt1932differential} and \cite{doob1933systems}, \cite{ritt1939ideal} respectively). Picard-Vessiot theory, which is the Galois theory of differential field extensions \cite{kolchin1973differential}, provides a link between the study of differential algebra and the study of algebraic groups, and hence motivated further study of differential algebraic groups and their properties \cite{cassidy1972differential}, \cite{kolchin1985differential}, \cite{buium1992differential}. 
Difference algebraic groups are the difference analogue to differential algebraic groups, but have thus far not been studied as deeply as their differential counterpart. 
Both Cohn \cite{cohn1965difference} and Levin \cite{levin2008difference} provide comprehensive sources for the study of difference algebra.

A classical example of a solution to an ordinary difference equation is the gamma function \cite[Chapter 2, page 19]{beals2010special}, defined $\Gamma(z)=\int_{0}^\infty t^{z-1}e^{-t}dt$ for any complex number $z$ with strictly positive real part. This function has a range of wide applications across mathematical analysis, probability theory, and, due to the fact it extends the factorial function, combinatorics. Integrating by parts, one can see that $\Gamma(z+1)=z\Gamma(z)$, and hence, considering $\mathbb{C}(z)$ as an ordinary $\s$\=/field with $\s(f(z))=f(z+1)$, the gamma function is a natural solution to the difference equation $\s(f(z))=zf(z)$.

The ideas of the gamma function were generalised to higher dimensions by Barnes \cite{barnes1901thetheory}, for example via the Barnes double gamma function, $G(z;\tau)$, which is defined by an infinite product involving the gamma function for $\tau\in\mathbb{C}\backslash (-\infty,0]$ and $z\in\mathbb{C}$ \cite{alexanian2023onthebarnes}. Interestingly, the Barnes double gamma function is the solution to some partial difference equations. Considering $\mathbb{C}(z,\tau)$ as a partial $\s$\=/field with shift maps $\s_1(f(z,\tau))=f(z+1,\tau)$ and $\s_2(f(z,\tau))=f(z+\tau,\tau)$, the Barnes double gamma function is the solution to some difference equations involving the gamma function \begin{align*}
    \s_1(f(z,t))=\Gamma\left(\frac{z}{\tau}\right)f(z,\tau),\quad \s_2(f(z,\tau))=A(z,\tau)\Gamma(z)f(z,\tau)
\end{align*}
for some function $A$ of $z$ and $\tau$. This gives a natural example of where partial difference equations arise.

A classical and intuitive example of a partial difference field is the field of rational functions in $n$ variables, where each basic endomorphism $\s_j\in \s$ shifts the $j$-th variable (see \Cref{ex: difference field}). These kinds of functions arise naturally across a wide range of mathematics, as if we have functions depending on independent variables, these shift maps allow us to study the effect of changing just one variable and keeping the others stable.

The development of difference algebra led to the study of Galois theory in the difference setting, \cite{singer1997galois}, \cite{hardouin2008differential}, \cite{bachmayr2022algebraic}, where difference algebraic groups arise naturally as Galois groups \cite{divizio2014difference}, \cite{ovchinnikov2015sigma}.
This in turn has motivated further study into the area of ordinary difference algebraic groups \cite{wibmer2021almost}, \cite{wibmer2024etale}, \cite{bachmayr2022torsors}. 

Another place where difference algebraic groups arise is in the intersection of algebraic geometry and symbolic dynamics. That is, they are analogous to closed, shift-invariant subgroups of the full $n$\=/dimensional one-shift, where the alphabet is an algebraic group $\G$. Standard references in dynamical systems include \cite{kitchens1998symbolic} and \cite{schmidt1995dynamical}. Note that in symbolic dynamics, one would usually study the two-shift, which would be analogous to the case of inversive difference algebraic groups (the basic endomorphisms are invertible). We have already seen, in the beginning of this introduction, how the results in this paper can be stated in this language of symbolic dynamics. 
An analogous result to (\Cref{the: ideal intro}) has been found in in \cite{phung2022dynamical}, that is, Phung proves the descending chain condition for algebraic group subshifts of an algebraic group shift $\G^G$, where now the shift action is by a polycyclic-by-finite group $G$. While the results in this paper regard the case where the action is by $\mathbb{N}^n$, Phung's action is via a polycyclic-by-finite group, and hence neither of these results contain the other. Another area of dynamical systems relating to difference algebraic geometry is algebraic dynamics, the study of an algebraic variety with an endomorphism. Algebraic dynamical systems are studied in \cite{medvedev2014invariant}, and are connected to the model theory of difference fields in \cite{chatzidakis2008difference} and \cite{chatzidakis2008iidifference}, in particular studying how results in model theory relate to descent.

Difference algebra, and in particular difference algebraic groups, have also become of interest in model theory. ACFA (algebraically closed fields with automorphism) serves as the model theoretic companion to ordinary difference fields \cite{chatzidakis1999model}, and groups definable in ACFA (see \cite{chatzidakis1997groups}, \cite{kowalski2002anote}) have applications in number theory, such as in the proof of the Manin-Mumford conjecture \cite{hrushovski2001manin}, \cite{kowalski2007algebraic}.
Some study of fields with multiple commuting endomorphisms from the model theoretic direction has been started in \cite{hyttinen2023AEC}, however, it is a well-known result of Hrushovski that these partial difference fields do not have a model companion. Some work has been done in the case where the difference endomorphisms are not necessarily commuting, see \cite{moosa2014model}, however here, things complicate quickly and there is no descending chain condition for groups definable in this theory. Further, since many natural examples of difference algebra arise from the shift maps acting on parameters (see \Cref{ex: difference field}), it is intuitive to study commuting endomorphisms.
Due to these challenges, it has not yet been natural to study partial difference algebraic groups from the model-theoretic point of view. 

Partial difference algebraic groups are introduced and studied in the context of Galois theory in \cite{antieu2014galois}, in the case where the parameters are periodic (the $\s_j$'s are assumed to generated a fixed finite group). Linear groups over fields with many operators are studied using the Tannakian approach in \cite{kamensky2013tannakian}, and this approach is also taken in \cite{ovchinnikov2017tannakian} to study the notion of a linear difference algebraic group, where the action is now by a semigroup rather than commuting automorphisms. 
This paper will introduce partial affine difference algebraic groups following the approach of Wibmer in the ordinary case \cite{wibmer2022finiteness}. These results will allow for further study into partial affine difference algebraic groups.
We will put the main results of this paper into context with current work in difference algebra.

A particular open problem in difference algebra is finding classes of $\s$\=/ideals that satisfy the ascending chain condition. It is not the case that any $\s$\=/ideal of a finitely $\s$\=/generated $k$\=/$\s$\=/algebra is finitely $\s$\=/generated. Recall the example of the $\s$\=/ideal $[xy,x\s(y),x\s^2(y),\dots]$ of the $\s$\=/polynomial ring in two $\s$\=/indeterminates over an ordinary $\s$\=/field $k$. 
The current basis theorem for difference ideals \cite{levin2008difference} states that the ascending chain condition holds for perfect $\s$\=/ideals of a finitely $\s$\=/generated $k$\=/$\s$\=/algebra, where a $\s$\=/ideal $I$ of $R$ is perfect if for $a\in R$, transforms $\tau_1,\dots,\tau_s$ of $R$, $k_1,\dots,k_s\in\mathbb{N}$, the inclusion $\tau_1(a)^{k_1}\cdots\tau_s(a)^{k_s}\in I$ implies that $a\in I$.
It is conjectured \cite{hrushovski2004elem} that the basis theorem can be generalised to radical mixed $\s$\=/ideals of finitely $\s$\=/generated $k$\=/$\s$\=/algebras (a $\s$\=/ideal $I$ of a $\s$\=/ring $R$ is mixed if, given any $\s_j\in \s$, $r,s\in R$, $rs\in I$ implies that $r\s_j(s)\in I$), and in the same paper the conjecture is proven under certain additional assumptions. It is known that not every ascending chain of mixed $\s$\=/ideals of a finitely $\s$\=/generated $k$\=/$\s$\=/algebra is finite \cite{levin2015acc}, that is, the radical condition is necessary. Some work has been done towards this conjecture in the ordinary case \cite{wang2017monomial}, \cite{wang2018binomial}. Further work into aspects of finite difference generation of ordinary difference ideals has been conducted, for example, \cite{chen2017criteria} gives necessary criteria for normal binomial difference ideals (of the univariate difference polynomial ring) to have a finite Gröbner basis. 
Therefore, while work has been done in the ordinary case to extend the knowledge of classes of $\s$\=/ideals that satisfy the ascending chain condition, the partial case has not yet been investigated as thoroughly.
In this paper, we use algebro-geometric methods to generalise the result from \cite[Theorem 4.1]{wibmer2022finiteness}, that $\s$\=/Hopf ideals of finitely $\s$\=/generated $k$\=/$\s$\=/Hopf algebras are finitely $\s$\=/generated.

Dimension polynomials are of central importance in differential and difference algebra, playing an analogous role to Hilbert polynomials in commutative algebra. Existence of dimension polynomials for several differential and difference constructions are proven in the partial case in \cite{kondratieva1999differential} and \cite{levin2008difference}. More work has since been done on dimension polynomials in the ordinary case for other difference constructions, such as for prime difference-differential ideals \cite{levin2021bivariate}, and for ordinary $\s$\=/algebraic groups in \cite{wibmer2022finiteness}. 
The existence of the dimension polynomial (\Cref{the: dim poly intro}) and the invariance of its degree and leading coefficient proven in this paper is analogous to classical results \cite[Sections 3 and 4]{levin2008difference},  regarding dimension polynomials for filtrations and gradings of difference modules, and for difference field extensions. In fact, \Cref{the: dim poly intro} is a generalisation of the result for filtrations of filtered modules, as one can consider a difference module to be an additive $\s$\=/algebraic group (\Cref{the: relating modules and groups}).  An application of the invariants introduced in \Cref{def: diff type diff dim etc} is that they yield a necessary criterion for isomorphism, that is, two isomorphic difference algebraic groups cannot have differing invariants. The invariants will be useful in furthering the theory of partial difference algebraic groups, for example, analogous invariants have been used to find Jordan\=/H\"older decompositions for both ordinary difference algebraic groups \cite{wibmer2021almost}, and partial differential algebraic groups \cite{cassidy2011ajordan}.

We now outline of the contents of the paper.
\Cref{sec: prelims} gives a brief overview of the necessary background for both difference algebra and algebraic groups. In \Cref{sec: diff varieties}, we introduce partial difference algebraic geometry and difference varieties, before restricting in \Cref{sec: diff alg goups} to studying difference algebraic groups. In particular we see their correspondence to Hopf structures and generalise the definition of Zariski closures from \cite{wibmer2022finiteness} to the partial case. Next, in \Cref{sec: gen dif alg grp def} we define generalised difference algebraic groups, a more general notion of the Zariski closures, which will allow us to use induction throughout our proofs. In \Cref{sec: kernels}, we study the kernels of certain projection maps of generalised difference algebraic groups with $n$ endomorphisms, and we see that these form generalised difference algebraic groups with $n-1$ endomorphisms. This allows us in \Cref{sec: ideal gen prop} to prove a strong ideal generation property on the defining ideals for a generalised difference algebraic group, of which \Cref{the: ideal intro} is a corollary. Another consequence of \Cref{the: ideal intro} is the existence of quotients of difference algebraic groups, which is proven in \Cref{sec: quotients}. In \Cref{sec: dim polys}, we prove the existence of a dimension polynomial for generalised difference algebraic groups, \Cref{the: dim poly intro}. Finally in \Cref{sec: relating polys to levin} we compare our dimension polynomials with classical results from \cite{levin2008difference} and see that they coincide in certain cases.

We will assume that all rings are unital and commutative, and take the convention that $\mathbb{N}=\{0,1,2,\dots\}$.


\section{Preliminaries}\label{sec: prelims}

\subsection{Difference Algebra}

Here we introduce the basic concepts of partial difference algebra, as defined in the standard reference \cite[Section 2]{levin2008difference}, however here we will not require the assumption that the basic endomorphisms are injective.

A \textbf{difference ring} ($\s$\=/ring) is a ring $R$ together with a finite set $\s=\basicset$ of pairwise commuting endomorphisms of $R$. We call $\s$ the \textbf{basic set} of $R$, and its elements the \textbf{basic endomorphisms}, or \textbf{shifts} of $R$. If the basic endomorphisms are automorphisms of $R$, we call $R$ an \textbf{inversive} $\s$\=/ring. A \textbf{difference field} (\s\=/field) is a difference ring where the underlying ring is a field. 

For any concepts in difference algebra where $\s=\basicset$, if $n=1$, this is called the \textbf{ordinary} case, and if $n>1$, this is the \textbf{partial} case. In the ordinary case, we will write $\s\colon  R\rightarrow R$ for the basic endomorphism of a $\s$\=/ring $R$.

\begin{example}\label[example]{ex: difference field}
   Consider the field $R=\mathbb{C}(x_1,\dots,x_n)$ of rational functions in $n$ variables over $\mathbb{C}$. Then $R$ is a $\s$\=/field with basic set $\{\s_1,\dots,\s_n\}$, where for each $1\leq j\leq n$, $\s_j\colon R\rightarrow R$ is the ring endomorphism \begin{align*}
            \s_j(f(x_1,\dots,x_n))=f(x_1,\dots,x_{j-1},x_j+1,x_{j+1},\dots,x_n).
        \end{align*}
      In fact, we can generalise this example in the following way. Given any $n$ endomorphisms $\phi_j\colon \mathbb{C}(x)\rightarrow \mathbb{C}(x)$ for $1\leq j\leq n$ (that is, each $(\mathbb{C}(x),\phi_j)$ is an ordinary difference field), one can consider the partial $\s$\=/field $\mathbb{C}(x_1,\dots,x_n)$, where $\s=\basicset$ and for each $1\leq j\leq n$, $\s_j\colon \mathbb{C}(x_1,\dots,x_n)\rightarrow \mathbb{C}(x_1,\dots,x_n)$ is the endomorphism $\s_j(f(x_1,\dots,x_n))\mapsto f(x_1,\dots,x_{j-1},\phi_j(x_j),x_{j+1},\dots,x_n)$. 
\end{example}

\begin{example}\label[example]{ex: hypergeometric functions}
    A classical example of where shifts of the kind in (\Cref{ex: difference field}) can be found is in the study of hypergeometric functions \cite[Chapter 8]{beals2010special}. These are functions $F={_2}F_1(a,b,c;x)$ with parameters $a,b,c$ and variable $z$, which arise naturally as the solutions to the hypergeometric equation. Two functions are called contiguous if two of their corresponding parameters are equal, and the third differs by $\pm 1$. For example, ${_2}F_1(a,b,c;x)$ is contiguous to ${_2}F_1(a+1,b,c;x)$. This property can be written clearly in the language of difference algebra. Considering $\mathbb{C}(a,b,c,x)$ as a partial $\s$\=/field with $\s_1(f(a,b,c;x))=f(a+1,b,c;x)$, $\s_2(f(a,b,c;x))=f(a,b+1,c;x)$ and $\s_3(f(a,b,c;x))=f(a,b,c+1;x)$, two hypergeometric functions are said to be contiguous if one is a shift of the other.
\end{example}

We will slightly abuse notation by saying that two or more difference rings have the same basic set $\s$, when we really mean that there is some mapping from $\s$ to the basic set of each difference ring. If $R$ and $S$ are both $\s$\=/rings, a morphism of rings $\varphi\colon  R\rightarrow S$ is called a \textbf{morphism of difference rings} (morphism of $\s$\=/rings) if $\varphi$ commutes with $\s_j$ for each $1\leq j\leq n$.
For a $\s$\=/ring $R$, a subring $S$ of the underlying ring $R$ is a \textbf{difference subring} ($\s$\=/subring) of $R$ if $\s_j(S)\subseteq S$ for each $1\leq j\leq n$, and an ideal $I$ of the underlying ring $R$ is called a \textbf{difference ideal} ($\s$\=/ideal) of $R$ if $\s_j(I)\subseteq I$ for each $1\leq j\leq n$. 

Notice that a $\s$\=/subring or $\s$\=/ideal of $R$ is not just closed under each basic endomorphism, but also any compositions of them. Since the basic endomorphisms are pairwise commuting, any composition of them can be written $\generalelemt$ for $k_1,\dots,k_n\in \mathbb{N}$. We call these endomorphisms \textbf{transforms} of $R$, and denote the set of them by the free commutative monoid 
            $T_\s=\{\generalelemt\ | \ k_1,\dots,k_n\in \mathbb{N}\}$.  We assign a natural order to the transforms of a $\s$\=/ring, that is, given $\tau\in T_\s$, so $\tauequals$ for some $k_1,\dots,k_n\in\mathbb{N}$, the \textbf{order} of $\tau$ is 
        $\ord(\tau)=k_1+\cdots+k_n$. The subsets \begin{align}\label{eq: Ts[i] and Ts(i)}
    T_\s[i]=\{\tau\in T_\s\ | \ \ord(\tau)\leq i\}\subseteq T_\s\quad \text{and}\quad T_\s(i)=\{\tau\in T_\s\ | \ \ord(\tau)= i\}\subseteq T_\s[i]
\end{align} for each $i\in\mathbb{N}$ will be crucial for proving finiteness properties of partial difference algebraic groups.

    Let $R$ be a $\s$\=/ring, and let $F\subseteq R$. While $(F)$ is the ideal of $R$ generated by $F$, we write $[F]$ for the smallest $\s$\=/ideal of $R$ containing $F$, and call it \textbf{$\s$\=/ideal of $R$ $\s$\=/generated by $F$}. We can see that $[F]=(\{\tau(f)\ |\ \tau\in T_\s,\ f\in F\})$, that is, $[F]$ is generated as an ideal by the elements of $F$, and every transform of every element of $F$. If a $\s$\=/ideal $I\subseteq R$ is such that $I=[F]$ for some finite  $F\subseteq R$, then we say $I$ is \textbf{finitely $\s$\=/generated} as a $\s$\=/ideal of $R$.

Let $R$ and $k$ be $\s$\=/rings, and suppose that $R$ is a $k$-algebra via the morphism $\psi_R\colon k\rightarrow R$. If $\psi_R$ is a morphism of $\s$\=/rings,
then $R$ is called a \textbf{difference $k$\=/algebra} ($k$\=/$\s$\=/algebra). A \textbf{morphism of $k$\=/$\s$\=/algebras} is a morphism of $k$\=/algebras that is also a morphism of $\s$\=/rings. An \textbf{isomorphism of $k$\=/$\s$\=/algebras} is an isomorphism of $k$\=/algebras that is also a morphism of $k$\=/$\s$\=/algebras. We write $R\cong S$ to denote that $k$\=/$\s$\=/algebras $R$ and $S$ are isomorphic.
A \textbf{$k$\=/$\s$\=/subalgebra} of a $k$\=/$\s$\=/algebra $R$ is a $k$\=/subalgebra of the underlying $k$\=/algebra which is also a $\s$\=/subring of $R$.

Let $k$ be a $\s$\=/ring and let $R$ be a $k$\=/$\s$\=/algebra. For a subset $F\subseteq R$, $k[F]$ is the $k$\=/subalgebra of $R$ generated by $F$. We denote by $k\{F\}$ the \textbf{$k$\=/$\s$\=/subalgebra of $R$ $\s$\=/generated by $F$}, that is, $k\{F\}$ is the smallest $k$\=/$\s$\=/subalgebra of $R$ containing $F$.
Similarly to the case for $\s$\=/ideals, we in fact see that $k\{F\}=k[\{\tau(f)\ |\ \tau\in T_\s,\ f\in F\}]$. 
If a $k$\=/$\s$\=/algebra $R$ is such that $R=k\{F\}$ for some finite  $F\subseteq R$, we say $R$ is a \textbf{finitely $\s$\=/generated} $k$\=/$\s$\=/algebra.

Where algebraic geometry is the study of the solutions of polynomials, difference algebraic geometry is the study of solutions of $\s$\=/polynomials.
    The \textbf{ring of $\s$\=/polynomials} over a $\s$\=/ring $k$ in the $\s$\=/indeterminates $y_1,\dots,y_s$ is the polynomial ring \begin{align*}
    k\{y_1,\dots,y_s\}=k\bigl[\{\tau (y_i) \ |\  \tau\in T_\s, 1\leq i\leq s\}\bigr]
\end{align*}
whose indeterminates are not just $y_1,\dots,y_s$, but also every transform of $y_1,\dots,y_s$.

\begin{example}\label[example]{ex: poly ring k sigma alg}
    The ring of $\s$\=/polynomials is a natural example of a $k$\=/$\s$\=/algebra, where for each $1\leq j\leq n$, $\s_j\colon  k\{y_1,\dots,y_s\}\rightarrow k\{y_1,\dots,y_s\}$ is extended from $\s_j\colon k\rightarrow k$ such that $\s_j(\tau (y_i))=(\s_j\tau) (y_i)$ for any $1\leq i\leq s$, and any $\tau\in T_\s$. 
\end{example}

If $R$ is a $k$\=/$\s$\=/algebra, we evaluate a $\s$\=/polynomial $f\in k\{y_1,\dots,y_s\}$ at a point $x=(x_1,\dots,x_s)\in R^s$ by substituting $\tau(x_i)$ for $\tau (y_i)$ for each $\tau\in T_\s$, $1\leq i\leq s$. A point $x$ in $R^s$ is called a \textbf{solution} to a $\s$\=/polynomial $f\in k\{y_1,\dots,y_s\}$ if $f(x)=0$.

\begin{example}
    The hypergeometric functions (see \Cref{ex: hypergeometric functions}) are naturally solutions to some $\s$\=/polynomials. It is a well-known result of Gauss that one can find an equation (linear over $\mathbb{C}(a,b,c,x)$), relating any hypergeometric function with any two of its contiguous functions. One can express any of these linear relations as a difference equation, for example, (8.5.4 in \cite{beals2010special}), \begin{align*}
    (a+(b-c)x)F=a(1-x)\s_1(F)-\frac{(c-a)(c-b)x}{c}\s_3(F).
\end{align*}
That is, contiguous hypergeometric functions are solutions to some linear $\s$\=/polynomials over $\mathbb{C}(a,b,c,x)$.
\end{example}

Finally, for a $\s$\=/ring $k$ and $k$\=/$\s$\=/algebras $R$ and $S$, the tensor product $R\otimes_k S$ of the underlying $k$\=/algebras is in fact a $k$\=/$\s$\=/algebra, where for each $1\leq j\leq n$, $\s_j\colon R\otimes_k S\rightarrow R\otimes_k S$ is the unique ring endomorphism such that $\s_j(r\otimes s)=\s_j(r)\otimes \s_j(s)$ for each $r\in R$, $s\in S$.
If $R$ and $S$ are both finitely $\s$\=/generated $k$\=/$\s$\=/algebras, then $R\otimes_k S$ is finitely $\s$\=/generated as a $k$\=/$\s$\=/algebra.

\subsection{Algebraic Groups}

We now review the required knowledge of affine schemes of finite type over a field and (affine) algebraic groups over a field. The facts in this section can either be found or derived from \cite{waterhouse2012introduction} and \cite{milne2017algebraic}, and we follow this functorial approach to studying affine schemes. 

Given a field $k$, an \textbf{affine scheme of finite type} over $k$ is a functor from the category of $k$\=/algebras to the category of sets which is isomorphic to the functor $\Hom(A,-)$ for some finitely generated $k$\=/algebra $A$. We write $k[\X]=A$, and call $A$ the coordinate ring or the representing algebra for $\X$.

An \textbf{algebraic group} $\G$ over $k$ is a functor from the category of $k$\=/algebras to the category of groups which is an affine scheme of finite type over $k$ when considered as a functor into the category of sets. Equivalently, an algebraic group $\G$ over $k$ is an affine scheme of finite type over $k$ such that $k[\G]$ has the structure of a $k$\=/Hopf algebra.

\begin{example}\label[example]{example: algebraic groups}
Let $k$ be a field and let $s\geq 1$ be some natural number. Throughout this paper we will consider the following classical examples of algebraic groups.
    \begin{itemize}
            \item The additive group $\mathbb{G}_a^s$, which takes a $k$\=/algebra $R$ and returns $\mathbb{G}_a^s(R)=R^s$, is an algebraic group, represented by $k[\mathbb{G}_a^s]=k[y_1,\dots,y_s]$.
        \item The multiplicative group $\mathbb{G}_m^s$, which takes a $k$\=/algebra $R$ and returns $\mathbb{G}_m^s(R)=(R^\times)^s$, is an algebraic group, represented by $k[\mathbb{G}_m^s]=k[y_1,\dots,y_s,y_1^{-1},\dots,y_s^{-1}]$.
        \item The general linear group $\operatorname{GL}_s$, which takes a $k$\=/algebra $R$ and returns $\operatorname{GL}_s(R)$, the set of $s\times s$ invertible matrices with entries in $R$, is an algebraic group. Putting $X=(x_{i,j})_{1\leq i,j\leq s}$, $k[\operatorname{GL}_s]=k[X,1/\det(X)]$ is the representing algebra for $\operatorname{GL}_s$.
    \end{itemize}
\end{example}

Given an algebraic group $\G$ over $k$, a \textbf{closed subgroup} $\H$ of $\G$ (denoted $\H\leq\G$) is a closed subscheme of $\G$ such that $\H(R)$ is a subgroup of $\G(R)$ for each $k$\=/algebra $R$. 
If $\H$ is a closed subgroup of $\G$, there exists some Hopf ideal $I$ of $k[\G]$ such that $k[\H]=k[\G]/I$, and conversely, for each Hopf ideal $I$ of $k[\G]$, there is a closed subgroup $\H$ of $\G$ such that $k[\H]=k[\G]/I$. We write $\I(\H)$ for $I$ and call $\I(\H)$ the \textbf{defining ideal} of $\H$ in $\G$. 
For closed subgroups $\H_1$ and $\H_2$ of $\G$, $\H_1\leq \H_2$ if and only if $\I(\H_2)\subseteq \I(\H_1)$. Intuitively, this means that $\H_1$ is defined by more polynomial equations than $\H_2$.

Morphisms of algebraic groups and their properties will be an important tool for the proofs of some of our main results. A morphism $\phi\colon\G\rightarrow \H$ of algebraic groups over $k$ is a natural transformation of functors such that $\phi_R\colon\G(R)\rightarrow \H(R)$ is a group homomorphism for every $k$\=/algebra $R$. Then $\phi\colon\G\rightarrow \H$ corresponds to a dual morphism $\phi^*\colon k[\H]\rightarrow k[\G]$ of the representing $k$\=/Hopf algebras.

We can define the image and kernel of a morphism $\phi\colon\G\rightarrow \H$ of algebraic groups. The \textbf{image} $\phi(\G)$ is the closed subgroup of $\H$ defined by the Hopf ideal $\ker(\phi^*)$ of $k[\H]$. The \textbf{kernel} $\ker(\phi)$ is the functor from the category of $k$\=/algebras to the category of groups which takes a $k$\=/algebra $R$ to $\ker(\phi_R)$, and is a closed subgroup of $\G$.

A morphism $\phi\colon\G\rightarrow \H$ of algebraic groups over $k$ is called a \textbf{closed embedding} if it satisfies any one of the  following equivalent conditions: 
    \begin{itemize}
    \item the induced morphism $\phi\colon \G\rightarrow \phi(\G)$ is an isomorphism,
    \item for every $k$\=/algebra $R$, $\phi_R\colon\G(R)\rightarrow \H(R)$ is injective,
    \item the dual morphism $\phi^*\colon k[\H]\rightarrow k[\G]$ of $k$\=/algebras is surjective.
\end{itemize}

A morphism $\phi\colon\G\rightarrow \H$ of algebraic groups over $k$ is called a \textbf{quotient map} if $\phi(\G)=\H$, or equivalently, if the dual morphism $\phi^*\colon k[\H]\rightarrow k[\G]$ of $k$\=/algebras is injective.

Finally, we introduce how we can consider an algebraic group $\G$ over a field $k$ to be an algebraic group over some other field $k'$. 
    Suppose that we have fields $k$, $k'$, and a morphism $k\rightarrow k'$ of rings. Any $k'$\=/algebra $R$ is then also a $k$\=/algebra via $k\rightarrow k'\rightarrow R$, and morphisms of $k'$\=/algebras can be considered to be morphisms of $k$\=/algebras. Therefore, a functor $\X$ from the category of $k$\=/algebras to the category of sets can be considered as a functor $\X'$ from the category of $k'$\=/algebras to the category of sets. 
    We call $\X'$ the functor obtained from $\X$ by \textbf{base change} via $k\rightarrow k'$. 
A morphism $\phi\colon \X\rightarrow \Y$ of functors from $k$\=/algebras to sets can be considered to be a morphism $\phi'\colon \X'\rightarrow \Y'$ of functors from $k'$\=/algebras to sets by base change. If $\phi$ is an isomorphism of functors, then $\phi'$ is an isomorphism of functors. 

In this paper, we will be concerned with base change of an algebraic group of a certain kind. We will describe this in more detail here.

\begin{definition}\label[definition]{def: group base change}
    Let $\G$ be an algebraic group over a field $k$ and let $\tau\colon  k\rightarrow k$ be an endomorphism. Denote by ${^\tau}\G$ the functor over $k$ obtained from $\G$ by base change via $\tau\colon  k\rightarrow k$, and notice that ${^\tau}\G$ is also an algebraic group over $k$. For a morphism $\phi\colon \G\rightarrow \H$ of algebraic groups, denote by ${^\tau}\phi\colon {^\tau}\G\rightarrow{^\tau\H}$ the morphism of functors obtained from $\phi$ by base change via $\tau\colon k\rightarrow k$, and notice that ${^\tau}\phi$ is also a morphism of algebraic groups. 
\end{definition}

Suppose that we have a $k$\=/algebra $R$. Denote by $R'$, $R$ considered as a $k$\=/algebra via some endomorphism $\tau\colon k\rightarrow k$. 
Suppose that $\G$ is an algebraic group defined by some polynomials $f_1,\dots,f_n\in k[y_1,\dots,y_s]$, that is, $\G$ is represented by $k[y_1,\dots,y_s]/(f_1,\dots,f_n)$. The elements of $\G(R)$ are the solutions of the polynomials $f_1,\dots,f_n$ in $R^s$. Then the elements of ${^\tau}\G(R)=\G(R')$ are the solutions to $f_1,\dots,f_n$ in $(R')^s$. These are the solutions in $R^s$ to the polynomials $g_1,\dots,g_n$, where $g_i$ is obtained from $f_i$ by applying $\tau$ to all of the coefficients. Equivalently, these are the solutions in $R^s$ to the polynomials $\tau(f_1),\dots,\tau(f_n)\in k\bigl[\tau (y_1),\dots,\tau (y_s)\bigr]$, where we have relabelled our indeterminates. We use this relabelling notation for clarity when we simultaneously consider algebraic groups obtained by different base changes.

\begin{remark}\label[remark]{rem: representing alg for base change}
    Let $\G$ be an algebraic group over a field $k$ and let $\tau\colon  k\rightarrow k$ be an endomorphism. If $\G$ is represented by $k[y_1,\dots,y_s]/(f_1,\dots,f_n)$, then ${^\tau}\G$ is represented by $k\bigl[\tau (y_1),\dots,\tau (y_s)\bigr]\big/\bigl(\tau(f_1),\dots,\tau(f_n)\bigr)$.
\end{remark}

\begin{remark}\label[remark]{rem: base change comp}
    Clearly from \Cref{rem: representing alg for base change} and the discussion above, if $\tau_1, \tau_2\colon k\rightarrow k$ are commuting endomorphisms of a field $k$ and $\G$ is an algebraic group over $k$, then ${^{\tau_1}}({^{\tau_2}}\G)={^{\tau_1\tau_2}}\G={^{\tau_2}}({^{\tau_1}}\G)$. 
\end{remark}

\begin{example}\label[example]{ex: tau GLs}
    For $s\geq 1$, $\operatorname{GL}_s$ is an algebraic group over $k$ where for each $k$\=/algebra $R$, if $X=(x_{i,j})_{1\leq i,j\leq s}$, \begin{align*}
        \operatorname{GL}_s(R)=\{(X,y)\in R^{s\times s}\times R\ |\ \det(X)y=1\}.
    \end{align*}
    If $\tau\colon k\rightarrow k$ is an endomorphism, $\tau(\det(X))=\det(\tau(X))$, where $\tau(X)=(\tau (x_{i,j}))_{1\leq i,j\leq s}$. Therefore, \begin{align*}
        {^\tau}\operatorname{GL}_s(R)=\bigl\{(\tau(X),\tau (y))\in R^{s\times s}\times R\ |\ \det(\tau(X))\tau (y)=1\bigr\}
    \end{align*}
    for each $k$\=/algebra $R$.
    So the points in ${^\tau}\operatorname{GL}_s(R)$ are the same as the points in $\operatorname{GL}_s(R)$, but the way that we are considering $R$ as a $k$\=/algebra is different.
\end{example}


\section{Difference Varieties}\label{sec: diff varieties}

We will now introduce some concepts in difference algebraic geometry, in particular, we introduce a partial difference variety over a difference field, and see how given an affine scheme of finite type over a difference field, we can assign difference structure to find a difference variety.
The work in this section is adapted from the ordinary case in \cite[Section 1.2]{wibmer2022finiteness}.
\textbf{From now on we will assume that $k$ is a $\s$\=/field}. Let $k$\=/$\operatorname{Alg}$ denote the category of $k$\=/algebras, $k$\=/$\s$\=/$\operatorname{Alg}$ denote the category of $k$\=/$\s$\=/algebras, $\operatorname{Sets}$ denote the category of sets, and let Groups denote the category of groups.

While an affine scheme of finite type is defined by some polynomials, a $\s$\=/variety is defined by some $\s$\=/polynomials. Let us formalise what this means. 
For some $s\geq 1$ and some set $F\subseteq k\{y_1,\dots,y_s\}$, define a functor $\V_F$ from $k$\=/$\s$\=/$\operatorname{Alg}$ to $\operatorname{Sets}$ such that \begin{align*}
    \mathbb{V}_F(R)=\{x\in R^s\ |\  f(x)=0 \ \text{for all}\ f\in F\}
\end{align*}
for any $k$-$\s$-algebra $R$, and 
    $\V_F(\varphi)\colon \V_F(R)\rightarrow \V_F(S),\ (x_1,\dots,x_s)\mapsto (\varphi(x_1),\dots,\varphi(x_s))$
for any morphism $\varphi\colon R\rightarrow S$ of $k$\=/$\s$\=/algebras. 
It is straightforward to check that $\V_F$ is in fact a well-defined functor.

\begin{definition}
    A \textbf{difference variety} ($\s$\=/variety) over $k$ is a functor $X$ from $k$\=/$\s$\=/$\operatorname{Alg}$ to $\operatorname{Sets}$ such that $X=\V_F$  for some $s\geq 1$ and some $F\subseteq k\{y_1,\dots,y_s\}$. A \textbf{morphism of $\s$\=/varieties} over $k$ is a natural transformation of functors, and an isomorphism of $\s$\=/varieties is a natural isomorphism of functors. 
\end{definition}

Given a $\s$\=/variety $Y=\mathbb{V}_F$ over $k$ for some $s\geq 1$ and $F\subseteq k\{y_1,\dots,y_s\}$, a subfunctor $X\subseteq Y$ is called a \textbf{$\s$\=/closed $\s$\=/subvariety} of $Y$ if $X=\mathbb{V}_G$ for some superset $F\subseteq G\subseteq k\{y_1,\dots,y_s\}$. 

It will be useful to study all of the $\s$\=/polynomials which vanish at every point in a $\s$\=/variety. 
\begin{lemma}\label[lemma]{lem: I(X) ideal eq to [F]}
    Let $X=\mathbb{V}_F$ be a $\s$\=/variety over $k$, where $F\subseteq k\{y_1,\dots,y_s\}$ for some $s\geq 1$. Then \begin{align*}
        \mathbb{I}(X)=\{f\in k\{y_1,\dots,y_s\}\ |\  f(x)=0 \ \text{for all $k$\=/$\s$\=/algebras $R$ and all $x\in X(R)$}\}
    \end{align*}
    is a $\s$\=/ideal of $k\{y_1,\dots,y_s\}$. In fact, $\I(X)=[F]$.
    \end{lemma}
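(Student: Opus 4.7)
The plan is to prove the second (stronger) claim $\I(X)=[F]$ directly, which then gives the first claim for free: since $[F]$ is a $\s$\=/ideal of $k\{y_1,\dots,y_s\}$ by construction, $\I(X)=[F]$ will automatically be one too.

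For the inclusion $[F]\subseteq \I(X)$, I would use the explicit description $[F]=(\{\tau(f)\mid \tau\in T_\s,\ f\in F\})$ recalled earlier in the paper. It is enough to check that each generator $\tau(f)$ lies in $\I(X)$, and then close under $k\{y_1,\dots,y_s\}$\=/linear combinations (which is automatic for any ideal whose generators vanish on a given set). The only substantive point is the compatibility of substitution with the endomorphisms: for any $k$\=/$\s$\=/algebra $R$, any $x\in R^s$, any $f\in k\{y_1,\dots,y_s\}$, and any $\tau\in T_\s$, one has $\tau(f)(x)=\tau(f(x))$. This follows from the definition of $\s_j$ on the $\s$\=/polynomial ring, since $\s_j$ is determined on coefficients by its action on $k$ and on variables by $\s_j(\tau(y_i))=\s_j\tau(y_i)$, and substitution sends $\tau(y_i)$ to $\tau(x_i)\in R$ while $\s_j$ in $R$ satisfies the same formula. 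Iterating gives the claim for all $\tau\in T_\s$. Consequently, if $x\in X(R)=\V_F(R)$ and $f\in F$, then $\tau(f)(x)=\tau(f(x))=\tau(0)=0$, so $\tau(f)\in \I(X)$.

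For the reverse inclusion $\I(X)\subseteq [F]$, the idea is to test membership on the universal point. Set $R:=k\{y_1,\dots,y_s\}/[F]$; because $[F]$ is a $\s$\=/ideal, $R$ inherits the structure of a $k$\=/$\s$\=/algebra. Let $\bar y=(\bar y_1,\dots,\bar y_s)\in R^s$ be the tuple of residues. Then $f(\bar y)=\overline{f}=0$ in $R$ for every $f\in F$, so $\bar y\in \V_F(R)=X(R)$. Hence for any $g\in \I(X)$ we get $g(\bar y)=0$ in $R$, i.e.\ $g\in [F]$.

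There is no real obstacle here; the only point that requires a moment of care is the identity $\tau(f)(x)=\tau(f(x))$, which is the engine that makes the $\s$\=/ideal $[F]$ (and not merely the ordinary ideal $(F)$) the right object to land in. Everything else is the standard Hilbert-style "evaluate on the universal solution" argument, packaged for the $\s$\=/setting.
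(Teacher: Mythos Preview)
Your proof is correct and follows essentially the same route as the paper: the reverse inclusion via the universal point in $R=k\{y_1,\dots,y_s\}/[F]$ is identical, and for the forward inclusion the paper first observes that $\I(X)$ is a $\s$\=/ideal (which amounts to the same computation $\s_j(g)(x)=\s_j(g(x))$ you spell out) and then concludes $[F]\subseteq\I(X)$ from $F\subseteq\I(X)$, whereas you verify $[F]\subseteq\I(X)$ directly and read off the $\s$\=/ideal property from the equality---a harmless reordering.
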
 
    \begin{proof}
    It is straightforward to check that $\I(X)$ is a $\s$\=/ideal of $k\{y_1,\dots,y_s\}$. 
        We know that $F\subseteq \I(X)$ by definition, and since $\I(X)$ is a $\s$\=/ideal, $[F]\subseteq\I(X)$. 
    Conversely, let $g\in \I(X)$. Consider the $k$\=/$\s$\=/algebra
        $R=k\{y_1,\dots,y_s\}/[F]$ and notice that any $f\in F$ vanishes at $(y_1,\dots,y_s)+[F]$, so $(y_1,\dots,y_s)+[F]\in X(R)$. Since $g\in\I(X)$, $g$ vanishes at any point in $X(R)$, in particular, $g(y_1,\dots,y_s)+[F]=0+[F]$, and
        hence $g\in [F]$. Therefore $\mathbb{I}(X)\subseteq [F]$, and $\I(X)=[F]$ as required.
    \end{proof}

    \begin{definition}
    Given a $\s$\=/variety $X$ over $k$, the $k$\=/$\s$\=/algebra
        $k\{X\}=k\{y_1,\dots,y_s\}/\mathbb{I}(X)$
    is called the \textbf{coordinate ring} of $X$. 
\end{definition}

Notice that for any $\s$\=/variety $X$ over $k$, its coordinate ring is a finitely $\s$\=/generated $k$\=/$\s$\=/algebra. We will see that this induces a dual correspondence between $\s$\=/varieties over $k$ and finitely $\s$\=/generated $k$\=/$\s$\=/algebras.

\subsection{Difference Varieties and Representing Difference Algebras}

We will now adapt the ideas of representing algebras in \cite[Section 1.2, page 4]{waterhouse2012introduction} to the partial difference case. From now on, we \textbf{call any functor isomorphic to a $\s$\=/variety, a $\s$\=/variety}. 

Recall that for a $k$\=/$\s$\=/algebra $A$, $\operatorname{Hom}_{k\text{\=/}\s\text{\=/}\operatorname{alg}}(A,-)$ is the functor from $k$\=/$\s$\=/$\operatorname{Alg}$ to $\operatorname{Sets}$ mapping a $k$\=/$\s$\=/algebra $R$ to the set of $k$\=/$\s$\=/algebra morphisms $A\rightarrow R$. We will write $\operatorname{Hom}(A,-)$ for $\operatorname{Hom}_{k\text{\=/}\s\text{\=/}\operatorname{alg}}(A,-)$, when it is clear we are considering morphisms of $k$\=/$\s$\=/algebras.

\begin{definition}
    A functor $X$ from $k$\=/$\s$\=/$\operatorname{Alg}$ to $\operatorname{Sets}$ is \textbf{representable} if there exists some $k$\=/$\s$\=/algebra $A$ such that the functors $X$ and $\operatorname{Hom}_{k\text{\=/}\s\text{\=/}\operatorname{alg}}(A,-)$ are isomorphic. In this case, we say that $X$ is represented by the $k$\=/$\s$\=/algebra $A$.
\end{definition}

Notice that if a functor $X$ from $k$\=/$\s$\=/$\operatorname{Alg}$ to $\operatorname{Sets}$ is represented by $k$\=/$\s$\=/algebra $A$, and $A$ is isomorphic to some  $k$\=/$\s$\=/algebra $B$, then $X$ is also represented by $B$.  Two representable functors are isomorphic if and only if their representing algebras are isomorphic.

\begin{proposition}\label[proposition]{prop: prelims varieties and algebras corresp}
    A functor from $k$\=/$\s$\=/$\operatorname{Alg}$ to $\operatorname{Sets}$ is a $\s$\=/variety over $k$ if and only if it is represented by a finitely $\s$\=/generated $k$\=/$\s$\=/algebra.
\end{proposition}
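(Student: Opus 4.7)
The plan is to identify, in both directions, the bijection between $R$-points and $k$-$\s$-algebra homomorphisms using the universal property of the $\s$-polynomial ring. The key underlying fact is that any finitely $\s$-generated $k$-$\s$-algebra $A$ is isomorphic to $k\{y_1,\dots,y_s\}/I$ for some $s\geq 1$ and some $\s$-ideal $I$: pick a finite $\s$-generating set $a_1,\dots,a_s$ and let $\pi\colon k\{y_1,\dots,y_s\}\to A$ be the unique $k$-$\s$-algebra morphism with $y_i\mapsto a_i$ (whose existence follows from the definition of the $\s$-polynomial ring). Then $\pi$ is surjective and $I=\ker(\pi)$ is a $\s$-ideal.

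For the forward direction, let $X=\V_F$ for some $F\subseteq k\{y_1,\dots,y_s\}$. By \Cref{lem: I(X) ideal eq to [F]}, $\I(X)=[F]$, so $k\{X\}=k\{y_1,\dots,y_s\}/[F]$ is finitely $\s$-generated. For any $k$-$\s$-algebra $R$, define
\begin{align*}
\eta_R\colon \Hom(k\{X\},R)\longrightarrow X(R),\qquad \varphi\longmapsto (\varphi(\bar y_1),\dots,\varphi(\bar y_s)),
\end{align*}
where $\bar y_i=y_i+[F]$. The image lies in $X(R)$ because $f(\varphi(\bar y_1),\dots,\varphi(\bar y_s))=\varphi(f(\bar y_1,\dots,\bar y_s))=0$ for all $f\in F$. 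Conversely, any $x=(x_1,\dots,x_s)\in X(R)$ gives a well-defined $k$-$\s$-algebra map $\bar y_i\mapsto x_i$ by the universal property of $k\{y_1,\dots,y_s\}$ combined with the fact that $[F]$ lies in the kernel of the evaluation map (since $F$ does, and the evaluation map is a morphism of $\s$-rings). This is inverse to $\eta_R$, and naturality in $R$ is immediate.

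For the converse, suppose $X\cong \Hom(A,-)$ for a finitely $\s$-generated $k$-$\s$-algebra $A$. Write $A\cong k\{y_1,\dots,y_s\}/I$ as above. The same universal-property argument gives a natural bijection between $\Hom(A,R)$ and $\{x\in R^s:f(x)=0\text{ for all }f\in I\}=\V_I(R)$, so $X\cong \V_I$ is a $\s$-variety.

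Neither direction presents a genuine obstacle; the whole proposition is essentially the universal property of $k\{y_1,\dots,y_s\}$ combined with the observation that a $\s$-ideal (not merely an ideal) is precisely what is required for the quotient to carry a $k$-$\s$-algebra structure compatible with evaluation. The one point that needs a brief sanity check is that the evaluation map $k\{y_1,\dots,y_s\}\to R$ induced by a tuple $(x_1,\dots,x_s)\in R^s$ is automatically a $\s$-ring morphism, which follows directly from the convention $\s_j(\tau(y_i))=\s_j\tau(y_i)$ built into the definition of the $\s$-polynomial ring and the fact that $R$ is itself a $k$-$\s$-algebra.
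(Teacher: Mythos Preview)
Your proof is correct and follows essentially the same approach as the paper: both directions use the universal property of the $\s$-polynomial ring to identify $\Hom(k\{y_1,\dots,y_s\}/[F],R)$ with $\V_F(R)$, and the converse writes the representing algebra as a quotient of a $\s$-polynomial ring. The only minor omission is that, since by this point in the paper a $\s$-variety means a functor \emph{isomorphic} to some $\V_F$ (not literally equal), the forward direction should conclude by noting that representability transfers along the isomorphism $X\cong\V_F$; the paper makes this explicit in one sentence, and you should too.
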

\begin{proof}
    Firstly, suppose that $X=\V_F$ for some $s\geq 1$ and some set $F\subseteq k\{y_1,\dots,y_s\}$, and the coordinate ring $k\{X\}=k\{y_1,\dots,y_s\}/\I(X)$ of $X$. For a $k$\=/$\s$\=/algebra $R$, there is a one-to-one correspondence between elements $(r_1,\dots,r_s)\in X(R)$ and $k$\=/$\s$\=/algebra morphisms $\varphi\colon k\{X\}\rightarrow R$ such that $\varphi(y_i+\I(X))=r_i$ for each $1\leq i\leq s$. This correspondence in fact defines a natural isomorphism between functors $X$ and $\Hom(k\{X\},-)$, that is, $X$ is represented by a finitely $\s$\=/generated $k$\=/$\s$\=/algebra.

Now, say that $X$ is a $\s$\=/variety, that is, $X$ is isomorphic to $\V_F$ for some $s\geq 1$ and some $F\subseteq k\{y_1,\dots,y_s\}$. We have seen that $\V_F$ is represented by the finitely $\s$\=/generated $k$\=/$\s$\=/algebra $k\{\V_F\}=k\{y_1,\dots,y_s\}/[F]$, and as $X$ and $\V_F$ are isomorphic as functors, the same is true for $X$ as required. 

Conversely, suppose that $X$ is a functor from $k$\=/$\s$\=/$\operatorname{Alg}$ to $\operatorname{Sets}$  that is represented by some finitely $\s$\=/generated $k$\=/$\s$\=/algebra $A$. Then $A\cong k\{y_1,\dots,y_s\}/I$ for some $s\geq 1$ and some $\s$\=/ideal $I\subseteq k\{y_1,\dots,y_s\}$. The $k$\=/$\s$\=/algebra $A$ is then a representing algebra for the $\s$\=/variety $\V_I$, so $X$ and $\mathbb{V}_I$ are represented by the same $k$\=/$\s$\=/algebra, hence are isomorphic. That is, $X$ is a $\s$\=/variety as required.   
\end{proof}

From now on, for a $\s$\=/variety $X$ over $k$, write $k\{X\}$ for a $k$\=/$\s$\=/algebra representing $X$.

\begin{proposition}\label[proposition]{prop: morphisms corresp dual}
    Let $X$ and $Y$ be $\s$\=/varieties over $k$. There is a one-to-one correspondence between morphisms $\phi\colon X\rightarrow Y$ of $\s$\=/varieties over $k$ and morphisms $\phi^*\colon k\{Y\}\rightarrow k\{X\}$ of $k$\=/$\s$\=/algebras. We say that the corresponding morphisms $\phi$ and $\phi^*$ are \textbf{dual} to one another.
\end{proposition}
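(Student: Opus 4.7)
The plan is to invoke (a difference algebraic version of) the Yoneda lemma, since by \Cref{prop: prelims varieties and algebras corresp} both $X$ and $Y$ are representable by finitely $\s$\=/generated $k$\=/$\s$\=/algebras $k\{X\}$ and $k\{Y\}$. Without loss of generality I may identify $X$ with $\Hom(k\{X\},-)$ and $Y$ with $\Hom(k\{Y\},-)$, so it suffices to establish the correspondence between natural transformations $\Hom(k\{Y\},-)\Rightarrow \Hom(k\{X\},-)$ and morphisms $k\{Y\}\to k\{X\}$ in $k$\=/$\s$\=/$\operatorname{Alg}$.

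First I would define the map in one direction. Given a morphism $\phi^*\colon k\{Y\}\to k\{X\}$ of $k$\=/$\s$\=/algebras, define $\phi\colon X\to Y$ on a $k$\=/$\s$\=/algebra $R$ by
\begin{align*}
\phi_R\colon \Hom(k\{X\},R)\to \Hom(k\{Y\},R),\quad \psi\mapsto \psi\circ \phi^*.
\end{align*}
One checks that $\phi_R$ takes values in $k$\=/$\s$\=/algebra morphisms (since the composition of $k$\=/$\s$\=/algebra morphisms is a $k$\=/$\s$\=/algebra morphism) and that naturality in $R$ is automatic from associativity of composition. Hence $\phi$ is a well-defined morphism of $\s$\=/varieties over $k$.

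Next I would define the inverse map. Given a morphism $\phi\colon X\to Y$ of $\s$\=/varieties, set $\phi^*:=\phi_{k\{X\}}(\id_{k\{X\}})\in \Hom(k\{Y\},k\{X\})$; this is a $k$\=/$\s$\=/algebra morphism by the definition of $\phi_{k\{X\}}$. The main check is that these two constructions are mutually inverse. Starting from $\phi^*$, passing to $\phi$ and then reading off $\phi_{k\{X\}}(\id_{k\{X\}}) = \id_{k\{X\}}\circ \phi^* = \phi^*$ recovers $\phi^*$. Conversely, starting from $\phi$, for any $k$\=/$\s$\=/algebra $R$ and any $\psi\in \Hom(k\{X\},R)$, naturality of $\phi$ applied to the morphism $\psi\colon k\{X\}\to R$ gives the commuting square
\begin{align*}
\phi_R(\psi) \;=\; \phi_R(\psi_*(\id_{k\{X\}})) \;=\; \psi_*(\phi_{k\{X\}}(\id_{k\{X\}})) \;=\; \psi\circ \phi^*,
\end{align*}
where $\psi_*$ denotes post-composition with $\psi$. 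This shows $\phi$ is reconstructed from $\phi^*$, closing the loop.

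The only real obstacle is a bookkeeping one, namely to be careful that everything stays inside the category of $k$\=/$\s$\=/algebras rather than just $k$\=/algebras or rings; but this is automatic because the Hom\=/functors we use are $\Hom_{k\text{-}\s\text{-alg}}$, and composition of $k$\=/$\s$\=/algebra morphisms is a $k$\=/$\s$\=/algebra morphism. No new difference algebraic input is needed beyond the representability of $X$ and $Y$ already established in \Cref{prop: prelims varieties and algebras corresp}.
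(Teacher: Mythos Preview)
Your proof is correct and follows essentially the same Yoneda-lemma argument as the paper, with more detail in verifying that the two constructions are mutually inverse. One small slip: in your first paragraph you write ``natural transformations $\Hom(k\{Y\},-)\Rightarrow \Hom(k\{X\},-)$'' when a morphism $X\to Y$ is a natural transformation $\Hom(k\{X\},-)\Rightarrow \Hom(k\{Y\},-)$; your subsequent explicit construction has the direction right, so this is just a typo.
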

\begin{proof}
    This is the difference analogue of the Yoneda Lemma \cite[Section 1.3, page 6]{waterhouse2012introduction}. Let $A=k\{X\}$ and $B=k\{Y\}$. A morphism $\phi\colon X\rightarrow Y$ of $\s$\=/varieties over $k$ can be considered to be a morphism $\phi\colon\Hom(A,-)\rightarrow\Hom(B,-)$. Then $\phi^*=\phi_A(\id_A)\colon B\rightarrow A$ is the dual morphism of $k$\=/$\s$\=/algebras.
    Conversely, for a morphism $\phi^*\colon B\rightarrow A$ of $k$\=/$\s$\=/algebras, the dual morphism $\phi\colon\Hom(A,-)\rightarrow \Hom(B,-)$ of $\s$\=/varieties over $k$ is such that for a $k$\=/$\s$\=/algebra $R$, $\phi_R(\psi)=\psi\circ\phi^*$.
    Notice that these constructions are inverses to each other, so we have a one-to-one correspondence as required.
\end{proof}

\begin{theorem}\label[theorem]{the: equiv of cate}
    The category of $\s$\=/varieties over $k$ is dually equivalent to the category of finitely $\s$\=/generated $k$\=/$\s$\=/algebras.
\end{theorem}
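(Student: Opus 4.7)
The plan is to assemble the dual equivalence directly from the two preceding propositions, which already contain essentially all the content. I would define a contravariant functor
\[
\mathcal{F} \colon (\s\text{-varieties over } k) \longrightarrow (\text{finitely } \s\text{-generated } k\text{-}\s\text{-algebras})
\]
by sending a $\s$-variety $X$ to a chosen representing algebra $k\{X\}$, and sending a morphism $\phi\colon X\rightarrow Y$ to its dual $\phi^*\colon k\{Y\}\rightarrow k\{X\}$ given by \Cref{prop: morphisms corresp dual}. The first task is to check that $\mathcal{F}$ is a well-defined (contravariant) functor: this amounts to the observations already recorded after \Cref{prop: morphisms corresp dual}, namely $\id_X^* = \id_{k\{X\}}$ and $(\psi\circ\phi)^* = \phi^*\circ\psi^*$.

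Next I would verify the two conditions for an equivalence of categories (in the contravariant sense). Fully faithfulness is immediate from \Cref{prop: morphisms corresp dual}, which supplies the bijection between $\Hom(X,Y)$ and $\Hom_{k\text{-}\s\text{-alg}}(k\{Y\}, k\{X\})$ and whose inverse is explicitly the assignment $\phi^* \mapsto \phi$. Essential surjectivity follows from \Cref{prop: prelims varieties and algebras corresp}: given any finitely $\s$-generated $k$-$\s$-algebra $A$, write $A \cong k\{y_1,\dots,y_s\}/I$ for some $s\geq 1$ and some $\s$-ideal $I$, so that $A$ represents the $\s$-variety $\mathbb{V}(I)$; then $\mathcal{F}(\mathbb{V}(I)) = k\{\mathbb{V}(I)\}\cong A$.

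Finally, since a fully faithful and essentially surjective contravariant functor is a dual equivalence of categories, this completes the proof. There is no real obstacle here: the theorem is a packaging statement, and all the genuine work has already been carried out in \Cref{prop: prelims varieties and algebras corresp} (constructing representing algebras and the isomorphism $X \cong \Hom(k\{X\},-)$) and in \Cref{prop: morphisms corresp dual} (the difference-algebraic Yoneda argument). The only mild subtlety is that the representing algebra $k\{X\}$ is only well-defined up to isomorphism, but this is harmless: any two choices of representing algebra are canonically isomorphic, so $\mathcal{F}$ is well-defined on objects up to the ambiguity allowed by an equivalence of categories.
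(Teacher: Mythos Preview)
Your proposal is correct and follows essentially the same approach as the paper: the paper's proof simply states that the theorem is a combination of \Cref{prop: prelims varieties and algebras corresp}, \Cref{prop: morphisms corresp dual}, and the composition property $(\psi\circ\phi)^*=\phi^*\circ\psi^*$ recorded immediately after \Cref{prop: morphisms corresp dual}. You have spelled out this combination in more detail (functoriality, full faithfulness, essential surjectivity), but the content and the ingredients are identical.
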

\begin{proof}
One can check that that given morphisms $\phi\colon  X\rightarrow Y$ and $\psi\colon  Y\rightarrow Z$ of $\s$\=/varieties corresponding to morphisms $\phi^*\colon k\{Y\}\rightarrow k\{X\}$ and $\psi^*\colon  k\{Z\}\rightarrow k\{Y\}$ of $k$\=/$\s$\=/algebras respectively, then $(\psi\circ\phi)^*=\phi^*\circ \psi^*$.
    This theorem then follows from \Cref{prop: prelims varieties and algebras corresp,prop: morphisms corresp dual}.
\end{proof}

Another property of the category of $\s$\=/varieties is that it has products. Given $\s$\=/varieties $X$ and $Y$  over $k$, the product functor $X\times Y$ is also a $\s$\=/variety over $k$, represented by the $k$\=/$\s$\=/algebra $k\{X\}\otimes_k k\{Y\}$. This is the difference analogue of a fact stated in \cite[Section 1.4, page 7]{waterhouse2012introduction}.


\subsection{Difference Subvarieties and Difference Ideals}

Recall that a $\s$\=/closed $\s$\=/subvariety of $\V_F$ is of the form $\V_G$ for some superset $F\subseteq G\subseteq k\{y_1,\dots,y_s\}$. Now, if $Y$ is a general $\s$\=/variety over $k$, that is, if $Y$ is isomorphic to the functor $\V_F$ for some $F\subseteq k\{y_1,\dots,y_s\}$, we call a subfunctor $X\subseteq Y$ a $\s$\=/closed $\s$\=/subvariety of $Y$ if $X$ is isomorphic to $ \V_G$ for some $F\subseteq G\subseteq k\{y_1,\dots,y_s\}$. 
 
\begin{proposition}\label[proposition]{lem: subvarieties and ideals}
    Let $Y$ be a $\s$-variety over $k$. 
    Given a $\s$\=/ideal $I\subseteq k\{Y\}$, there exists a $\s$-closed $\s$-subvariety of $Y$ that is represented by $k\{Y\}/I$, and for any $\s$-closed $\s$-subvariety $X$ of $Y$, there exists some $\s$-ideal $I\subseteq k\{Y\}$ such that $X$ is represented by $k\{Y\}/I$.
\end{proposition}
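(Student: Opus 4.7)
The plan is to reduce to the concrete case $Y = \V_F$ for some $s \geq 1$ and $F \subseteq k\{y_1,\dots,y_s\}$, using the fact that $Y$ is a $\s$\=/variety to fix such an isomorphism, together with the induced identification $k\{Y\} \cong k\{y_1,\dots,y_s\}/[F]$ from \Cref{lem: I(X) ideal eq to [F]}. Once we work in this concrete presentation, the correspondence between $\s$\=/ideals of $k\{Y\}$ and $\s$\=/closed $\s$\=/subvarieties of $Y$ becomes essentially the standard correspondence theorem for ideals of a quotient ring, transported to the language of $\s$\=/varieties.

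First I would handle the direction from $\s$\=/ideals to subvarieties. Given a $\s$\=/ideal $I \subseteq k\{Y\}$, under the isomorphism $k\{Y\} \cong k\{y_1,\dots,y_s\}/[F]$, the ideal $I$ corresponds to a $\s$\=/ideal $J \subseteq k\{y_1,\dots,y_s\}$ with $[F] \subseteq J$. In particular $F \subseteq J$, so $\V_J$ is by definition a $\s$\=/closed $\s$\=/subvariety of $\V_F \cong Y$ (via the subfunctor inclusion $\V_J(R) \subseteq \V_F(R)$ for every $k$\=/$\s$\=/algebra $R$, which is immediate since vanishing on a larger set is a stronger condition). Because $J$ is already a $\s$\=/ideal we have $[J] = J$, so the coordinate ring $k\{\V_J\} = k\{y_1,\dots,y_s\}/[J] = k\{y_1,\dots,y_s\}/J$ is canonically isomorphic to $(k\{y_1,\dots,y_s\}/[F])/(J/[F]) \cong k\{Y\}/I$, as required.

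Next I would handle the reverse direction. Given a $\s$\=/closed $\s$\=/subvariety $X \subseteq Y$, the definition gives an identification $X \cong \V_G$ for some $G \subseteq k\{y_1,\dots,y_s\}$ with $F \subseteq G$. Then $[F] \subseteq [G]$, so the $\s$\=/ideal $I := [G]/[F]$ of $k\{y_1,\dots,y_s\}/[F] \cong k\{Y\}$ makes sense and satisfies $k\{Y\}/I \cong k\{y_1,\dots,y_s\}/[G] = k\{X\}$; thus $X$ is represented by $k\{Y\}/I$.

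I do not anticipate a genuine obstacle here — the statement is really a bookkeeping result — but the one point that requires slight care is keeping the isomorphism $Y \cong \V_F$ fixed throughout, so that a $\s$\=/ideal of the abstract algebra $k\{Y\}$ and a subfunctor of the abstract functor $Y$ are transported consistently to the same concrete presentation; once this is done, the argument is formal via the bijection between $\s$\=/ideals of $k\{y_1,\dots,y_s\}$ containing $[F]$ and $\s$\=/ideals of the quotient $k\{y_1,\dots,y_s\}/[F]$.
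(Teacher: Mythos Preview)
Your proposal is correct and follows essentially the same approach as the paper: fix a presentation $Y\cong\V_F$, use the ideal correspondence between $\s$\=/ideals of $k\{y_1,\dots,y_s\}$ containing $[F]$ and $\s$\=/ideals of $k\{Y\}$, and translate subvarieties via $\V_J$ and $\V_G$. The only cosmetic difference is that the paper, in the first direction, explicitly constructs the subfunctor $X\subseteq Y$ as $X(R)=\{x\in Y(R)\mid f(x)=0\text{ for all }f\in I\}$ (after remarking that evaluation of elements of $k\{Y\}$ at points of $Y(R)$ is well-defined), thereby making the transport along $Y\cong\V_F$ explicit, whereas you leave this transport implicit; your closing remark about fixing the isomorphism throughout covers exactly this point.
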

\begin{proof}
As $Y$ is a $\s$\=/variety over $k$, there exists some $F\subseteq k\{y_1,\dots,y_s\}$ such that $Y$ and $\mathbb{V}_F$ are isomorphic as functors, and hence $k\{Y\}=k\{y_1,\dots,y_s\}/[F]$. Then a $\s$\=/ideal $I\subseteq k\{Y\}$ corresponds to a $\s$\=/ideal $J$ of $k\{y_1,\dots,y_s\}$ containing $[F]$. By definition, $\mathbb{V}_J$ is a $\s$\=/closed $\s$\=/subvariety of $\V_F$.
Given two $\s$\=/polynomials $f,g\in k\{y_1,\dots,y_s\}$ such that $f+[F]=g+[F]$, for any $k$\=/$\s$\=/algebra $R$ and any $x\in Y(R)$, $f(x)=g(x)$. Therefore, it is well-defined to evaluate elements of $k\{Y\}$ at a point $x\in Y(R)$ for any $k$\=/$\s$\=/algebra $R$, by choosing any representative of the coset and evaluating this $\s$\=/polynomial at $x$.
This means that we can consider the functor $X$ from  $k$\=/$\s$\=/$\operatorname{Alg}$ to $\operatorname{Sets}$ such that for any $k$\=/$\s$\=/algebra $R$, \begin{align*}
    X(R)=\{x\in Y(R)\ |\ f(x)=0 \ \text{for any}\ f\in I\}.
\end{align*}
Clearly $X$ is a subfunctor of $Y$ represented by the $k$\=/$\s$\=/algebra $k\{Y\}/I\cong k\{y_1,\dots,y_s\}/J$. That is, $X$ is isomorphic to $\V_J$ and hence $X$ is a $\s$\=/closed $\s$\=/subvariety of $Y$ represented by $k\{Y\}/I$ as required.

Conversely, let $X$ be a $\s$\=/closed $\s$\=/subvariety of $Y$. So $X$ and $Y$ are isomorphic to some $\V_G$ and $\V_F$ respectively, where $F\subseteq G\subseteq k\{y_1,\dots,y_s\}$. In particular, $[F]\subseteq [G]\subseteq k\{y_1,\dots,y_s\}$. We can then consider the image $I$ of $[G]$ in $k\{Y\}=k\{y_1,\dots,y_s\}/[F]$. We know that $I$ is a $\s$\=/ideal of $k\{Y\}$, and that $X$ is represented by $k\{X\}= k\{y_1,\dots,y_s\}/[G]\cong k\{Y\}/I$ as required.
\end{proof}

Given a $\s$\=/closed $\s$\=/subvariety $X$ of $Y$, from the previous proposition, there exists some ideal $I\subseteq k\{Y\}$ such that $X$ is represented by $k\{Y\}/I$.
We can consider $X$ to be defined as a $\s$\=/closed $\s$\=/subvariety of $Y$ by the $\s$\=/ideal $I\subseteq k\{Y\}$, and call the ideal $I$ of $k\{Y\}$ the \textbf{defining ideal of $X$ in $k\{Y\}$}. Where it is clear from the context, we will write $\I(X)$ to mean $I$, rather than the definition given in \Cref{lem: I(X) ideal eq to [F]}.

This dual construction allows us to show that a morphism of $\sigma$\=/varieties has a unique image, hence allowing us to define the difference analogue to an embedding.

\begin{lemma}\label[lemma]{lem: image of morphism}
    Let $\phi\colon  X\rightarrow Y$ be a morphism of $\sigma$\=/varieties over $k$. There exists a unique $\sigma$\=/closed $\sigma$\=/subvariety $\phi(X)$ of $Y$ with the following properties:\begin{itemize}
        \item[$(i)$] The morphism $\phi$ factors through $\phi(X)$.
        \item[$(ii)$] If $Z\subseteq Y$ is a $\sigma$\=/closed $\sigma$\=/variety such that $\phi$ factors through $Z$, then $\phi(X)\subseteq Z$.
    \end{itemize} 
    We call the $\sigma$\=/closed $\sigma$\=/subvariety $\phi(X)$ of $Y$ the \textbf{image} of $\phi$.
\end{lemma}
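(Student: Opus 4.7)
The plan is to construct $\phi(X)$ via the dual correspondence between $\sigma$-closed $\sigma$-subvarieties of $Y$ and $\sigma$-ideals of $k\{Y\}$ established in \Cref{lem: subvarieties and ideals}. Specifically, I would let $\phi^* \colon k\{Y\} \to k\{X\}$ be the dual morphism of $k$-$\sigma$-algebras given by \Cref{prop: morphisms corresp dual}, and define $\phi(X)$ to be the $\sigma$-closed $\sigma$-subvariety of $Y$ whose defining ideal in $k\{Y\}$ is $\ker(\phi^*)$. First I would verify that $\ker(\phi^*)$ is indeed a $\sigma$-ideal of $k\{Y\}$: it is an ideal by standard ring theory, and it is closed under each $\sigma_j$ because $\phi^*$ commutes with $\sigma_j$ (as a morphism of $\sigma$-rings), so if $\phi^*(a)=0$ then $\phi^*(\sigma_j(a)) = \sigma_j(\phi^*(a)) = 0$. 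Hence by \Cref{lem: subvarieties and ideals}, $\phi(X)$ is well-defined, with coordinate ring $k\{\phi(X)\} = k\{Y\}/\ker(\phi^*)$.

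For property $(i)$, the inclusion $\iota \colon \phi(X) \hookrightarrow Y$ is dual to the quotient map $\iota^* \colon k\{Y\} \twoheadrightarrow k\{Y\}/\ker(\phi^*)$. By the first isomorphism theorem, the induced map $k\{Y\}/\ker(\phi^*) \hookrightarrow k\{X\}$ is an injective $k$-$\sigma$-algebra morphism, call it $\psi^*$, and satisfies $\phi^* = \psi^* \circ \iota^*$. Dualising via \Cref{prop: morphisms corresp dual} yields a morphism $\psi \colon X \to \phi(X)$ of $\sigma$-varieties with $\phi = \iota \circ \psi$, which is precisely the required factorisation.

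For property $(ii)$, suppose $Z \subseteq Y$ is a $\sigma$-closed $\sigma$-subvariety such that $\phi$ factors as $\phi = \iota_Z \circ \rho$ for some $\rho \colon X \to Z$, where $\iota_Z \colon Z \hookrightarrow Y$. Dually, $\phi^* = \rho^* \circ \iota_Z^*$, where $\iota_Z^* \colon k\{Y\} \twoheadrightarrow k\{Z\} = k\{Y\}/\I(Z)$ is the quotient. Hence $\I(Z) = \ker(\iota_Z^*) \subseteq \ker(\rho^* \circ \iota_Z^*) = \ker(\phi^*) = \I(\phi(X))$, which by the order-reversing correspondence in \Cref{lem: subvarieties and ideals} gives $\phi(X) \subseteq Z$.

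Finally, uniqueness is immediate: if $\phi(X)'$ were another $\sigma$-closed $\sigma$-subvariety satisfying $(i)$ and $(ii)$, then applying $(ii)$ to each with the other playing the role of $Z$ gives $\phi(X) \subseteq \phi(X)'$ and $\phi(X)' \subseteq \phi(X)$, so they coincide. No step presents a genuine obstacle; the proof is a direct translation of the classical algebraic-geometry argument into the $\sigma$-setting, and the only point requiring attention is the verification that $\ker(\phi^*)$ is closed under the basic endomorphisms, which is immediate from $\phi^*$ being a morphism of $\sigma$-rings.
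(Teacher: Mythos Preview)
Your proposal is correct and follows essentially the same approach as the paper: both define $\phi(X)$ as the $\sigma$-closed $\sigma$-subvariety of $Y$ with defining ideal $\ker(\phi^*)$, invoking \Cref{prop: morphisms corresp dual} and \Cref{lem: subvarieties and ideals}, and then translate factorisations of $\phi$ into factorisations of $\phi^*$ through quotients of $k\{Y\}$. Your version simply spells out in more detail what the paper compresses into a single sentence about the universal property of the quotient $k\{Y\}/\ker(\phi^*)$.
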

\begin{proof}
Let $\phi^*\colon k\{Y\}\rightarrow k\{X\}$ be the dual morphism to $\phi$, which exists by \Cref{prop: morphisms corresp dual}. Then $I=\ker(\phi^*)$ is a $\s$\=/ideal of $k\{Y\}$, and hence, by \Cref{lem: subvarieties and ideals}, defines a $\s$\=/closed $\s$\=/subvariety of $k\{Y\}$, which we denote by $\phi(X)$. We know that $I$ is the unique $\s$\=/ideal of $k\{Y\}$ such that $\phi^*$ factors through $k\{Y\}/I$, and if $J\subseteq k\{Y\}$ is a $\s$\=/ideal such that $\phi^*$ factors through $k\{Y\}/J$, then $J\subseteq I$. Therefore, $\phi(X)$ is the unique $\s$\=/closed $\s$\=/subvariety with the required dual properties.
\end{proof}

\begin{definition}
    A morphism $\phi\colon  X\rightarrow Y$ of $\sigma$\=/varieties over $k$ is called a \textbf{$\sigma$\=/closed embedding} if, given $\phi(X)$ as constructed in \Cref{lem: image of morphism}, $\phi$ induces an isomorphism $\phi\colon X\rightarrow\phi(X)$. 
\end{definition}

We will now prove an adaptation to a well-known result in algebraic geometry, characterising the $\s$\=/closed embeddings of $\s$\=/varieties.

\begin{proposition}\label[proposition]{lem: embedding iff surjective}
    Let $X$ and $Y$ be $\s$\=/varieties over $k$.
    A morphism $\phi\colon  X\rightarrow Y$ of $\sigma$\=/varieties over $k$ is a $\sigma$\=/closed embedding if and only if the dual morphism $\phi^*\colon  k\{Y\}\rightarrow k\{X\}$ of $k$\=/$\s$\=/algebras is surjective.
\end{proposition}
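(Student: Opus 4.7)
The plan is to reduce the statement to the dual equivalence of categories provided by \Cref{the: equiv of cate}, using the explicit description of the image $\phi(X)$ from \Cref{lem: image of morphism}. Recall that $\phi(X)$ is represented by $k\{Y\}/\ker(\phi^*)$, and that the factorisation $\phi = \iota \circ \psi$, where $\psi\colon X\to\phi(X)$ and $\iota\colon \phi(X)\hookrightarrow Y$, corresponds dually to the factorisation of $\phi^*\colon k\{Y\}\to k\{X\}$ through the quotient $k\{Y\}\twoheadrightarrow k\{Y\}/\ker(\phi^*)$ followed by the injection $\bar\phi^*\colon k\{Y\}/\ker(\phi^*)\hookrightarrow k\{X\}$ induced by the first isomorphism theorem.

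For the forward direction, I would assume $\phi$ is a $\s$\=/closed embedding, so $\psi\colon X\to\phi(X)$ is an isomorphism of $\s$\=/varieties. By \Cref{prop: morphisms corresp dual} and the fact that a morphism of $\s$\=/varieties is an isomorphism if and only if the dual morphism of $k$\=/$\s$\=/algebras is an isomorphism, $\bar\phi^*$ is an isomorphism, hence surjective. Composing with the surjective quotient map then gives that $\phi^* = \bar\phi^* \circ q$ is surjective.

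For the backward direction, I would assume $\phi^*$ is surjective. Then $\bar\phi^*\colon k\{Y\}/\ker(\phi^*)\to k\{X\}$ is both injective (by construction via the first isomorphism theorem) and surjective, hence an isomorphism of $k$\=/$\s$\=/algebras. Applying the equivalence of categories in the other direction, the dual morphism $\psi\colon X\to\phi(X)$ is an isomorphism of $\s$\=/varieties, so $\phi$ is a $\s$\=/closed embedding by definition.

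There is no real obstacle here; the statement is essentially a direct consequence of the dual equivalence of categories (\Cref{the: equiv of cate}) combined with the construction of $\phi(X)$ via $\ker(\phi^*)$ in \Cref{lem: image of morphism}. The only point requiring minor care is verifying that the induced map $\bar\phi^*$ is genuinely the dual of $\psi\colon X\to\phi(X)$, which follows from chasing the Yoneda\=/style correspondence used in the proof of \Cref{prop: morphisms corresp dual}.
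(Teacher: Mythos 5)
Your proposal is correct and follows essentially the same route as the paper: both identify $\phi(X)$ with the $\s$\=/closed $\s$\=/subvariety represented by $k\{Y\}/\ker(\phi^*)$ and observe that the induced injection $k\{Y\}/\ker(\phi^*)\to k\{X\}$, dual to $X\to\phi(X)$, is an isomorphism precisely when $\phi^*$ is surjective. The paper's proof is just a more compressed version of your factorisation argument.
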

\begin{proof}
    Let $I=\ker(\phi^*)$, and recall that $\phi(X)$ is the $\s$\=/closed $\s$\=/subvariety of $Y$ represented by $k\{Y\}/I$ as described in \Cref{lem: subvarieties and ideals}. Notice that the dual map to $X\rightarrow \phi(X)$ is $k\{Y\}/I\rightarrow k\{X\}$.
    Since $k\{Y\}/I\rightarrow k\{X\}$ is an isomorphism if and only if $\phi^*\colon  k\{Y\}\rightarrow k\{X\}$ is surjective, $X\rightarrow \phi(X)$ is an isomorphism if and only if $\phi^*\colon  k\{Y\}\rightarrow k\{X\}$ is surjective as required.
\end{proof}

\subsection{Building Difference Structure}\label{sec: building difference structure}

This section adapts constructions from \cite[Section 1.3]{wibmer2022finiteness} to the partial case. We will show how we can build a difference structure on a $k$\=/algebra to get a $k$\=/$\s$\=/algebra, and see that this corresponds to building difference structure on an affine scheme to get a $\s$\=/variety.

For a $k$\=/$\s$\=/algebra $R$, let $R^\#$ denote the $k$\=/algebra obtained from $R$ by forgetting the ring endomorphisms $\s=\basicset$. Given an affine scheme of finite type $\X$ over $k$, define a functor $[\s]_k\mathcal{X}$ from $k$\=/$\s$\=/$\operatorname{Alg}$ to $\operatorname{Sets}$ such that for a $k$\=/$\s$\=/algebra $R$, $\sk \X(R)=\mathcal{X}(R^\#)$. 
We will show that $[\s]_k\mathcal{X}$ is a $\s$\=/variety over $k$ by constructing $k\{[\s]_k\mathcal{X}\}$ and showing that this is a finitely $\s$\=/generated $k$\=/$\s$\=/algebra. Then the fact that $[\s]_k\mathcal{X}$ is a $\s$\=/variety over $k$ will follow from \Cref{prop: prelims varieties and algebras corresp}.
In order to do this, we will consider how $k$\=/algebras can `become' $k$\=/$\s$\=/algebras, by constructing a left adjoint to the $(-)^\#$ functor.

\begin{definition}\label[definition]{def: tau A and A[i]}
    Let $k$ be a $\s$\=/field and let $A$ be a $k$\=/algebra. For every $\tau\in T_\s$, let $^\tau A=A\otimes_k k$, where the tensor product is formed using $\tau\colon  k\rightarrow k$ on the right hand side. For each $i\in\mathbb{N}$, let  \begin{align*}
    A[i]&=\bigotimes\limits_{\tau\in T_\s[i]} {^\tau A}
\end{align*}
where the tensor products are taken over $k$.
\end{definition}

We consider $^\tau A$ to be a $k$\=/algebra via the morphism of rings $k\rightarrow{^\tau}A$, $\lambda\mapsto 1_A\otimes \lambda$. We will see that $A[i]$ is a $k$\=/algebra, but firstly we introduce some notation. For any $i\in\mathbb{N}$, $1\leq j\leq n$ there are natural inclusions $A[i]\subseteq A[i+1]$ and ${^{\s_j}}A[i]\subseteq A[i+1]$, by filling in any `gaps' in the tensor product with $1_A\otimes 1$.
Further, for $\tau\in T_\s[i]$, there is a natural inclusion $\theta^{\tau}\colon{^\tau}A\rightarrow A[i]$ such that \begin{align*}
    \theta^\tau(r\otimes \lambda)=(1_A\otimes 1)\otimes\cdots\otimes(1_A\otimes 1)\otimes\underbrace{(r\otimes \lambda)}_{\text{in ${^\tau A}$ place}}\otimes (1_A\otimes 1)\otimes\cdots\otimes(1_A\otimes 1)\in \bigotimes_{\tau\in T_\s[i]}{^\tau}A= A[i]
\end{align*}
for any elementary element $r\otimes\lambda\in {^\tau A}$, extended additively for any element of ${^\tau}A$. The target space of $\theta^\tau$ could be any $A[i]$ where $\ord(\tau)\leq i$, by considering $\theta^\tau(r\otimes \lambda)\in A[\ord(\tau)]$ and then using the inclusion $A[\ord(\tau)]\subseteq A[i]$. 
For $i\in\mathbb{N}$, an elementary element of $A[i]$ is of the form  \begin{align*}
    \bigotimes_{\tau\in T_\s[i]}(r_\tau\otimes\lambda_\tau)
    =\prod_{\tau\in T_\s[i]}\theta^\tau(r_\tau\otimes\lambda_\tau)
\end{align*}
where $r_\tau\otimes\lambda_\tau\in {^\tau} A$ for each $\tau\in T_\s[i]$.
For any $\lambda \in k$, $\tau\in T_\s[i]$, \begin{align}\label{eq: A[i] k algebra equiv}
    (1_A\otimes \lambda)\otimes(1_A\otimes 1)\otimes\cdots\otimes(1_A\otimes 1)=(1_A\otimes 1)\otimes\cdots\otimes\underbrace{(1_A\otimes \lambda)}_{\text{in ${^\tau A}$ place}}\otimes\cdots\otimes(1_A\otimes 1)\in A[i],
\end{align}
and hence it is well-defined to consider $A[i]$ to be a $k$\=/algebra via $\lambda\mapsto \theta^\tau(1_A\otimes \lambda)$ for any $\tau\in T_\s[i]$. Notice that this makes the inclusion $\theta^\tau\colon {^\tau}A\rightarrow A[i]$ a morphism of $k$\=/algebras.

\begin{example}\label[example]{ex: tau A and A[i]}
    Consider $A=k[y_1,\dots,y_s]$, where $k$ is a $\s$\=/field. We will describe ${^\tau A}=k[y_1,\dots,y_s]\otimes_k k$ for some $\tau\in T_\s$. Recall that for $\lambda\in k$, $\lambda\otimes 1=1\otimes \tau(\lambda)\in {^\tau}A$, and ${^\tau A}$ is considered a $k$\=/algebra via $\lambda\mapsto 1\otimes\lambda$. Notice that when $\tau=\id$, we have $k$\=/algebra isomorphism 
        ${^{\id}}A\rightarrow k[y_1,\dots,y_s]$, where $f\otimes\lambda\mapsto \lambda f$ for elementary elements in ${^{\id}}A$. This same map would not be well defined on a general ${^{\tau}}A$, as for $\lambda \in k$, $\lambda\otimes 1=1\otimes \tau(\lambda)$ would not have a unique image. Instead, we relabel our polynomial ring $k\bigl[\tau (y_1),\dots,\tau (y_s)\bigr]$, which is a $k$\=/algebra in the usual sense for a polynomial ring, and notice that \begin{align*}
        {^\tau}A\rightarrow k\bigl[\tau (y_1),\dots,\tau (y_s)\bigr],\ f\otimes \lambda\mapsto \lambda\tau(f)\ \text{for elementary elements in ${^{\tau}}A$}
    \end{align*} is a well-defined $k$\=/algebra isomorphism when extended additively to all of ${^\tau A}$. 
    Therefore, for any $\tau\in T_\s$, we put ${^\tau A}= k\bigl[\tau (y_1),\dots,\tau (y_s)\bigr]$. Then for $i\in\mathbb{N}$, $A[i]=k\bigl[\{\tau (y_j)\ |\ \tau\in T_\s[i],1\leq j\leq s\}\bigr]$, and $A[i]$ is considered a $k$\=/algebra in the usual sense for a polynomial ring over $k$.
    
    More generally, if $F\subseteq k[y_1,\dots,y_s]$ and $A=k[y_1,\dots,y_s]/(F)$, for any $\tau\in T_\s$, $i\in\mathbb{N}$, \begin{align*}
        {^\tau}A= k\bigl[\tau (y_1),\dots,\tau (y_s)\bigr]/\bigl(\tau(F)\bigr)\ \text{and}\ A[i]=k\bigl[\{\tau (y_j)\ |\ \tau\in T_\s[i], 1\leq j\leq s\}\bigr]\big/\bigl(\{\tau(F)\ |\ \tau\in T_\s[i]\}\bigr), 
    \end{align*} where $\tau(F)=\{\tau(f)\ |\ f\in F\}$. 
\end{example}

\begin{remark}\label[remark]{rem: tau constructions agree}
Let $\G$ be an algebraic group over a $\s$\=/field $k$, and let $\tau\in T_\s$. Then there exists $s\geq 1$, and $f_1,\dots,f_m\in k[y_1,\dots,y_s]$ such that $\G$ is represented by $A=k[y_1,\dots,y_s]/(f_1,\dots,f_m)$. By \Cref{ex: tau A and A[i]}, ${^\tau}A=k\bigl[\tau (y_1),\dots,\tau (y_s)\bigr]\big/\bigl(\tau(f_1),\dots,\tau(f_m)\bigr)$, and by \Cref{rem: representing alg for base change}, ${^\tau}A$ represents ${^\tau}\G$ (\Cref{def: group base change}). This is a standard fact for base change of algebraic groups \cite[Section 1.6, page 11]{waterhouse2012introduction}.
\end{remark}

\begin{lemma}\label[lemma]{prop: [s]kA ks algebra}
    Given a $k$\=/algebra $A$, the union 
    $[\s]_kA=\bigcup_{i\in \mathbb{N}}A[i]$
is naturally a $k$\=/$\s$\=/algebra.
\end{lemma}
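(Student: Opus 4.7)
The plan is to equip $[\s]_kA = \bigcup_{i\geq 0} A[i]$ with pairwise commuting ring endomorphisms $\s_1,\dots,\s_n$ by constructing, for each $1\leq j\leq n$ and each $i\in\mathbb{N}$, a ring map $\s_j^{(i)}\colon A[i]\rightarrow A[i+1]$; these will be compatible with the natural inclusions $A[i]\hookrightarrow A[i+1]$ and hence assemble into an endomorphism $\s_j$ of $[\s]_kA$. It will then remain to check pairwise commutation and that the structure map $k\rightarrow[\s]_kA$ intertwines the basic endomorphisms of $k$ with the $\s_j$.

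The heart of the argument is the construction of $\s_j^{(i)}$. The intended formula on generators is $\s_j^{(i)}(\theta^\tau(r\otimes\lambda)) = \theta^{\s_j\tau}(r\otimes\s_j(\lambda))$ for $\tau\in T_\s[i]$, $r\in A$, $\lambda\in k$. To make this rigorous I first observe that, for each $\tau\in T_\s[i]$, the rule $r\otimes\lambda\mapsto r\otimes\s_j(\lambda)$ defines a well-defined ring map $\phi^\tau\colon {^\tau}A\rightarrow {^{\s_j\tau}}A$: the balance $\mu r\otimes\lambda = r\otimes\tau(\mu)\lambda$ in ${^\tau}A$ is sent to $r\otimes\s_j(\tau(\mu))\s_j(\lambda) = r\otimes(\s_j\tau)(\mu)\s_j(\lambda) = \mu r\otimes\s_j(\lambda)$ in ${^{\s_j\tau}}A$. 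The same computation shows that $\phi^\tau$ is $\s_j$-semilinear over $k$, in the sense that $\phi^\tau(\mu\cdot x) = \s_j(\mu)\cdot\phi^\tau(x)$. Composing with the $k$-algebra morphism $\theta^{\s_j\tau}\colon {^{\s_j\tau}}A\rightarrow A[i+1]$ gives $\s_j$-semilinear ring maps $\bar\theta^\tau\colon {^\tau}A\rightarrow A[i+1]$. Endowing the target $A[i+1]$ with the twisted $k$-algebra structure obtained by precomposing its original structure map with $\s_j\colon k\rightarrow k$, each $\bar\theta^\tau$ becomes a genuine $k$-algebra morphism, so the universal property of the $k$-algebra tensor product $A[i] = \bigotimes_{\tau\in T_\s[i]}{^\tau}A$ produces a unique ring morphism $\s_j^{(i)}\colon A[i]\rightarrow A[i+1]$ satisfying $\s_j^{(i)}\circ\theta^\tau = \theta^{\s_j\tau}\circ\phi^\tau$ for every $\tau\in T_\s[i]$.

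The remaining verifications are straightforward computations on generators. Compatibility with the inclusion $A[i]\hookrightarrow A[i+1]$ is immediate from the explicit formula, so the $\s_j^{(i)}$ glue into a ring endomorphism $\s_j$ of $[\s]_kA$. Since the monoid $T_\s$ is commutative and the basic endomorphisms of $k$ pairwise commute, evaluating $\s_j\s_{j'}$ and $\s_{j'}\s_j$ on a generator $\theta^\tau(r\otimes\lambda)$ both return $\theta^{\s_j\s_{j'}\tau}(r\otimes\s_j\s_{j'}(\lambda))$, giving commutativity. Finally, because $\theta^\tau(1_A\otimes\mu) = \mu\cdot 1_{A[i]}$ for every $\tau\in T_\s[i]$, the image of a scalar $\lambda\in k$ under $\s_j$ is $\theta^{\s_j}(1_A\otimes\s_j(\lambda)) = \s_j(\lambda)\cdot 1$, which is exactly the statement that the structure map $k\rightarrow[\s]_kA$ is a morphism of $\s$-rings.

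The only real obstacle is that $\s_j$ acts nontrivially on the base field $k$, so the candidate map out of the $k$-algebra tensor product $A[i]$ is not $k$-linear but only $\s_j$-semilinear. The universal property of $\otimes_k$ therefore has to be invoked after re-scalaring the target through $\s_j$; once that twist is made explicit, every subsequent step reduces to a routine check on elementary tensors.
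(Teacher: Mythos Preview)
Your proof is correct and follows essentially the same approach as the paper: define $\s_j$ on elementary tensors by $\theta^\tau(r\otimes\lambda)\mapsto\theta^{\s_j\tau}(r\otimes\s_j(\lambda))$, then verify commutativity and compatibility with the structure map. Your version is in fact more careful than the paper's, which simply asserts the existence of ``the unique ring morphism'' with this effect on generators without addressing the $\s_j$-semilinearity issue you handle explicitly via the twisted target.
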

\begin{proof}
As it is the union of $k$\=/algebras, $\sk A$ is a $k$\=/algebra via 
$k\rightarrow \sk A$, $\lambda\mapsto 1_A\otimes \lambda\in A[0]\subseteq \sk A$ (equivalently, $\lambda\mapsto \theta^\tau(1_A\otimes \lambda)\in A[i]\subseteq \sk A$ for any $i\in\mathbb{N}$, $\tau\in T_\s[i]$, by (\ref{eq: A[i] k algebra equiv})). 
For each $1\leq j \leq n$, define $\s_j\colon\sk A\rightarrow \sk A$ as the unique ring morphism such that for each $i\in\mathbb{N}$ \begin{align}\label{eq: endomorphisms act on sk A}
    \s_j\left(\prod_{\tau\in T_\s[i]}\theta^\tau(r_\tau\otimes\lambda_\tau)\right)
    =\prod_{\tau\in T_\s[i]}\theta^{\s_j\tau}\bigl(r_\tau\otimes\s_j(\lambda_\tau)\bigr)
    \in {^{\s_j}}A[i]\subseteq A[i+1].
\end{align}
for an elementary element in $ A[i]$.
 So applying $\s_j$ both applies $\s_j$ to the field elements and shifts the order of the tensor products, filling in `gaps' with $1_A\otimes 1$.
The fact that the endomorphisms $\s_j\colon \sk A\rightarrow\sk A$ are pairwise commuting follows as each $\s_j\colon  k\rightarrow k$ is pairwise commuting.
That is, $\sk A$ is a $\s$\=/ring. 

It is immediate from the way we have defined each $\s_j\colon\sk A\rightarrow \sk A$ that the $k$\=/algebra structure map for $\sk A$ is a morphism of $\s$\=/rings. Therefore, $[\s]_kA$ is a $k$\=/$\s$\=/algebra as required.
\end{proof}

Later, we will use the fact that $\sk A\otimes_k \sk B=\sk(A\otimes_k B)$ for $k$\=/algebras $A$ and $B$. This is an immediate consequence of the way we have constructed $\sk A$ and $\sk B$.

\begin{example}\label[example]{example: sk A}
For a $\s$\=/field $k$, if $A=k[y_1,\dots,y_s]$, using \Cref{ex: tau A and A[i]}, \begin{align*}
    \sk A=\bigcup_{i\in \mathbb{N}}k\bigl[\{\tau (y_j)\ |\ \tau\in T_\s[i], 1\leq j\leq s\}\bigr]=k\{y_1,\dots,y_s\},
\end{align*}
which is a $k$\=/$\s$\=/algebra in the natural way for the ring of $\s$\=/polynomials as described in \Cref{ex: poly ring k sigma alg}. More generally, if $A=k[y_1,\dots,y_s]/(F)$ for some $F\subseteq k[y_1,\dots,y_s]$, \begin{align*}
    \sk A=\bigcup_{i\in\mathbb{N}}\Bigl(k\bigl[\{\tau (y_j)\ |\ \tau\in T_\s[i], 1\leq j\leq s\}\bigr]\big/\bigl(\{\tau(F)\ |\ \tau\in T_\s[i]\}\bigr)\Bigr)=k\{y_1,\dots,y_s\}/[F].
\end{align*}
\end{example}

As $A=A[0]$, there is a natural inclusion $\iota_A\colon A\hookrightarrow \sk A$ into the union $[\s]_kA=\bigcup_{i\in \mathbb{N}}A[i]$. 
We will now show that there is a universal property for this inclusion.

\begin{lemma}\label[lemma]{lem: universal prop}
    For each $k$\=/algebra $A$ there exists a $k$\=/$\s$\=/algebra $[\s]_kA$ and a morphism of $k$\=/algebras $\iota_A\colon  A\hookrightarrow [\s]_kA$  such that for every $k$\=/$\s$\=/algebra $R$ and every morphism $\psi\colon  A\rightarrow R$ of $k$\=/algebras, there exists a unique morphism $\varphi\colon [\s]_kA\rightarrow R$ of $k$\=/$\s$\=/algebras making the diagram in \Cref{fig: univ prop diag} commute.\begin{figure}[!htbp]
\centering
\begin{tikzpicture}  
\node (x)   {$A$};
  \node (z) at ([xshift=3cm]$(x)$) {$[\s]_kA$};  
   \node (s) at ([yshift=-1.2cm]$(x)!0.5!(z)$) {$R$}; 
  \draw[right hook->] (x)-- (z) node[midway,above, color=black] {$\iota_A$};  
  \draw[line] (x)-- (s) node[midway,left, color=black] {$\psi\ $};
  \draw[dashed, ->] (z)--(s) node[midway,right, color=black] {$\ \varphi$};
\end{tikzpicture} 
\caption{Universal Property for $A\hookrightarrow \sk A$}
\label{fig: univ prop diag}
\end{figure}
\end{lemma}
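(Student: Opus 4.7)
The plan is to define $\iota_A$ as the inclusion $A \cong A[0] \hookrightarrow [\s]_k A$ (using the identification $A \otimes_k k \cong A$, $a \otimes \lambda \mapsto \lambda a$), and to construct $\varphi$ explicitly using the tensor product universal property, level by level on the filtration $A[0] \subseteq A[1] \subseteq A[2] \subseteq \cdots$.

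For the construction, first I would observe that for each $\tau = \generalelemt \in T_\s$, writing $\tau_R = \s_1^{k_1}\cdots\s_n^{k_n}$ for the corresponding endomorphism of $R$, the assignment $a \otimes \lambda \mapsto \tau_R(\psi(a))\lambda$ is $k$-bilinear with respect to the twisted $k$-action (this is a direct check: $a\mu \otimes \lambda$ and $a \otimes \tau(\mu)\lambda$ are both sent to $\tau_R(\psi(a))\tau_R(\mu)\lambda$, using that $\psi$ is a $k$-algebra morphism and that $k \to R$ is a morphism of $\s$-rings). This yields a well-defined $k$-algebra morphism $^\tau A \to R$. Taking the tensor product over $\tau \in T_\s[i]$ gives a $k$-algebra morphism $\varphi_i \colon A[i] \to R$ sending an elementary element
\begin{align*}
    \prod_{\tau \in T_\s[i]} \theta^\tau(r_\tau \otimes \lambda_\tau) \ \longmapsto\ \prod_{\tau \in T_\s[i]} \tau_R(\psi(r_\tau))\lambda_\tau.
\end{align*}
The inclusions $A[i] \hookrightarrow A[i+1]$ are by filling gaps with $1_A \otimes 1$, which get sent to $1_R$, so the $\varphi_i$ are compatible and glue to a $k$-algebra morphism $\varphi \colon [\s]_k A \to R$.

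Next I would verify that $\varphi$ commutes with each $\s_j$. By the description of $\s_j$ on $\sk A$ from \Cref{prop: [s]kA ks algebra}, $\s_j$ applied to $\theta^\tau(r \otimes \lambda)$ is $\theta^{\s_j\tau}(r \otimes \s_j(\lambda))$; applying $\varphi$ gives $(\s_j\tau)_R(\psi(r))\s_j(\lambda) = \s_j(\tau_R(\psi(r)))\s_j(\lambda) = \s_j(\tau_R(\psi(r))\lambda)$, which is $\s_j(\varphi(\theta^\tau(r \otimes \lambda)))$. Since both $\varphi \circ \s_j$ and $\s_j \circ \varphi$ are ring morphisms agreeing on the generating elementary tensors, they are equal. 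Commutativity of the triangle in \Cref{fig: univ prop diag} is clear from the definition restricted to $A[0]$.

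Finally, for uniqueness, I would note that $\sk A$ is $\s$-generated by $\iota_A(A)$: indeed, the elementary element $\prod_\tau \theta^\tau(r_\tau \otimes \lambda_\tau)$ in $A[i]$ is a $k$-linear combination of products of elements of the form $\tau(\iota_A(r))$ for $\tau \in T_\s[i]$ and $r \in A$. Any morphism of $k$-$\s$-algebras $\sk A \to R$ extending $\psi$ is therefore determined on these generators by $\tau \mapsto \tau_R(\psi(r))$, forcing equality with $\varphi$. The only real subtlety is the bookkeeping involved in verifying that $\varphi_i$ is well-defined on the twisted tensor product $^\tau A = A \otimes_k k$; once this is set up cleanly, the remaining verifications are routine.
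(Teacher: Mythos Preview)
Your proposal is correct and takes essentially the same approach as the paper: both arrive at the formula $\varphi\bigl(\theta^\tau(r\otimes\lambda)\bigr)=\lambda\,\tau(\psi(r))$ on elementary tensors and extend to all of $\sk A$. The only difference is one of presentation order: the paper derives this formula by imposing the required conditions (proving uniqueness along the way) and then checks it defines a $k$-$\s$-algebra morphism, whereas you construct $\varphi$ directly via the tensor-product universal property and argue uniqueness separately from $\s$-generation by $\iota_A(A)$.
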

\begin{proof}
We have already defined the $k$\=/$\s$\=/algebra $\sk A$ and the inclusion map $A=A[0]\hookrightarrow \sk A$. Now suppose that $R$ is a $k$\=/$\s$\=/algebra and that $\psi\colon A\rightarrow R$ is a $k$\=/algebra morphism.
    We will determine $\varphi$ subject to the conditions necessary, and as we will not make any choices, this will be the unique morphism of $k$\=/$\s$\=/algebras making the diagram commute. 
    
        Firstly, given $r\in A$, we must have $\varphi(r\otimes 1)=\varphi(\iota_A(r))=\psi(r)$ in order to make the diagram  commute. 
        Notice that given any $i\in\mathbb{N}$, $\tau\in T_\s[i]$, for an elementary element $r\otimes\lambda \in {^\tau A}$, \begin{align*}
            \theta^\tau(r\otimes \lambda)=\lambda\theta^\tau(r\otimes 1) =\lambda\tau(r\otimes 1)\in A[i],
        \end{align*}  where the first equality holds as $\theta^\tau$ is a $k$\=/algebra morphism, and the second is due to the definition (\ref{eq: endomorphisms act on sk A}) of the shift maps on $A[i]$.
    Then, as $\varphi$ must be a morphism of $k$\=/$\s$\=/algebras, for any $i\in\mathbb{N}$, $\tau\in T_\s[i]$,
    \begin{align*}
        \varphi\bigl(\theta^\tau(r\otimes\lambda)\bigr)
    =\varphi\bigl(\lambda\tau(r\otimes 1)\bigr) 
        =\lambda\tau\bigl(\varphi(r\otimes 1)\bigr)
        =\lambda\tau\bigl(\psi(r)\bigr) 
    \end{align*}
for any elementary element $r\otimes\lambda\in {^\tau}A$.
Further, as $\varphi$ must be a ring morphism, for any $i\in\mathbb{N}$ we have
\begin{align*}
    \varphi\left(\prod_{\tau\in T_\s[i]}\theta^\tau(r_\tau\otimes\lambda_\tau)\right)=\prod_{\tau\in T_\s[i]}\varphi\bigl(\theta^\tau(r_\tau\otimes\lambda_\tau)\bigr)=\prod_{\tau\in T_\s[i]} \lambda_{\tau}\tau\bigl(\psi(r_\tau)\bigr)
\end{align*}
for any elementary element of $A[i]$. So we have found how $\varphi$ acts on an elementary element of $A[i]$ and this extends uniquely to $\sk A$ in such a way that $\varphi\colon \sk A\rightarrow R$ is a morphism of $k$\=/$\s$\=/algebras. 
\end{proof}

Given $k$\=/algebras $A$ and $B$ and a morphism $f\colon A\rightarrow B$ of $k$\=/algebras, since $\iota_B\circ f\colon A\rightarrow\sk B$ is a morphism of $k$\=/algebras, we can define the morphism $[\s]_k f\colon\sk A\rightarrow \sk B$ to be the unique morphism of $k$\=/$\s$\=/algebras such that $\iota_B\circ f=\iota_A\circ \sk f$ as per \Cref{lem: universal prop}.
It is straightforward to check that this works in a way that makes $\sk$ a functor from $k$\=/$\operatorname{Alg}$ to $k$\=/$\s$\=/$\operatorname{Alg}$.

\begin{corollary}\label[corollary]{cor: Hom functor stuff}
    For a $k$\=/algebra $A$ and a $k$\=/$\s$\=/algebra $R$, as sets, \begin{align*}
        \operatorname{Hom}_{k\text{\=/}\s\text{\=/}\operatorname{Alg}}(\sk A,R)\cong \operatorname{Hom}_{k\text{\=/}\operatorname{Alg}}(A,R^\#).
    \end{align*}
    That is, the $\sk$ functor from $k$\=/$\operatorname{Alg}$ to $k$\=/$\s$\=/$\operatorname{Alg}$ is left\=/adjoint to the $(\=/)^\#$ functor from $k$\=/$\s$\=/$\operatorname{Alg}$ to $k$\=/$\operatorname{Alg}$. 
\end{corollary}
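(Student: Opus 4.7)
The plan is to exhibit the claimed bijection as a direct reformulation of the universal property established in \Cref{lem: universal prop}, and then observe that it is natural in both arguments, which gives the adjunction.

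First I would define the two maps. In one direction, given a morphism $\varphi\colon [\s]_kA\rightarrow R$ of $k$-$\s$-algebras, send it to $\varphi^\#\circ \iota_A\colon A\rightarrow R^\#$, where $\varphi^\#$ is $\varphi$ viewed as a morphism of the underlying $k$-algebras; this is a morphism of $k$-algebras since $\iota_A$ is a morphism of $k$-algebras and forgetting the $\s$-structure on $\varphi$ preserves this. In the other direction, given a morphism $\psi\colon A\rightarrow R^\#$ of $k$-algebras, \Cref{lem: universal prop} produces a unique morphism $\varphi\colon [\s]_kA\rightarrow R$ of $k$-$\s$-algebras with $\varphi\circ\iota_A = \psi$; send $\psi$ to this $\varphi$.

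Next I would check that these two assignments are mutually inverse. Starting from $\psi\colon A\rightarrow R^\#$, the $\varphi$ produced by the universal property satisfies $\varphi\circ\iota_A=\psi$ by construction, so the round trip recovers $\psi$. Starting from $\varphi\colon[\s]_kA\rightarrow R$, the composite $\varphi\circ\iota_A$ is a morphism $A\rightarrow R^\#$ of $k$-algebras, and $\varphi$ itself is a morphism of $k$-$\s$-algebras lifting it; but the uniqueness clause in \Cref{lem: universal prop} says there is only one such lift, so the $\varphi$ produced from $\varphi\circ\iota_A$ must equal the original $\varphi$. This gives the desired bijection.

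Finally, to upgrade the bijection to an adjunction I would verify naturality in $R$ and $A$. For naturality in $R$, given a morphism $f\colon R\rightarrow R'$ of $k$-$\s$-algebras and $\varphi\colon[\s]_kA\rightarrow R$, one checks $(f\circ\varphi)\circ\iota_A = f^\#\circ(\varphi\circ\iota_A)$, which is immediate. For naturality in $A$, given $g\colon A'\rightarrow A$ of $k$-algebras and $\varphi\colon[\s]_kA\rightarrow R$, one uses the functoriality of $[\s]_k$ already noted after \Cref{lem: universal prop}, namely that $[\s]_kg$ is characterized by $\iota_A\circ g = [\s]_kg\circ\iota_{A'}$, to compute $(\varphi\circ[\s]_kg)\circ\iota_{A'} = \varphi\circ\iota_A\circ g$. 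I expect no real obstacle here: everything follows formally from \Cref{lem: universal prop}, and the only bookkeeping is making sure the $(-)^\#$ forgetful step is applied consistently.
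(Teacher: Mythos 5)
Your proposal is correct and is exactly the argument the paper intends: the corollary is stated as an immediate consequence of the universal property in \Cref{lem: universal prop}, and your write-up simply makes explicit the bijection (compose with $\iota_A$ one way, apply existence-and-uniqueness the other way) together with the routine naturality checks. Nothing is missing.
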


We can now see that this $\sk $ construction on $k$\=/algebras corresponds to the $\sk$ construction on affine schemes, that $\sk k[\X]=k\{\sk\X\}$.

\begin{proposition}\label[proposition]{prop: X affine scheme skX s variety}
    Let $\X$ be an affine scheme of finite type over a $\s$\=/field $k$. Then the functor $\sk \X$ from $k$\=/$\s$\=/$\operatorname{Alg}$ to $\operatorname{Sets}$ defined by $\sk\X(R)=\X(R^\#)$ is a $\s$\=/variety over $k$.
\end{proposition}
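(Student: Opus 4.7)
The plan is to invoke \Cref{prop: prelims varieties and algebras corresp} and show that $\sk\X$ is represented by a finitely $\s$\=/generated $k$\=/$\s$\=/algebra. Since $\X$ is an affine scheme of finite type over $k$, there exists a finitely generated $k$\=/algebra $A$ with $\X\cong\Hom_{k\=/\operatorname{Alg}}(A,-)$. I claim $\sk\X$ is represented by the $k$\=/$\s$\=/algebra $\sk A$ constructed in \Cref{prop: [s]kA ks algebra}.

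To see this, fix a $k$\=/$\s$\=/algebra $R$. By \Cref{cor: Hom functor stuff}, there is a natural bijection
\begin{align*}
    \Hom_{k\=/\s\=/\operatorname{Alg}}(\sk A, R)\cong \Hom_{k\=/\operatorname{Alg}}(A, R^\#)=\X(R^\#)=\sk\X(R).
\end{align*}
The naturality in $R$ follows from the functoriality established after \Cref{lem: universal prop}, so $\sk\X\cong \Hom_{k\=/\s\=/\operatorname{Alg}}(\sk A,-)$ as functors from $k$\=/$\s$\=/$\operatorname{Alg}$ to $\operatorname{Sets}$.

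It remains to show that $\sk A$ is a finitely $\s$\=/generated $k$\=/$\s$\=/algebra. Since $A$ is finitely generated, we can write $A=k[y_1,\dots,y_s]/(F)$ for some $s\geq 1$ and some finite $F\subseteq k[y_1,\dots,y_s]$. By \Cref{example: tau A coord ring}, we have $\sk A=k\{y_1,\dots,y_s\}/[F]$, which is manifestly finitely $\s$\=/generated over $k$. Therefore \Cref{prop: prelims varieties and algebras corresp} applies and $\sk\X$ is a $\s$\=/variety over $k$.

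I do not anticipate a serious obstacle: all the substantive work has been done already in the construction of $\sk A$ and the adjunction of \Cref{cor: Hom functor stuff}. The only minor point to be careful about is that the representing algebra of $\X$ is only determined up to isomorphism, but since $\sk$ is a functor, any two choices yield isomorphic $\sk A$'s, so the claim is independent of the choice.
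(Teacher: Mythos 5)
Your proposal is correct and follows essentially the same route as the paper: both represent $\sk\X$ by $\sk A$ via the adjunction of \Cref{cor: Hom functor stuff} and then conclude from \Cref{prop: prelims varieties and algebras corresp}. The only difference is that you spell out the finite $\s$\=/generation of $\sk A$ via \Cref{example: tau A coord ring} (note that finiteness of $F$ is not even needed there, since $k\{y_1,\dots,y_s\}/[F]$ is $\s$\=/generated by the images of $y_1,\dots,y_s$ regardless), whereas the paper simply asserts it.
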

\begin{proof}
    We know that $\X$ is represented by some finitely generated $k$-algebra $A$, and hence as sets,   \begin{align*}
        \operatorname{Hom}_{k\text{\=/}\s\text{\=/}\operatorname{Alg}}(\sk A,R)\cong \operatorname{Hom}_{k\text{\=/}\operatorname{Alg}}(A,R^\#)\cong \X(R^\#)=\sk \X(R)
    \end{align*} for any $k$\=/$\s$\=/algebra $R$ by \Cref{cor: Hom functor stuff}. Therefore the functor $\sk \X$ is represented by $\sk A$. Since $A$ is finitely generated as a $k$\=/algebra, $\sk A$ is finitely $\s$\=/generated as a $k$\=/$\s$\=/algebra. This implies that $\sk \X$ is a functor from $k$-$\s$\=/$\operatorname{Alg}$ to $\operatorname{Sets}$ which is represented by a finitely $\s$\=/generated $k$\=/$\s$\=/algebra, hence is a $\s$\=/variety over $k$ as required.
\end{proof}

Now we will often write $\X$ instead of $[\s]_k\X$. We will write $k\{\X\}$ instead of $k\{[\s]_k\X\}$ and by a $\s$\=/closed $\s$\=/subvariety of $\X$, we mean a $\s$\=/closed $\s$\=/subvariety of $[\s]_k\X$.

\section{Difference Algebraic Groups}\label{sec: diff alg goups}

\subsection{Difference Algebraic Groups and Difference Hopf Algebras}

We will now introduce the difference analogue to algebraic groups and consider their dual equivalence to difference Hopf algebras. We will see the $\s$\=/algebraic group and $\s$\=/Hopf algebra analogues of many of the results for $\s$\=/varieties and $\s$\=/algebras from the previous section.
The concepts and results in this section are adapted from \cite[Section 2]{wibmer2022finiteness} to the partial case. 

\begin{definition}
    A \textbf{difference algebraic group} ($\sigma$\=/algebraic group) over $k$ is a group object in the category of $\s$\=/varieties over $k$. 
\end{definition}

Equivalently, a $\s$\=/algebraic group $G$ is a functor from $k$\=/$\s$\=/$\operatorname{Alg}$ to $\operatorname{Groups}$ that is a $\s$\=/variety over $k$ when considered as a functor from $k$\=/$\s$\=/$\operatorname{Alg}$ to $\operatorname{Sets}$, or a functor from $k$\=/$\s$\=/$\operatorname{Alg}$ to $\operatorname{Groups}$ that is represented by a finitely $\s$\=/generated $k$\=/$\s$\=/algebra.

\begin{example}
        Consider the additive $\s$-algebraic group $G$ over a $\s$\=/field $k$, where $\s=\{\s_1,\s_2\}$, such that \begin{align*}
            G(R)=\{g\in R\ |\  \s_1^2\s_2(g)+2\s_2^3(g)=0\}.
        \end{align*}
        for any $k$-$\s$-algebra $R$.
        Then $\I(G)=[\s_1^2\s_2(y)+2\s_2^3(y)]\subseteq k\{y\}$, and $k\{G\}=k\{y\}/\I(G)$. In fact, any set of homogeneous linear $\s$\=/polynomials defines an additive $\s$-algebraic group. In particular, the example (\ref{ex: intro example symbolic dynam}) given in the introduction is an example of an additive $\s$\=/algebraic group which is an analogue to an example in algebraic dynamics.
    \end{example}

    \begin{example}\label[example]{ex:introducing example}
        Consider the multiplicative $\s$-algebraic group $G$ over a $\s$\=/field $k$, where $\s=\{\s_1,\s_2\}$, such that \begin{align*}
            G(R)=\{g\in R^\times\ |\ \s_1^2\s_2(g)\s_2^4(g)=1\}.
        \end{align*}
        for any $k$-$\s$-algebra $R$. Then $\I(G)=[\s_1^2\s_2(y)\s_2^4(y)-1]\subseteq k\{y,y^{-1}\}$, and $k\{G\}=k\{y,y^{-1}\}/\I(G)$. In fact, any set of $\s$\=/polynomials of the form $f-1$ where $f$ is a monomial defines a multiplicative $\s$-algebraic group. 
    \end{example}

We will see (\Cref{prop: alg grp is diff alg gr}) that algebraic groups provide a wealth of examples of $\s$\=/algebraic groups. Intuitively, as an algebraic group is defined by some given polynomials, and these polynomials are implicitly $\s$\=/polynomials, we can consider the $\s$\=/algebraic group defined by them. This is equivalent to the $\sk$ construction on affine schemes giving a $\s$\=/variety (\Cref{prop: X affine scheme skX s variety}).

    A \textbf{morphism} of $\sigma$\=/algebraic groups $\phi\colon G\rightarrow H$ is a morphism of $\sigma$\=/varieties that respects the group structure.

 \begin{definition}
     Let $G$ be a $\sigma$\=/algebraic group over $k$. A \textbf{$\sigma$\=/closed subgroup} $H$ of $G$ is a $\sigma$\=/closed $\sigma$\=/subvariety $H$ of $G$ such that for every $k$\=/$\s$\=/algebra $R$, $H(R)$ is a subgroup of $G(R)$. Then $H$ itself is a $\sigma$\=/algebraic group over $k$. We write $H\leq G$ to express that $H$ is a $\sigma$\=/closed subgroup of $G$.
 \end{definition}

 Recall that the coordinate ring of an algebraic group over $k$ has the structure of a $k$\=/Hopf algebra. See \cite[Chapter IV]{sweedler1969hopf} for an introduction to Hopf algebras, and \cite[Section 1.4]{waterhouse2012introduction} for a description of their duality to algebraic groups. We now introduce the difference analogue to Hopf algebras, which will be the representing algebras for difference algebraic groups.

\begin{definition}
    A \textbf{$k$\=/$\sigma$\=/Hopf algebra} is a $k$\=/Hopf algebra $A$ that is also a $k$\=/$\sigma$\=/algebra such that the Hopf algebra structure maps $\Delta\colon A\rightarrow A\otimes_k A$, $S\colon A\rightarrow A$ and $\epsilon\colon  A\rightarrow k$ are morphisms of $k$\=/$\sigma$\=/algebras.
\end{definition}

Where necessary for clarity, we use notation $\Delta_A\colon A\rightarrow A\otimes_k A$, $S_A\colon A\rightarrow A$ and $\epsilon_A\colon  A\rightarrow k$ to denote the Hopf structure maps of a $k$\=/$\s$\=/Hopf algebra $A$.

For a $k$\=/$\s$\=/Hopf algebra $A$, a \textbf{$k$\=/$\s$\=/Hopf subalgebra} of $A$ is a $k$\=/$\s$\=/subalgebra of $A$ that is also a $k$\=/Hopf subalgebra of $A$, and a \textbf{$\s$\=/Hopf ideal} of $A$ is a $\s$\=/ideal of $A$ that is also a Hopf ideal of $A$. A \textbf{morphism of $k$\=/$\s$\=/Hopf algebras} is a morphism of $k$\=/$\s$\=/algebras that is also a morphism of $k$\=/Hopf algebras.

\begin{proposition}\label[proposition]{prop: one to one corresp groups and Hopf algs}
    A functor from $k$\=/$\s$\=/$\operatorname{Alg}$ to $\operatorname{Sets}$ is a $\s$\=/algebraic group over $k$ if and only if it is represented by a finitely $\s$\=/generated $k$\=/$\s$\=/Hopf algebra.
\end{proposition}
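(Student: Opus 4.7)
The plan is to leverage the dual equivalence of categories from \Cref{the: equiv of cate}, together with the fact (stated immediately after that theorem) that the product of $\s$\=/varieties is represented by the tensor product of representing algebras. Under a contravariant equivalence, group objects on one side correspond to cogroup objects on the other, and a cogroup object in the category of finitely $\s$\=/generated $k$\=/$\s$\=/algebras is precisely a finitely $\s$\=/generated $k$\=/$\s$\=/Hopf algebra.

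First I would note that a $\s$\=/algebraic group $G$ is, in particular, a $\s$\=/variety over $k$, and hence by \Cref{prop: prelims varieties and algebras corresp} is represented by a finitely $\s$\=/generated $k$\=/$\s$\=/algebra $A=k\{G\}$. The group object structure on $G$ consists of morphisms of $\s$\=/varieties $\mult\colon G\times G\to G$, $\unit\colon \ast\to G$ (where $\ast$ denotes the terminal $\s$\=/variety, represented by $k$ itself, since $\Hom_{k\=/\s\=/\operatorname{alg}}(k,R)$ is a singleton for every $k$\=/$\s$\=/algebra $R$), and $\inv\colon G\to G$, satisfying the usual associativity, unit and inverse axioms in commutative-diagram form. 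By \Cref{prop: morphisms corresp dual} each of these has a dual morphism of $k$\=/$\s$\=/algebras, and using that $G\times G$ is represented by $A\otimes_k A$ we obtain a comultiplication $\Delta=\mult^*\colon A\to A\otimes_k A$, a counit $\epsilon=\unit^*\colon A\to k$ and an antipode $S=\inv^*\colon A\to A$, each a morphism of $k$\=/$\s$\=/algebras.

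The group axioms now translate directly into the Hopf axioms: associativity of $\mult$ dualises to coassociativity of $\Delta$, the unit diagram dualises to the counit identity, and the inverse diagram dualises to the antipode identity. Each of these translations is a purely formal diagram chase via \Cref{prop: morphisms corresp dual}, using that composition reverses under the equivalence and that the dual of $\mult\times\id_G$ is $\Delta\otimes\id_A$ (and analogously for the other products of morphisms appearing in the group axioms). Hence $A$ is a finitely $\s$\=/generated $k$\=/$\s$\=/Hopf algebra.

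Conversely, given a finitely $\s$\=/generated $k$\=/$\s$\=/Hopf algebra $A$, the functor $G=\Hom_{k\=/\s\=/\operatorname{alg}}(A,-)$ is a $\s$\=/variety over $k$ by \Cref{prop: prelims varieties and algebras corresp}. The Hopf structure maps $\Delta$, $\epsilon$ and $S$ on $A$ are morphisms of $k$\=/$\s$\=/algebras, so they dualise via \Cref{prop: morphisms corresp dual} to morphisms of $\s$\=/varieties $\mult$, $\unit$ and $\inv$ that satisfy the group axioms and endow $G(R)$ with a natural group structure for every $k$\=/$\s$\=/algebra $R$. The only point that needs care---more bookkeeping than obstacle---is the identification of the terminal $\s$\=/variety with $\Hom(k,-)$ and of binary products of $\s$\=/varieties with tensor products of their representing $k$\=/$\s$\=/algebras, so that the dualised maps land in the correct places to assemble a genuine Hopf algebra structure on $A$.
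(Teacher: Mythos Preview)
Your proposal is correct and takes essentially the same approach as the paper: both argue that the group-object structure maps $\mult$, $\unit$, $\inv$ on a $\s$\=/variety $G$ correspond under the duality of \Cref{prop: morphisms corresp dual} (and the identification $k\{G\times G\}=k\{G\}\otimes_k k\{G\}$) to $k$\=/$\s$\=/algebra maps $\Delta$, $\epsilon$, $S$ satisfying the Hopf axioms, and conversely. Your write-up simply spells out in more detail what the paper summarises in a single sentence.
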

\begin{proof}
    This is the difference analogue to \cite[Section 1.4, page 7]{waterhouse2012introduction}. 
    The correspondence between $\s$\=/varieties and finitely $\s$\=/generated $k$\=/$\s$\=/algebras described in \Cref{prop: prelims varieties and algebras corresp} restricts to a correspondence between $\s$\=/algebraic groups and finitely $\s$\=/generated $k$\=/$\s$\=/Hopf algebras. This is due to the fact that given a $\s$\=/variety $G$ over $k$ with representing $k$\=/$\s$\=/algebra $A$, there exist morphisms $\mult\colon G\times G\rightarrow G$ and $\unit\colon e\rightarrow G$ and $\inv\colon G\rightarrow G$ of functors which commute in such a way that make each $G(R)$ a group if and only if there exist dual morphisms (as described in \Cref{prop: morphisms corresp dual}) $\Delta\colon A\rightarrow A\otimes_k A$, $\epsilon\colon A\rightarrow k$ and $S \colon A\rightarrow A$ of $k$\=/$\s$\=/algebras which commute in such a way that makes $A$ a $k$\=/$\s$\=/Hopf algebra.
\end{proof}

\begin{proposition}\label[proposition]{prop: one to one corresp subgroups and Hopf ideals}
    Let $G$ be a $\s$\=/algebraic group over $k$, represented by a $k$\=/$\s$\=/Hopf algebra $A$. 
    Given a $\s$\=/Hopf ideal $I\subseteq A$, there exists a $\s$-closed subgroup $H$ of $G$ that is represented by $A/I$, and for any $\s$-closed subgroup $H$ of $G$, there exists some $\s$\=/Hopf ideal $I\subseteq A$ such that $H$ is represented by $A/I$.
\end{proposition}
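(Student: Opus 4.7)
The plan is to bootstrap from \Cref{lem: subvarieties and ideals}, which already gives the $\s$\=/variety/$\s$\=/ideal correspondence at the level of subfunctors, and combine it with \Cref{prop: one to one corresp groups and Hopf algs}, which characterises $\s$\=/algebraic groups as exactly those $\s$\=/varieties whose representing $k$\=/$\s$\=/algebra carries a compatible Hopf structure. The only new content is checking that ``Hopf ideal'' on the algebra side corresponds to ``subfunctor is a subgroup'' on the geometric side, which is exactly parallel to the classical non-difference statement.

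First, suppose $I\subseteq A$ is a $\s$\=/Hopf ideal. By \Cref{lem: subvarieties and ideals} there is a $\s$\=/closed $\s$\=/subvariety $H\subseteq G$ with representing algebra $A/I$, and $A/I$ is a $k$\=/$\s$\=/algebra since $I$ is a $\s$\=/ideal. Because $I$ is a Hopf ideal, the structure maps $\Delta\colon A\rightarrow A\otimes_kA$, $\epsilon\colon A\rightarrow k$ and $S\colon A\rightarrow A$ descend to $\Delta\colon A/I\rightarrow A/I\otimes_k A/I$, $\epsilon\colon A/I\rightarrow k$ and $S\colon A/I\rightarrow A/I$ which satisfy the Hopf algebra axioms, and since $I$ is also a $\s$\=/ideal, these descended maps remain morphisms of $k$\=/$\s$\=/algebras. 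Hence $A/I$ is a finitely $\s$\=/generated $k$\=/$\s$\=/Hopf algebra, and by \Cref{prop: one to one corresp groups and Hopf algs} the functor $H$ factors through $\operatorname{Groups}$. Chasing definitions, the group operations on $H(R)$ coincide with the restriction of those on $G(R)$ (they are dual to the restrictions of $\Delta$, $\epsilon$, $S$), so $H(R)\leq G(R)$ for every $k$\=/$\s$\=/algebra $R$, and $H$ is a $\s$\=/closed subgroup of $G$.

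Conversely, let $H\leq G$ be a $\s$\=/closed subgroup. By \Cref{lem: subvarieties and ideals} there is a $\s$\=/ideal $I\subseteq A$ such that $H$ is represented by $A/I$. Since $H$ is itself a $\s$\=/algebraic group, \Cref{prop: one to one corresp groups and Hopf algs} equips $A/I$ with the structure of a finitely $\s$\=/generated $k$\=/$\s$\=/Hopf algebra. The inclusion $H\hookrightarrow G$ is a morphism of $\s$\=/algebraic groups, and by \Cref{prop: morphisms corresp dual} its dual is the canonical projection $\pi\colon A\twoheadrightarrow A/I$, which must therefore be a morphism of $k$\=/$\s$\=/Hopf algebras. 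Hence $\Delta$, $\epsilon$ and $S$ on $A$ and on $A/I$ are intertwined by $\pi$, and it follows from a standard diagram chase that $\ker(\pi)=I$ satisfies $\Delta(I)\subseteq I\otimes_k A+A\otimes_k I$, $\epsilon(I)=0$ and $S(I)\subseteq I$, i.e. $I$ is a Hopf ideal; it is already a $\s$\=/ideal by construction, so $I$ is a $\s$\=/Hopf ideal.

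The argument is essentially bookkeeping: the genuinely content-laden steps (the $\s$\=/variety/$\s$\=/ideal correspondence and the $\s$\=/algebraic group/$\s$\=/Hopf algebra correspondence) have already been established, and the main point to be careful about is that the Hopf structure one obtains on $A/I$ in the two directions of the proof genuinely agrees with the quotient Hopf structure inherited from $A$, so that passing back and forth is mutually inverse. There is no serious obstacle beyond verifying this compatibility, which reduces to the universal property of the quotient and the fact that $\pi$ is both a $k$\=/$\s$\=/algebra morphism and (when $I$ is a Hopf ideal) a Hopf algebra morphism.
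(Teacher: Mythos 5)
Your proof is correct and follows essentially the same route as the paper: combine \Cref{lem: subvarieties and ideals} with \Cref{prop: one to one corresp groups and Hopf algs} and the fact that $A/I$ inherits a $k$\=/$\s$\=/Hopf algebra structure exactly when $I$ is a $\s$\=/Hopf ideal (the paper simply cites Sweedler for this last point, where you carry out the diagram chase explicitly). No gaps.
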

\begin{proof}
    This is a combination of \Cref{lem: subvarieties and ideals,prop: one to one corresp groups and Hopf algs} and the fact that given a $k$\=/$\s$\=/Hopf algebra $A$ and $\s$\=/ideal $I\subseteq A$, the quotient $k$\=/$\s$\=/algebra $A/I$ is a $k$\=/$\s$\=/Hopf algebra if and only if $I$ is a $\s$\=/Hopf ideal of $A$. This fact follows directly from the analogous fact for $k$\=/Hopf algebras and Hopf ideals \cite[Section 4.3, page 87]{sweedler1969hopf}.
\end{proof}

\begin{corollary}\label[corollary]{cor: morphisms groups and hopf dual}
    Let $G$ and $H$ be $\s$\=/algebraic groups over $k$ represented by finitely $\s$\=/generated $k$\=/$\s$\=/Hopf algebras $A$ and $B$ respectively. A morphism $\phi\colon G\rightarrow H$ of $\s$\=/varieties is a morphism of $\s$\=/algebraic groups if and only if the dual morphism $\phi^*\colon B\rightarrow A$ of $k$\=/$\s$\=/algebras is a morphism of $k$\=/$\s$\=/Hopf algebras.
\end{corollary}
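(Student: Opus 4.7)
The plan is to invoke the dual equivalence of categories established in \Cref{the: equiv of cate} together with \Cref{prop: one to one corresp groups and Hopf algs}, and simply translate each of the three group-axiom diagrams for $\phi$ into the corresponding Hopf-axiom diagrams for $\phi^*$. By definition, $\phi\colon G\to H$ is a morphism of $\s$\=/algebraic groups precisely when it intertwines the three structure morphisms, that is, when
\begin{align*}
    \phi\circ \mult_G &= \mult_H\circ(\phi\times\phi),\\
    \phi\circ \unit_G &= \unit_H,\\
    \phi\circ \inv_G &= \inv_H\circ \phi.
\end{align*}

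First I would recall, from \Cref{prop: one to one corresp groups and Hopf algs} and its proof, that the $\s$\=/Hopf algebra structure maps $\Delta_A,\epsilon_A,S_A$ (and likewise for $B$) are by definition the duals of $\mult_G,\unit_G,\inv_G$ (resp.\ for $H$) under the one-to-one correspondence of \Cref{prop: morphisms corresp dual}. Next I would use two functorial properties of the duality already recorded in the paper: the identity $(\psi\circ\varphi)^* = \varphi^*\circ\psi^*$ for composable morphisms of $\s$\=/varieties, and the fact, stated just before \Cref{lem: subvarieties and ideals}, that the product of $\s$\=/varieties is represented by the tensor product of their coordinate rings, so that the dual of $\phi\times\phi\colon G\times G\to H\times H$ is $\phi^*\otimes\phi^*\colon B\otimes_k B\to A\otimes_k A$.

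Then I would dualise each of the three equalities above. For instance, applying the contravariant $k\{-\}$ functor to the first equation and substituting the identifications from the previous paragraph yields
\begin{align*}
    \Delta_A\circ \phi^* = (\phi^*\otimes\phi^*)\circ \Delta_B,
\end{align*}
which is exactly the comultiplication compatibility required of a morphism of $k$\=/$\s$\=/Hopf algebras; the equations for $\unit$ and $\inv$ dualise analogously to the compatibility with $\epsilon$ and $S$. Conversely, since the dual correspondence of \Cref{prop: morphisms corresp dual} is a bijection, each Hopf-algebra compatibility dualises back to the corresponding group-axiom diagram for $\phi$.

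There is no real obstacle here: the statement is essentially a formal bookkeeping consequence of the anti-equivalence of categories in \Cref{the: equiv of cate}, combined with the observation that the Hopf structure maps on $A$ and $B$ were defined to be the duals of the group structure morphisms on $G$ and $H$. The only mild care needed is to make sure that when dualising diagrams involving $G\times G$, one uses $k\{G\times G\}\cong A\otimes_k A$, so that $\Delta_A$ really appears as the dual of $\mult_G$ in the tensor-product form; this is immediate from the representability of products recorded in \Cref{sec: diff varieties}.
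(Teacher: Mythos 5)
Your argument is correct and is essentially the paper's own proof, which likewise observes that the group-axiom diagrams for $\phi$ dualise precisely to the Hopf-compatibility diagrams for $\phi^*$ via the correspondence of \Cref{prop: morphisms corresp dual}; you have merely written out the bookkeeping (composition reversal and $k\{G\times G\}\cong A\otimes_k A$) that the paper leaves implicit.
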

\begin{proof}
This follows as the properties required for a map $\phi\colon G\rightarrow H$ to be a morphism of $\s$\=/algebraic groups are precisely the dual properties to those required for the dual map $\phi^*\colon B\rightarrow A$ to be a morphism of $k$\=/$\s$\=/Hopf algebras.
\end{proof}

We see that the dual equivalence of the category of $\s$\=/varieties over $k$ and the category of finitely $\s$\=/generated $k$\=/$\s$\=/algebras as described in \Cref{the: equiv of cate} restricts to a dual equivalence of the category of $\s$\=/algebraic groups over $k$ and the category of finitely $\s$\=/generated $k$\=/$\s$\=/Hopf algebras.

     Let $\phi\colon G\rightarrow H$ be a morphism of $\s$\=/algebraic groups over $k$, and recall from \Cref{lem: image of morphism} that the image $\phi(G)$ of $\phi$ is the $\s$\=/closed $\s$\=/subvariety of $H$ defined by $\ker(\phi^*)$, where $\phi^*$ is the dual map to $\phi$. Since $\phi$ is a morphism of $\s$\=/algebraic groups over $k$, $\phi^*$ is a morphism of $k$\=/$\s$\=/Hopf algebras and hence $\ker(\phi^*)$ is a $\s$\=/Hopf ideal of $k\{H\}$. Then by \Cref{prop: one to one corresp subgroups and Hopf ideals}, $\phi(G)$ is a $\s$\=/closed subgroup of $H$.

\begin{lemma}\label[lemma]{lem: sk A hopf algebra}
    If $A$ is a $k$\=/Hopf algebra, then $\sk A$ is a $k$\=/$\s$\=/Hopf algebra.
\end{lemma}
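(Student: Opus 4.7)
The approach is to leverage the fact that $\sk$ is a functor from $k$\=/$\operatorname{Alg}$ to $k$\=/$\s$\=/$\operatorname{Alg}$ that respects the structure needed to define a Hopf algebra. Two key compatibilities are already available: the identification $\sk(A\otimes_k B)=\sk A\otimes_k \sk B$ noted just after \Cref{prop: [s]kA ks algebra}, and the equality $\sk k=k$, which follows directly from the definition since $^\tau k=k\otimes_k k=k$ for every $\tau\in T_\s$, giving $k[i]=k$ for every $i\in\mathbb{N}$.

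Given these, I would apply the functor $\sk$ to the Hopf algebra structure maps of $A$, namely the comultiplication $\Delta\colon A\to A\otimes_k A$, the counit $\epsilon\colon A\to k$, and the antipode $S\colon A\to A$. This yields morphisms of $k$\=/$\s$\=/algebras
\[
\sk\Delta\colon \sk A\to \sk A\otimes_k \sk A,\quad \sk\epsilon\colon \sk A\to k,\quad \sk S\colon \sk A\to \sk A,
\]
which I propose as the $k$\=/$\s$\=/Hopf algebra structure on $\sk A$.

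To verify the Hopf algebra axioms, I would observe that coassociativity, counitality, and the antipode identity each assert equality of two compositions of $k$\=/algebra morphisms built from $\Delta$, $\epsilon$, $S$, the multiplication, and the unit of $A$. Applying the functor $\sk$ to such an equality, and using the two identifications above to interpret $\sk$ on iterated tensor products and on $k$, yields the corresponding equality on $\sk A$. This shows that $\sk A$ is a $k$\=/Hopf algebra, and since each of its structure maps is a morphism of $k$\=/$\s$\=/algebras by construction, $\sk A$ is a $k$\=/$\s$\=/Hopf algebra as required.

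The main obstacle is purely bookkeeping: checking that the canonical isomorphisms $\sk(A\otimes_k A\otimes_k A)\cong \sk A\otimes_k \sk A\otimes_k \sk A$ (needed for triple-tensor formulations of coassociativity and the antipode axiom) and the identification $\sk k=k$ are compatible with the functorial application of $\sk$ to the Hopf axioms of $A$. These are routine, so the whole proof reduces to a short invocation of functoriality together with the stated compatibilities; alternatively, one may construct the three structure maps directly via the universal property of \Cref{lem: universal prop} and use its uniqueness clause to transport each axiom from $A$ to $\sk A$.
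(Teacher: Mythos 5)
Your argument is correct, and it produces the same structure maps as the paper but verifies the Hopf axioms by a different mechanism. The paper's proof first observes that $\sk A=\bigcup_i A[i]$ is a union of tensor products of (base\-/changed) $k$\=/Hopf algebras, hence a $k$\=/Hopf algebra, and then notes that the resulting structure maps are precisely the ones forced by the universal property of \Cref{lem: universal prop}, so they are automatically morphisms of $k$\=/$\s$\=/algebras. You instead exploit functoriality: since $\sk$ is a left adjoint (\Cref{cor: Hom functor stuff}) it preserves the coproduct $\otimes_k$ and the initial object $k$, so applying $\sk$ to $\Delta$, $\epsilon$, $S$ and transporting the Hopf identities along the natural identifications $\sk(A\otimes_k B)\cong \sk A\otimes_k\sk B$ and $\sk k\cong k$ (both of which you justify correctly) gives the $k$\=/Hopf structure and its compatibility with $\s$ in one stroke. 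The two routes construct the same maps, because both are characterised by the uniqueness clause of \Cref{lem: universal prop} --- indeed your closing ``alternative'' is literally the paper's second sentence. What your version buys is that you never have to argue separately that the ``union of tensor products of Hopf algebras'' structure agrees with the difference structure; what the paper's version buys is a concrete description of the comultiplication on each $A[i]$ as a tensor product of comultiplications. The bookkeeping you defer (naturality of the coproduct isomorphisms on triple tensors) is genuinely routine, so the proof is complete.
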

\begin{proof}
Let $A$ be a $k$\=/algebra.
We know from \Cref{prop: [s]kA ks algebra} that $\sk A$ is a $k$\=/$\s$\=/algebra.
    Since $\sk A$ is a union of tensor products of $k$\=/Hopf algebras, it is in fact a $k$\=/Hopf algebra. The Hopf algebra structure maps on $\sk A$ are the maps uniquely determined by \Cref{lem: universal prop} (the universal property) to commute with the Hopf structure maps on $A$, and hence are morphisms of $k$\=/$\s$\=/algebras.
    That is, the Hopf and difference structures on $\sk A$ are compatible, hence $\sk A$ is a $k$\=/$\s$\=/Hopf algebra as required.
\end{proof}

\begin{lemma}\label[lemma]{lem: univ hopf prop}
    Let $A$ be a $k$\=/Hopf algebra, $B$ a $k$\=/$\sigma$\=/Hopf algebra, and $\psi\colon  A\rightarrow B$ a morphism of $k$\=/Hopf algebras. Then the morphism $\varphi\colon  [\sigma]_kA\rightarrow B$ of $k$\=/$\sigma$\=/algebras induced by the universal property (\Cref{lem: universal prop}) is a morphism of $k$\=/$\s$\=/Hopf algebras.
\end{lemma}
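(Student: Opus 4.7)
The plan is to verify that $\varphi$ commutes with each of the three Hopf structure maps (comultiplication $\Delta$, counit $\epsilon$, and antipode $S$) by invoking the uniqueness clause of the universal property (\Cref{lem: universal prop}) three times. The crucial observation is that the Hopf structure on $[\sigma]_k A$ established in \Cref{lem: sk A hopf algebra} is itself pinned down by the universal property: for example, $\Delta_{[\sigma]_k A} \colon [\sigma]_k A \rightarrow [\sigma]_k A \otimes_k [\sigma]_k A$ is the unique morphism of $k$-$\s$-algebras such that $\Delta_{[\sigma]_k A} \circ \iota_A = (\iota_A \otimes \iota_A) \circ \Delta_A$, using that $[\sigma]_k A \otimes_k [\sigma]_k A \cong [\sigma]_k(A\otimes_k A)$.

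For comultiplication, I would argue as follows. Both $(\varphi\otimes\varphi)\circ \Delta_{[\sigma]_k A}$ and $\Delta_B\circ \varphi$ are morphisms of $k$-$\s$-algebras from $[\s]_k A$ to $B\otimes_k B$ (the latter being a $k$-$\s$-algebra as recalled in the preliminaries, and $\varphi\otimes\varphi$ being a $k$-$\s$-algebra morphism because $\varphi$ is). By the universal property, to show they coincide it suffices to check that they agree after precomposing with $\iota_A$. On the one hand,
\begin{equation*}
(\varphi\otimes\varphi)\circ \Delta_{[\sigma]_k A}\circ \iota_A = (\varphi\otimes\varphi)\circ(\iota_A\otimes\iota_A)\circ\Delta_A = (\psi\otimes\psi)\circ\Delta_A,
\end{equation*}
using that $\varphi\circ\iota_A=\psi$. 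On the other hand,
\begin{equation*}
\Delta_B\circ\varphi\circ\iota_A=\Delta_B\circ\psi=(\psi\otimes\psi)\circ\Delta_A,
\end{equation*}
since $\psi$ is a morphism of $k$-Hopf algebras. Thus the two agree on $A$, and uniqueness in \Cref{lem: universal prop} forces $(\varphi\otimes\varphi)\circ\Delta_{[\sigma]_k A}=\Delta_B\circ\varphi$.

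The counit and antipode cases are entirely analogous. For the counit, both $\epsilon_B\circ\varphi$ and $\epsilon_{[\sigma]_k A}$ (viewed as $k$-$\s$-algebra morphisms into $k$, which is a $k$-$\s$-algebra via $\s$) restrict to $\epsilon_A$ on $A$ via $\iota_A$, so they coincide by uniqueness. For the antipode, $\varphi\circ S_{[\sigma]_k A}$ and $S_B\circ\varphi$ are both $k$-$\s$-algebra morphisms $[\s]_k A\rightarrow B$ whose precomposition with $\iota_A$ is $\psi\circ S_A=S_B\circ \psi$, so uniqueness gives the desired equality. There is no real obstacle here — the entire argument is a bookkeeping exercise in the universal property — the only minor subtlety is to ensure in each case that the target object ($B\otimes_k B$, $k$, and $B$) has been equipped with the correct $k$-$\s$-algebra structure so that \Cref{lem: universal prop} applies, which we have already established.
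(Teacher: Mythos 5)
Your proposal is correct and follows essentially the same route as the paper: both proofs identify $\Delta_B\circ\varphi$ and $(\varphi\otimes\varphi)\circ\Delta_{[\sigma]_kA}$ as $k$\=/$\sigma$\=/algebra morphisms that agree with $\Delta_B\circ\psi$ after precomposition with $\iota_A$, and conclude by the uniqueness clause of \Cref{lem: universal prop}, treating the counit (and antipode) analogously. You merely spell out the intermediate computation $(\psi\otimes\psi)\circ\Delta_A=\Delta_B\circ\psi$ a little more explicitly than the paper does.
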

\begin{proof}
This is the partial difference algebra analogue to Lemma 2.15 in \cite[Section 2, page 524]{waterhouse2012introduction}. For example, to see that $\varphi$ commutes with the comultiplications $\Delta_B$ of $B$ and $\Delta_{\sk A}$ of $\sk A$, notice that both $\Delta_B\circ \varphi$ and $(\varphi\otimes\varphi)\circ\Delta_{\sk A}$ are morphisms of $k$\=/$\s$\=/algebras making the diagram \begin{align*}
    \begin{tikzpicture}  
\node (x)   {$A$};
  \node (z) at ([xshift=3cm]$(x)$) {$[\sigma]_kA$};  
   \node (s) at ([yshift=-1cm]$(x)!0.5!(z)$) {$B\otimes B$}; 
  \draw[right hook->] (x)-- (z) node[midway,above, color=black] {$\iota_A$};  
  \draw[line] (x)-- (s) node[pos=0.75, left, color=black] {$\Delta_B\circ\psi\ \ $};
  \draw[dashed, ->] (z)--(s) node[midway,right, color=black] {};
\end{tikzpicture} 
\end{align*}
commute, and hence by uniqueness of the universal property, are equal. The proof that $\varphi$ commutes with the counit follows a similar method.
\end{proof}

\begin{proposition}\label[proposition]{prop: alg grp is diff alg gr}
    Let $\G$ be an algebraic group over $k$. The functor $\sk \G$ from $k$\=/$\s$\=/$\operatorname{Alg}$ to $\operatorname{Groups}$, where for a $k$\=/$\s$\=/algebra $R$, $\sk \G(R)=\G(R^\#)$, is a $\s$\=/algebraic group over $k$.
\end{proposition}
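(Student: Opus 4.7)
The plan is to combine the results already established so that $\sk\G$ is exhibited as a functor to Groups represented by a finitely $\s$-generated $k$-$\s$-Hopf algebra, at which point \Cref{prop: one to one corresp groups and Hopf algs} immediately gives that $\sk\G$ is a $\s$-algebraic group over $k$.

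First, I would note that since $\G$ is an algebraic group over $k$, it is represented by a finitely generated $k$-Hopf algebra $A=k[\G]$. By \Cref{prop: X affine scheme skX s variety}, applied to $\G$ viewed purely as an affine scheme of finite type, the functor $\sk\G$ is a $\s$-variety over $k$ represented by the finitely $\s$-generated $k$-$\s$-algebra $\sk A$. By \Cref{lem: sk A hopf algebra}, this $\sk A$ carries the structure of a $k$-$\s$-Hopf algebra, with comultiplication, counit and antipode obtained by applying the universal property of \Cref{lem: universal prop} to $\Delta_A$, $\epsilon_A$ and $S_A$ (and using $\sk A\otimes_k\sk A = \sk(A\otimes_k A)$ for the comultiplication).

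The remaining point to check is that the group structure on $\sk\G(R)=\G(R^\#)$ coming from the Hopf structure on $A$ coincides, under the natural isomorphism $\G(R^\#)=\Hom_{k\text{-alg}}(A,R^\#)\cong\Hom_{k\text{-}\s\text{-alg}}(\sk A,R)$ supplied by \Cref{cor: Hom functor stuff}, with the group structure induced on $\Hom_{k\text{-}\s\text{-alg}}(\sk A,R)$ by the $\s$-Hopf structure on $\sk A$. This is an immediate diagram chase: the multiplication on $\G(R^\#)$ sends $(\psi_1,\psi_2)$ to $(\psi_1\otimes\psi_2)\circ\Delta_A$, while the multiplication on $\Hom(\sk A,R)$ sends $(\varphi_1,\varphi_2)$ to $(\varphi_1\otimes\varphi_2)\circ\Delta_{\sk A}$. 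Since $\Delta_{\sk A}$ is by construction the unique morphism of $k$-$\s$-algebras extending $\Delta_A$, and the adjunction isomorphism is precisely given by precomposition with $\iota_A$, the two group structures agree. The same argument handles the inverse and unit.

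Putting these pieces together: $\sk\G$ is a functor from $k$-$\s$-$\operatorname{Alg}$ to $\operatorname{Groups}$ represented by the finitely $\s$-generated $k$-$\s$-Hopf algebra $\sk A$, so by \Cref{prop: one to one corresp groups and Hopf algs} it is a $\s$-algebraic group over $k$. There is no serious obstacle here; the only thing that requires any care is verifying compatibility between the group structure and the $\s$-Hopf structure on $\sk A$, and this is essentially forced by the uniqueness clause of the universal property in \Cref{lem: universal prop}, exactly as in the proof of \Cref{lem: univ hopf prop}.
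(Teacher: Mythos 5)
Your proof is correct, but it takes a slightly longer route than the paper's. The paper simply observes that $\sk\G$ is a $\s$\=/variety over $k$ as a functor to $\operatorname{Sets}$ (by \Cref{prop: X affine scheme skX s variety}), notes that it is manifestly a functor to $\operatorname{Groups}$ since each $\G(R^\#)$ is a group, and concludes directly from the stated equivalence that a $\s$\=/algebraic group is precisely a functor to $\operatorname{Groups}$ that is a $\s$\=/variety when viewed as a functor to $\operatorname{Sets}$. This sidesteps any mention of Hopf algebras. You instead go through the other equivalent characterisation (\Cref{prop: one to one corresp groups and Hopf algs}), which obliges you to check that $\sk k[\G]$ is a $k$\=/$\s$\=/Hopf algebra (\Cref{lem: sk A hopf algebra}) and that the group structure on $\G(R^\#)$ agrees with the one induced by that Hopf structure under the adjunction of \Cref{cor: Hom functor stuff}; your observation that this agreement is forced by the uniqueness clause of \Cref{lem: universal prop} is the right justification. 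The extra work is not wasted --- it records explicitly that $k\{\sk\G\}=\sk k[\G]$ \emph{as a $k$\=/$\s$\=/Hopf algebra}, which the paper uses implicitly later --- but for the proposition as stated the paper's shorter argument suffices.
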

\begin{proof}
    Since $\G$ can be considered as an affine scheme of finite type over $k$, $\sk \G$ is a $\s$\=/variety over $k$ by \Cref{prop: X affine scheme skX s variety}. Therefore, $\sk\G$ is represented by a finitely $\s$\=/generated $k$\=/$\s$\=/algebra by \Cref{prop: prelims varieties and algebras corresp}. That is, $\sk \G$ is a functor from $k$-$\s$\=/$\operatorname{Alg}$ to $\operatorname{Groups}$ represented by a finitely $\s$\=/generated $k$\=/$\s$\=/algebra, therefore $\sk \G$ is a $\s$\=/algebraic group over $k$.
\end{proof}

\begin{example} For the general linear group $\operatorname{GL}_s$ over $k$ for some $s\geq 1$ as defined in \Cref{example: algebraic groups}, we can consider the corresponding $\s$\=/algebraic group $\sk \operatorname{GL}_s$ which takes a $k$\=/$\s$\=/algebra $R$ and returns the set of $s\times s$ invertible matrices with entries in $R$. Since $k[\operatorname{GL}_s]=k[X,1/\det(X)]$ where $X=(x_{i,j})_{1\leq i,j\leq s}$, \Cref{example: sk A} tells us that $k\{\operatorname{GL}_s\}=k\{X,1/\det(X)\}$. This is a $k$\=/$\s$\=/Hopf algebra.
\end{example}

Again we will abuse notation by writing $\G$ instead of $\sk \G$, $k\{\G\}$ instead of $k\{\sk\G\}$ and by a $\s$\=/closed subgroup of $\G$, we mean a $\s$\=/closed subgroup of $\sk \G$.

So any algebraic group provides an example of a $\s$\=/algebraic group. In \Cref{sec: building difference structure}, we will see that these particular $\s$\=/algebraic groups have a rich structure. In fact, we will now see that any $\s$\=/algebraic group can be embedded into a $\s$\=/algebraic group coming from an algebraic group, hence allowing us to induce information about the structure of any $\s$\=/algebraic group.

\begin{proposition}\label[proposition]{prop: embedding into gln}
    Let $G$ be a $\sigma$\=/algebraic group over $k$. Then $G$ is isomorphic to a $\sigma$\=/closed subgroup of $\operatorname{GL}_s$ over $k$ for some $s\geq 1$.
\end{proposition}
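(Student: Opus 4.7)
The plan is to adapt the classical proof that every affine algebraic group embeds in some $\operatorname{GL}_s$ by constructing a suitable morphism of $k$-Hopf algebras and then lifting it via the universal property of the $[\sigma]_k$ functor. Concretely, the target is to exhibit a surjective morphism of $k$-$\sigma$-Hopf algebras $[\sigma]_k k[\operatorname{GL}_s] \to k\{G\}$, since by the Hopf-compatible version of \Cref{prop: morphisms corresp dual} (combined with \Cref{lem: embedding iff surjective} and \Cref{cor: morphisms groups and hopf dual}) this dualises to a $\sigma$-closed embedding $G \hookrightarrow [\sigma]_k \operatorname{GL}_s$ of $\sigma$-algebraic groups.

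First, choose $\sigma$-generators $a_1,\dots,a_r$ of $k\{G\}$ as a $k$-$\sigma$-algebra. Viewing $k\{G\}$ merely as a $k$-Hopf algebra (forgetting the $\sigma$-structure for the moment), the classical fact that every finite-dimensional $k$-subspace of a Hopf algebra is contained in a finite-dimensional subcomodule produces a finite-dimensional $k$-subcomodule $V \subseteq k\{G\}$ with basis $v_1,\dots,v_s$ such that $a_1,\dots,a_r$ lie in $V$. Writing $\Delta(v_i)=\sum_{j=1}^s v_j \otimes c_{ji}$ gives an $s\times s$ matrix $(c_{ji})$ of elements of $k\{G\}$, and the Hopf axioms together with the antipode $S$ ensure that $(c_{ji})$ is invertible in $\operatorname{Mat}_s(k\{G\})$, so the assignment $x_{ji}\mapsto c_{ji}$ defines a morphism of $k$-Hopf algebras $\psi\colon k[\operatorname{GL}_s] \to k\{G\}$.

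Now apply \Cref{lem: univ hopf prop} to the $k$-Hopf algebra $A=k[\operatorname{GL}_s]$, the $k$-$\sigma$-Hopf algebra $B=k\{G\}$, and the morphism $\psi$: this produces a unique morphism $\varphi\colon [\sigma]_k k[\operatorname{GL}_s] \to k\{G\}$ of $k$-$\sigma$-Hopf algebras extending $\psi$. To verify surjectivity, observe that the image of $\varphi$ contains $\psi(a_i)=a_i$ for each $i$ (since the $a_i$ lie in the $k$-span of the $c_{ji}$, which is contained in the image of $\psi$), and it is closed under each $\sigma_j$ because $\varphi$ is a morphism of $k$-$\sigma$-algebras; hence the image contains the $k$-$\sigma$-subalgebra $\sigma$-generated by $a_1,\dots,a_r$, which is all of $k\{G\}$ by choice of the generators. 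Dualising via \Cref{prop: morphisms corresp dual} and \Cref{lem: embedding iff surjective} yields the desired $\sigma$-closed embedding of $\sigma$-algebraic groups $G\hookrightarrow [\sigma]_k\operatorname{GL}_s$.

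The only non-routine step is the existence of the finite-dimensional subcomodule $V$ containing the chosen $\sigma$-generators, but this is a purely coalgebraic fact about $k\{G\}$ as a $k$-Hopf algebra and does not interact with the $\sigma$-structure. The remainder of the argument is essentially a bookkeeping exercise: once the universal property is used to pass from Hopf to $\sigma$-Hopf, surjectivity transfers automatically because applying $\sigma$-operators to a generating set of the image of $\psi$ recovers a $\sigma$-generating set of $k\{G\}$.
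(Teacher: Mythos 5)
Your proof is correct and follows essentially the same route as the paper: produce a $k$\=/Hopf algebra morphism $k[\operatorname{GL}_s]\rightarrow k\{G\}$ whose image contains a finite $\s$\=/generating set, lift it to a morphism $k\{\operatorname{GL}_s\}\rightarrow k\{G\}$ of $k$\=/$\s$\=/Hopf algebras via the universal property (\Cref{lem: univ hopf prop}), deduce surjectivity because the image is a $k$\=/$\s$\=/subalgebra containing the $\s$\=/generators, and dualise using \Cref{lem: embedding iff surjective} and \Cref{cor: morphisms groups and hopf dual}. The only cosmetic difference is that you inline the classical finite\-/dimensional\-/subcomodule and matrix\-/coefficient arguments, whereas the paper cites Waterhouse for a finitely generated $k$\=/Hopf subalgebra $A\subseteq k\{G\}$ containing the generators together with a surjection $k[\operatorname{GL}_s]\rightarrow A$.
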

\begin{proof}
We know that $k\{G\}$ is a finitely $\s$\=/generated $k$\=/$\s$\=/Hopf algebra. That is, $k\{G\}=k\{F\}$ for some finite subset $F\subseteq k\{G\}$.
By \cite[Section 3.3, page 24]{waterhouse2012introduction}, every finite subset of a $k$\=/Hopf algebra is contained in some finitely generated $k$\=/Hopf subalgebra of it. Therefore, there is a finitely generated $k$\=/Hopf subalgebra $A$ of $k\{G\}$ that contains $F$. 
We know that $A$ represents some algebraic group over $k$, which, by \cite[Section 3.4, page 25]{waterhouse2012introduction}, is isomorphic to a closed subgroup of $\operatorname{GL}_s$ for some $s\in\mathbb{N}$. That is, $k[\operatorname{GL}_s]\rightarrow A$ is a surjective morphism of $k$\=/Hopf algebras.
Therefore we have a morphism $\psi^*\colon  k[\operatorname{GL}_s]\rightarrow A\hookrightarrow k\{G\}$ of $k$\=/Hopf algebras.
    By the universal property (\Cref{lem: universal prop}), there is a unique $k$\=/$\sigma$\=/algebra morphism $\phi^*\colon k\{\operatorname{GL}_n\}\rightarrow k\{G\}$ making the diagram \begin{align*}
        \begin{tikzpicture}  
\node (x)   {$k[\operatorname{GL}_s]$};
  \node (z) at ([xshift=3cm]$(x)$) {$k\{\operatorname{GL}_s\}$}; 
   \node (s) at ([yshift=-1cm]$(x)!0.5!(z)$) {$k\{G\}$}; 
  \draw[right hook->] (x)-- (z) node[midway,above, color=black] {};  
  \draw[line] (x)-- (s) node[pos=0.75,left, color=black] {$\psi^*\ \ $};
  \draw[dashed, ->] (z)--(s) node[pos=0.75,right, color=black] {$\ \phi^*$};
\end{tikzpicture} 
    \end{align*}
    commute. By \Cref{lem: univ hopf prop}, $\phi^*$ is a morphism of $k$\=/$\s$\=/Hopf algebras. 
    We know that $F$ lies in the image of $\psi^*\colon k[\operatorname{GL}_s]\rightarrow A\hookrightarrow k\{G\}$, and hence lies in the image of the $k$\=/$\s$\=/algebra morphism $\phi^*\colon k\{\operatorname{GL}_s\}\rightarrow k\{G\}$. As $F$ generates $k\{G\}$ as a $k$\=/$\sigma$\=/algebra, and the image of $\phi^*$ is a $k$\=/$\s$\=/algebra, we see that $\phi^*$ is surjective. Therefore, the dual map $\phi\colon G\hookrightarrow \operatorname{GL}_n$ is a $\s$\=/closed embedding by \Cref{lem: embedding iff surjective}.
    By \Cref{cor: morphisms groups and hopf dual}, since $\phi^*$ is a morphism of $k$\=/$\s$\=/Hopf algebras, $\phi$ is a morphism of $\s$\=/algebraic groups. Further, since $\phi$ is a $\s$\=/closed embedding,
$G$ is isomorphic to $\phi(G)$, which is a $\s$\=/closed subgroup of $\operatorname{GL}_s$.
\end{proof}

\begin{example}\label[example]{ex:embedding into Gm}
Let $\s=\{\s_1,\s_2\}$ and let $k$ be a $\s$\=/field.
    The $\s$-algebraic group $G$ over $k$ introduced in \Cref{ex:introducing example}, where
            $G(R)=\{g\in R^\times\ |\   \s_1^2\s_2(g)\s_2^4(g)=1\}$
         for any $k$-$\s$-algebra $R$, is a $\s$\=/closed subgroup of $\operatorname{GL}_1=\mathbb{G}_m$.  
    \end{example}

    Therefore, any $\s$\=/algebraic group is defined as a $\s$\=/closed subgroup of some algebraic group $\G$ by a $\s$\=/Hopf ideal. We will see (\Cref{cor: defining ideal fin generated}) that this $\s$\=/Hopf ideal is finitely $\s$\=/generated.

\subsection{A Geometric Method for Building Difference Structure}\label{sec: defining G[i]s}

We will now introduce useful notation for building difference structure on an algebraic group over a $\s$\=/field using base change, and introduce diagrams which will help us visualise many constructions used in this paper.
The constructions in this section are adapted from \cite[Section 3]{wibmer2022finiteness} to the partial case. 

Recall that given a $k$\=/algebra $A$, we constructed the $k$\=/$\s$\=/algebra $\sk A$ via a $k$\=/algebra ${^\tau}A$ for each $\tau\in T_\s$, and a $k$\=/algebra $A[i]$ for each $i\in\mathbb{N}$ (\Cref{def: tau A and A[i]}). Then $\sk A=\cup_{i\in\mathbb{N}}A[i]$ is a $k$\=/$\s$\=/algebra (\Cref{prop: [s]kA ks algebra}). 
If $A$ is the representing $k$\=/algebra for an algebraic group $\G$ over $k$, we saw that $\sk A$ is the representing $k$\=/$\s$\=/algebra for the corresponding $\s$\=/algebraic group $\sk \G$ (see the proof of \Cref{prop: alg grp is diff alg gr}). We will see that we can also conduct a step-by-step construction on the algebraic group side.

Given $\tau\in T_\s$, we noted in \Cref{rem: tau constructions agree} that ${^\tau}A$ is the representing $k$\=/algebra for the algebraic group ${^\tau}\G$ obtained from $\G$ by base change via $\tau\colon  k\rightarrow k$ (\Cref{def: group base change}). That is, $k[{^\tau}\G]={^\tau}k[\G]$. Now, define
\begin{align}\label{eq: G[i]}
    \quad \G[i]=\prod_{\tau\in T_\s[i]}{^\tau}\G
\end{align}
for each $i\in\mathbb{N}$. Notice that since the product of algebraic groups over $k$ is again an algebraic group over $k$, $\G[i]$ is an algebraic group over $k$ for every $i\in\mathbb{N}$. In particular, notice that \begin{align*}
    k\Bigl[\G[i]\Bigr]=k\left[\prod_{\tau\in T_\s[i]}{^\tau}\G \right]=\bigotimes_{\tau\in T_\s[i]}k[{^\tau}\G] =\bigotimes_{\tau\in T_\s[i]}{^\tau}k[\G] &=\Bigl(k[\G]\Bigr)[i],
\end{align*}
where the second equality is due to properties of products of algebraic groups \cite[Section 1.4, page 7]{waterhouse2012introduction}. That is, if $A$ is the representing $k$\=/algebra for $\G$, then for any $i\in\mathbb{N}$, $\G[i]$ is represented by $A[i]$. Then, \begin{align*}
    \bigcup_{i\in\mathbb{N}}k\Bigl[\G[i]\Bigr]=\bigcup_{i\in\mathbb{N}}\Bigl(k[\G]\Bigr)[i]=\sk\Bigl(k[\G]\Bigr)=k\{\G\}
\end{align*}
where we are using the notation $k\{\G\}$ to mean $k\bigl\{[\s]_k\mathcal{G}\bigr\}$. Therefore, we essentially have growing products of algebraic groups $\G[i]$ for $i\in\mathbb{N}$, the union of whose representing algebras is the representing algebra for the corresponding $\s$\=/algebraic group. We will use that fact that any general $\s$\=/algebraic group can be embedded into one of these $\s$\=/algebraic groups obtained from algebraic groups (\Cref{prop: embedding into gln}) to `split' up our $\s$\=/algebraic groups into algebraic groups.

Throughout this document we will use diagrams such as \Cref{fig: g3 when n=2 example} to visualise (in the two endomorphisms case) the constructions we work with. A dot on a particular part of the diagram means this construction has information in this given ${^\tau}\G$ (see \Cref{fig: describing figures}). The diagrams use black dots to show the information of this order is all of ${^\tau}\G$, and red dots to show there is not the full ${^\tau}\G$, but some closed subgroup of it (see \Cref{fig comp: Gi s1 Gi and extension s1 Gi}). Projection maps \begin{align*}
    \prod_{\tau\in A}{^\tau \G}\rightarrow \prod_{\tau\in B}{^\tau}\G \quad \text{for some subsets}\quad B\subseteq A\subseteq T_\s
\end{align*} forget some information, so forget some dots. We will leave lighter dots in the place to show where the forgotten information was (see \Cref{fig: pi[i] and sj[i] two endomorphism case}). The entry at a point will be $1$ if in our given group, the ${^\tau\G}$ component is just $1_{(^\tau\G)}={^\tau}(1_\G)$ (see \Cref{fig ker: G[i] H[i] rho[i]H[i]}). Of course these figures can't be used to prove properties, they are useful to visualise constructions. Finally, we will ignore the true value of $i$ in these diagrams. We will, for example, say a diagram is displaying $\G[i]$, when really it is showing, for example, $\G[3]$. 

\begin{figure}[!htbp]
\centering
\begin{subfigure}[b]{0.45\textwidth}
\centering
\centering
\begin{tikzpicture}
\draw[step=1cm,gray,very thin] (-0.59,-0.59) grid (3.9,3.9);
\draw[thick,->] (0,0) -- (3.9,0);
\node at (4,-0.3) {$\s_1$};
\draw[thick,->] (0,0) -- (0,3.9);
\node at (-0.4,3.9) {$\s_2$};
\node[fill,circle,radius=0.4cm,inner sep=0pt,white] at (0,0) {\footnotesize ${^{\s_1\s_2}}\G$};
\node[draw,circle,radius=0.4cm,inner sep=0pt,black] at (0,0) {\footnotesize \textcolor{white}{${^{\s_1\s_2}}\G$}};
\node at (0,0) { \footnotesize $\G$};
\node[fill,circle,radius=0.4cm,inner sep=0pt,white] at (1,0) {\footnotesize ${^{\s_1\s_2}}\G$};
\node[draw,circle,radius=0.4cm,inner sep=0pt,black] at (1,0) {\footnotesize \textcolor{white}{${^{\s_1\s_2}}\G$}};
\node at (1,0) {\footnotesize\scalebox{.75}[1.0]{$^{\s_1}$}$\G$};
\node[fill,circle,radius=0.4cm,inner sep=0pt,white] at (2,0) {\footnotesize ${^{\s_1\s_2}}\G$};
\node[draw,circle,radius=0.4cm,inner sep=0pt,black] at (2,0) {\footnotesize \textcolor{white}{${^{\s_1\s_2}}\G$}};
\node at (2,0.057) {\footnotesize\scalebox{.75}[1.0]{$^{\s_1^2}$}$\G$};
\node[fill,circle,radius=0.4cm,inner sep=0pt,white] at (3,0) {\footnotesize ${^{\s_1\s_2}}\G$};
\node[draw,circle,radius=0.4cm,inner sep=0pt,black] at (3,0) {\footnotesize \textcolor{white}{${^{\s_1\s_2}}\G$}};
\node at (3,0.057) {\footnotesize\scalebox{.75}[1.0]{$^{\s_1^3}$}$\G$};
\node[fill,circle,radius=0.4cm,inner sep=0pt,white] at (0,1) {\footnotesize ${^{\s_1\s_2}}\G$};
\node[draw,circle,radius=0.4cm,inner sep=0pt,black] at (0,1) {\footnotesize \textcolor{white}{${^{\s_1\s_2}}\G$}};
\node at (0,1) {\footnotesize\scalebox{.75}[1.0]{$^{\s_2}$}$\G$};
\node[fill,circle,radius=0.4cm,inner sep=0pt,white] at (1,1) {\footnotesize ${^{\s_1\s_2}}\G$};
\node[draw,circle,radius=0.4cm,inner sep=0pt,black] at (1,1) {\footnotesize \textcolor{white}{${^{\s_1\s_2}}\G$}};
\node at (1,1) {\footnotesize\scalebox{.75}[1.0]{$^{\s_1\s_2}$}$\G$};
\node[fill,circle,radius=0.4cm,inner sep=0pt,white] at (2,1) {\footnotesize ${^{\s_1\s_2}}\G$};
\node[draw,circle,radius=0.4cm,inner sep=0pt,black] at (2,1) {\footnotesize \textcolor{white}{${^{\s_1\s_2}}\G$}};
\node at (2,1.057) {\footnotesize\scalebox{.75}[1.0]{$^{\s_1^2\s_2}$}$\G$};
\node[fill,circle,radius=0.4cm,inner sep=0pt,white] at (0,2) {\footnotesize ${^{\s_1\s_2}}\G$};
\node[draw,circle,radius=0.4cm,inner sep=0pt,black] at (0,2) {\footnotesize \textcolor{white}{${^{\s_1\s_2}}\G$}};
\node at (0,2.057) {\footnotesize\scalebox{.75}[1.0]{$^{\s_2^2}$}$\G$};
\node[fill,circle,radius=0.4cm,inner sep=0pt,white] at (1,2) {\footnotesize ${^{\s_1\s_2}}\G$};
\node[draw,circle,radius=0.4cm,inner sep=0pt,black] at (1,2) {\footnotesize \textcolor{white}{${^{\s_1\s_2}}\G$}};
\node at (1,2.057) {\footnotesize\scalebox{.75}[1.0]{$^{{\s_1}\s_2^2}$}$\G$};
\node[fill,circle,radius=0.4cm,inner sep=0pt,white] at (0,3) {\footnotesize ${^{\s_1\s_2}}\G$};
\node[draw,circle,radius=0.4cm,inner sep=0pt,black] at (0,3) {\footnotesize \textcolor{white}{${^{\s_1\s_2}}\G$}};
\node at (0,3.057) {\footnotesize\scalebox{.75}[1.0]{$^{\s_2^3}$}$\G$};
\end{tikzpicture}
\caption{Dots Correspond to ${^\tau}\G$ for some $\tau\in T_\s$}
\label{fig: describing figures}
\end{subfigure}
\begin{subfigure}[b]{0.45\textwidth}
\centering
\begin{tikzpicture}
\draw[step=1cm,gray,very thin] (-0.5,-0.5) grid (3.9,3.9);
\node at (4,-0.35) {\textcolor{white}{g}};
\draw[thick,->] (0,0) -- (3.9,0);
\node at (4,-0.3) {$\s_1$};
\draw[thick,->] (0,0) -- (0,3.9);
\node at (-0.4,3.9) {$\s_2$};
\fill (0,0) circle(3pt);
\fill (1,0) circle(3pt) (0,1) circle(3pt);
\fill (2,0) circle(3pt) (1,1) circle(3pt) (0,2) circle(3pt);
\fill [radius=3pt] (3,0) circle[] (2,1) circle[] (1,2) circle[] (0,3) circle[];
\end{tikzpicture}
\caption{Visualising $\G[3]$ when $n=2$}
\label{fig: g3 when n=2 example}
\end{subfigure}
\caption{Explaining Visualisation Diagrams}
\end{figure}

Applying base change on these diagrams essentially looks like moving the dots around, see \Cref{fig: showing base change}.

\begin{figure}[!htbp]
\centering\scalebox{0.9}{\begin{tikzpicture}
\draw[step=0.5cm,lightgray,very thin] (-0.25,-0.25) grid (2.45,2.45);
\draw[darkgray,->] (0,0) -- (2.35,0) node[anchor=north west] {$\s_1$};
\draw[darkgray,->] (0,0) -- (0,2.35) node[anchor=south east] {$\s_2$};
\fill (0,0) circle(2pt);
\fill (0.5,0) circle(2pt) (0,0.5) circle(2pt);
\fill (1,0) circle(2pt) (0.5,0.5) circle(2pt) (0,1) circle(2pt);
\fill [radius=2pt] (1.5,0) circle[] (1,0.5) circle[] (0.5,1) circle[] (0,1.5) circle[];
\node at (1,-0.55) {{$\G[i]$}};
\draw[step=0.5cm,lightgray,very thin] (3.75,-0.25) grid (6.45,2.45);
\draw[darkgray,->] (4,0) -- (6.35,0) node[anchor=north west] {$\s_1$};
\draw[darkgray,->] (4,0) -- (4,2.35) node[anchor=south east] {$\s_2$};
\fill (4.5,0) circle(2pt);
\fill (5,0) circle(2pt) (4.5,0.5) circle(2pt);
\fill [radius=2pt] (5.5,0) circle[] (5,0.5) circle[] (4.5,1) circle[];
\node at (5,-0.55) {{${^{\s_1}}\G[i]$}};
\fill [radius=2pt] (6,0) circle[] (5.5,0.5) circle[] (5,1) circle[] (4.5,1.5) circle[];
\draw[step=0.5cm,lightgray,very thin] (7.75,-0.25) grid (10.45,2.45);
\draw[darkgray,->] (8,0) -- (10.35,0) node[anchor=north west] {$\s_1$};
\draw[darkgray,->] (8,0) -- (8,2.35) node[anchor=south east] {$\s_2$};
\fill  (8.5,0.5) circle(2pt);
\fill [radius=2pt] (9,0.5) circle[] (8.5,1) circle[];
\fill [radius=2pt]  (9.5,0.5) circle[] (9,1) circle[] (8.5,1.5) circle[];
\node at (9,-0.55) {${^{\s_2}}\G[i]$};
\fill [radius=2pt]  (8,0.5) circle[] (8,1) circle[] (8,1.5) circle[] (8,2) circle[];
\end{tikzpicture}}
\caption{Visualising Base Change when $n=2$}
\label{fig: showing base change}
\end{figure}

\subsection{Zariski Closures}\label{sec: zariski closures def}
We will consider a method for `splitting' our $\s$\=/algebraic groups into algebraic groups.
Let $\G$ be an algebraic group over $k$. For each $i\geq 1$ and $1\leq j\leq n$, define the projections\begin{align}\label{eq: def of pi map}
    \pi_i\colon  \G[i]&\rightarrow\G[i-1], \qquad
    (x_\tau)_{\tau\in T_\s[i]}\mapsto (x_\tau)_{\tau\in T_\s[i-1]} &&\text{and}\\\label{eq: def of sigma map} (\s_j)_i\colon  \G[i]&\rightarrow{^{\s_j}}\G[i-1],\quad\hspace{0.4mm}
    (x_\tau)_{\tau\in T_\s[i]}\mapsto (x_{\s_j\tau})_{\tau\in T_\s[i-1]}
\end{align}
and notice that these are morphisms of algebraic groups.
See \Cref{fig: pi[i] and sj[i] two endomorphism case} for a visualisation of $\pi_i$, $(\s_1)_i$ and $(\s_2)_i$ in the two endomorphisms case. 
\begin{figure}[!htbp]
\centering
\scalebox{0.9}{\begin{tikzpicture}
\draw[step=0.5cm,lightgray,very thin] (-0.25,-0.25) grid (2.45,2.45);
\draw[darkgray,->] (0,0) -- (2.35,0) node[anchor=north west] {$\s_1$};
\draw[darkgray,->] (0,0) -- (0,2.35) node[anchor=south east] {$\s_2$};
\fill (0,0) circle(2pt);
\fill (0.5,0) circle(2pt) (0,0.5) circle(2pt);
\fill (1,0) circle(2pt) (0.5,0.5) circle(2pt) (0,1) circle(2pt);
\fill [radius=2pt] (1.5,0) circle[] (1,0.5) circle[] (0.5,1) circle[] (0,1.5) circle[];
\fill [radius=2pt] (2,0) circle[] (1.5,0.5) circle[] (1,1) circle[] (0.5,1.5) circle[] (0,2) circle[];
\node at (1,-0.55) {{$\G[i]$}};
\draw[step=0.5cm,lightgray,very thin] (3.75,-0.25) grid (6.45,2.45);
\draw[darkgray,->] (4,0) -- (6.35,0) node[anchor=north west] {$\s_1$};
\draw[darkgray,->] (4,0) -- (4,2.35) node[anchor=south east] {$\s_2$};
\fill (4,0) circle(2pt);
\fill (4.5,0) circle(2pt) (4,0.5) circle(2pt);
\fill (5,0) circle(2pt) (4.5,0.5) circle(2pt) (4,1) circle(2pt);
\fill [radius=2pt] (5.5,0) circle[] (5,0.5) circle[] (4.5,1) circle[] (4,1.5) circle[];
\node at (5,-0.55) {{$\pi_i(\G[i])$}};
\fill[black!30] [radius=2pt] (6,0) circle[] (5.5,0.5) circle[] (5,1) circle[] (4.5,1.5) circle[] (4,2) circle[];
\draw[step=0.5cm,lightgray,very thin] (7.75,-0.25) grid (10.45,2.45);
\draw[darkgray,->] (8,0) -- (10.35,0) node[anchor=north west] {$\s_1$};
\draw[darkgray,->] (8,0) -- (8,2.35) node[anchor=south east] {$\s_2$};
\fill (8.5,0) circle(2pt);
\fill (9,0) circle(2pt) (8.5,0.5) circle(2pt);
\fill [radius=2pt] (9.5,0) circle[] (9,0.5) circle[] (8.5,1) circle[];
\fill [radius=2pt] (10,0) circle[] (9.5,0.5) circle[] (9,1) circle[] (8.5,1.5) circle[];
\node at (9,-0.55) {{$(\s_1)_i\bigl(\G[i]\bigr)$}};
\fill[black!30] [radius=2pt] (8,0) circle[] (8,0.5) circle[] (8,1) circle[] (8,1.5) circle[] (8,2) circle[];
\draw[step=0.5cm,lightgray,very thin] (11.75,-0.25) grid (14.45,2.45);
\draw[darkgray,->] (12,0) -- (14.35,0) node[anchor=north west] {$\s_1$};
\draw[darkgray,->] (12,0) -- (12,2.35) node[anchor=south east] {$\s_2$};
\fill (12,0.5) circle(2pt);
\fill  (12.5,0.5) circle(2pt) (12,1) circle(2pt);
\fill [radius=2pt] (13,0.5) circle[] (12.5,1) circle[] (12,1.5) circle[];
\fill [radius=2pt] (13.5,0.5) circle[] (13,1) circle[] (12.5,1.5) circle[] (12,2) circle[];
\node at (13,-0.55) {$(\s_2)_i\bigl(\G[i]\bigr)$};
\fill[black!30] [radius=2pt] (12,0) circle[] (12.5,0) circle[] (13,0) circle[] (13.5,0) circle[] (14,0) circle[];
\end{tikzpicture}}
\caption{$\pi_i$, $(\s_1)_i$ and $(\s_2)_i$ acting on $\G[i]$ when $n=2$}
\label{fig: pi[i] and sj[i] two endomorphism case}
\end{figure}

\begin{example}
    Consider the general linear group $\operatorname{GL}_s$ over a $\s$\=/field $k$. We saw in \Cref{ex: tau GLs} that for any $\tau\in T_\s$, ${^\tau}\operatorname{GL}_s$ is just a copy of $\operatorname{GL}_s$. Therefore, $\operatorname{GL}_s[i]=\prod_{\tau\in T_\s[i]}{^\tau}\operatorname{GL}_s=\operatorname{GL}_s^{|T_\s[i]|}$ for every $i\in\mathbb{N}$. So for a $k$\=/algebra $R$, an element of $(A_\tau)_{\tau\in T_\s[i]}\in\operatorname{GL}_s[i](R)$ is a sequence of $|T_\s[i]|$ invertible $s\times s$ matrices with entries in $R$. Then for $i\geq 1$, the map $(\pi_i)_R$ simply drops the entries $A_\tau$ where $\ord(\tau)=i$, and for $1\leq j\leq n$, the map $((\s_j)_i)_R$ drops the entries $A_\tau$ where $\s_j|\tau$. 
\end{example}

We will often consider the projection maps defined in (\ref{eq: def of pi map}) and (\ref{eq: def of sigma map}) restricted to closed subgroups of $\G[i]$.

\begin{remark}\label[remark]{lem: restrictions well defined}
Suppose that $i\geq 1$ and we have closed subgroups $G_i\leq \G[i]$ and $G_{i-1}\leq \G[i-1]$. Then $\pi_i\bigl(G_i\bigr)$ is the closed subgroup of $\G[i-1]$ defined by Hopf ideal $\I(G_i)\cap k[\G[i-1]]$ and $(\s_j)_i\left(G_i\right)$ is the closed subgroup of ${^{\s_j}}\G[i-1]$ defined by Hopf ideal $\I(G_i)\cap \s_j(k[\G[i-1]])$.
That is, $\pi_i\bigl(G_i\bigr)\leq G_{i-1}$ if and only if $\I(G_{i-1})\subseteq \I(G_i)$, and $(\s_j)_i\left(G_i\right)\leq {^{\s_j}}G_{i-1}$ if and only if $\s_j(\I(G_{i-1}))\subseteq \I(G_i)$.
\end{remark}

\begin{definition}\label[definition]{def: zariski closures}
    Let $G$ be a $\s$\=/closed subgroup of an algebraic group $\G$ over $k$, defined by the $\s$\=/Hopf ideal $\I(G)\subseteq k\{\G\}$. For each $i\in\mathbb{N}$, let $G[i]$ be the closed subgroup of $\G[i]$ defined by the Hopf ideal \begin{align*}
        \I(G[i])=\I(G)\cap k\bigl[\G[i]\bigr].
    \end{align*} We call the sequence of closed subgroups $(G[i])_{i\in\mathbb{N}}$ the \textbf{Zariski closures} of $G$ in $\G$.
\end{definition}

That is, the $i$-th order Zariski closure $G[i]$ of $G$ with respect to $\G$ is the closed subgroup of $\G[i]$ defined by all $\s$\=/polynomials in $\I(G)$ of order up to and including $i$. 
Notice that as $\G$ is a $\s$\=/closed subgroup of itself defined by $(0)\subseteq k\{\G\}$, for each $i\in\mathbb{N}$, $\G[i]$ is in fact the $i$-th order Zariski closure of $\G$ in itself.

\begin{example}\label[example]{ex: zariski closures example}
        Let $k$ be a $\s$-field, with $\s=\{\s_1,\s_2\}$. Consider the $\s$\=/closed subgroup $G$ of $\mathbb{G}_m$ over $k$ defined by the $\s$\=/Hopf ideal $\I(G)=[\s_1^2\s_2(y)\s_2^4(y)-1]\subseteq k\{y,y^{-1}\}=k\{\mathbb{G}_m\}$, as introduced in \Cref{ex:introducing example,ex:embedding into Gm}. For any $i\in\mathbb{N}$, 
             $k\bigl[\mathbb{G}_m[i]\bigr]=k\bigl[\{\tau(y),\tau(y)^{-1}\ |\  \tau\in T_\s[i]\}\bigr]$,
         and hence 
         \begin{align*}
             \I(G[i])&=(0)\subseteq k[\mathbb{G}_m[i]] &&\text{for $0\leq i\leq 3$}\\
             \I(G[i])&=\bigl(\{\tau(\s_1^2\s_2(y)\s_2^4(y)-1)\ |\ \tau\in T_\s[i-4]\}\bigr)\subseteq k[\mathbb{G}_m[i]] &&\text{for $i\geq 4$}
         \end{align*}
         are the defining ideals for the Zariski closures $(G[i])_{i\in\mathbb{N}}$ of $G$ in $\mathbb{G}_m$.
    \end{example}

    \begin{remark}\label[remark]{rem: zariski closures have restrictions}
        Let $(G[i])_{i\in\mathbb{N}}$ be the Zariski closures of a $\s$\=/closed subgroup $G$ of $\G$. Naturally, for all $i\geq 1$, $\I(G[i-1])\subseteq \I(G[i])$ and $\s_j\bigl(\I(G[i-1])\bigr)\subseteq \I(G[i])$ for all $1\leq j\leq n$, due to the fact that $\I(G)$ is a $\s$\=/ideal. Therefore, by \Cref{lem: restrictions well defined}, we have induced restrictions \begin{align*}
    \pi_i\colon G[i]\rightarrow G[i-1]\ \text{and}\ (\s_j)_i\colon G[i]\rightarrow{^{\s_j}}G[i-1].
\end{align*}  Further, since for each $i\geq 1$, we have that $\I(G[i])\cap k[\G[i-1]]= \I(G[i-1])$, \Cref{lem: restrictions well defined} tells us that the restriction $\pi_i\colon G[i]\rightarrow G[i-1]$ is in fact a quotient map.
    \end{remark}

Some finiteness properties for the Zariski closures of a $\s$\=/algebraic group $G$ of $\G$ have already been proven in the ordinary case.
\begin{theorem}[Wibmer]\label[theorem]{the: Zariski closures ordinary case}
    Let $\G$ be an algebraic group over an ordinary $\s$\=/field $k$. Let $G$ be a $\s$\=/closed subgroup of $\G$, and let $(G[i])_{i\in\mathbb{N}}$ be the Zariski closures of $G$ with respect to $\G$. Then \begin{itemize}
        \item $\I(G[i+1])=(\I(G[i]),\s(\I(G[i])))$ for large enough $i\in\mathbb{N}$,
        \item there exist constants $d,e\in\mathbb{N}$ such that for large enough $i\in\mathbb{N}$, $\dim(G[i])=d(i+1)+e$.
    \end{itemize}
\end{theorem}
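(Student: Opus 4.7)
The plan is to establish the first claim via a stabilization argument on the kernels of the projections $\pi[i+1]$, and then to deduce the second claim as a dimension count. One containment in (i) is automatic: since $\I(G)$ is a $\s$\=/ideal containing $\I(G_i)\subseteq k[\G[i]]$, it also contains $\s(\I(G_i))\subseteq k[{^\s}\G[i]]$, and hence both lie inside $\I(G_{i+1})=\I(G)\cap k[\G[i+1]]$, giving $(\I(G_i),\s(\I(G_i)))\subseteq \I(G_{i+1})$ for every $i$. Let $L_{i+1}\leq \G[i+1]$ denote the closed subgroup defined by $(\I(G_i),\s(\I(G_i)))$; then $G_{i+1}\leq L_{i+1}$, and geometrically $L_{i+1}$ identifies with the fibre product $G_i\times_{{^\s}G_{i-1}}{^\s}G_i$ formed along $\s[i]\colon G_i\to{^\s}G_{i-1}$ and ${^\s}\pi[i]\colon{^\s}G_i\to{^\s}G_{i-1}$. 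The task is thus to prove $G_{i+1}=L_{i+1}$ for $i$ sufficiently large.

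Set $N_i=\ker(\pi[i]\colon G_i\to G_{i-1})$, a closed subgroup of the final factor ${^{\s^i}}\G$ of $\G[i]$. Examining the fibre product description shows $\ker(\pi[i+1]\colon L_{i+1}\to G_i)={^\s}N_i$; since both $G_{i+1}$ and $L_{i+1}$ surject onto $G_i$ under $\pi[i+1]$, we have $G_{i+1}=L_{i+1}$ if and only if $N_{i+1}={^\s}N_i$ inside ${^{\s^{i+1}}}\G$, and the inclusion $N_{i+1}\leq{^\s}N_i$ holds unconditionally. It therefore suffices to show this chain stabilizes. Passing to an inversive closure $k^*$ of $k$ (in which $\s$ becomes an automorphism) and base changing, the inclusions $N_{i+1}^*\leq{^\s}N_i^*$ transport via the inverse shifts ${^{\s^{-(i+1)}}}$ to a descending chain of closed subgroups of $\G^*=\G\otimes_k k^*$. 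Since $\G^*$ is an algebraic group over a field, $k^*[\G^*]$ is Noetherian, so closed subgroups of $\G^*$ satisfy the descending chain condition and the chain stabilizes from some $i_0$ onward. Faithfully flat descent along $k\to k^*$ then transfers the equality back, giving $N_{i+1}={^\s}N_i$ over $k$ for $i\geq i_0$, which proves (i).

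For (ii), once (i) is in hand, $\pi[i+1]\colon G_{i+1}\to G_i$ is surjective with kernel $N_{i+1}={^\s}N_i$ for $i\geq i_0$. Since base change along a field endomorphism preserves dimension, $\dim N_{i+1}=\dim N_i$, so $(\dim N_i)_{i\geq i_0}$ is eventually constant, equal to some $d\in\mathbb{N}$. Telescoping yields $\dim G_i=\dim G_{i_0}+d(i-i_0)$ for $i\geq i_0$, which rewrites as $d(i+1)+e$ for a suitable constant $e$. The main obstacle in the whole argument is the stabilization of the chain $N_{i+1}\leq{^\s}N_i$: without an inversive assumption the shift $\s^{-1}$ is not directly available on $k$, which is why the proof either routes through an inversive closure as above or instead relies on a direct Noetherian argument tracking suitably chosen images of the $G_i$ back into $\G$.
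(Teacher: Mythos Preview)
Your proof is correct and follows essentially the same route as the cited reference: the paper itself does not reprove this theorem but simply refers to \cite{wibmer2022finiteness}, and your kernel--stabilization argument via passage to an inversive closure is precisely the method used there and reproduced later in this paper as \Cref{lem: H[i+1] isomorphic to H[i] (ordinary)} (together with \Cref{the: ordinary ideal gen} and \Cref{the: ordinary dimension polynomial}). One small omission: you should note that $e\in\mathbb{N}$, which follows since the unconditional inclusions $N_{i+1}\leq{^\s}N_i$ force $\dim N_i$ to be non\=/increasing, hence $\dim N_j\geq d$ for all $j$ and $e=\sum_{j<i_0}(\dim N_j-d)\geq 0$.
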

\begin{proof}
    See \cite[Section 3, page 528]{wibmer2022finiteness}, and \cite[Section 4, page 533]{wibmer2022finiteness}.
\end{proof}

We wish to extend these properties to the partial case. In order to do so, we will introduce a more general idea for a `splitting' of a $\s$\=/closed subgroup $G$ of $\G$.

\section{Generalised Difference Algebraic Groups}\label{sec: gen dif alg grp def}

Let $\G$ be an algebraic group over a $\s$\=/field $k$. In the previous section we introduced the Zariski closures $(G[i])_{i\in\mathbb{N}}$ of a $\s$\=/closed subgroup $G$ of $\G$ (\Cref{def: zariski closures}), recall that
    $\I(G)=\bigcup_{i\in\mathbb{N}}\I(G[i])$
and $G[i]\leq \G[i]$ for each $i\in\mathbb{N}$. 
We will see a benefit to introducing a slightly more relaxed concept than the Zariski closures of a $\s$\=/closed subgroup $G$ of $\G$. For example, for each $i\geq 1$, we have the map $\pi_i\colon G[i]\rightarrow G[i-1]$ (\Cref{rem: zariski closures have restrictions}), and we can define the kernel $H_i=\ker(\pi_i|_{G[i]})$ of this restriction. It would be nice if the $H_i$'s formed the Zariski closures of some difference closed subgroup of $\G$, but this doesn't hold as the conditions to be the Zariski closures are too restrictive. For this reason, we introduce a more general definition of a sequence of closed subgroups $(G_i)_{i\in\mathbb{N}}$ where the union of the defining ideals $\I(G_i)$ defines a $\s$\=/closed subgroup $G$ of $\G$. These will prove more flexible to work with than the Zariski closures.

\begin{definition}\label[definition]{def: gen diff alg grp}
    Let $\G$ be an algebraic group over $k$. A sequence $(G_i)_{i\in\mathbb{N}}$ is a generalised $\s$\=/algebraic group with respect to $\G$ if the following hold: \begin{itemize}
    \item For every $i\in\mathbb{N}$, $G_i$ is a closed subgroup of $\G[i]$,
        \item for every $i\geq 1$, we have induced restriction $\pi_i\colon G_i\rightarrow G_{i-1}$ of \cref{eq: def of pi map},
        \item for every $i\geq 1$, we have induced restriction $(\s_j)_i\colon G_i\rightarrow{^{\s_j}}G_{i-1}$ of \cref{eq: def of sigma map} for each $1\leq j\leq n$. 
    \end{itemize}
\end{definition}

By \Cref{rem: zariski closures have restrictions}, the Zariski closures $(G[i])_{i\in\mathbb{N}}$ of a $\s$\=/closed subgroup $G$ of an algebraic group $\G$ are a generalised $\s$\=/algebraic group with respect to $\G$. The key difference between a generalised $\s$\=/algebraic group $(G_i)_{i\in\mathbb{N}}$ with respect to $\G$ and the Zariski closures $(G[i])_{i\in\mathbb{N}}$ of a $\s$\=/closed subgroup $G$ of $\G$ is that for every $i\geq 1$, the restriction $\pi_i\colon  G[i]\rightarrow G[i-1]$ is a quotient map, whereas $\pi_i\colon  G_i\rightarrow G_{i-1}$ is not necessarily a quotient map.
Notice that given a generalised $\s$\=/algebraic group $(G_i)_{i\in\mathbb{N}}$ with respect to $\G$, for every $i\geq 2$, we get the commutative diagram in \Cref{fig: comm diagram partial- for g} for each $1\leq j\leq n$.
    \begin{figure}[!htbp]
\centering
\begin{tikzpicture}
\node(a) {$G_i$};
\node(b)at ([xshift=4cm]$(a)$) {$G_{i-1}$};
\node(d) at ([yshift=-1.5cm]$(a)$) {$^{\sigma_j}(G_{i-1})$};
\node(e) at ([xshift=4cm]$(d)$) {$^{\sigma_j}(G_{i-2})$};
\draw[line] (a)-- (b) node[midway,above, color=black] {$\pi_i$};
\draw[line] (a)-- (d) node[midway,left, color=black] {$(\sigma_j)_i$};
\draw[line] (b)-- (e) node[midway,right, color=black] {$(\sigma_j)_{i-1}$};
\draw[line] (d)-- (e) node[midway,below, color=black] {$^{\s_j}({\pi_{i-1}})$};
\end{tikzpicture} 
\caption{Commutative Diagram for a Generalised $\s$\=/Algebraic Group $(G_i)_{i\in\mathbb{N}}$}
\label{fig: comm diagram partial- for g}
\end{figure}

By \Cref{lem: restrictions well defined}, we have an equivalent ideal-theoretic definition of a generalised $\s$\=/algebraic group.

\begin{subdefinition}\label[definition]{def: gen diff alg grp def ideal theoretic}
    Let $\G$ be an algebraic group over $k$. A sequence $(G_i)_{i\in\mathbb{N}}$ is a generalised $\s$\=/algebraic group with respect to $\G$ if the following hold: \begin{itemize}
    \item For every $i\in\mathbb{N}$, $G_i$ is a closed subgroup of $\G[i]$, defined by Hopf ideal $\I(G_i)$ of $k[\G[i]]$,
        \item for every $i\geq 1$, $\I(G_{i-1})\subseteq \I(G_i)$,
        \item for every $i\geq 1$, $\s_j(\I(G_{i-1}))\subseteq \I(G_i)$ for each $1\leq j\leq n$. 
    \end{itemize}
\end{subdefinition}

\begin{remark}\label[remark]{rem: one inclusion for gen grp}
    If $(G_i)_{i\in\mathbb{N}}$ is a generalised $\s$\=/algebraic group with respect to an algebraic group $\G$ over a $\s$\=/field $k$, for every $i\in\mathbb{N}$, $(\I(G_i),\s_1(\I(G_i)),\dots,\s_n(\I(G_i)))\subseteq \I(G_{i+1})$ as ideals in $k[\G[i+1]]$.
\end{remark}

One of the main results of the paper (\Cref{the: partial ideal generation property}) is that for large enough $i\in\mathbb{N}$, the inclusion in \Cref{rem: one inclusion for gen grp} is in fact an equality.

We see that similarly to the Zariski closures of a $\s$\=/closed subgroup of $\G$, these generalised $\s$\=/algebraic groups are some kind of `splitting' of a $\s$\=/closed subgroup of $\G$.

\begin{lemma}\label[lemma]{lem: union of generalised group forms alg subgroup}
    Let $(G_i)_{i\in\mathbb{N}}$ be a generalised $\s$\=/algebraic group with respect to an algebraic group $\G$ over $k$. The union of defining ideals \begin{align*}
    \I(G)=\bigcup_{i\in\mathbb{N}}\I(G_i)
\end{align*} defines a $\s$\=/closed subgroup $G$ of $\G$.
\end{lemma}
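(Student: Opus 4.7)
The plan is to verify that $\I(G)=\bigcup_{i\in\mathbb{N}}\I(G[i])$ is a $\s$\=/Hopf ideal of $k\{\G\}$, and then invoke \Cref{prop: one to one corresp subgroups and Hopf ideals} to obtain the $\s$\=/closed subgroup $G$ of $\G$ defined by $\I(G)$. Since $k\{\G\}=\bigcup_{i\in\mathbb{N}}k[\G[i]]$ by the discussion in \Cref{sec: defining G[i]s} and $\I(G[i-1])\subseteq \I(G[i])$ for every $i\geq 1$ by the defining property of a generalised $\s$\=/algebraic group, the sequence $(\I(G[i]))_{i\in\mathbb{N}}$ is an ascending chain and its union is a directed union inside $k\{\G\}$.

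First, I would verify that $\I(G)$ is an ordinary ideal of $k\{\G\}$. Any finite collection of elements of $\I(G)$, together with any finite collection of ring elements from $k\{\G\}$, can be located inside some $k[\G[i]]$ for sufficiently large $i$, where $\I(G[i])$ is already an ideal; closure under addition and under multiplication by elements of $k\{\G\}$ then follows by passing to that common level. Next, I would check that $\I(G)$ is closed under each $\s_j$: given $f\in\I(G)$, pick $i$ with $f\in\I(G[i])$, and use the defining condition $\s_j(\I(G[i]))\subseteq\I(G[i+1])\subseteq\I(G)$.

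The more delicate step is verifying that $\I(G)$ is a Hopf ideal of $k\{\G\}$. For this I would exploit the fact that $k\{\G\}$ was constructed in \Cref{sec: building difference structure} as $[\s]_k k[\G]$, so that its Hopf algebra structure maps (comultiplication, counit, antipode) are the ones uniquely obtained from those of $k[\G]$ via the universal property of \Cref{lem: universal prop}, as established in \Cref{lem: sk A hopf algebra} and \Cref{lem: univ hopf prop}. Concretely, since $\G[i]=\prod_{\tau\in T_\s[i]}{^\tau}\G$, the ring $k[\G[i]]$ carries the tensor product Hopf structure, and the inclusion $k[\G[i]]\hookrightarrow k\{\G\}$ is a morphism of $k$\=/Hopf algebras. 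Because $G[i]$ is a closed subgroup of $\G[i]$, the ideal $\I(G[i])$ is a Hopf ideal of $k[\G[i]]$, so for any $f\in\I(G[i])$ we have $\Delta(f)\in\I(G[i])\otimes_k k[\G[i]]+k[\G[i]]\otimes_k\I(G[i])$, $S(f)\in\I(G[i])$ and $\epsilon(f)=0$. Taking unions over $i$ yields the Hopf ideal conditions for $\I(G)$ in $k\{\G\}$.

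The main obstacle I anticipate is the bookkeeping around the Hopf compatibility: one needs to know that the restriction of the Hopf structure on $k\{\G\}$ to $k[\G[i]]$ really is the product Hopf structure on $k[\G[i]]$, which justifies lifting the Hopf ideal property from each level to the union. Once this is in place, combining the three verifications shows $\I(G)$ is a $\s$\=/Hopf ideal of $k\{\G\}$, and \Cref{prop: one to one corresp subgroups and Hopf ideals} produces the desired $\s$\=/closed subgroup $G\leq\G$.
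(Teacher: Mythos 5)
Your proposal is correct and follows essentially the same route as the paper: verify that $\I(G)$ is a $\s$\=/Hopf ideal of $k\{\G\}=\bigcup_{i\in\mathbb{N}}k[\G[i]]$ by using the ascending chain of Hopf ideals $\I(G[i])$ together with the condition $\s_j(\I(G[i]))\subseteq\I(G[i+1])$, and then invoke \Cref{prop: one to one corresp subgroups and Hopf ideals}. Your added care about the compatibility of the Hopf structure on $k\{\G\}$ with that on each $k[\G[i]]$ makes explicit a point the paper's proof leaves implicit, but it is the same argument.
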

\begin{proof}
We will show that $\I(G)$ is a $\s$\=/Hopf ideal of $k\{\G\}$.
Since for each $i\in\mathbb{N}$, $\I(G_i)$ is a Hopf ideal of $k[\G[i]]$, and the ideals $(\I(G_i))_{i\in\mathbb{N}}$ form an ascending chain, their union $\I(G)$ is a Hopf ideal of $\bigcup_{i\in\mathbb{N}}k[\G[i]]=k\{\G\}$. Further, given $f\in \I(G)$, $f\in\I(G_i)$ for some $i\in\mathbb{N}$, and hence for any $1\leq j\leq n$, 
    $\s_j(f)\in\s_j(\I(G_i))\subseteq\I(G_{i+1})\subseteq \I(G)$. That is, $\I(G)$ is a $\s$\=/Hopf ideal of $k\{\G\}$ and therefore, by \Cref{prop: one to one corresp subgroups and Hopf ideals}, defines a $\s$\=/closed subgroup $G$ of $\G$.
\end{proof}

For a $\s$\=/closed subgroup $G$ of an algebraic group $\G$, we can either form the Zariski closures $(G[i])_{i\in\mathbb{N}}$ of $G$ with respect to $\G$, or a generalised $\s$\=/algebraic group $(G_i)_{i\in\mathbb{N}}$ with respect to $\G$ such that $\bigcup_{i\in\mathbb{N}}\I(G_i)=\I(G)$. These both correspond to ascending chains of ideals for each $i\in\mathbb{N}$ whose union is $\I(G)$, but there is a unique choice for the Zariski closures, and many choices for such a generalised $\s$\=/algebraic group.

\begin{example}\label[example]{ex: trivial group zariski generalised}
    Let $k$ be a $\s$\=/field with $\s=\{\s_1,\s_2\}$, and consider the algebraic group $\mathbb{G}_a$ over $k$. We know that $k[\mathbb{G}_a]=k[y]$, and by \Cref{ex: tau A and A[i],example: sk A}, that $k[\mathbb{G}_a[i]]=k\bigl[\{\tau(y)\ |\ \tau\in T_\s[i]\}\bigr]$ for any $i\in\mathbb{N}$, and $k\{ \mathbb{G}_a\}=k\{y\}$.
    Consider the trivial $\s$-closed subgroup $G$ of $\mathbb{G}_a$ such that for every $k$-$\s$-algebra $R$, $G(R)=0$, which is defined as a $\s$\=/closed subgroup of $\mathbb{G}_a$ by the $\s$\=/Hopf ideal $\I(G)=[y]$. 

    The Zariski closures $(G[i])_{i\in\mathbb{N}}$ of $G$ with respect to $\mathbb{G}_a$ must be defined by all possible $\s$\=/polynomials in $\I(G)$ up to order $i$. That is, for each $i\in\mathbb{N}$, 
            $\I(G[i])=\bigl(\{\tau(y)\ |\  \tau\in T_\s[i]\}\bigr)$
        and $G[i]$ is the trivial subgroup of $\mathbb{G}_a[i]$. Notice that $\bigcup_{i\in\mathbb{N}}\I(G[i])=\I(G)$. 
        
    There are many choices for a generalised $\s$-algebraic group $(G_i)_{i\in\mathbb{N}}$ with respect to $\mathbb{G}_a$ such that $\bigcup_{i\in\mathbb{N}}\I(G_i)=\I(G)$. This is because the defining $\s$\=/polynomials do not need to be in the lowest order defining ideal that they are allowed in, and instead can be introduced `late'. For example, the ideals $\I(G_0)=(0)$ and 
            $\I(G_i)=\bigl(\{\s_1\tau(y), \tau(y)\ |\ \tau\in T_\s[i-1]\}\bigr)\subseteq k[\mathbb{G}_a[i]]$
        for each $i\geq 1$ define a generalised $\s$-algebraic group $(G_i)_{i\in\mathbb{N}}$ with respect to $\mathbb{G}_a$. Here, for each $i\geq 1$, the $\s$\=/polynomial $\s_2^i(y)$ is not in $\I(G_i)$, instead it is introduced in the next order defining ideal, $\I(G_{i+1})$. This is in contrast to the Zariski closures, where we must have $\s_2^i(y)\in \I(G[i])$ for each $i\in\mathbb{N}$. That is, for a $k$\=/algebra $R$, there is no restrictions on the ${^{\s_2^i}}\mathbb{G}_a$ entry of an element in $G_i(R)$, whereas the entry must be $0$ in $G[i](R)$.
        See \Cref{fig: zariskis vs generalised} for a visualisation of how the chosen generalised $\s$\=/algebraic group is more relaxed than the Zariski closures.
\begin{figure}[!htbp]
    \centering
    \scalebox{0.8}{
        \begin{tikzpicture}
\draw[step=0.5cm,lightgray,very thin] (-0.25,-0.25) grid (2.45,2.45);
\draw[darkgray,->] (0,0) -- (2.35,0) node[anchor=north west] {$\s_1$};
\draw[darkgray,->] (0,0) -- (0,2.35) node[anchor=south east] {$\s_2$};
\foreach \x in {0,0.5,1,...,2}
    \foreach \y in {0,0.5,1,...,2}  
      \node at (\x,\y) {\textcolor{black}{$0$}};
\node at (1,-0.55) {{$\textcolor{black}{G}$}};
\draw[step=0.5cm,lightgray,very thin] (3.75,-0.25) grid (6.45,2.45);
\draw[darkgray,->] (4,0) -- (6.35,0) node[anchor=north west] {$\s_1$};
\draw[darkgray,->] (4,0) -- (4,2.35) node[anchor=south east] {$\s_2$};
\node(a) at (4,0) {{\textcolor{black}{0}}};
\node(b) at ([xshift=0.5cm]$(a)$) {{\textcolor{black}{0}}};
\node(c) at ([xshift=0.5cm]$(b)$) {{\textcolor{black}{0}}};
\node(d) at ([yshift=0.5cm]$(a)$) {{\textcolor{black}{0}}};
\node(e) at ([xshift=0.5cm]$(d)$) {{\textcolor{black}{0}}};
\node(f) at ([yshift=0.5cm]$(d)$) {{\textcolor{black}{0}}};
\node(g) at ([yshift=0.5cm]$(f)$) {{\textcolor{black}{0}}};
\node(h) at ([xshift=0.5cm]$(f)$) {{\textcolor{black}{0}}};
\node(i) at ([xshift=0.5cm]$(e)$) {{\textcolor{black}{0}}};
\node(i) at ([xshift=0.5cm]$(c)$) {{\textcolor{black}{0}}};
\node at (5,-0.55) {$\textcolor{black}{G[i]}$};
\draw[step=0.5cm,lightgray,very thin] (7.75,-0.25) grid (10.45,2.45);
\draw[darkgray,->] (8,0) -- (10.35,0) node[anchor=north west] {$\s_1$};
\draw[darkgray,->] (8,0) -- (8,2.35) node[anchor=south east] {$\s_2$};
\node(a) at (8,0) {{\textcolor{black}{0}}};
\node(b) at ([xshift=0.5cm]$(a)$) {{\textcolor{black}{0}}};
\node(c) at ([xshift=0.5cm]$(b)$) {{\textcolor{black}{0}}};
\node(d) at ([yshift=0.5cm]$(a)$) {{\textcolor{black}{0}}};
\node(e) at ([xshift=0.5cm]$(d)$) {{\textcolor{black}{0}}};
\node(f) at ([yshift=0.5cm]$(d)$) {{\textcolor{black}{0}}};
\node(g) at ([xshift=0.5cm]$(c)$) {{\textcolor{black}{0}}};
\node(h) at ([xshift=0.5cm]$(f)$) {{\textcolor{black}{0}}};
\node(i) at ([xshift=0.5cm]$(e)$) {{\textcolor{black}{0}}};
\fill (8,1.5) circle(2pt);
\node at (9,-0.55) {$\textcolor{black}{G_i}$};
\draw[step=0.5cm,lightgray,very thin] (11.75,-0.25) grid (14.45,2.45);
\draw[darkgray,->] (12,0) -- (14.35,0) node[anchor=north west] {$\s_1$};
\draw[darkgray,->] (12,0) -- (12,2.35) node[anchor=south east] {$\s_2$};
\node(a) at (12,0) {{\textcolor{black}{0}}};
\node(b) at ([xshift=0.5cm]$(a)$) {{\textcolor{black}{0}}};
\node(c) at ([xshift=0.5cm]$(b)$) {{\textcolor{black}{0}}};
\node(d) at ([yshift=0.5cm]$(a)$) {{\textcolor{black}{0}}};
\node(e) at ([xshift=0.5cm]$(d)$) {{\textcolor{black}{0}}};
\node(f) at ([yshift=0.5cm]$(d)$) {{\textcolor{black}{0}}};
\node(g) at ([yshift=0.5cm]$(f)$) {{\textcolor{black}{0}}};
\node(h) at ([xshift=0.5cm]$(f)$) {{\textcolor{black}{0}}};
\node(i) at ([xshift=0.5cm]$(e)$) {{\textcolor{black}{0}}};
\node(k) at ([xshift=0.5cm]$(g)$) {{\textcolor{black}{0}}};
\node(j) at ([xshift=0.5cm]$(h)$) {{\textcolor{black}{0}}};
\node(l) at ([xshift=0.5cm]$(i)$) {{\textcolor{black}{0}}};
\node(m) at ([xshift=0.5cm]$(c)$) {{\textcolor{black}{0}}};
\node(n) at ([xshift=0.5cm]$(m)$) {{\textcolor{black}{0}}};
\fill  (12,2) circle(2pt);
\node at (13,-0.55) {$\textcolor{black}{G_{i+1}}$};
\end{tikzpicture}}
\caption{\Cref{ex: trivial group zariski generalised}: Comparing Zariski closures to a generalised $\s$\=/algebraic group}
\label{fig: zariskis vs generalised}
\end{figure}
    \end{example}

\begin{example}\label[example]{ex: zariski closures and gen group}
Let $\s=\{\s_1,\s_2\}$.
Consider the $\s$\=/closed subgroup $G$ of $\mathbb{G}_m$ over a $\s$\=/field $k$ defined by the $\s$\=/Hopf ideal $\I(G)=[\s_1^2\s_2(y)\s_2^4(y)-1]\subseteq k\{y,y^{-1}\}=k\{\mathbb{G}_m\}$.
    In \Cref{ex: zariski closures example}, we saw that 
    \begin{align*}
             \I(G[i])&=(0)\subseteq k[\mathbb{G}_m[i]] &&\text{for $0\leq i\leq 3$}\\
             \I(G[i])&=\bigl(\{\tau(\s_1^2\s_2(y)\s_2^4(y)-1)\ |\ \tau\in T_\s[i-4]\}\bigr)\subseteq k[\mathbb{G}_m[i]] &&\text{for $i\geq 4$}
         \end{align*}
         are the defining ideals for the Zariski closures $(G[i])_{i\in\mathbb{N}}$ of $G$ in $\mathbb{G}_m$.
    Again, there are many options for a generalised $\s$\=/algebraic group $(G_i)_{i\in\mathbb{N}}$ with respect to $\mathbb{G}_m$ such that $\bigcup_{i\in\mathbb{N}}\I(G_i)=\I(G)$. For example, we can choose the defining ideals for our generalised $\s$\=/algebraic group to be `one step behind' the defining ideals for the Zariski closures. That is, consider the generalised $\s$\=/algebraic group $(G_i)_{i\in\mathbb{N}}$ with respect to $\mathbb{G}_m$ defined by the Hopf ideals
    \begin{align*}
             \I(G_i)&=(0)\subseteq k[\mathbb{G}_m[i]] &&\text{for $0\leq i\leq 4$}\\
             \I(G_i)&=\bigl(\{\tau(\s_1^2\s_2(y)\s_2^4(y)-1)\ |\ \tau\in T_\s[i-5]\}\bigr)\subseteq k[\mathbb{G}_m[i]] &&\text{for $i\geq 5$}.
          \end{align*} 
         Notice that for each $i\in\mathbb{N}$, $\I(G_{i+1})=\I(G[i])k[\mathbb{G}_m[i+1]]$. 
    \end{example}

\subsection{Building a Natural Structure}\label{sec: building natural structure}

Given a generalised $\s$\=/algebraic group $(G_i)_{i\in\mathbb{N}}$ with respect to an algebraic group $\G$ over $k$, the union
    $\bigcup_{i\in\mathbb{N}}\mathbb{I}(G_i)=\mathbb{I}(G)$
defines a $\s$\=/closed subgroup $G$ of $\G$ (\Cref{lem: union of generalised group forms alg subgroup}). One can construct the Zariski closures of this group $G$ (see \Cref{def: zariski closures}), and see how they relate to the generalised $\s$\=/algebraic group $(G_i)_{i\in\mathbb{N}}$. Recall that the generalised groups seemed to be `behind' the Zariski closures, here we will introduce a way to formalise this.

\begin{definition}\label[definition]{def: zariski closures of gen grp}
    Let $(G_i)_{i\in\mathbb{N}}$ be a generalised $\s$\=/algebraic group with respect to an algebraic group $\G$ over $k$. For each $i\in\mathbb{N}$, let \begin{align*}
    \I(G[i])=\left(\bigcup_{j\in\mathbb{N}}\mathbb{I}(G_j)\right)\cap k[\G[i]],
\end{align*} and call $(G[i])_{i\in\mathbb{N}}$ the \textbf{Zariski closures} of $(G_i)_{i\in\mathbb{N}}$ with respect to $\G$.
\end{definition}

For any generalised $\s$\=/algebraic group $(G_i)_{i\in\mathbb{N}}$ such that $\bigcup_{i\in\mathbb{N}}\mathbb{I}(G_i)=\mathbb{I}(G)$, the $i$-th order Zariski closure $G[i]$ of $(G_i)_{i\in\mathbb{N}}$ is the $i$-th order Zariski closure of $G$. Notice that for each $i\in\mathbb{N}$, $\I(G_i)\subseteq k[\G[i]]$ by \Cref{def: gen diff alg grp def ideal theoretic}, and $\I(G_i)\subseteq \I(G)$, so \begin{align}\label{eq: gen ideal contained in zariski}
    \I(G_i)\subseteq\I(G)\cap k[\G[i]]=\I(G[i])
\end{align} and hence $G[i]\leq G_i$.

Since for each $i\in\mathbb{N}$, $\I(G[i])$ is finitely generated as an ideal of $k[\G[i]]$, and $\I(G[i])\subseteq \bigcup_{j\in\mathbb{N}}\mathbb{I}(G_j)$, we see that there must exist some $j\in\mathbb{N}$ such that $\I(G[i])\subseteq \I(G_j)$.

\begin{definition}\label[definition]{def: zariski indicators}
    Let $(G_i)_{i\in\mathbb{N}}$ be a generalised $\s$\=/algebraic group with respect to an algebraic group $\G$ over $k$, and let $(G[i])_{i\in\mathbb{N}}$ be the Zariski closures of $(G_i)_{i\in\mathbb{N}}$ with respect to $\G$. For each $i\in\mathbb{N}$, let \begin{align*}
    g_i=\min\{j\ |\ \I(G[i])\subseteq \I(G_j)\}\in\mathbb{N}
\end{align*}
and call $(g_i)_{i\in\mathbb{N}}$ the \textbf{Zariski indicators} of $(G_i)_{i\in\mathbb{N}}$ with respect to $\G$.
\end{definition}

Notice that for $i\in\mathbb{N}$, $\I(G[i])\subseteq \I(G_{g_i})$, so $\I(G[i])\subseteq \mathbb{I}(G_{g_i})\cap k[\G[i]]$. Conversely, by \Cref{def: zariski closures of gen grp}, $\mathbb{I}(G_{g_i})\cap k[\G[i]]\subseteq \cup_{j\in\mathbb{N}}\I(G_j)\cap k[\G[i]]=\I(G[i])$. That is,  
   $ \I(G[i])=\mathbb{I}(G_{g_i})\cap k[\G[i]]$,
and $g_i$ is the smallest natural number for which this is true.

\begin{lemma}\label[lemma]{lem: gi=0 or i}
Suppose that $(g_i)_{i\in\mathbb{N}}$ are the Zariski indicators of a generalised $\s$\=/algebraic group $(G_i)_{i\in\mathbb{N}}$ with respect to $\G$. 
For every $i\in\mathbb{N}$, $g_i=0$ or $g_i\geq i$.
\end{lemma}
\begin{proof} Let $(G[i])_{i\in\mathbb{N}}$ be the Zariski closures of $(G_i)_{i\in\mathbb{N}}$ with respect to $\G$. Suppose that $g_i\leq i-1$. Then, \begin{align*}
        \I(G[i])\subseteq\I(G_{g_i})\subseteq\I(G_{i-1})\subseteq \I(G[i-1]).
    \end{align*}
    where the second and third inclusions are due to \Cref{def: gen diff alg grp def ideal theoretic,eq: gen ideal contained in zariski} respectively.
    Then for any $f\in \I(G[i])\subseteq \I(G[i-1])$, $\s_j(f)\in\s_j(\I(G[i-1]))\subseteq \I(G[i])$ for any $1\leq j\leq n$ by \Cref{rem: zariski closures have restrictions}. If $\I(G[i])$ contains some non-constant polynomial, then this would suggest $\I(G[i])$ contains elements of infinite order, contradicting that $\I(G[i])\subseteq k[\G[i]]$. Therefore $\I(G[i])=(0)$, that is, $\I(G[i])\subseteq\I(G_0)$ and $g_i=0$.
\end{proof}

Notice that by the proof of \Cref{lem: gi=0 or i}, if $g_m> 0$ for some $m\in\mathbb{N}$, then for all $i\geq m$, $g_i>0$ and hence $g_i\geq i$.
Further, given a $\s$\=/closed subgroup $G$ of some $\G$, if $(g_i)_{i\in\mathbb{N}}$ are the Zariski indicators of the Zariski closures $(G[i])_{i\in\mathbb{N}}$ of $G$ with respect to $\G$, if for some $i\in\mathbb{N}$, $\I(G[i])$ is non-zero, then $g_i=i$.

\begin{example}\label[example]{ex: zariski indicators}
Consider the generalised $\s$\=/algebraic group $(G_i)_{i\in\mathbb{N}}$ and its corresponding Zariski closures $(G[i])_{i\in\mathbb{N}}$ from \Cref{ex: zariski closures and gen group}. For $0\leq i\leq 3$, $\I(G[i])=(0)$, so $g_i=0$, and for all $i\geq 4$, $\I(G[i])\not\subseteq\I(G_i)$, but $\I(G[i])\subseteq \I(G_{i+1})$. That is, for all $i\geq 4$, $g_i=i+1$. 
    \end{example}

Intuitively, the Zariski indicators $(g_i)_{i\in\mathbb{N}}$ provide a way to measure of how different our generalised $\s$\=/algebraic group is to its corresponding Zariski closures.
    Notice that in \Cref{ex: zariski closures and gen group}, the generalised $\s$\=/algebraic group given was intuitively `one step behind' the Zariski closures. Then in \Cref{ex: zariski indicators}, we saw that for all large enough $i\in\mathbb{N}$, $g_i=i+1$ for this particular generalised $\s$\=/algebraic group. We already know that Zariski closures behave `nicely' in the ordinary case (\Cref{the: Zariski closures ordinary case}), so perhaps we can say more about a generalised $\s$\=/algebraic group if it behaves similarly to its Zariski closures. 
    We now prove that if at some critical point our generalised $\s$\=/algebraic group is the same as the Zariski closure, then they are equal for all large enough $i\in\mathbb{N}$. 

    \begin{lemma}\label[lemma]{lem: jm=m means I(G[i])=I(Gi)}
    Let $(G_i)_{i\in\mathbb{N}}$ be a generalised $\s$\=/algebraic group with respect to an algebraic group $\G$ over $k$. Let $(G[i])_{i\in\mathbb{N}}$ be the Zariski closures of $(G_i)_{i\in\mathbb{N}}$ in $\G$ and let $(g_i)_{i\in\mathbb{N}}$ be the Zariski indicators of $(G_i)_{i\in\mathbb{N}}$ with respect to $\G$. Suppose that there exists an $m\in\mathbb{N}$ such that  \begin{align*}
        \I(G[i+1])=(\I(G[i]),\s_1(\I(G[i])),\dots,\s_n(\I(G[i])))
    \end{align*}
    for all $i\geq m$. If $g_m-m=0$, then $\I(G[i])=\I(G_i)$ for all $i\geq m$.
\end{lemma}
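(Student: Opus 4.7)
The plan is to proceed by induction on $i \geq m$, using the hypothesis $j_m - m = 0$ as the base case and the stability assumption $\I(G_{i+1}) = (\I(G_i), \s_1(\I(G_i)), \dots, \s_n(\I(G_i)))$ as the engine of the induction step. Throughout the argument I will use two facts already established in the excerpt: first, that $G_i \leq G[i]$ for all $i \in \mathbb{N}$, which by the order-reversing correspondence for closed subgroups gives $\I(G[i]) \subseteq \I(G_i)$; and second, that by \Cref{def: gen diff alg grp def ideal theoretic}, $\I(G[i]) \subseteq \I(G[i+1])$ and $\s_j(\I(G[i])) \subseteq \I(G[i+1])$ for each $1 \leq j \leq n$.

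For the base case $i = m$, the hypothesis $j_m - m = 0$ means $j_m = m$, so by definition of the Zariski indicator $\I(G_m) \subseteq \I(G[m])$. Combined with the containment $\I(G[m]) \subseteq \I(G_m)$ noted above, we obtain $\I(G[m]) = \I(G_m)$ immediately.

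For the induction step, suppose $\I(G[i]) = \I(G_i)$ for some $i \geq m$. The hypothesis of the lemma gives
\begin{align*}
    \I(G_{i+1}) = (\I(G_i), \s_1(\I(G_i)), \dots, \s_n(\I(G_i))) = (\I(G[i]), \s_1(\I(G[i])), \dots, \s_n(\I(G[i]))),
\end{align*}
where the second equality uses the inductive hypothesis. Each of the generators on the right lies in $\I(G[i+1])$ by the two stability properties of a generalised $\s$-algebraic group recalled above, so $\I(G_{i+1}) \subseteq \I(G[i+1])$. Together with the reverse inclusion $\I(G[i+1]) \subseteq \I(G_{i+1})$ coming from $G_{i+1} \leq G[i+1]$, this yields $\I(G[i+1]) = \I(G_{i+1})$, completing the induction.

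There is no real obstacle here: the content of the lemma is that once $\I(G[m])$ already captures $\I(G_m)$, the stabilisation formula forces every subsequent $\I(G_{i+1})$ to be generated by elements already inside $\I(G[i+1])$, so the generalised splitting cannot lag behind the Zariski closures from index $m$ onward. The only thing one must be careful about is unwinding the definition of $j_m$ correctly in the base case, since $j_m \geq m$ always and the hypothesis forces equality.
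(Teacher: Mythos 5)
Your proof is correct and follows essentially the same route as the paper: the paper phrases the induction as showing $j_i=i$ for all $i\geq m$, but the content is identical — the stabilisation formula plus the inductive hypothesis forces $\I(G_{i+1})\subseteq\I(G[i+1])$, and the reverse inclusion from $G_{i+1}\leq G[i+1]$ gives equality.
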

\begin{proof}
Firstly notice that if $g_i=i$, then $\I(G[i])=\I(G_i)$. So this proof reduces to showing that if $g_m=m$, then for all $i\geq m$, $g_i=i$. This can be proven by induction on the value of $i\geq m$. Clearly the base case holds, so suppose that for some $i\geq m$, $g_i=i$. Then\begin{align*}
        \I(G[i+1])&=(\I(G[i]),\s_1(\I(G[i])),\dots,\s_n(\I(G[i]))) &&\text{as $i\geq m$}\\
        &=(\I(G_i),\s_1(\I(G_i)),\dots,\s_n(\I(G_i))) &&\text{by inductive hypothesis}\\
        &\subseteq \I(G_{i+1})&&\text{by \Cref{rem: one inclusion for gen grp}}
    \end{align*}
    so $g_{i+1}\leq i+1$, hence $g_{i+1}=i+1$ as required.
\end{proof}

\subsection{The Projections of a Generalised Difference Algebraic Group}\label{sec: forming projections}

Recall that the key difference between the Zariski closures $(G[i])_{i\in\mathbb{N}}$ of a $\s$\=/closed subgroup $G$ of $\G$ and a generalised $\s$\=/algebraic group $(G_i)_{i\in\mathbb{N}}$ with respect to $\G$ is that for $i\in\mathbb{N}$, $\pi_{i+1}\bigl( G[i+1]\bigr)= G[i]$, whereas $\pi_{i+1}\bigl(G_{i+1}\bigr)\leq G_i$. We will now consider the image of $G_{i+1}$ under the map $\pi_{i+1}$, and see that if we have a generalised $\s$\=/algebraic group that is not equal to its Zariski closures, these images under projection will form a generalised $\s$\=/algebraic group that is `closer' to the Zariski closures than the original generalised $\s$\=/algebraic group.

\begin{definition}\label[definition]{def: projections}
    Let $(G_i)_{i\in\mathbb{N}}$ be a generalised $\s$\=/algebraic group with respect to some algebraic group $\G$ over $k$. For each $i\in\mathbb{N}$, let $F_i=\pi_{i+1}\bigl(G_{i+1}\bigr)$ where $\pi_{i+1}$ is the projection map defined in \cref{eq: def of pi map}. We refer to the sequence $(F_i)_{i\in\mathbb{N}}$ as the \textbf{projections of $(G_i)_{i\in\mathbb{N}}$ under $\pi$}. 
\end{definition}

That is, for all $i\in\mathbb{N}$, $\pi_{i+1}\colon G_{i+1}\rightarrow F_i$ is a quotient map.

\begin{lemma}\label[lemma]{lem: projections also generalised grps}
Let $(G_i)_{i\in\mathbb{N}}$ be a generalised $\s$\=/algebraic group with respect to an algebraic group $\G$ over $k$. The projections $(F_i)_{i\in\mathbb{N}}$ of $(G_i)_{i\in\mathbb{N}}$ under $\pi$ form a generalised $\s$\=/algebraic group with respect to $\G$.
\end{lemma}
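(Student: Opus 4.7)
The plan is to verify the three conditions of \Cref{def: gen diff alg grp def group theoretic} for the sequence $(F[i])_{i\in\mathbb{N}}$, using the corresponding properties for $(G[i])_{i\in\mathbb{N}}$ together with the commutative diagram in \Cref{fig: comm diagram partial- for g}. The key observation that will make everything work is that, since $\pi[i+1]\colon G[i+1]\rightarrow G[i]$ is a well-defined restriction, we have $F[i]=\pi[i+1](G[i+1])\leq G[i]$ as a closed subgroup of $\G[i]$.

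First I would note that the image $F[i]=\pi[i+1](G[i+1])$ is a closed subgroup of $\G[i]$, since it is the image of a morphism of algebraic groups (namely the restriction of $\pi[i+1]\colon \G[i+1]\rightarrow \G[i]$ to $G[i+1]$). This disposes of the first bullet point. Next I would check that $\pi[i]\colon F[i]\rightarrow F[i-1]$ is well-defined for $i\geq 1$. Because $F[i]\leq G[i]$ as observed above, applying $\pi[i]$ gives $\pi[i](F[i])\subseteq \pi[i](G[i])=F[i-1]$, so the restriction is well-defined.

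The main step is the verification of the third bullet point, which is where the commutative diagram is needed. For $i\geq 1$ and $1\leq j\leq n$, I would apply \Cref{fig: comm diagram partial- for g} to $(G[i])_{i\in\mathbb{N}}$ at level $i+1$, giving the identity $^{\s_j}\!\pi[i]\circ \s_j[i+1]=\s_j[i]\circ \pi[i+1]$ on $G[i+1]$. Therefore
\begin{align*}
    \s_j[i](F[i])=\s_j[i](\pi[i+1](G[i+1]))=\,{^{\s_j}}\pi[i](\s_j[i+1](G[i+1]))\subseteq\, {^{\s_j}}\pi[i]({^{\s_j}}G[i])=\,{^{\s_j}}F[i-1],
\end{align*}
using that $\s_j[i+1]\colon G[i+1]\rightarrow {^{\s_j}}G[i]$ is well-defined in the original generalised $\s$\=/algebraic group, and that base change commutes with taking images. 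This shows $\s_j[i]\colon F[i]\rightarrow {^{\s_j}}F[i-1]$ is well-defined.

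I do not expect a real obstacle here: once the observation $F[i]\leq G[i]$ is in place, both restrictions are obtained by chasing the commutative square. The only thing to be careful about is that one genuinely needs the commutation of $\s_j[i]$ and $\pi[i]$ (rather than just $F[i]\leq G[i]$) in order to land inside $^{\s_j}F[i-1]$ and not merely inside $^{\s_j}G[i-1]$; this is exactly the content of \Cref{fig: comm diagram partial- for g}.
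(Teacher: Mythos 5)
Your proof is correct. It verifies the three conditions of the group-theoretic definition (\Cref{def: gen diff alg grp def group theoretic}) directly, by noting $F[i]=\pi[i+1](G[i+1])\leq G[i]$ and chasing the commutative square $\s_j[i]\circ\pi[i+1]={^{\s_j}}\pi[i]\circ\s_j[i+1]$; the paper instead makes the single observation $\I(F[i])=\I(G[i+1])\cap k[\G[i]]$ and checks the ideal-theoretic form of the definition (\Cref{def: gen diff alg grp def ideal theoretic}), which is precisely the dual of your argument via \Cref{lem: restrictions well defined}. Both routes are equally valid and short: yours has the merit of making explicit exactly where the commutativity of the projections is used (and correctly notes that $F[i]\leq G[i]$ alone would only land you in ${^{\s_j}}G[i-1]$ rather than ${^{\s_j}}F[i-1]$), while the paper's ideal formulation makes the containments $\I(F[i-1])\subseteq\I(F[i])$ and $\s_j(\I(F[i-1]))\subseteq\I(F[i])$ follow in one line from the corresponding containments for $(G[i])_{i\in\mathbb{N}}$ together with $\s_j(k[\G[i-1]])\subseteq k[\G[i]]$.
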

    \begin{proof}
    Notice that for any $i\in\mathbb{N}$, $F_i$ is the closed subgroup of $\G[i]$ defined by \begin{align}\label{eq: def ideal for F}
        \mathbb{I}(F_i)=\mathbb{I}(G_{i+1})\cap k[\G[i]].
    \end{align}
    The fact that the sequence $(F_i)_{i\in\mathbb{N}}$ satisfies \Cref{def: gen diff alg grp def ideal theoretic} then follows from the fact that the sequence $(G_i)_{i\in\mathbb{N}}$ satisfies \Cref{def: gen diff alg grp def ideal theoretic}.
    \end{proof}

\begin{remark}\label[remark]{rem: zariski closures grp and proj same}
Given a generalised $\s$\=/algebraic group $(G_i)_{i\in\mathbb{N}}$ with respect to $\G$ and the projections $(F_i)_{i\in\mathbb{N}}$ of $(G_i)_{i\in\mathbb{N}}$ under $\pi$, notice that
   $\bigcup_{i\in\mathbb{N}}\I(G_i)=\bigcup_{i\in\mathbb{N}}\I(F_i)$. Therefore, by \Cref{def: zariski closures of gen grp}, the Zariski closures of $(G_i)_{i\in\mathbb{N}}$ with respect to $\G$ are the same as the Zariski closures of $(F_i)_{i\in\mathbb{N}}$ with respect to $\G$.
\end{remark}

\begin{lemma}\label[lemma]{lem: zariskis equal ji>i means tji leq ji-1}
 Let $(G_i)_{i\in\mathbb{N}}$ be a generalised $\s$\=/algebraic group with respect to an algebraic group $\G$ over $k$ and let $(F_i)_{i\in\mathbb{N}}$ be the projections of $(G_i)_{i\in\mathbb{N}}$ under $\pi$. Let $(g_i)_{i\in\mathbb{N}}$ and $(f_i)_{i\in\mathbb{N}}$ be the Zariski indicators of $(G_i)_{i\in\mathbb{N}}$ and $(F_i)_{i\in\mathbb{N}}$ respectively (\Cref{def: zariski indicators}). If for some $i\in\mathbb{N}$, $g_i>i$, then $f_i\leq g_i-1$.
\end{lemma}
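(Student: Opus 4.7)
The plan is to unpack the definitions of $j_i$ and $f_i$ and exploit the description $\I(F[i])=\I(G[i+1])\cap k[\G[i]]$ noted in the proof of \Cref{lem: projections also generalised grps}. The hypothesis $j_i>i$ gives us room to shift the index down by one when passing from $G[j_i]$ to $F[j_i-1]$, and the inclusion $\I(G_i)\subseteq \I(G[j_i])$ together with the obvious fact $\I(G_i)\subseteq k[\G[i]]\subseteq k[\G[j_i-1]]$ should force $\I(G_i)\subseteq \I(F[j_i-1])$.

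More concretely, I would first note that by definition of the Zariski indicator, $\I(G_i)\subseteq \I(G[j_i])$, and that $\I(G_i)$ lives inside $k[\G[i]]$. Since $j_i>i$ means $j_i-1\geq i$, we have $k[\G[i]]\subseteq k[\G[j_i-1]]$, so $\I(G_i)$ is contained in the intersection $\I(G[j_i])\cap k[\G[j_i-1]]$, which is precisely $\I(F[j_i-1])$. This shows that there exists some $j\leq j_i-1$ with $\I(G_i)\subseteq\I(F[j])$, namely $j=j_i-1$.

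To conclude $f_i\leq j_i-1$, I recall that $f_i=\max\{i,\min\{j\mid \I(G_i)\subseteq\I(F[j])\}\}$. The previous paragraph bounds the minimum by $j_i-1$, and the bound $i\leq j_i-1$ follows from the hypothesis $j_i>i$, so the maximum is likewise at most $j_i-1$. This gives the claim.

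The main point to check carefully is the index shift: the hypothesis $j_i>i$ is exactly what allows $j_i-1$ to still be $\geq i$, so that the containment $k[\G[i]]\subseteq k[\G[j_i-1]]$ is available; without this hypothesis one cannot descend. There are no real obstacles beyond unwinding definitions — the lemma is essentially saying that passing from $(G[i])$ to its projections $(F[i])$ under $\pi$ removes exactly the one step of ``lag'' behind the Zariski closures that is always available when the original Zariski indicator strictly exceeds $i$.
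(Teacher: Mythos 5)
Your proposal is correct and follows essentially the same route as the paper: both arguments use the hypothesis $j_i>i$ to insert $k[\G[j_i-1]]$ between $k[\G[i]]$ and $k[\G[j_i]]$ and then identify $\I(G[j_i])\cap k[\G[j_i-1]]$ with $\I(F[j_i-1])$, yielding $\I(G_i)\subseteq\I(F[j_i-1])$ with $j_i-1\geq i$. The only cosmetic difference is that the paper phrases the key step as a chain of equalities $\I(G_i)=\I(F[j_i-1])\cap k[\G[i]]$ while you work directly with the inclusion, which suffices.
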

\begin{proof}
Let $(G[i])_{i\in\mathbb{N}}$ be the Zariski closures of $(G_i)_{i\in\mathbb{N}}$ (and hence of $(F_i)_{i\in\mathbb{N}}$ by \Cref{rem: zariski closures grp and proj same}) with respect to $\G$.
Recall that for each $i\in\mathbb{N}$, $g_i$ and $f_i$ are the smallest natural numbers such that $\I(G[i])\subseteq \I(G_{g_i})$ and $\I(G[i])\subseteq \I(F_{f_i})$ respectively.
 Suppose that $g_i>i$ for some $i\in\mathbb{N}$. Then \begin{align*}
        \I(G[i])&=\I(G_{g_i})\cap k[\G[i]]\\
        &=\I(G_{g_i})\cap k[\G[g_i-1]]\cap k[\G[i]] &&\text{as $g_i>i$}\\
        &=\I(F_{g_i-1})\cap k[\G[i]] &&\text{by (\ref{eq: def ideal for F})}
    \end{align*}
    so $\I(G[i])\subseteq \I(F_{g_i-1})$, that is, $f_i\leq g_i-1$ as required.
\end{proof}

This lemma gives a hint why the projections of a generalised difference algebraic group are a useful construction, they are closer to the shared Zariski closures than the original group.

\begin{example}\label[example]{ex: projections of my group}
    Consider the generalised $\s$\=/algebraic group $(G_i)_{i\in\mathbb{N}}$ and its corresponding Zariski closures $(G[i])_{i\in\mathbb{N}}$ from \Cref{ex: zariski closures and gen group}. In \Cref{ex: zariski indicators}, we found that the Zariski indicators $(g_i)_{i\in\mathbb{N}}$ of $(G_i)_{i\in\mathbb{N}}$ with respect to $\mathbb{G}_m$ are $g_i=0$ for $0\leq i\leq3$, and $g_i=i+1$ for $i\geq 4$. The projections $(F_i)_{i\in\mathbb{N}}$ of $(G_i)_{i\in\mathbb{N}}$ under $\pi$ are defined by \begin{align*}
        \I(F_i)=\I(G_{i+1})\cap k[\G[i]]=\I(G[i])k[\G[i+1]]\cap k[\G[i]]=\I(G[i])
    \end{align*} for each $i\in\mathbb{N}$. So we do see that in fact the projections $(F_i)_{i\in\mathbb{N}}$ of $(G_i)_{i\in\mathbb{N}}$ are `closer' to $(G[i])_{i\in\mathbb{N}}$ than the generalised $\s$\=/algebraic group $(G_i)_{i\in\mathbb{N}}$. Formally, the Zariski indicators $(f_i)_{i\in\mathbb{N}}$ of $(F_i)_{i\in\mathbb{N}}$ with respect to $\mathbb{G}_m$ are $f_i=i$ for all $i\geq 3$.
\end{example}

\subsection{The Extensions of a Generalised Difference Algebraic Group}\label{sec: forming extensions}

We now introduce a construction which will allow us to be able to consider, for example, $G_i$ or ${^{\s_j}}G_i$ as a subgroup of $\G[i+1]$ for $i\in\mathbb{N}$ and $1\leq j\leq n$. This construction will be useful in the proof of the ideal generation property for generalised difference algebraic groups.

Firstly, for a subset $A\subseteq T_\s$, we put $\G_A=\prod_{\tau\in A}{^\tau}\G$. Therefore, for any $A\subseteq B\subseteq T_\s$, there is a quotient map $\G_B\rightarrow\G_A$, and $k[\G_A]\subseteq k[\G_B]$. Notice that
    $\G_{T_\s[i]}=\G[i]$ and $\G_{\s_j(T_\s[i])}={^{\s_j}}\G[i]$
for any $i\in\mathbb{N}$, $1\leq j\leq n$. This notation allows us to introduce a new definition.

\begin{definition}\label[definition]{def: extension}
Let $\G$ be an algebraic group over $k$.
Suppose that we have subsets $A\subseteq B\subseteq T_\s$.
     Given a closed subgroup $\H\leq \G_A$ defined by the ideal $\I(\H)\subseteq k[\G_A]$, let 
        $\widehat{\H}^{\G_B}$
    be the closed subgroup of $\G_B$ defined by $\I(\H)k[\G_B]$, the ideal of $k[\G_B]$ generated by $\I(\H)$. We call $\widehat{\H}^{\G_B}$ the \textbf{extension of $G$ to $\G_B$}.
\end{definition}

Essentially, given a closed subgroup $\H\leq \G_A$ we find its extension to $\G_B$ by `filling in' any gaps with $^\tau \G$ for the appropriate $\tau\in B\backslash A$.
Some of the most crucial constructions of these types will be the extension of $G_i$ to $\G[i+1]$, and the extension of ${^{\s_j}}G_i$ to $\G[i+1]$ for $i\in\mathbb{N}$ and $1\leq j\leq n$. See \Cref{fig comp: Gi s1 Gi and extension s1 Gi}  for visualisations of these constructions in the case where we have two endomorphisms.

\begin{figure}[!htbp]
\centering
\begin{tikzpicture}
\draw[step=0.5cm,lightgray,very thin] (-0.45,-0.45) grid (2.45,2.45);
\draw[darkgray,->] (0,0) -- (2.25,0) node[anchor=north west] {$\s_1$};
\draw[darkgray,->] (0,0) -- (0,2.25) node[anchor=south east] {$\s_2$};
\fill[red] (0,0) circle(2pt);
\fill[red] (0.5,0) circle(2pt) (0,0.5) circle(2pt);
\fill[red] [radius=2pt] (1,0) circle[] (0.5,0.5) circle[] (0,1) circle[];
\node at (1.25,-1) {{$G_i$}};
\draw[step=0.5cm,lightgray,very thin] (3.55,-0.45) grid (6.45,2.45);
\draw[darkgray,->] (4,0) -- (6.25,0) node[anchor=north west] {$\s_1$};
\draw[darkgray,->] (4,0) -- (4,2.25) node[anchor=south east] {$\s_2$};
\fill[red] (4,0) circle(2pt);
\fill[red] (4.5,0) circle(2pt) (4,0.5) circle(2pt);
\fill[red] [radius=2pt] (5,0) circle[] (4.5,0.5) circle[] (4,1) circle[];
\fill [radius=2pt] (5.5,0) circle[] (5,0.5) circle[] (4.5,1) circle[] (4,1.5) circle[];
\node at (5.5,1.35) {\footnotesize copies of ${^\tau}\G$ where };
\node at (5.85,1.1) {\footnotesize  $\ord(\tau)=i+1$};
\node at (3.6,0.25) {\footnotesize \textcolor{red}{$G_i$}};
\node at (5.4,-0.9) {{$\widehat{G_i}^{\G[i+1]}$}};
\draw[step=0.5cm,lightgray,very thin] (7.55,-0.45) grid (10.45,2.45);
\draw[darkgray,->] (8,0) -- (10.25,0) node[anchor=north west] {$\s_1$};
\draw[darkgray,->] (8,0) -- (8,2.25) node[anchor=south east] {$\s_2$};
\fill[red] (8.5,0) circle(2pt);
\fill[red] [radius=2pt] (9,0) circle[] (8.5,0.5) circle[];
\fill[red] [radius=2pt] (9.5,0) circle[] (9,0.5) circle[] (8.5,1) circle[];
\node at (9.25,-1) {{${^{\s_1}}G_i$}};
\draw[step=0.5cm,lightgray,very thin] (11.55,-0.45) grid (14.45,2.45);
\draw[darkgray,->] (12,0) -- (14.25,0) node[anchor=north west] {$\s_1$};
\draw[darkgray,->] (12,0) -- (12,2.25) node[anchor=south east] {$\s_2$};
\fill[red] (12.5,0) circle(2pt);
\fill[red] [radius=2pt] (13,0) circle[] (12.5,0.5) circle[];
\fill[red] [radius=2pt] (13.5,0) circle[] (13,0.5) circle[] (12.5,1) circle[];
\fill (12,0) circle(2pt) (12,0.5) circle(2pt) (12,1) circle(2pt) (12,1.5) circle(2pt); 
\node at (13.25,0.75) {\footnotesize \textcolor{red}{${^{\s_1}}G_i$}};
\node at (12.7,1.9) {\footnotesize copies of ${^\tau}\G$};
\node at (12.95,1.65) {\footnotesize where $\s_1\nmid \tau$};
\node at (13.4,-0.9) {{$\widehat{{^{\s_1}}G_i}^{\G[i+1]}$}};
\end{tikzpicture}
\caption{The Extensions of $G_i$ and ${^{\s_1}}G_i$ to $\G[i+1]$ when $n=2$}
\label{fig comp: Gi s1 Gi and extension s1 Gi}
\end{figure}

These crucial constructions can be written explicitly in the ordinary case,
let $(G_i)_{i\in\mathbb{N}}$ be an ordinary generalised $\s$\=/algebraic group with respect to $\G$. Then we have that \begin{align*}
    \widehat{G_i}^{\G[i+1]}=G_i\times {^{\s^{i+1}}}\G\quad\text{and}\quad \widehat{{^\s}G_i}^{\G[i+1]}=\G\times{^\s}G_i
\end{align*}
for every $i\in\mathbb{N}$. The reason these constructions are difficult to write explicitly as products in the partial case is due to the order of the products. 

For this reason, in order to work with the extensions of a generalised difference algebraic group, we consider some natural isomorphisms which mix up the order of the products. Consider the inverse morphisms of algebraic groups \begin{align*}
    \phi_{(A,B)}\colon \G_B\rightarrow \G_A\times \G_{B\backslash A} \quad\text{and}\quad \phi_{(A,B)}^{-1}\colon \G_A\times \G_{B\backslash A}\rightarrow \G_B
\end{align*}
for some sets $A\subseteq B\subseteq T_\s$. Then for a closed subgroup $\H\leq \G_A$, we have \begin{align}\label{equation: mixing up order of subgroup}
    \phi_{(A,B)}\left(\widehat{\H}^{\G_B}\right)=\H\times \G_{B\backslash A}\quad\text{and}\quad \phi_{(A,B)}^{-1}(\H\times \G_{B\backslash A})=\widehat{\H}^{\G_B}.
\end{align}

We will use these maps changing the order of products to see how extensions commute with projection maps and base change.

\begin{lemma}\label[lemma]{lem: projections commute with extension}
    Let $\G$ be an algebraic group over $k$, and suppose that we have subsets $A, B\subseteq C\subseteq T_\s$. (Assume $\emptyset\subsetneq A\cap B$).
    Consider the projection map $\mu\colon \G_C\rightarrow \G_B$. 
    For a closed subgroup $\H\leq\G_A$, \begin{align*}
        \mu\left(\widehat{\H}^{\G_C}\right)=\widehat{\mu_1(\H)}^{\G_{B}}
    \end{align*}
    where $\mu_1\colon \G_A\rightarrow\G_{A\cap B}$ is the projection from $\G_A$ to $\G_{A\cap B}$. 
\end{lemma}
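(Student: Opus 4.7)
The plan is to reduce the statement to a product decomposition via the rearrangement isomorphisms recorded in \eqref{equation: mixing up order of subgroup}. First, since $B \subseteq C$, I would decompose $C$ as the disjoint union $A \cup (C \setminus A)$ and $B$ as the disjoint union $(A \cap B) \cup (B \setminus A)$, and observe that $B \setminus A = B \setminus (A \cap B) \subseteq C \setminus A$. Under $\phi_{(A,C)} \colon \G_C \to \G_A \times \G_{C \setminus A}$, equation \eqref{equation: mixing up order of subgroup} identifies $\widehat{\H}^{\G_C}$ with $\H \times \G_{C \setminus A}$.

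Next, I would verify that under the rearrangement $\phi_{(A,C)}$ on the source and $\phi_{(A \cap B, B)}$ on the target, the projection $\mu \colon \G_C \to \G_B$ becomes the product morphism $\mu_1 \times \nu \colon \G_A \times \G_{C \setminus A} \to \G_{A \cap B} \times \G_{B \setminus A}$, where $\nu \colon \G_{C \setminus A} \to \G_{B \setminus A}$ is the natural projection. This is a bookkeeping check: the indices $\tau \in A \cap B$ come from the $\G_A$ factor via $\mu_1$, while the indices $\tau \in B \setminus A$ come from the $\G_{C \setminus A}$ factor via $\nu$, and these together exhaust $B$.

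Having set this up, applying $\mu$ to $\widehat{\H}^{\G_C}$ translates, via the rearrangement isomorphisms, into
\[
(\mu_1 \times \nu)(\H \times \G_{C \setminus A}) = \mu_1(\H) \times \nu(\G_{C \setminus A}) = \mu_1(\H) \times \G_{B \setminus A},
\]
using that $\nu$ is surjective. Pulling back along $\phi_{(A \cap B, B)}^{-1}$ and applying \eqref{equation: mixing up order of subgroup} once more identifies this subgroup of $\G_B$ with $\widehat{\mu_1(\H)}^{\G_B}$, as required. The main obstacle I anticipate is the combinatorial bookkeeping in the second step, namely checking that $\mu$ really does intertwine with $\mu_1 \times \nu$ under the two rearrangements; this reduces to the compatible partitions $C = A \sqcup (C \setminus A)$ and $B = (A \cap B) \sqcup (B \setminus A)$ and the fact that each coordinate projection $\G_C \to {^\tau}\G$ for $\tau \in B$ is routed through exactly one factor on each side. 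If one prefers to avoid this, an entirely dual argument works: $\widehat{\H}^{\G_C}$ is cut out by $\I(\H) k[\G_C] = \I(\H) \otimes_k k[\G_{C \setminus A}]$, the image under $\mu$ is defined by the intersection of this ideal with $k[\G_B] = k[\G_{A \cap B}] \otimes_k k[\G_{B \setminus A}]$, and flatness gives $(\I(\H) \otimes_k k[\G_{C \setminus A}]) \cap k[\G_B] = (\I(\H) \cap k[\G_{A \cap B}]) \otimes_k k[\G_{B \setminus A}]$, which is precisely the defining ideal of $\widehat{\mu_1(\H)}^{\G_B}$.
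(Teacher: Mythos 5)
Your proof is correct and follows essentially the same route as the paper's: both factor $\mu$ through the rearrangement isomorphisms as $\phi_{(A\cap B,B)}^{-1}\circ(\mu_1,\nu)\circ\phi_{(A,C)}$ (your $\nu$ is the paper's $\mu_2$, since $B\setminus A=B\setminus(A\cap B)$) and then apply \cref{equation: mixing up order of subgroup} to $\H\times\G_{C\setminus A}$. The additional ideal-theoretic argument you sketch is a fine dual check but not needed.
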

\begin{proof}
We can write the map $\mu$ as $\mu=\phi_{(A\cap B, B)}^{-1}\circ (\mu_1,\mu_2)\circ \phi_{(A,C)}$, where 
\begin{align*}
    \mu_1\colon \G_A\rightarrow \G_{A\cap B},\quad \mu_2\colon \G_{C\backslash A}\rightarrow \G_{B\backslash (A\cap B)}.
\end{align*}
Further, by properties of projection maps, $(\mu_1,\mu_2)(\H\times \G_{C\backslash A})=\mu_1(\H)\times \G_{B\backslash (A\cap B)}$
for any closed subgroup $\H\leq \G_A$. The result now follows from \cref{equation: mixing up order of subgroup}.
\end{proof}

\begin{corollary}\label[corollary]{cor: kernel of extensions}
    With the notation of \Cref{lem: projections commute with extension}, \begin{align*}
\ker\left(\mu\big|_{\widehat{\H}^{\G_C}}\right)\leq\widehat{\ker(\mu_1|_{\H})}^{\G_C}.
    \end{align*}
\end{corollary}

Let's see some key examples of the applications of \Cref{lem: projections commute with extension}.

\begin{corollary}\label[corollary]{lem: extension commutes with}
    Let $(G_i)_{i\in\mathbb{N}}$ be a generalised $\s$\=/algebraic group with respect to an algebraic group $\G$ over $k$. Then  \begin{align*}
            (a) \ \pi_{i+1}\left(\widehat{G_i}^{\G[i+1]}\right)=G_i\quad\text{and}\quad (b)\ (\s_j)_{i+1}\left( \widehat{^{\s_j}G_i}^{\G[i+1]}\right)={^{\s_j}}G_i
        \end{align*}
    for each $i\in\mathbb{N}$, $1\leq j\leq n$ and further \begin{align*}
        (c)\ \pi_{i+1}\left(\widehat{{^{\s_j}}G_i}^{\G[i+1]}\right)=\widehat{{^{\s_j}}{\left(\pi_i\bigl(G_i\bigr)\right)}}^{\G[i]} \quad \text{and} \quad (d)\ (\s_j)_{i+1}\left(\widehat{G_i}^{\G[i+1]}\right)=\widehat{(\s_j)_i\bigl(G_i\bigr)}^{^{\s_j}\G[i]}
    \end{align*}
    for each $i\geq 1$, $1\leq j\leq n$, where $\pi_{i+1}$ and $(\s_j)_{i+1}$ are the projections maps \cref{eq: def of pi map,eq: def of sigma map}.
\end{corollary}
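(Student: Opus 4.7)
The plan is to derive all four identities by direct application of \Cref{lem: projections commute with extension}, with the bulk of the work being the careful identification of the relevant subsets $A$, $B$, $C$ of $T_\s$ in each case. Throughout, I set $C = T_\s[i+1]$ since both $\pi[i+1]$ and $\s_j[i+1]$ have domain $\G[i+1] = \G_C$. The codomain is $\G_{T_\s[i]}$ for $\pi[i+1]$ and $\G_{\s_j(T_\s[i])}$ for $\s_j[i+1]$, giving the choice of $B$. The subgroup being extended lives inside $\G_{T_\s[i]}$ in (a) and (d), and inside $\G_{\s_j(T_\s[i])}$ in (b) and (c), which fixes $A$.

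For (a), take $A = B = T_\s[i]$. Then $A \cap B = A$, so the map $\mu_1$ in \Cref{lem: projections commute with extension} is the identity on $\G[i]$, and the lemma returns $\widehat{G[i]}^{\G[i]} = G[i]$. Part (b) is the analogous computation with $A = B = \s_j(T_\s[i])$: again $\mu_1$ is the identity and we recover $^{\s_j}G[i]$.

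For (c) and (d), the key combinatorial observation is that, for $i \geq 1$,
\begin{equation*}
T_\s[i] \cap \s_j(T_\s[i]) = \s_j(T_\s[i-1]),
\end{equation*}
since an element of $T_\s[i]$ divisible by $\s_j$ is precisely an element of the form $\s_j \tau$ with $\ord(\tau) \leq i-1$. In (c), $A = \s_j(T_\s[i])$ and $B = T_\s[i]$, so $A \cap B = \s_j(T_\s[i-1])$. The induced projection $\mu_1 \colon {^{\s_j}}\G[i] \to \G_{\s_j(T_\s[i-1])}$ is exactly $^{\s_j}(\pi[i])$, hence $\mu_1(^{\s_j}G[i]) = {^{\s_j}}(\pi[i](G[i]))$, and the lemma yields $\widehat{{^{\s_j}}(\pi[i](G[i]))}^{\G[i]}$. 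In (d), $A = T_\s[i]$ and $B = \s_j(T_\s[i])$, so again $A \cap B = \s_j(T_\s[i-1])$. Here $\mu_1 \colon \G[i] \to \G_{\s_j(T_\s[i-1])}$ coincides with $\s_j[i]$, giving $\mu_1(G[i]) = \s_j[i](G[i])$ and producing the claimed $\widehat{\s_j[i](G[i])}^{^{\s_j}\G[i]}$.

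There is no real obstacle here beyond bookkeeping; the only subtlety is verifying the intersection identity $T_\s[i] \cap \s_j(T_\s[i]) = \s_j(T_\s[i-1])$, which explains why (c) and (d) require $i \geq 1$, whereas (a) and (b) hold already for $i = 0$. Once this is noted and the four choices of $(A,B,C)$ are lined up, each part is a one-line invocation of \Cref{lem: projections commute with extension}.
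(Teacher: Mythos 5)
Your proposal is correct and follows exactly the route the paper takes: its proof of this corollary consists precisely of invoking \Cref{lem: projections commute with extension} with "the appropriate choices for the subsets $A$, $B$ and $C$", and your explicit identification of those subsets (including the intersection identity $T_\s[i]\cap\s_j(T_\s[i])=\s_j(T_\s[i-1])$ and the recognition of $\mu_1$ as the identity, ${^{\s_j}}(\pi[i])$, or $\s_j[i]$ in the respective cases) supplies the bookkeeping the paper leaves implicit.
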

\begin{proof}
     These identities follow from \Cref{lem: projections commute with extension}, by making the appropriate choices for the subsets $A$, $B$ and $C$. Looking at \Cref{fig comp: Gi s1 Gi and extension s1 Gi}, we can visualise why the identities $(a)$ and $(b)$ hold in the $n=2$ case.
\end{proof}

This tells us that our maps $\pi_i$ and $(\s_j)_i$ for $i\geq 1$ and $1\leq j\leq n$ commute with taking extensions. We also see that base change commutes with extension.

\begin{lemma}\label[lemma]{lem: base change commutes with extension}

Let $\G$ be an algebraic group over $k$, and suppose that $A\subseteq B\subseteq T_\s$. 
    For any any $\tau\in T_\s$,  \begin{align*}
        {^\tau}\left(\widehat{\H}^{\G_B}\right)&=\widehat{{^\tau}\H}^{^\tau{\G_B}}
    \end{align*}
    for any closed subgroup $\H\leq\G_A$. That is, taking the extension commutes with base change.
\end{lemma}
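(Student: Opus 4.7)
The plan is to reduce the claim to the compatibility of two functorial constructions that both preserve finite products. The extension $\widehat{\H}^{\G_B}$ is characterised in \cref{equation: mixing up order of subgroup} by the fact that the reshuffling isomorphism $\phi_{(A,B)}\colon \G_B \xrightarrow{\sim} \G_A \times \G_{B\setminus A}$ sends $\widehat{\H}^{\G_B}$ to $\H \times \G_{B\setminus A}$. Base change by $\tau$ is a functor on algebraic groups over $k$ which preserves finite products: on coordinate rings it is $-\otimes_k k$ (where the right factor is regarded as a $k$-algebra via $\tau$), and this is compatible with the tensor products defining $k[\G_B] = \bigotimes_{\rho\in B} k[{^\rho}\G]$. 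In particular, ${^\tau \G_B} = \prod_{\rho\in B} {^\tau}({^\rho}\G)$, and similarly for $\G_A$ and $\G_{B\setminus A}$.

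First I would apply base change to the identity $\phi_{(A,B)}\bigl(\widehat{\H}^{\G_B}\bigr) = \H \times \G_{B\setminus A}$ from \cref{equation: mixing up order of subgroup}. Since base change is functorial and sends products to products, this yields
\begin{align*}
{^\tau}\phi_{(A,B)}\left({^\tau}\bigl(\widehat{\H}^{\G_B}\bigr)\right) \;=\; {^\tau}\H \times {^\tau}\G_{B\setminus A}.
\end{align*}
Next I would observe that ${^\tau}\phi_{(A,B)}$ is exactly the reshuffling isomorphism for the base-changed ambient group, that is, it plays the role of $\phi_{(A,B)}$ in the setting where each factor ${^\rho}\G$ has been replaced by ${^\tau}({^\rho}\G)$. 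Applying \cref{equation: mixing up order of subgroup} in this base-changed setting to the closed subgroup ${^\tau}\H \leq {^\tau}\G_A$ gives
\begin{align*}
{^\tau}\phi_{(A,B)}\left(\widehat{{^\tau}\H}^{{^\tau}\G_B}\right) \;=\; {^\tau}\H \times {^\tau}\G_{B\setminus A}.
\end{align*}
Comparing the two displays and cancelling the common isomorphism ${^\tau}\phi_{(A,B)}$ yields ${^\tau}\bigl(\widehat{\H}^{\G_B}\bigr) = \widehat{{^\tau}\H}^{{^\tau}\G_B}$, as required.

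There is no serious obstacle; the content is purely formal once one accepts that base change preserves products and therefore commutes with the isomorphisms $\phi_{(A,B)}$. The only step requiring mild care is the identification ${^\tau}\phi_{(A,B)}$ with the reshuffling isomorphism in the base-changed category, which is immediate from the universal property of products (or equivalently, from the fact that $-\otimes_k k$ is a symmetric monoidal functor on $k$-algebras). An alternative, purely ideal-theoretic proof would observe that $\widehat{\H}^{\G_B}$ is defined by the extended ideal $\I(\H) k[\G_B]$; since $k[\G_B] \cong k[\G_A] \otimes_k k[\G_{B\setminus A}]$ and $-\otimes_k k$ is exact, tensoring gives $\I({^\tau}\H)\, k[{^\tau}\G_B]$, which is the defining ideal of $\widehat{{^\tau}\H}^{{^\tau}\G_B}$.
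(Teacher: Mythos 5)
Your proposal is correct and follows essentially the same route as the paper: both arguments rewrite $\widehat{\H}^{\G_B}$ via the reshuffling isomorphism $\phi_{(A,B)}$ from \cref{equation: mixing up order of subgroup}, use that base change by $\tau$ preserves products and commutes with this isomorphism (identifying ${^\tau}\phi_{(A,B)}$ with the reshuffling isomorphism $\phi_{(\tau(A),\tau(B))}$ for ${^\tau}\G_B=\G_{\tau(B)}$), and then read off the extension of ${^\tau}\H$ in the base-changed setting. The ideal-theoretic variant you sketch at the end is also a valid alternative but is not needed.
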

\begin{proof}
Let $\tau\in T_\s$. 
Notice that for a subset $C\subseteq T_\s$, ${^{\tau}}\G_C=\G_{\tau(C)}$, and hence ${^\tau}\phi_{(A,B)}=\phi_{(\tau(A),\tau(B))}$ and ${^\tau}\phi_{(A,B)}^{-1}=\phi_{(\tau(A),\tau(B))}^{-1}$. 
    Therefore
    \begin{align*}
        {^{\tau}}\left(\widehat{\H}^{\G_B}\right)={^{\tau}}\left(\phi_{(A,B)}^{-1}\left(\H\times \G_{B\backslash A}\right)\right)
        =\left({^{\tau}}\phi_{(A,B)}^{-1}\right)\left({^{\tau}}\H\times {^{\tau}}\G_{B\backslash A}\right)
        =\widehat{{^{\tau}}\H}^{{^{\tau}}\G_{B}}
    \end{align*}
    as required.
\end{proof}

\section{Properties of Kernels}\label{sec: kernels}

For a generalised $\s$\=/algebraic group $(G_i)_{i\in\mathbb{N}}$ with $n$ endomorphisms, the kernels of the maps defined in \cref{eq: def of pi map} will form a generalised $\s'$ algebraic group with $n-1$ endomorphisms, and this fact will allow us to apply induction for the proofs of the desired finiteness properties. For any algebraic group $\G$ over a field, use $1_{\G}$ to denote the trivial closed subgroup of $\G$.

\begin{definition}\label[definition]{def: kernels}
    Let $(G_i)_{i\in\mathbb{N}}$ be a generalised $\s$\=/algebraic group with respect to an algebraic group $\G$ over $k$. Let $H_0=G_0$ and $H_i=\ker(\pi_i|_{G_i})$ (see \cref{eq: def of pi map}) for each $i\geq 1$. We call $(H_i)_{i\in\mathbb{N}}$ the \textbf{kernels of $\pi$ restricted to $(G_i)_{i\in\mathbb{N}}$}. 
\end{definition}

\begin{lemma}\label[lemma]{cor: restriction sj to H[i] well defined}
Let $(G_i)_{i\in\mathbb{N}}$ be a generalised $\s$\=/algebraic group with respect to an algebraic group $\G$ over $k$ and let $(H_i)_{i\in\mathbb{N}}$ be the kernels of $\pi$ restricted to $(G_i)_{i\in\mathbb{N}}$. 
    Then there is induced restriction \begin{align*}
            (\s_j)_i\colon H_i\rightarrow{^{\s_j}}H_{i-1}
    \end{align*}
    of the projection map $(\s_j)_i\colon \G[i]\rightarrow {^{\s_j}}\G[i-1]$ (\cref{eq: def of sigma map}) for every $i\geq$, $1\leq j\leq n$.
\end{lemma}
\begin{proof}
This is the partial analogue to a result in \cite[Section 3, page 526]{wibmer2022finiteness},  and follows from the fact that the diagram in \Cref{fig: comm diagram partial- for g} commutes. 
\end{proof}

\begin{corollary}\label[corollary]{cor: kernels generalised grp}
    Let $(G_i)_{i\in\mathbb{N}}$ be a generalised $\s$\=/algebraic group with respect to an algebraic group $\G$ over $k$. The kernels $(H_i)_{i\in\mathbb{N}}$ of $\pi$ restricted to $(G_i)_{i\in\mathbb{N}}$ form a generalised $\s$\=/algebraic group with respect to $\G$. 
\end{corollary}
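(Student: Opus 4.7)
The plan is to verify the three bullet points of \Cref{def: gen diff alg grp def group theoretic} directly for the sequence $(H[i])_{i\in\mathbb{N}}$. Since $H[0]=G[0]\leq \G[0]$ by hypothesis, and for $i\geq 1$ the group $H[i]=\ker(\pi[i]|_{G[i]})$ is a closed subgroup of $G[i]$, which is itself a closed subgroup of $\G[i]$, transitivity of the closed subgroup relation gives $H[i]\leq \G[i]$. This handles the first bullet.

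For the second bullet, I would observe that the restriction $\pi[i]\colon H[i]\rightarrow H[i-1]$ is well-defined for trivial reasons. Indeed, by the very definition of $H[i]$, we have $\pi[i](H[i])=\{1\}\subseteq H[i-1]$, since $H[i-1]$ is a subgroup of $\G[i-1]$ and therefore contains the identity. Hence $\pi[i]$ restricts to a (constant) morphism $H[i]\rightarrow H[i-1]$.

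For the third bullet, well-definedness of $\s_j[i]\colon H[i]\rightarrow {^{\s_j}}H[i-1]$ for each $1\leq j\leq n$ is exactly the content of \Cref{cor: restriction sj to H[i] well defined}, which has just been proved.

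There is no real obstacle here: once the preceding lemma is in hand, the corollary is essentially bookkeeping. The only mild subtlety, worth flagging so the reader is not confused, is that the second bullet is trivially satisfied because $\pi[i]$ annihilates $H[i]$ by construction, so the real content of the three conditions for the kernels lies entirely in the compatibility of the shift maps established in \Cref{cor: restriction sj to H[i] well defined}.
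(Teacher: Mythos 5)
Your proposal is correct and follows essentially the same route as the paper's proof: closedness of $H[i]$ in $\G[i]$ by transitivity, well-definedness of $\pi[i]$ on the kernels because its image is the trivial subgroup, and an appeal to \Cref{cor: restriction sj to H[i] well defined} for the $\s_j[i]$ maps. No changes needed.
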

\begin{proof}
    It is clear that for each $i\in\mathbb{N}$, $H_i$ is a closed subgroup of $G_i$ and hence of $\G[i]$. Further, for each $i\geq 1$, $\pi_i\bigl(H_i\bigr)=1_{\G[i-1]}\leq H_{i-1}$, and for each $1\leq j\leq n$, $(\s_j)_i\bigl(H_i)\leq {^{\s_j}}H_{i-1}$ by \Cref{cor: restriction sj to H[i] well defined}, so $(H_i)_{i\in\mathbb{N}}$ is a generalised $\s$\=/algebraic group (\Cref{def: gen diff alg grp}). 
\end{proof}

\subsection{Kernels of Ordinary Generalised Difference Algebraic Groups}

To avoid confusion, when we have an ordinary $\s$\=/field $k$, write $(\s)_i\colon\G[i]\rightarrow \G[i-1]$ for the map \cref{eq: def of sigma map}.

\begin{lemma}\label[lemma]{lem: H[i+1] isomorphic to H[i] (ordinary)}
Let $\G$ be an algebraic group over an ordinary $\s$\=/field $k$.
Let $(G_i)_{i\in\mathbb{N}}$ be a generalised $\s$\=/algebraic group with respect to $\G$, and let $(H_i)_{i\in\mathbb{N}}$ be the kernels of $\pi$ restricted to $(G_i)_{i\in\mathbb{N}}$. 
    For each $i\geq 1$, $(\s)_i\colon H_i\rightarrow {^\s}H_{i-1}$ is a closed embedding, and for large $i\in\mathbb{N}$, $(\s)_i\colon H_i\rightarrow {^\s}H_{i-1}$ is an isomorphism.
\end{lemma}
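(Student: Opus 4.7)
The plan is to establish the closed embedding assertion directly from the product structure of $\G[i]$ in the ordinary case, and the isomorphism assertion via a descending chain argument on closed subgroups of the Noetherian scheme $\G$. In the ordinary case we have $\G[i]=\G[i-1]\times{^{\s^i}}\G$ with $\pi[i]$ the projection onto the first factor, so $H[i]=\ker(\pi[i]|_{G[i]})$ sits naturally inside $\{1\}\times{^{\s^i}}\G$ and identifies with a closed subgroup of the last factor ${^{\s^i}}\G$. Applying the analogous decomposition to $\G[i-1]=\G[i-2]\times{^{\s^{i-1}}}\G$ and base-changing by $\s$ identifies ${^\s}H[i-1]$ with a closed subgroup of the last factor of ${^\s}\G[i-1]$, that is, again of ${^{\s^i}}\G$. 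The commuting square in \Cref{fig: comm diagram partial- for g} guarantees that $\s[i]$ sends $H[i]$ into ${^\s}H[i-1]$, and under these identifications $\s[i]|_{H[i]}$ is simply the inclusion $H[i]\hookrightarrow{^\s}H[i-1]$ inside ${^{\s^i}}\G$, manifestly a closed embedding.

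For the isomorphism, the plan is to convert the chain of closed embeddings $H[i]\leq{^\s}H[i-1]$ (each living inside ${^{\s^i}}\G$) into a descending chain of closed subgroups of a single Noetherian algebraic group $\G$. Assuming first that $k$ is inversive, base change by $\s^{-i}$ provides canonical isomorphisms ${^{\s^i}}\G\cong\G$ as algebraic groups over $k$; under these, $H[i]$ corresponds to some closed subgroup $L_i\leq\G$, and ${^\s}H[i-1]$ corresponds to $L_{i-1}$. The inclusions $H[i]\leq{^\s}H[i-1]$ thus translate into a descending chain $L_0\geq L_1\geq L_2\geq\cdots$ of closed subgroups of the Noetherian scheme $\G$, which must stabilise. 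Hence for $i$ sufficiently large $L_i=L_{i-1}$, equivalently $H[i]={^\s}H[i-1]$, and $\s[i]|_{H[i]}$ is an isomorphism.

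The main obstacle will be the case when $\s$ is not injective on $k$ (which this paper explicitly permits), since the ``inverse base change'' identification is then unavailable over $k$ itself. The plan to address this is to pass to the inversive closure $k^*$ of $k$, formed by quotienting $k$ by $\ker(\s^\infty)=\bigcup_n\ker(\s^n)$ and then taking the direct limit along $\s$. Over $k^*$ the endomorphism $\s$ becomes an automorphism, so the preceding argument applies to the base-changed closed subgroups $H[i]_{k^*}$ and $({^\s}H[i-1])_{k^*}$, and the resulting eventual equality then needs to be descended back to $k$ by comparing defining ideals. Extra care will be required here since $k\to k^*$ need not be flat, but the reverse containment of defining ideals is already supplied by the closed embedding from the first part, so only the forward direction needs to be lifted, which should reduce to tracking finitely many generators of the defining ideals.
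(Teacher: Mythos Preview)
Your approach is essentially the same as the paper's: the closed embedding comes from the product decomposition of $\G[i]$ in the ordinary case (the paper phrases this as injectivity of $\s[i](R)$ on $R$-points for every $R$), and the eventual isomorphism comes from a descending chain argument after passing to the inversive closure.

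Your worries in the final paragraph, however, are unfounded and overcomplicate the descent. Since $k$ is a \emph{field}, any nonzero ring endomorphism of $k$ is automatically injective, so the quotient by $\ker(\s^\infty)$ is vacuous; the inversive closure $k^*$ is simply the direct limit of $k\xrightarrow{\s}k\xrightarrow{\s}\cdots$ and is a field extension of $k$, hence faithfully flat. The descent is then immediate: a closed embedding of finite-type $k$-schemes that becomes an isomorphism after base change along a faithfully flat extension was already an isomorphism (dually, the surjective map $k[{^\s}H[i-1]]\to k[H[i]]$ has kernel that vanishes after $\otimes_k k^*$, so it was already zero). The paper dispatches this in a single sentence, and you can too; there is no need to track generators of defining ideals.
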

\begin{proof}
This is a generalisation of result in \cite[Section 3, page 526]{wibmer2022finiteness}, and is proven using the same methods.
For each $i\in\mathbb{N}$, for any $k$\=/algebra $R$, 
    $H_i(R)=\bigl\{(r_0,r_1,\dots,r_i)\in G_i(R)\ |\ r_0=\cdots=r_{i-1}=1\bigr\}$, where we are using $1$ to denote the unit of $\G(R)$. The restriction $((\s)_i)_R\colon H_i(R)\rightarrow {^\s}H_{i-1}(R)$,  \begin{align*}
    (\underbrace{1,\cdots,1}_{i \text{ many}},r_i)\mapsto (\underbrace{1,\cdots,1}_{i-1 \text{ many}},r_i)
\end{align*}
is clearly injective, hence $(\s)_i\colon H_i\rightarrow {^\s}H_{i-1}$ is a closed embedding. Firstly suppose that $\s\colon k\rightarrow k$ is a bijection. Then for each $i\in\mathbb{N}$, we can consider the morphism \begin{align}\label{eq: base change projection}
    {^{\s^{-i}}}(\s)_i\colon {^{\s^{-i}}}H_i\rightarrow {^{\s^{-i}}}{^\s}H_{i-1}
\end{align} obtained by base change via $\s^{-i}\colon k\rightarrow k$ (\Cref{def: group base change}), which is still a closed embedding. By \Cref{rem: base change comp}, ${^{\s^{-i}}}({^\s}H_{i-1})={^{\s^{-(i-1)}}}H_{i-1}$, and hence we have a descending chain \begin{align*}
        H_0\hookleftarrow {^{\s^{-1}}}H_1 \hookleftarrow {^{\s^{-2}}}H_2 \hookleftarrow \cdots
    \end{align*}
    of closed subgroups, which must stabilise.
    That is, for large enough $i\in\mathbb{N}$, 
        the morphism (\ref{eq: base change projection}) is an isomorphism and hence $(\s)_i\colon H_i\rightarrow {^\s}H_{i-1}$
    is an isomorphism. 
    If $\s\colon k\rightarrow k$ is not a bijection, we can pass via base change to a field where it is. For example, we could consider the inversive closure of $k$, an inversive $\s$\=/overfield of $k$, whose existence is proven in  \cite[Proposition 2.1.7, page 109]{levin2008difference}. Since for large $i\in\mathbb{N}$, $(\s)_i\colon H_i\rightarrow {^\s}H_{i-1}$ will be an isomorphism after base change, it must already be an isomorphism before base change. 
\end{proof}

\subsection{Kernels of Partial Generalised Difference Algebraic Groups}\label{sec: kernels being n-1 groups}

Let $\s=\basicset$, and assume in this section that $n\geq 2$. Let $\G$ be an algebraic group over $k$. Let $(G_i)_{i\in\mathbb{N}}$ be a generalised $\s$\=/algebraic group with respect to $\G$, and let $(H_i)_{i\in\mathbb{N}}$ be the kernels of $\pi$ restricted to $(G_i)_{i\in\mathbb{N}}$. See \Cref{fig ker: G[i] H[i] rho[i]H[i]} for a visualisation of $G_i$ and $H_i$ in the $n=2$ case. For the kernels $H_i$, the only non-trivial information is contained in the ${^\tau}\G$ entries where $\ord(\tau)=i$. Therefore, we introduce a new construction, \begin{align*}
    \G(i)=\prod_{\tau\in T_\s(i)}{^\tau}\G
\end{align*}
for each $i\in\mathbb{N}$. That is, where $\G[i]$ is the product of ${^\tau}\G$ for $\tau\in T_\s[i]$, $\G(i)$ is the product of ${^\tau}\G$ for $\tau\in T_\s(i)$ (see \cref{eq: Ts[i] and Ts(i)} for the definitions of $T_\s[i]$ and $T_\s(i)$, and \cref{eq: G[i]} for the definition of $\G[i]$).

For each $i\in\mathbb{N}$, consider the morphisms of algebraic groups
\begin{align}\label{eq: def of rho}
    \rho_i\colon \G[i]&\rightarrow\G(i),\quad  (x_\tau)_{\tau\in T_\s[i]}\mapsto (x_\tau)_{\tau\in T_\s(i)} &&\text{and}\\ \label{eq: def of rho star}
    \rho^*_i\colon \G(i) &\rightarrow \G[i], \quad \hspace{0.1mm}
    (x_\tau)_{\tau\in T_\s(i)}\mapsto \bigl(1_{\G[i-1]},(x_\tau)_{\tau\in T_\s(i)}\bigr).
\end{align}
That is, $\rho_i$ drops the entries of $^\tau\G$ where $\ord(\tau)<i$, and $\rho^*_i$ 
fills entries in ${^\tau}\G$ where $\ord(\tau)<i$ with $1_{({^\tau}\G)}$.

\begin{figure}[!htbp]
\centering
\begin{tikzpicture}
\draw[step=0.5cm,lightgray,very thin] (-0.25,-0.25) grid (1.95,1.95);
\draw[darkgray,->] (0,0) -- (1.85,0) node[anchor=north west] {$\s_1$};
\draw[darkgray,->] (0,0) -- (0,1.85) node[anchor=south east] {$\s_2$};
\fill[red] (0,0) circle(2pt);
\fill[red] (0.5,0) circle(2pt) (0,0.5) circle(2pt);
\fill[red] [radius=2pt] (1,0) circle[] (0.5,0.5) circle[] (0,1) circle[];
\fill[red] [radius=2pt] (1.5,0) circle[] (1,0.5) circle[] (0.5,1) circle[] (0,1.5) circle[];
\node at (0.75,-0.55) {{$G_i$}};
\draw[step=0.5cm,lightgray,very thin] (3.75,-0.25) grid (5.95,1.95);
\draw[darkgray,->] (4,0) -- (5.85,0) node[anchor=north west] {$\s_1$};
\draw[darkgray,->] (4,0) -- (4,1.85) node[anchor=south east] {$\s_2$};
\node(a) at (4,0) {{\textcolor{red}{1}}};
\node(b) at ([xshift=0.5cm]$(a)$) {{\textcolor{red}{1}}};
\node(c) at ([xshift=0.5cm]$(b)$) {{\textcolor{red}{1}}};
\node(d) at ([yshift=0.5cm]$(a)$) {{\textcolor{red}{1}}};
\node(e) at ([xshift=0.5cm]$(d)$) {{\textcolor{red}{1}}};
\node(f) at ([yshift=0.5cm]$(d)$) {{\textcolor{red}{1}}};
\fill[red] [radius=2pt] (5.5,0) circle[] (5,0.5) circle[] (4.5,1) circle[] (4,1.5) circle[];
\node at (4.75,-0.55) {{$H_i$}};
\draw[step=0.5cm,lightgray,very thin] (7.75,-0.25) grid (9.95,1.95);
\draw[darkgray,->] (8,0) -- (9.85,0) node[anchor=north west] {$\s_1$};
\draw[darkgray,->] (8,0) -- (8,1.85) node[anchor=south east] {$\s_2$};
\node(a) at (8,0) {{\textcolor{red!30}{1}}};
\node(b) at ([xshift=0.5cm]$(a)$) {{\textcolor{red!30}{1}}};
\node(c) at ([xshift=0.5cm]$(b)$) {{\textcolor{red!30}{1}}};
\node(d) at ([yshift=0.5cm]$(a)$) {{\textcolor{red!30}{1}}};
\node(e) at ([xshift=0.5cm]$(d)$) {{\textcolor{red!30}{1}}};
\node(f) at ([yshift=0.5cm]$(d)$) {{\textcolor{red!30}{1}}};
\fill[red] [radius=2pt] (9.5,0) circle[] (9,0.5) circle[] (8.5,1) circle[] (8,1.5) circle[];
\node at (8.75,-0.55) {{$\rho_i\bigl(H_i\bigr)$}};
\end{tikzpicture}
\caption{$G_i$, $H_i$ and $\rho_i\bigl(H_i\bigr)$ when $n=2$}
\label{fig ker: G[i] H[i] rho[i]H[i]}
\end{figure}

\begin{lemma}\label[lemma]{lem: rho restricted to H is isom}
Let $(G_i)_{i\in\mathbb{N}}$ be a generalised $\s$\=/algebraic group with respect to an algebraic group $\G$ over $k$, and let $(H_i)_{i\in\mathbb{N}}$ be the kernels of $\pi$ restricted to $(G_i)_{i\in\mathbb{N}}$.
    The restriction $\rho_i\colon H_i\rightarrow\rho_i\bigl(H_i\bigr)$ is an isomorphism, with inverse map $\rho^*_i\colon \rho_i\bigl(H_i\bigr)\rightarrow H_i$ for each $i\in\mathbb{N}$.
\end{lemma}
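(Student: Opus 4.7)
The strategy is to identify $H[i]$ with a closed subgroup of $\G(i)$ by restricting an isomorphism between $\ker(\pi[i])$ and $\G(i)$ that is intrinsic to $\G$, independent of the chosen generalised $\s$\=/algebraic group. The case $i=0$ is trivial since $H[0]=G[0]$ and $\rho[0]=\id$, so I focus on $i\geq 1$.

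First I would set $L[i]=\ker(\pi[i]\colon \G[i]\rightarrow \G[i-1])$, which as a closed subgroup of $\G[i]$ consists precisely of those tuples $(x_\tau)_{\tau\in T_\s[i]}$ with $x_\tau=1$ for every $\tau\in T_\s[i-1]$. Directly from the definitions of $\rho[i]$ and $\rho^*[i]$, one reads off that $\rho[i]\circ\rho^*[i]=\id_{\G(i)}$, that $\rho^*[i]$ is a closed embedding with image exactly $L[i]$, and that $\rho^*[i]\circ\rho[i]$ restricts to the identity on $L[i]$. Consequently $\rho[i]$ restricts to an isomorphism $\rho[i]|_{L[i]}\colon L[i]\xrightarrow{\cong}\G(i)$ of algebraic groups over $k$, with inverse $\rho^*[i]$.

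The defining identity $H[i]=\ker(\pi[i]|_{G[i]})=G[i]\cap \ker(\pi[i])=G[i]\cap L[i]$ exhibits $H[i]$ as a closed subgroup of $L[i]$, so restricting the isomorphism $\rho[i]|_{L[i]}$ to this closed subgroup of its source yields an isomorphism $\rho[i]|_{H[i]}\colon H[i]\xrightarrow{\cong}\rho[i](H[i])$ of algebraic groups. Its inverse is then automatically the restriction of $\rho^*[i]$ to $\rho[i](H[i])$: this restriction a priori takes values in $L[i]$, but since $\rho[i](H[i])$ is the isomorphic image of $H[i]\subseteq L[i]$ under $\rho[i]|_{L[i]}$, it in fact lands in $H[i]$. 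No significant obstacle is anticipated; the only care required is the bookkeeping with the index sets $T_\s[i-1]$ versus $T_\s(i)$ when filling entries with $1$, which \Cref{fig ker: G[i] H[i] rho[i]H[i]} already makes transparent.
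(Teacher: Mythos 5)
Your proposal is correct and is essentially the same argument as the paper's: the paper's proof is the one-line observation that the components of $H[i]$ at every $\tau$ with $\ord(\tau)<i$ are all $1$, which is exactly the content of your identification $H[i]=G[i]\cap L[i]\leq L[i]=\ker(\pi[i])$ together with the fact that $\rho[i]$ and $\rho^*[i]$ are mutually inverse on $L[i]$. You have simply spelled out the bookkeeping the paper leaves implicit.
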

\begin{proof}
    This holds as the projections of $H_i$ onto $^\tau\G$ where $\ord(\tau)<i$ are all $1_{({^\tau}\G)}$. See \Cref{fig ker: G[i] H[i] rho[i]H[i]} to visualise this in the $n=2$ case.
\end{proof}

Looking at \Cref{fig ker: G[i] H[i] rho[i]H[i]} for the case where $n=2$, we see that $\rho_i\bigl(H_i\bigr)$ has $i+1$ many dots for each $i$, so it looks like perhaps it could be some kind of ordinary generalised difference algebraic group. It looks like building difference structure on $\G$ in some alternative way could allow us to realise $\rho_i\bigl(H_i\bigr)$ as a generalised $\s'$-algebraic group for some set $\s'$ of endomorphisms with $|\s'|=n-1$.

In order to do this, firstly assume that $\s_n\colon  k\rightarrow k$ is a bijection. Let $\s'=\{\s_1',\dots,\s_{n-1}'\}$, where $\s_j'=\s_j{\s_n}^{-1}$ for each $1\leq j\leq n-1$. As $k$ is a $\s$\=/field, it is also a $\s'$-field. Recall that $\G$ is an algebraic group over the $\s$\=/field $k$, and let $\G'$ denote $\G$ considered as an algebraic group over the $\s'$-field $k$. We can then build a $\s'$-algebraic group $[\s']_k\G'$ using \Cref{prop: alg grp is diff alg gr}. Again we will just write $\G'$ to mean $[\s']_k\G'$. For each $i\in\mathbb{N}$, one can construct  
$T_{\s'}[i]=\{(\s_1\s_n^{-1})^{k_1}\cdots(\s_{n-1}\s_n^{-1})^{k_{n-1}}\ |\ k_1+\cdots +k_{n-1}\leq i\}$ (adapting \cref{eq: Ts[i] and Ts(i)}) and hence the algebraic group $\G'[i]$ (adapting \cref{eq: G[i]}). 

\begin{lemma}\label[lemma]{lem : G'[i] twist of G(i)}
    Let $\G$ be an algebraic group over $k$ and suppose that $\s_n\colon  k\rightarrow k$ is a bijection. Let $\s'=\{\s_1',\dots,\s_{n-1}'\}$, where $\s_j'=\s_j{\s_n}^{-1}$ for each $1\leq j\leq n-1$, and let $\G'$ be $\G$ considered as an algebraic group over the $\s'$-field $k$.
    Then for each $i\in\mathbb{N}$,\begin{align*}
        \G'[i]={^{{\s_n}^{-i}}}\G(i)
    \end{align*}
    where ${^{{\s_n}^{-i}}}\G(i)$ is the algebraic group obtained from $\G(i)$ by base change via $\s_n^{-i}\colon k\rightarrow k$ (\Cref{def: group base change}).
\end{lemma}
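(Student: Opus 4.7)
The plan is to prove the equality directly by unwinding both sides to products of base changes of $\G$ indexed by a common combinatorial parameter set, then showing that the base change operations match up.

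First I would set up the bijection between the index sets. Any element of $T_\s(i)$ has the form $\tau = \s_1^{k_1}\cdots\s_n^{k_n}$ with $k_1+\cdots+k_n=i$, and so is determined uniquely by $(k_1,\dots,k_{n-1})\in\mathbb{N}^{n-1}$ subject to $k_1+\cdots+k_{n-1}\leq i$ (since $k_n=i-(k_1+\cdots+k_{n-1})$ is then forced). Meanwhile, any element of $T_{\s'}[i]$ has the form $\tau'=(\s_1')^{k_1}\cdots(\s_{n-1}')^{k_{n-1}}$ with $k_1+\cdots+k_{n-1}\leq i$, parametrised by the same data. This gives a bijection $T_\s(i)\leftrightarrow T_{\s'}[i]$, $\tau\leftrightarrow\tau'$.

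Next I would verify the crucial identity that under this bijection, $\tau'=\s_n^{-i}\tau$ as endomorphisms of $k$ (which makes sense because $\s_n$ is assumed to be a bijection). Indeed, writing $m=k_1+\cdots+k_{n-1}$ and using that the $\s_j$ pairwise commute,
\begin{align*}
\s_n^{-i}\tau = \s_n^{-i}\s_1^{k_1}\cdots\s_{n-1}^{k_{n-1}}\s_n^{i-m} = \s_1^{k_1}\cdots\s_{n-1}^{k_{n-1}}\s_n^{-m} = (\s_1\s_n^{-1})^{k_1}\cdots(\s_{n-1}\s_n^{-1})^{k_{n-1}} = \tau'.
\end{align*}

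Then the result follows from a short computation. Since base change commutes with products, and iterated base change via $\sigma\colon k\to k$ followed by $\tau\colon k\to k$ equals base change via $\sigma\tau$, we have
\begin{align*}
{^{\s_n^{-i}}}\G(i) = {^{\s_n^{-i}}}\!\prod_{\tau\in T_\s(i)}{^\tau}\G = \prod_{\tau\in T_\s(i)}{^{\s_n^{-i}\tau}}\G = \prod_{\tau'\in T_{\s'}[i]}{^{\tau'}}\G.
\end{align*}
Because $\G'$ and $\G$ are the same algebraic group over the field $k$, merely endowed with different difference structures, the right hand side is exactly $\prod_{\tau'\in T_{\s'}[i]}{^{\tau'}}\G' = \G'[i]$, as required.

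The only subtle point, and the one I would be most careful about, is the clean separation between $k$ viewed as a $\s$\=/field versus as a $\s'$\=/field: the base change operation ${^\tau}\G$ depends only on the field map $\tau\colon k\to k$, not on which difference structure we regard $k$ as carrying, so the identifications above are legitimate. Apart from that, the proof is essentially a bookkeeping exercise, and I do not anticipate any substantive obstacle.
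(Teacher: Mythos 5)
Your proposal is correct and follows essentially the same route as the paper: the paper's proof simply asserts the combinatorial identity $T_{\s'}[i]=\s_n^{-i}(T_\s(i))$ and then distributes the base change over the product, exactly as you do. Your write-up just makes the bijection and the verification that $\s_n^{-i}\tau=\tau'$ explicit, which the paper leaves as "using combinatorics."
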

\begin{proof}
    Using combinatorics we can see that for each $i\in\mathbb{N}$, $T_{\s'}[i]=\s_n^{-i}(T_\s(i))$, and hence
\begin{align*}
    \G'[i]=\prod_{\tau\in T_{\s'}[i]}{^\tau}\G={^{{\s_n}^{-i}}}\left(\prod\limits_{\tau\in T_\s(i)}{^\tau}\G \right)={^{{\s_n}^{-i}}}\G(i)
\end{align*}
for each $i\in\mathbb{N}$.
\end{proof}

\begin{lemma}\label[lemma]{lem: form of pi' and sj'}
    Let $\G$ be an algebraic group over $k$ and suppose that $\s_n\colon  k\rightarrow k$ is a bijection. Let $\s'=\{\s_1',\dots,\s_{n-1}'\}$, where $\s_j'=\s_j{\s_n}^{-1}$ for each $1\leq j\leq n-1$, and let $\G'$ be $\G$ considered as an algebraic group over the $\s'$-field $k$.
    The projection maps \begin{align*}
         \pi'_i\colon \G'[i]\rightarrow\G'[i-1]\quad\text{and}\quad(\s_j')_i\colon \G'[i]\rightarrow{^{\s_j'}}(\G'[i-1]),
     \end{align*}
    which are the maps $\pi_i$ and $(\s_j)_i$ (\cref{eq: def of pi map,eq: def of sigma map}) adapted to $\G'$, can be written as \begin{align*}
    \pi'_i={^{{\s_n}^{-i}}}({^{\s_n}}\rho_{i-1}\circ (\s_n)_i\circ\rho^*_i)
\quad \text{and} \quad
    (\s_j')_i={^{{\s_n}^{-i}}}({^{\s_j}}\rho_{i-1}\circ (\s_j)_i\circ \rho^*_i)
\end{align*}for each $i\geq 1$ and $1\leq j\leq n-1$, where $\rho_i$ and $\rho^*_i$ are the maps defined in \cref{eq: def of rho,eq: def of rho star}.
\end{lemma}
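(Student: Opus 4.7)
The plan is to verify both identities by tracing how each composition acts on components, relying on the identification $\G'[i]={^{\s_n^{-i}}}\G(i)$ from \Cref{lem : G'[i] twist of G(i)}. Recall that under this identification the index set of $\G'[i]$ is $T_{\s'}[i]$, and the bijection with $T_\s(i)$ sends $\tau' \in T_{\s'}[i]$ to $\s_n^i \tau' \in T_\s(i)$. Under this bijection, the projection map $\pi'[i]$ sends the $\tau'$ component to the $\tau'$ component whenever $\tau' \in T_{\s'}[i-1] \subseteq T_{\s'}[i]$, and $\s_j'[i]$ sends the $\tau'$ component of $\G'[i]$ to the $\s_j' \sigma'$ component of ${^{\s_j'}}\G'[i-1]$ whenever $\tau' = \s_j'\sigma'$ for some $\sigma' \in T_{\s'}[i-1]$.

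First I would compute the composition $f_j = {^{\s_j}}\rho[i-1]\circ \s_j[i]\circ\rho^*[i]\colon \G(i) \to {^{\s_j}}\G(i-1)$ for any $1 \leq j \leq n$, by tracing a generic element $(x_\tau)_{\tau\in T_\s(i)}$. Applying $\rho^*[i]$ gives a tuple in $\G[i]$ filled with $1$s at all components of order $< i$; applying $\s_j[i]$ reindexes by $\sigma \mapsto \s_j\sigma$, landing in ${^{\s_j}}\G[i-1]$; and applying ${^{\s_j}}\rho[i-1]$ retains only the top-order components, namely those indexed by $\sigma \in T_\s(i-1)$. Since $\s_j\sigma$ has order exactly $i$ for such $\sigma$, no $1$s survive, and the result is the tuple $(x_{\s_j\sigma})_{\sigma \in T_\s(i-1)}$. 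Thus $f_j$ sends the component at $\s_j \sigma \in T_\s(i)$ to the component at $\sigma \in T_\s(i-1)$.

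Next I would apply the base change by $\s_n^{-i}$ and re-index. The codomain ${^{\s_n^{-i}}}({^{\s_j}}\G(i-1))$ is indexed by $\{\s_n^{-i}\s_j\sigma : \sigma \in T_\s(i-1)\}$; writing $\sigma = \s_n^{i-1}\sigma'$ for $\sigma' \in T_{\s'}[i-1]$, this equals $\s_j\s_n^{-1}\sigma' = \s_j' \sigma'$, so the codomain is exactly ${^{\s_j'}}\G'[i-1]$. Under base change, morphisms act the same way on components, so ${^{\s_n^{-i}}}f_j$ sends the $\tau' \in T_{\s'}[i]$ component (corresponding to $\s_n^i \tau' \in T_\s(i)$) to the $\sigma'$ component (corresponding to $\s_j\sigma = \s_j \s_n^{i-1}\sigma'$) precisely when $\s_n^i \tau' = \s_j\s_n^{i-1}\sigma'$, i.e.\ when $\tau' = \s_j'\sigma'$. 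This matches the description of $\s_j'[i]$ for $1 \leq j \leq n-1$, and the case $j=n$ gives $\tau' = \s_n\s_n^{-1}\sigma' = \sigma'$, which matches $\pi'[i]$.

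The main technical obstacle is simply keeping the indexing sets straight through the two base changes and through $\rho^*[i]$. Everything else is a matter of expanding definitions component by component. Once one is careful with the reindexing $\tau' \leftrightarrow \s_n^i\tau'$ and the observation that $\s_n^{-i}\s_j\s_n^{i-1} = \s_j\s_n^{-1} = \s_j'$ for all $1\leq j \leq n$ (with $\s_n' := \mathrm{id}$ playing the role corresponding to $\pi'[i]$), both identities fall out uniformly from the component-level computation above.
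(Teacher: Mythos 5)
Your proof is correct and follows the same idea as the paper, which justifies the identities only with the one-line observation that the $1$'s introduced by $\rho^*[i]$ are forgotten by $\rho[i-1]$, together with a figure in the $n=2$ case. Your component-by-component trace through $\rho^*[i]$, $\s_j[i]$, $\rho[i-1]$ and the reindexing $T_{\s'}[i]=\s_n^{-i}(T_\s(i))$ supplies the details the paper leaves implicit, including the uniform treatment of $j=n$ as the $\pi'[i]$ case.
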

\begin{proof}
These equalities hold because while the $\rho^*$ map introduces lots of $1_{({^\tau}\G)}$'s, they are later forgotten by the $\rho$ map.
See \Cref{fig ker: pi'[i] broken down} for a visualisation of the decomposition of $\pi'_i$ in the two endomorphisms case.
\end{proof}
    \begin{figure}[!htbp]
\centering
\begin{tikzpicture}
\draw[step=0.5cm,lightgray,very thin] (-0.25,-1.95) grid (1.95,0.25);
\draw[darkgray,->] (0,0) -- (1.85,0) node[anchor=south west] {$\s_1$};
\draw[darkgray,->] (0,0) -- (0,-1.85) node[anchor=south east] {$\s_2\ $};
\fill (1.5,-1.5) circle(2pt);
\fill (0,0) circle(2pt);
\fill (0.5,-0.5) circle(2pt);
\fill (1,-1) circle(2pt);
\node at (0.75,-2.25) {{$\G'[i]$}};
\node at (2.85,-0.5) {$\xrightarrow{{^{{\s_2}^{-i}}}\rho^*_i}$};
\draw[step=0.5cm,lightgray,very thin] (3.75,-1.95) grid (5.95,0.25);
\draw[darkgray,->] (4,0) -- (5.85,0) node[anchor=south west] {$\s_1$};
\draw[darkgray,->] (4,0) -- (4,-1.85) node[anchor=south east] {$\s_2\ $};
\fill (5.5,-1.5) circle(2pt);
\fill (4,0) circle(2pt);
\fill (4.5,-0.5) circle(2pt);
\fill (5,-1) circle(2pt);
\node(a) at (4,-1.5) {{$1$}};
\node(b) at ([xshift=0.5cm]$(a)$) {{{$1$}}};
\node(c) at ([xshift=0.5cm]$(b)$) {{{$1$}}};
\node(d) at ([yshift=0.5cm]$(a)$) {{{$1$}}};
\node(e) at ([xshift=0.5cm]$(d)$) {{{$1$}}};
\node(f) at ([yshift=0.5cm]$(d)$) {{{$1$}}};
\node at (6.85,-0.5) {$\xrightarrow{{^{{\s_2}^{-i}}}(\s_2)_i}$};
\draw[step=0.5cm,lightgray,very thin] (7.75,-1.95) grid (9.95,0.25);
\draw[darkgray,->] (8,0) -- (9.85,0) node[anchor=south west] {$\s_1$};
\draw[darkgray,->] (8,0) -- (8,-1.85) node[anchor=south east] {$\s_2\ $};
\fill (8,0) circle(2pt);
\fill (8.5,-0.5) circle(2pt);
\fill (9,-1) circle(2pt);
\fill[black!30] (9.5,-1.5) circle(2pt);
\node(a) at (8,-1.5) {\textcolor{black!30}{$1$}};
\node(b) at ([xshift=0.5cm]$(a)$) {\textcolor{black!30}{$1$}};
\node(c) at ([xshift=0.5cm]$(b)$) {\textcolor{black!30}{$1$}};
\node(d) at ([yshift=0.5cm]$(a)$) {{{$1$}}};
\node(e) at ([xshift=0.5cm]$(d)$) {{{$1$}}};
\node(f) at ([yshift=0.5cm]$(d)$) {{{$1$}}};
\node at (11.1,-0.5) {{$\xrightarrow{{^{{\s_2}^{-(i-1)}}}\rho_{i-1}}$}};
%
\draw[step=0.5cm,lightgray,very thin] (12.25,-1.95) grid (14.45,0.25);
\draw[darkgray,->] (12.5,0) -- (14.35,0) node[anchor=south west] {$\s_1$};
\draw[darkgray,->] (12.5,0) -- (12.5,-1.85) node[anchor=south east] {$\s_2\ $};
\fill (12.5,0) circle(2pt);
\fill (13,-0.5) circle(2pt);
\fill (13.5,-1) circle(2pt);
\fill[black!30] (14,-1.5) circle(2pt);
\node(a) at (12.5,-1.5) {\textcolor{black!30}{$1$}};
\node(b) at ([xshift=0.5cm]$(a)$) {\textcolor{black!30}{$1$}};
\node(c) at ([xshift=0.5cm]$(b)$) {\textcolor{black!30}{$1$}};
\node(d) at ([yshift=0.5cm]$(a)$) {\textcolor{black!30}{$1$}};
\node(e) at ([xshift=0.5cm]$(d)$) {\textcolor{black!30}{$1$}};
\node(f) at ([yshift=0.5cm]$(d)$) {\textcolor{black!30}{$1$}};
\node at (13.25,-2.25) {{$\G'[i-1]$}};
%
\end{tikzpicture}
\caption{Decomposition of $\pi'_i\colon \G'[i]\rightarrow \G'[i-1]$ when $n=2$}
\label{fig ker: pi'[i] broken down}
\end{figure}

We will now see the main motivation for introducing generalised $\s$\=/algebraic groups, which is that the kernels of the $\pi$ maps form a generalised $\s'$\=/algebraic group where $|\s'|= n-1$.

\begin{proposition}\label[proposition]{prop: twisted kernels n-1 groups}
Let $k$ be a $\s$\=/field, and assume that $\s_n\colon k\rightarrow k$ is a bijection. Let $(G_i)_{i\in\mathbb{N}}$ be a generalised $\s$\=/algebraic group with respect to an algebraic group $\G$ over $k$. Let $(H_i)_{i\in\mathbb{N}}$ be the kernels of $\pi$ restricted to $(G_i)_{i\in\mathbb{N}}$. Let $\s'=\{\s_1',\dots,\s_{n-1}'\}$, where $\s_j'=\s_j{\s_n}^{-1}$ for each $1\leq j\leq n-1$, and let $\G'$ be $\G$ considered as an algebraic group over the $\s'$-field $k$. For every $i\in\mathbb{N}$, let
$H'_i={^{{\s_n}^{-i}}}(\rho_i\bigl(H_i\bigr))$.
    Then $(H'_i)_{i\in\mathbb{N}}$ is a generalised $\s'$\=/algebraic group with respect to $\G'$.
\end{proposition}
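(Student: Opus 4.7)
My plan is to verify the three conditions of \Cref{def: gen diff alg grp def group theoretic} for the sequence $(H'[i])_{i\in\mathbb{N}}$, regarded now as a candidate generalised $\s'$\=/algebraic group with respect to $\G'$. The whole argument will reduce to tracing subgroups through the decompositions supplied by \Cref{lem: form of pi' and sj'} and applying the well\=/definedness results from \Cref{cor: restriction sj to H[i] well defined} together with the isomorphism of \Cref{lem: rho restricted to H is isom}.

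First I would check that $H'[i]$ is a closed subgroup of $\G'[i]$. Since $H[i]$ is a closed subgroup of $\G[i]$, the image $\rho[i](H[i])$ is a closed subgroup of $\G(i)$; base changing by $\s_n^{-i}$ and invoking \Cref{lem : G'[i] twist of G(i)} gives that $H'[i]={^{\s_n^{-i}}}(\rho[i](H[i]))$ is a closed subgroup of ${^{\s_n^{-i}}}\G(i)=\G'[i]$.

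Next I would verify well\=/definedness of $\pi'[i]\colon H'[i]\to H'[i-1]$ for $i\geq 1$. By \Cref{lem: form of pi' and sj'}, $\pi'[i]={^{\s_n^{-i}}}({^{\s_n}}\rho[i-1]\circ\s_n[i]\circ\rho^*[i])$, so it suffices to track the subgroup $\rho[i](H[i])\leq \G(i)$ through ${^{\s_n}}\rho[i-1]\circ\s_n[i]\circ\rho^*[i]$ and then base change by $\s_n^{-i}$. Using \Cref{lem: rho restricted to H is isom}, $\rho^*[i]$ sends $\rho[i](H[i])$ back onto $H[i]$; applying \Cref{cor: restriction sj to H[i] well defined} with $j=n$ shows $\s_n[i](H[i])\leq {^{\s_n}}H[i-1]$; pushing forward under ${^{\s_n}}\rho[i-1]$ yields a subgroup of ${^{\s_n}}(\rho[i-1](H[i-1]))$. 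Base changing by $\s_n^{-i}$ then lands in ${^{\s_n^{-(i-1)}}}(\rho[i-1](H[i-1]))=H'[i-1]$, as required.

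Finally, for each $1\leq j\leq n-1$ and $i\geq 1$, the analogous argument with $\s_j'[i]={^{\s_n^{-i}}}({^{\s_j}}\rho[i-1]\circ\s_j[i]\circ\rho^*[i])$ traces $\rho[i](H[i])$ via $\rho^*[i]$ to $H[i]$, then by $\s_j[i]$ (well\=/defined by \Cref{cor: restriction sj to H[i] well defined}) into ${^{\s_j}}H[i-1]$, then by ${^{\s_j}}\rho[i-1]$ into ${^{\s_j}}(\rho[i-1](H[i-1]))$. Base changing by $\s_n^{-i}$ produces a subgroup of ${^{\s_j\s_n^{-i}}}(\rho[i-1](H[i-1]))$. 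The small bookkeeping check I would perform here is that $\s_j\s_n^{-i}=\s_j'\s_n^{-(i-1)}$, which gives ${^{\s_j'}}H'[i-1]$ and completes the verification. The only real obstacle is making sure these identifications of base\=/changed subgroups are applied cleanly; once the decomposition of \Cref{lem: form of pi' and sj'} and the inverse relation in \Cref{lem: rho restricted to H is isom} are in hand, the proof is essentially a diagram chase.
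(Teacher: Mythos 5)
Your proposal is correct and follows essentially the same route as the paper: the paper also checks $H'[i]\leq\G'[i]$ via \Cref{lem : G'[i] twist of G(i)} and then asserts the well\=/definedness of the restricted $\pi'[i]$ and $\s_j'[i]$ as a consequence of \Cref{lem: rho restricted to H is isom}, \Cref{lem: form of pi' and sj'} and \Cref{cor: restriction sj to H[i] well defined}, which is exactly the diagram chase you carry out explicitly (including the bookkeeping identity $\s_j\s_n^{-i}=\s_j'\s_n^{-(i-1)}$).
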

\begin{proof}
We know that for each $i\in\mathbb{N}$, $H_i\leq\G[i]$. Therefore, $\rho_i\bigl(H_i\bigr)\leq \G(i)$ and we see that \begin{align*}
    H'_i={^{{\s_n}^{-i}}}\bigl(\rho_i\bigl(H_i\bigr)\bigr)\leq {^{{\s_n}^{-i}}}\G(i)=\G'[i]
\end{align*}
by \Cref{lem : G'[i] twist of G(i)}. See \Cref{fig ker: going from H[i] to H'[i]} for a visualisation of this in the two endomorphisms case.

\begin{figure}[!htbp]
\centering
\begin{tikzpicture}
\draw[step=0.5cm,lightgray,very thin] (-0.45,-0.45) grid (1.95,1.95);
\draw[darkgray,->] (0,0) -- (1.85,0) node[anchor=north west] {$\s_1$};
\draw[darkgray,->] (0,0) -- (0,1.85) node[anchor=south east] {$\s_2$};
\fill[red] [radius=2pt] (1.5,0) circle[] (1,0.5) circle[] (0.5,1) circle[] (0,1.5) circle[];
\node(a) at (0,0) {\textcolor{red}{$1$}};
\node(b) at ([xshift=0.5cm]$(a)$) {{\textcolor{red}{1}}};
\node(c) at ([xshift=0.5cm]$(b)$) {{\textcolor{red}{1}}};
\node(d) at ([yshift=0.5cm]$(a)$) {{\textcolor{red}{1}}};
\node(e) at ([xshift=0.5cm]$(d)$) {{\textcolor{red}{1}}};
\node(f) at ([yshift=0.5cm]$(d)$) {{\textcolor{red}{1}}};
\node at (0.75,-0.75) {{$H_i$}};
\draw[step=0.5cm,lightgray,very thin] (3.55,-0.45) grid (5.95,1.95);
\draw[darkgray,->] (4,0) -- (5.85,0) node[anchor=north west] {$\s_1$};
\draw[darkgray,->] (4,0) -- (4,1.85) node[anchor=south east] {$\s_2$};
\fill[red] [radius=2pt] (5.5,0) circle[] (5,0.5) circle[] (4.5,1) circle[] (4,1.5) circle[];
\node(a) at (4,0) {\textcolor{red!30}{$1$}};
\node(b) at ([xshift=0.5cm]$(a)$) {{\textcolor{red!30}{1}}};
\node(c) at ([xshift=0.5cm]$(b)$) {{\textcolor{red!30}{1}}};
\node(d) at ([yshift=0.5cm]$(a)$) {{\textcolor{red!30}{1}}};
\node(e) at ([xshift=0.5cm]$(d)$) {{\textcolor{red!30}{1}}};
\node(f) at ([yshift=0.5cm]$(d)$) {{\textcolor{red!30}{1}}};
\node at (4.75,-0.75) {{$\rho_i\bigl(H_i\bigr)$}};
\draw[step=0.5cm,lightgray,very thin] (7.55,-0.45) grid (9.95,1.95);
\draw[darkgray,->] (8,1.5) -- (9.85,1.5) node[anchor=north west] {$\s_1$};
\draw[darkgray,->] (8,1.5) -- (8,-0.35) node[anchor=south east] {$\s_2$};
\fill[red] [radius=2pt] (9.5,0) circle[] (9,0.5) circle[] (8.5,1) circle[];
\fill[red] (8,1.5) circle(2pt); 
\node at (8.75,-0.75) {{$H'_i={^{{\s_2}^{-i}}}(\rho_i\bigl(H_i\bigr))$}};
\draw[step=0.5cm,lightgray,very thin] (11.55,-0.45) grid (13.95,1.95);
\draw[darkgray,->] (12,1.5) -- (13.85,1.5) node[anchor=north west] {$\s_1$};
\draw[darkgray,->] (12,1.5) -- (12,-0.35) node[anchor=south east] {$\s_2$};
\fill [radius=2pt] (13.5,0) circle[] (13,0.5) circle[] (12.5,1) circle[];
\fill (12,1.5) circle(2pt); 
\node at (12.75,-0.75) {{$\G'[i]$}};
\end{tikzpicture}
\caption{$H_i$, $\rho_i\bigl(H_i\bigr)$, $H'_i$ and $\G'[i]$ when $n=2$ }
\label{fig ker: going from H[i] to H'[i]}
\end{figure}

Therefore, to see that $(H'_i)_{i\in\mathbb{N}}$ is a generalised $\s'$\=/algebraic group with respect to $\G'$, it just remains to show that for each $i\geq 1$, there are induced restrictions $\pi'_i\colon H'_i\rightarrow H'_{i-1}$ and $(\s_j')_i\colon H'_i\rightarrow{^{\s_j'}}H'_{i-1}$ for each $1\leq j\leq n-1$. This follows due to \Cref{lem: rho restricted to H is isom,lem: form of pi' and sj',cor: restriction sj to H[i] well defined} and the definition of $H'_i$.
\end{proof}

Using the constructions and notation from the previous proposition, in the case where $\s_n\colon k\rightarrow k$ is a bijection, notice that for each $i\in\mathbb{N}$, $H'_i$ is defined as a $\s'$\=/closed subgroup of $\G'[i]$ by the Hopf ideal \begin{align}\label{eq: defining ideal twisted kernels}
    \I(H'_i)=\s_n^{-i}\bigl(\I\bigl(\rho_i\bigl(H_i\bigr)\bigr)\bigr)=\s_n^{-i}\bigl(\I(H_i)\cap k[\G(i)]\bigr).
\end{align}

\begin{example}\label[example]{}
Let $k$ be a $\s$\=/field such that $\s=\{\s_1,\s_2\}$. 
Consider the $\s$\=/closed subgroup $G$ of $\mathbb{G}_m$ introduced in \Cref{ex:introducing example}, and the Zariski closures $(G[i])_{i\in\mathbb{N}}$ of $G$ with respect to $\mathbb{G}_m$, whose defining ideals are found in \Cref{ex: zariski closures example}.
         We know that $(G[i])_{i\in\mathbb{N}}$ is a generalised $\s$\=/algebraic group with respect to $\mathbb{G}_m$. The kernels $(H_i)_{i\in\mathbb{N}}$ of $\pi$ restricted to $(G[i])_{i\in\mathbb{N}}$ are defined by $\I(H_i)=(\I(G[i]),\{\tau (y)-1\ |\ \tau\in T_\s[i-1]\})$ for each $i\geq 1$, and $\I(H_0)=\I(G[0])=(0)$. Notice that \begin{align*}
             \I(H_i)&=\bigl(\{\tau( y)-1\ |\ \tau\in T_\s[i-1]\}\bigr)\subseteq k[\mathbb{G}_m[i]] &&\text{for $1\leq i\leq 3$}\\
             \I(H_i)&=\bigl(\{\tau (y)-1\ |\ \tau\in T_\s[i-1]\}, \{\tau\s_2^4(y)-1\ |\ \tau\in T_\s(i-4)\}\bigr)\subseteq k[\mathbb{G}_m[i]] &&\text{for $i\geq 4$}.
         \end{align*} 
         Now suppose that $\s_2\colon k\rightarrow k$ is a bijection. We can apply the constructions from 
    \Cref{prop: twisted kernels n-1 groups} to find the generalised $\s'$\=/algebraic group $(H_i')_{i\in\mathbb{N}}$ corresponding to these kernels $(H_i)_{i\in\mathbb{N}}$, where $\s'=\{\s_1\s_2^{-1}\}$. We can use (\ref{eq: defining ideal twisted kernels}) to see that \begin{align*}
        \I(H_i')&=(0)\subseteq k[\mathbb{G}_m'[i]] &&\text{for $0\leq i\leq 3$}\\
             \I(H_i')&=\s_2^{-i}\bigl(\{(\tau\s_2^4)(y)-1\ |\ \tau\in T_\s(i-4)\}\bigr)
    =\bigl(\{\tau'(y)-1\ |\ \tau'\in T_{\s'}[i-4]\}\bigr) &&\text{for $i\geq 4$},
    \end{align*} using the fact that $T_{\s'}[i]=\s_2^{-i}T_\s(i)$ for each $i\in\mathbb{N}$. The chain of Hopf ideals $(\I(H_i'))_{i\in\mathbb{N}}$ satisfies the required properties to define a generalised $\s'$\=/algebraic group $(H_i')_{i\in\mathbb{N}}$. Notice that if we let $H'$ be the $\s'$\=/closed subgroup of $\G'$ defined by $\s'$\=/Hopf ideal $\I(H')=\bigcup_{i\in\mathbb{N}}\I(H'_i)$, the $i$-th order Zariski closure of $H'$ with respect to $\mathbb{G}_m'$ would be defined by $(\{\tau'(y)-1\ |\ \tau'\in T_{\s'}[i]\})$. So we see that our adjusted kernels $(H_i')_{i\in\mathbb{N}}$ constructed using \Cref{prop: twisted kernels n-1 groups} are not the Zariski closures of this $\s'$\=/algebraic group, but they do form a generalised $\s'$\=/algebraic group. This provides an example for the motivation for introducing the more general concept of a generalised difference algebraic group.
\end{example}

\Cref{prop: twisted kernels n-1 groups} will be useful when using induction to prove properties of generalised difference algebraic groups, as when inducting on the number of endomorphisms $n$, we will be able to apply the inductive hypothesis to the $(H'_i)_{i\in\mathbb{N}}$.

\section{Ideal Generation Property}\label{sec: ideal gen prop}
\begin{definition}\label[definition]{def: ideal gen prop}
    Let $\G$ be an algebraic group over $k$.
We say that a generalised $\s$\=/algebraic group $(G_i)_{i\in\mathbb{N}}$ with respect to $\G$ has the \textbf{ideal generation property} if \begin{align*}
    \I(G_{i+1})=(\I(G_i), \s_1(\I(G_i)),\dots, \s_n(\I(G_i)))
\end{align*}
for large enough $i\in \mathbb{N}$.
\end{definition}

We already know by \Cref{rem: one inclusion for gen grp} that the $\supseteq$ inclusion in \Cref{def: ideal gen prop} holds for any generalised $\s$\=/algebraic group.
Notice that $(G_i)_{i\in\mathbb{N}}$ having the ideal generation property is equivalent to the equality \begin{align}\label{eq: ideal gen in groups}
    G_{i+1}=\widehat{G_i}^{\G[i+1]}\cap\widehat{{^{\s_1}}G_i}^{\G[i+1]}\cap\cdots\cap \widehat{{^{\s_n}}G_i}^{\G[i+1]}
\end{align}
holding for large enough $i\in\mathbb{N}$. Again by \Cref{rem: one inclusion for gen grp}, the $\leq$ inclusion of (\ref{eq: ideal gen in groups}) holds for any generalised $\s$\=/algebraic group and any $i\in\mathbb{N}$.

We will use results from \Cref{sec: kernels} to prove that the kernels $(H_i)_{i\in\mathbb{N}}$ (\Cref{def: kernels}) of $\pi$ (\cref{eq: def of pi map}) restricted to some generalised $\s$\=/algebraic group $(G_i)_{i\in\mathbb{N}}$ with respect to an algebraic group $\G$ satisfy the ideal generation property. That is, we will see that \begin{align*}
    \I(H_{i+1})=(\I(H_i),\s_1(\I(H_i)),\dots,\s_n(\I(H_i)))
\end{align*}
for large enough $i\in\mathbb{N}$. Notice that $\s=\basicset$ are the original endomorphisms, not the twisted ones that we form in \Cref{prop: twisted kernels n-1 groups}. 
Firstly, we prove that if the ideal generation property holds for the kernels of any generalised $\s$\=/algebraic group, then it holds for any generalised $\s$\=/algebraic group.

\begin{proposition}\label[proposition]{prop: ideal gen holds for kernels then holds for groups}
Let $\G$ be an algebraic group over a $\s$\=/field $k$. Suppose that for any generalised $\s$\=/algebraic group $(G_i)_{i\in\mathbb{N}}$ with respect to $\G$, the kernels $(H_i)_{i\in\mathbb{N}}$ of $\pi$ restricted to $(G_i)_{i\in\mathbb{N}}$ satisfy the ideal generation property.
Then any generalised $\s$\=/algebraic group $(G_i)_{i\in\mathbb{N}}$ with respect to $\G$ satisfies the ideal generation property.
\end{proposition}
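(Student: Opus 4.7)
Write $K[i+1] := \widehat{G[i]}^{\G[i+1]} \cap \bigcap_{j=1}^n \widehat{{^{\s_j}}G[i]}^{\G[i+1]}$; then the ideal generation property for $(G[i])$ is exactly the statement that $G[i+1] = K[i+1]$ for large $i$, and the inclusion $G[i+1] \leq K[i+1]$ is automatic from $\pi[i+1](G[i+1]) \leq G[i]$ and $\s_j[i+1](G[i+1]) \leq {^{\s_j}}G[i]$. The plan is to compare the short exact sequences
\[
1 \to H[i+1] \to G[i+1] \to F[i] \to 1, \qquad 1 \to \ker(\pi[i+1]|_{K[i+1]}) \to K[i+1] \to \pi[i+1](K[i+1]) \to 1,
\]
with $F[i] := \pi[i+1](G[i+1])$, and show that their flanking terms coincide for large $i$, so that $G[i+1] = K[i+1]$ will follow.

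For the kernels, using \Cref{lem: extension commutes with} together with the observation that $\widehat{G[i]}^{\G[i+1]}$ contains $\ker(\pi[i+1])$, one obtains
\[
\ker(\pi[i+1]|_{K[i+1]}) = \bigcap_{j=1}^n \widehat{{^{\s_j}}H[i]}^{\G(i+1)}
\]
inside $\G(i+1)$. Reducing the hypothesised identity $\I(H[i+1]) = \I(H[i]) + \sum_j \s_j(\I(H[i]))$ modulo the augmentation ideal of $k[\G[i]]$, and using the identification $\rho$ of \Cref{lem: rho restricted to H is isom}, this intersection has the same defining ideal in $k[\G(i+1)]$ as $H[i+1]$, so the two kernels agree.

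For the images, \Cref{lem: extension commutes with} gives the bound $\pi[i+1](K[i+1]) \leq G[i] \cap \bigcap_j \widehat{{^{\s_j}}F[i-1]}^{\G[i]}$, and the commutative diagram in \Cref{fig: comm diagram partial- for g} shows $F[i]$ satisfies the same upper bound. Hence the desired equality $\pi[i+1](K[i+1]) = F[i]$ is equivalent to $F[i] = G[i] \cap \bigcap_j \widehat{{^{\s_j}}F[i-1]}^{\G[i]}$, which, in view of $\I(F[i-1]) \subseteq \I(G[i])$, is precisely what the ideal generation property for the projection sequence $(F[i])$ (at step $i-1$) delivers. So ideal generation for $(G[i])$ reduces to ideal generation for $(F[i])$.

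Iterating this reduction, it suffices to establish ideal generation for the $m$-fold projection sequence $(F^m[i])$ for some $m$. Once ideal generation for the Zariski closures $(G_i)$ is in hand, \Cref{cor: projections closer to zariskis} shows that the Zariski indicator strictly decreases with each application of projection, so for $m$ large enough $(F^m[i])$ agrees with $(G_i)$ from some index onwards. For the Zariski closures $(G_i)$ themselves the map $\pi[i+1]\colon G_{i+1} \to G_i$ is a quotient by construction, hence $\pi[i+1](K_{i+1}) = G_i = F_i$ is automatic, and combined with the kernel-matching argument this yields the ideal generation property for $(G_i)$ as the base case. The main obstacle is organising this downward induction along the chain of projections back to $(G[i])$; the algebro-geometric inputs themselves are direct applications of the extension identities of \Cref{lem: extension commutes with} and the kernel ideal generation hypothesis.
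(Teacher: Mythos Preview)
Your proposal is correct and follows essentially the same route as the paper: you define the same auxiliary group $K[i+1]=\H[i+1]$, match its $\pi[i+1]$-kernel with $H[i+1]$ via the hypothesis on kernels, bound its $\pi[i+1]$-image by the projection sequence $(F[i])$ using \Cref{lem: extension commutes with}, and then run a descent on the Zariski indicator (your ``iterating the reduction'' is the paper's induction on $j_m-m$ via \Cref{cor: projections closer to zariskis}) with the Zariski closures as the base case, where $\pi$ is already a quotient. The only cosmetic differences are that you phrase the kernel computation inside $\G(i+1)$ rather than as an intersection of extensions in $\G[i+1]$, and you use the slightly sharper image bound $G[i]\cap\bigcap_j\widehat{{^{\s_j}}F[i-1]}^{\G[i]}$ instead of replacing $G[i]$ by $\widehat{F[i-1]}^{\G[i]}$.
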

\begin{proof}
For a generalised $\s$\=/algebraic group $(G_i)_{i\in\mathbb{N}}$ with respect to $\G$, denote by $(H_i)_{i\in\mathbb{N}}$ the kernels of $\pi$ restricted to $(G_i)_{i\in\mathbb{N}}$ (\Cref{def: kernels}), and for any $i\in\mathbb{N}$, let \begin{align*}
    \H_{i+1}=\widehat{G_i}^{\G[i+1]}\cap\widehat{{^{\s_1}}G_i}^{\G[i+1]}\cap\cdots\cap \widehat{{^{\s_n}}G_i}^{\G[i+1]}.
\end{align*}  Firstly, we prove that for large enough $i\in\mathbb{N}$, \begin{align}\label{eq: ker of intersection}
    \ker\left(\pi_{i+1}\Big|_{\H_{i+1}}\right)=H_{i+1}
\end{align} 
holds.
Since $G_{i+1}\leq \H_{i+1}$ (by \Cref{rem: one inclusion for gen grp}), certainly $H_{i+1}\leq \ker(\pi_{i+1}|_{\H_{i+1}})$. 
Making appropriate choices of $A,B\subseteq C\subseteq T_\s$ for \Cref{cor: kernel of extensions}, for each $i\geq 1$, and each $1\leq j\leq n$, 
\begin{align*}
\ker\left(\pi_{i+1}\Big|_{\widehat{G_i}^{\G[i+1]}}\right)\leq\widehat{1_{\G[i]}}^{\G[i+1]}\leq\widehat{H_i}^{\G[i+1]}\ \text{and}\ \ker\left(\pi_{i+1}\Big|_{\widehat{{^{\s_j}}G_i}^{\G[i+1]}}\right)\leq\widehat{{^{\s_j}}H_i}^{\G[i+1]}.
\end{align*}
 
         Therefore, \begin{align*}
            \ker\left(\pi_{i+1}\Big|_{\H_{i+1}}\right)&\leq\ker\left(\pi_{i+1}\Big|_{\widehat{G_i}^{\G[i+1]}}\right)\cap\ker\left(\pi_{i+1}\Big|_{\widehat{{^{\s_1}}G_i}^{\G[i+1]}}\right) \cap\cdots\cap \ker\left(\pi_{i+1}\Big|_{\widehat{{^{\s_n}}G_i}^{\G[i+1]}}\right)\\
            &\leq \widehat{H_i}^{\G[i+1]}\cap \widehat{{^{\s_1}}H_i}^{\G[i+1]}\cap\cdots\cap \widehat{{^{\s_n}}H_i}^{\G[i+1]}\\
            &=H_{i+1}
        \end{align*} for large enough $i\in\mathbb{N}$, as the sequence $(H_i)_{i\in\mathbb{N}}$ has the ideal generation property.

Secondly, we will prove that under the assumptions of the proposition, Zariski closures have the ideal generation property.
For a $\s$\=/closed subgroup $G$ of $\G$, construct the Zariski closures $(G[i])_{i\in\mathbb{N}}$ of $G$ with respect to $\G$ (\Cref{def: zariski closures}), the kernels $(H_i)_{i\in\mathbb{N}}$ of $\pi$ restricted to $(G[i])_{i\in\mathbb{N}}$, and the algebraic group \begin{align*}
    \H_{i+1}=\widehat{G[i]}^{\G[i+1]}\cap\widehat{{^{\s_1}}G[i]}^{\G[i+1]}\cap\cdots \cap \widehat{{^{\s_n}G[i]}}^{\G[i+1]}
\end{align*}
for each $i\in\mathbb{N}$. We need to prove that $\H_{i+1}=G_{i+1}$ for large enough $i\in\mathbb{N}$, recall that $G_{i+1}\leq \H_{i+1}$ for any $i\in\mathbb{N}$ by \Cref{rem: one inclusion for gen grp}. Notice that for every $i\in\mathbb{N}$,
    $\pi_{i+1}(\H_{i+1})\leq \pi_{i+1}\left(\widehat{G[i]}^{\G[i+1]}\right)= G[i]$ by \Cref{lem: extension commutes with}(a), and hence the diagram 
\begin{align*}
        \begin{tikzpicture}  
\node (x)   {$G[i+1]$};
  \node (z) at ([xshift=4cm]$(x)$) {$\H_{i+1}$};  
   \node (s) at ([yshift=-1cm]$(x)!0.5!(z)$) {$G[i]$}; 
  \draw[right hook->] (x)-- (z) node[midway,above, color=black] {};  
  \draw[line] (x)-- (s) node[pos=0.75,left, color=black] {$\pi_{i+1}\ \ $};
  \draw[line] (z)--(s) node[pos=0.75,right, color=black] {$\ \ \pi_{i+1}$};
\end{tikzpicture} 
    \end{align*}
commutes. For each $i\in\mathbb{N}$, as $\pi_{i+1}\colon G[i+1]\rightarrow G[i]$ is a quotient map (\Cref{rem: zariski closures have restrictions}), the commutative diagram tells us that $\pi_{i+1}\colon \H_{i+1}\rightarrow G[i]$ is also a quotient map.
For large enough $i\in\mathbb{N}$, we know that $\ker(\pi_{i+1}|_{\H_{i+1}})= H_{i+1}$ (\ref{eq: ker of intersection}), hence $\pi_{i+1}\colon G[i+1]\rightarrow G[i]$ and $\pi_{i+1}\colon \H_{i+1}\rightarrow G[i]$ are both quotient maps with the same kernel, therefore $G[i+1]=\H_{i+1}$ as required. That is, the Zariski closures of any $\s$\=/closed subgroup $G$ of $\G$ satisfy the ideal generation property.

We are now ready to prove the theorem by induction on how `different' a generalised $\s$\=/algebraic group is from its Zariski closures. Let $(G_i)_{i\in\mathbb{N}}$ be a generalised $\s$\=/algebraic group with respect to $\G$. The Zariski closures $(G[i])_{i\in\mathbb{N}}$ of $(G_i)_{i\in\mathbb{N}}$ with respect to $\G$ (\Cref{def: zariski closures of gen grp}) satisfy the ideal generation property, so let $m\in\mathbb{N}$ be the smallest natural number such that  
    $\I(G[i+1])=(\I(G[i]),\s_1(\I(G[i])),\dots,\s_n(\I(G[i])))$
for all $i\geq m$. Let $(g_i)_{i\in\mathbb{N}}$ be the Zariski indicators of $(G_i)_{i\in\mathbb{N}}$ with respect to $\G$ (\Cref{def: zariski indicators}). 

If $g_m=0$, then $\I(G_i)=\I(G[i])=(0)$ for all $i\in\mathbb{N}$, and so $(G_i)_{i\in\mathbb{N}}$ implicitly has the ideal generation property. Hence, we can suppose $g_m> 0$, and so by \Cref{lem: gi=0 or i}, $g_m\geq m$. We will prove that generalised $\s$\=/algebraic groups satisfy the ideal generation property by induction on the value of $g_m-m\geq 0$. 

If $(G_i)_{i\in\mathbb{N}}$ is a generalised $\s$\=/algebraic group such that $g_m=m$, for every $i\geq m$, $\I(G[i])=\I(G_i)$ by \Cref{lem: jm=m means I(G[i])=I(Gi)} and hence $(G_i)_{i\in\mathbb{N}}$ has the ideal generation property. Suppose that the ideal generation property holds for any generalised $\s$\=/algebraic group $(G_i)_{i\in\mathbb{N}}$ where $g_m-m<p$ for some $p\geq 1$.

Let $(G_i)_{i\in\mathbb{N}}$ be a generalised $\s$\=/algebraic group with respect to $\G$ such that $g_m-m=p\geq 1$. Let $(F_i)_{i\in\mathbb{N}}$ be the projections of $(G_i)_{i\in\mathbb{N}}$ under $\pi$ (\Cref{def: projections}). That is, $F_i=\pi_{i+1}\bigl(G_{i+1}\bigr)$. \Cref{lem: projections also generalised grps} tells us that $(F_i)_{i\in\mathbb{N}}$ is a generalised $\s$\=/algebraic group with respect to $\G$, and by \Cref{lem: zariskis equal ji>i means tji leq ji-1}, $f_m<g_m$ and we can apply the inductive hypothesis to $(F_i)_{i\in\mathbb{N}}$. That is, the sequence $(F_i)_{i\in\mathbb{N}}$ has the ideal generation property.
Again, for every $i\in\mathbb{N}$, let $\H_{i+1}$ be the algebraic group \begin{align*}
    \H_{i+1}=\widehat{G_i}^{\G[i+1]}\cap\widehat{{^{\s_1}}G_i}^{\G[i+1]}\cap\cdots\cap \widehat{{^{\s_n}}G_i}^{\G[i+1]},
\end{align*} and recall that $G_{i+1}\leq \H_{i+1}$ (\Cref{rem: one inclusion for gen grp}). We will prove that $\H_{i+1}=G_{i+1}$ for large $i\in\mathbb{N}$. Notice that 
\begin{align}\label{eq: Gi leq Fi-1}
    G_i\leq \widehat{F_{i-1}}^{\G[i]}
\end{align}
for $i\geq 1$, as the extension of $F_{i-1}$ to $\G[i]$ is defined as a subgroup of $\G[i]$ by $\I(F_{i-1})k[\G[i]]$ (by \Cref{def: extension}), and $\I(F_{i-1})\subseteq \I(G_i)$ (\cref{eq: def ideal for F}), so $\I(F_{i-1})k[\G[i]]\subseteq \I(G_i)$. Further, \begin{align*}
    \pi_{i+1}(\H_{i+1})&={G_i}\cap \widehat{{^{\s_1}}\left(\pi_i\bigl(G_i\bigr)\right)}^{\G[i]}\cap\cdots\cap \widehat{{^{\s_n}}\left(\pi_i\bigl(G_i\bigr)\right)}^{\G[i]} &&\text{by \Cref{lem: extension commutes with} (a) and (c)}\\
    &\leq \widehat{F_{i-1}}^{\G[i]}\cap \widehat{{^{\s_1}}F_{i-1}}^{\G[i]}\cap\cdots\cap \widehat{{^{\s_n}}F_{i-1}}^{\G[i]}&&\text{by (\ref{eq: Gi leq Fi-1})}\\
    &=F_{i} &&\text{for $i\gg 0$}
\end{align*}
for large $i\in\mathbb{N}$ as $(F_i)_{i\in\mathbb{N}}$ has the ideal generation property. 
That is, the diagram 
\begin{align*}
        \begin{tikzpicture}  
\node (x)   {$G_{i+1}$};
  \node (z) at ([xshift=4cm]$(x)$) {$\H_{i+1}$};  
   \node (s) at ([yshift=-1cm]$(x)!0.5!(z)$) {$F_i$}; 
  \draw[right hook->] (x)-- (z) node[midway,above, color=black] {};  
  \draw[line] (x)-- (s) node[midway,left, color=black] {$\pi_{i+1}\ \ $};
  \draw[line] (z)--(s) node[midway,right, color=black] {$\ \ \pi_{i+1}$};
\end{tikzpicture} 
    \end{align*}
commutes for large enough $i\in\mathbb{N}$. For large enough $i\in\mathbb{N}$,
since $\pi_{i+1}\colon G_{i+1}\rightarrow F_i$ is a quotient map, $\pi_{i+1}\colon  \H_{i+1}\rightarrow F_i$ is also a quotient map.
 Further, $\ker\left(\pi_{i+1}\big|_{\H_{i+1}}\right)=H_{i+1}$ (\cref{eq: ker of intersection}), and hence $\pi_{i+1}\colon G_{i+1}\rightarrow F_i$ and $\pi_{i+1}\colon  \H_{i+1}\rightarrow F_i$ are both quotient maps with the same kernel, so $G_{i+1}=\H_{i+1}$. That is, the sequence $(G_i)_{i\in\mathbb{N}}$ satisfies the ideal generation property as required.
 \end{proof}

\subsection{Ideal Generation Property in the Ordinary Case}

We will now show that the ideal generation property holds for any ordinary generalised difference algebraic group. This is a generalisation of the result for the Zariski closures of an ordinary difference algebraic group \cite[Corollary 4.2, page 533]{wibmer2022finiteness}.

\begin{theorem}\label[theorem]{the: ordinary ideal gen}
Let $\G$ be an algebraic group over an ordinary $\s$\=/field $k$.
    Let $(G_i)_{i\in\mathbb{N}}$ be a generalised $\s$\=/algebraic group with respect to $\G$. Then  
        $\I(G_{i+1})=(\I(G_i),\s(\I(G_i)))$
    for large enough $i\in\mathbb{N}$.
\end{theorem}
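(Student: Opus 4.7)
The plan is to apply \Cref{prop: ideal gen holds for kernels then holds for groups} to reduce to the case of kernels. That is, it suffices to show that for the kernels $(H[i])_{i\in\mathbb{N}}$ of $\pi$ restricted to an arbitrary generalised $\s$\=/algebraic group $(G[i])_{i\in\mathbb{N}}$ with respect to $\G$, we have $\I(H[i+1])=(\I(H[i]),\s(\I(H[i])))$ for large $i$. Equivalently, I would show that $H[i+1]=\widehat{H[i]}^{\G[i+1]}\cap\widehat{{^\s}H[i]}^{\G[i+1]}$ for large enough $i$.

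First I would exploit the rigid structure of the kernels in the ordinary case. Since $H[i]=\ker(\pi[i]|_{G[i]})$ is contained in the kernel of $\pi[i]\colon\G[i]\to\G[i-1]$, which is $\{1\}\times\cdots\times\{1\}\times{^{\s^i}}\G$, each $H[i]$ is determined by a single closed subgroup $K[i]\leq{^{\s^i}}\G$, namely $K[i]=\rho[i](H[i])$ in the notation of \Cref{sec: kernels being n-1 groups}. This reduces all the subsequent computations to the single ``top'' coordinate.

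Next, via the identification $\G[i+1]\cong\G[i]\times{^{\s^{i+1}}}\G$, I would verify from the definition of the extension that $\widehat{H[i]}^{\G[i+1]}=H[i]\times{^{\s^{i+1}}}\G$ and $\widehat{{^\s}H[i]}^{\G[i+1]}=\G\times{^\s}H[i]$. Because both of these are products of subgroups on each coordinate, their intersection can be taken coordinate by coordinate: on the $\id$ slot it is $\{1\}\cap\G=\{1\}$; on the slots $\s,\dots,\s^{i-1}$ it is $\{1\}$; on the $\s^i$ slot it is $K[i]\cap\{1\}=\{1\}$; and on the $\s^{i+1}$ slot it is ${^{\s^{i+1}}}\G\cap{^\s}K[i]={^\s}K[i]$. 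So the intersection is the subgroup of $\G[i+1]$ that is trivial on every coordinate except $\s^{i+1}$, where it equals ${^\s}K[i]$.

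The final ingredient is \Cref{lem: H[i+1] isomorphic to H[i] (ordinary)} applied with $i$ replaced by $i+1$: for large $i$, the map $\s[i+1]\colon H[i+1]\to{^\s}H[i]$ is an isomorphism. Under the identifications above this map corresponds to the inclusion $K[i+1]\hookrightarrow{^\s}K[i]$ inside ${^{\s^{i+1}}}\G$, so the isomorphism forces $K[i+1]={^\s}K[i]$ for large $i$, and hence $H[i+1]$ agrees with the intersection computed in the previous step. I expect the main obstacle to be purely notational rather than conceptual, namely the careful bookkeeping that identifies $H[i]$ with $K[i]$ and translates the two extensions and the isomorphism from \Cref{lem: H[i+1] isomorphic to H[i] (ordinary)} into a coordinate-wise statement about subgroups of $\G[i+1]$; once this is set up, the theorem drops out as a formal combination of \Cref{prop: ideal gen holds for kernels then holds for groups} and \Cref{lem: H[i+1] isomorphic to H[i] (ordinary)}.
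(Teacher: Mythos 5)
Your proposal is correct and follows essentially the same route as the paper: reduce to the kernels via \Cref{prop: ideal gen holds for kernels then holds for groups}, observe that $H[i]$ lives entirely in the top coordinate ${^{\s^i}}\G$, and use \Cref{lem: H[i+1] isomorphic to H[i] (ordinary)} to conclude $H[i+1]=1\times{^\s}H[i]$, which coincides with $\widehat{H[i]}^{\G[i+1]}\cap\widehat{{^\s}H[i]}^{\G[i+1]}$. Your coordinate-by-coordinate computation of that intersection just spells out the step the paper compresses into ``Notice that this implies.''
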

\begin{proof}
 Let $(G_i)_{i\in\mathbb{N}}$ be a generalised ordinary $\s$\=/algebraic group with respect to $\G$, and let $(H_i)_{i\in\mathbb{N}}$ be the kernels of $\pi$ with respect to $(G_i)_{i\in\mathbb{N}}$.
By \Cref{lem: H[i+1] isomorphic to H[i] (ordinary)}, for large enough $i\in\mathbb{N}$, $(\s)_{i+1}\colon H_{i+1}\rightarrow {^\s}H_i$ is an isomorphism, and hence $H_{i+1}=1\times{^\s}H_i$.
Notice that this implies that for large enough $i\in\mathbb{N}$, \begin{align*}
    H_{i+1}=\widehat{H_i}^{\G[i+1]}\cap \widehat{{^\s}H_i}^{\G[i+1]}
\end{align*}
and hence 
    $\I(H_{i+1})=(\I(H_i),\s(\I(H_i)))$.
The theorem now follows from \Cref{prop: ideal gen holds for kernels then holds for groups}.
\end{proof}

\subsection{Ideal Generation Property in the Partial Case}

We aim to extend the ideal generation property to the $n$-endomorphisms case, see \Cref{fig ideal: diagram of intersections g[i+1]} for a visualisation of this in the two endomorphisms case.


\begin{figure}[!htbp]
\centering
\begin{tikzpicture}
\draw[step=0.5cm,lightgray,very thin] (-0.25,-0.25) grid (1.95,1.95);
\draw[darkgray,->] (0,0) -- (1.85,0) node[anchor=north west] {$\s_1$};
\draw[darkgray,->] (0,0) -- (0,1.85) node[anchor=south east] {$\s_2$};
\fill[red] (0,0) circle(2pt);
\fill[red] (0.5,0) circle(2pt) (0,0.5) circle(2pt);
\fill[red] [radius=2pt] (1,0) circle[] (0.5,0.5) circle[] (0,1) circle[];
\fill[red] [radius=2pt] (1.5,0) circle[] (1,0.5) circle[] (0.5,1) circle[] (0,1.5) circle[];
\node at (0.75,-0.65) {{$G_{i+1}$}};
\node at (2.75,-0.65) {$=$};
\draw[step=0.5cm,lightgray,very thin] (3.25,-0.25) grid (5.45,1.95);
\draw[darkgray,->] (3.5,0) -- (5.35,0) node[anchor=north west] {$\s_1$};
\draw[darkgray,->] (3.5,0) -- (3.5,1.85) node[anchor=south east] {$\s_2$};
\fill[red] (3.5,0) circle(2pt);
\fill[red] (4,0) circle(2pt) (3.5,0.5) circle(2pt);
\fill[red] [radius=2pt] (4.5,0) circle[] (4,0.5) circle[] (3.5,1) circle[];
\fill [radius=2pt] (5,0) circle[] (4.5,0.5) circle[] (4,1) circle[] (3.5,1.5) circle[];
\node at (4.5,-0.65) {{$\widehat{G_i}^{\G[i+1]}$}};
\node at (6.25,-0.65) {$\cap$};
\draw[step=0.5cm,lightgray,very thin] (6.75,-0.25) grid (8.95,1.95);
\draw[darkgray,->] (7,0) -- (8.85,0) node[anchor=north west] {$\s_1$};
\draw[darkgray,->] (7,0) -- (7,1.85) node[anchor=south east] {$\s_2$};
\fill[red] (7.5,0) circle(2pt);
\fill[red] [radius=2pt] (8,0) circle[] (7.5,0.5) circle[];
\fill[red] [radius=2pt] (8.5,0) circle[] (8,0.5) circle[] (7.5,1) circle[];
\fill (7,0) circle(2pt) (7,0.5) circle(2pt) (7,1) circle(2pt) (7,1.5) circle(2pt); 
\node at (8,-0.65) {{$\widehat{{^{\s_1}}G_i}^{\G[i+1]}$}};
\node at (9.75,-0.65) {$\cap$};
\draw[step=0.5cm,lightgray,very thin] (10.25,-0.25) grid (12.45,1.95);
\draw[darkgray,->] (10.5,0) -- (12.35,0) node[anchor=north west] {$\s_1$};
\draw[darkgray,->] (10.5,0) -- (10.5,1.85) node[anchor=south east] {$\s_2$};
\fill[red] (10.5,0.5) circle(2pt);
\fill[red] [radius=2pt] (11,0.5) circle[] (10.5,1) circle[];
\fill[red] [radius=2pt] (11.5,0.5) circle[] (11,1) circle[] (10.5,1.5) circle[];
\fill (10.5,0) circle(2pt) (11,0) circle(2pt) (11.5,0) circle(2pt) (12,0) circle(2pt);
\node at (11.5,-0.65) {{$\widehat{{^{\s_2}}G_{i}}^{\G[i+1]}$}};
\end{tikzpicture}
\caption{Decomposition of $G_{i+1}$ when $n=2$}
\label{fig ideal: diagram of intersections g[i+1]}
\end{figure}

\begin{theorem}\label[theorem]{the: partial ideal generation property}
    Let $\G$ be an algebraic group over a $\s$\=/field $k$. Let $(G_i)_{i\in\mathbb{N}}$ be a generalised $\s$\=/algebraic group with respect to $\G$. Then
    \begin{align*}
        \I(G_{i+1})=(\I(G_i),\s_1(\I(G_i)),\dots,\s_n(\I(G_i)))
    \end{align*}
    for large enough $i\in\mathbb{N}$.
\end{theorem}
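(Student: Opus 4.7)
The proof will proceed by induction on $n=|\s|$. The base case $n=1$ is exactly \Cref{the: ordinary ideal gen}. For the inductive step, assume the ideal generation property holds for every generalised difference algebraic group equipped with $n-1$ pairwise commuting endomorphisms. By \Cref{prop: ideal gen holds for kernels then holds for groups}, it suffices to verify the property for the kernels $(H[i])_{i\in\mathbb{N}}$ of $\pi$ restricted to an arbitrary generalised $\s$\=/algebraic group $(G[i])_{i\in\mathbb{N}}$ with respect to $\G$. Moreover, since both sides of the putative ideal equality are preserved under faithfully flat base change, we may replace $k$ by its inversive closure (as in the proof of \Cref{lem: H[i+1] isomorphic to H[i] (ordinary)}) and assume that $\s_n\colon k\rightarrow k$ is a bijection.

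We then invoke \Cref{prop: twisted kernels n-1 groups}: setting $\s'=\{\s_1\s_n^{-1},\ldots,\s_{n-1}\s_n^{-1}\}$ and $H'[i]={^{\s_n^{-i}}}(\rho[i](H[i]))$ yields a generalised $\s'$\=/algebraic group $(H'[i])_{i\in\mathbb{N}}$ with respect to $\G'$, to which the inductive hypothesis applies since $|\s'|=n-1$. This gives
\[
H'[i+1]=\widehat{H'[i]}^{\G'[i+1]}\cap\bigcap_{j=1}^{n-1}\widehat{{^{\s_j'}}H'[i]}^{\G'[i+1]}
\]
for all large enough $i$. Applying the base change ${^{\s_n^{i+1}}}(-)$ to this identity, using that base change commutes with intersection and with the extension operation (\Cref{lem: base change commutes with extension}), and computing ${^{\s_n^{i+1}}}H'[i+1]=\rho[i+1](H[i+1])$, ${^{\s_n^{i+1}}}H'[i]={^{\s_n}}(\rho[i](H[i]))$, ${^{\s_n^{i+1}}}({^{\s_j'}}H'[i])={^{\s_j}}(\rho[i](H[i]))$ for $1\leq j\leq n-1$, and ${^{\s_n^{i+1}}}\G'[i+1]=\G(i+1)$, we obtain the top-order identity
\[
\rho[i+1](H[i+1])=\bigcap_{j=1}^{n}\widehat{{^{\s_j}}(\rho[i](H[i]))}^{\G(i+1)}
\]
valid for all sufficiently large $i$.

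Finally, we upgrade this top-order identity to the full ideal generation property. Consider
\[
\H[i+1]:=\widehat{H[i]}^{\G[i+1]}\cap\bigcap_{j=1}^{n}\widehat{{^{\s_j}}H[i]}^{\G[i+1]},
\]
whose defining ideal in $k[\G[i+1]]$ is exactly $(\I(H[i]),\s_1(\I(H[i])),\ldots,\s_n(\I(H[i])))$. The inclusion $H[i+1]\leq\H[i+1]$ is immediate from the fact that $(H[i])_{i\in\mathbb{N}}$ is a generalised $\s$\=/algebraic group, so we only need the reverse. Since $H[i]$ has all entries at positions $\tau\in T_\s[i-1]$ equal to $1$ and every $\mu\in T_\s$ with $1\leq\ord(\mu)\leq i$ is divisible by some $\s_j$, the constraints from $\widehat{{^{\s_j}}H[i]}^{\G[i+1]}$ force every entry of $\H[i+1]$ of order at most $i$ to equal $1$; the remaining entries of order $i+1$ are then constrained precisely by the intersection $\bigcap_{j=1}^{n}\widehat{{^{\s_j}}(\rho[i](H[i]))}^{\G(i+1)}$, which by the top-order identity equals $\rho[i+1](H[i+1])$. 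Hence $\H[i+1]=H[i+1]$, giving the desired ideal equality. The main obstacle is the careful bookkeeping in transporting the inductive hypothesis across the twist of \Cref{prop: twisted kernels n-1 groups}; once that is set up, the final step is a straightforward analysis of which entries are forced trivial by the kernel structure.
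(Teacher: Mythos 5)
Your proposal is correct and follows essentially the same route as the paper's proof: induction on $n$, reduction to the kernels via \Cref{prop: ideal gen holds for kernels then holds for groups}, base change to make $\s_n$ bijective, transport of the inductive hypothesis through the twist of \Cref{prop: twisted kernels n-1 groups}, and the final split into the forced-trivial low-order entries plus the top-order identity under $\rho[i+1]$. The only cosmetic difference is that you perform the inversive-closure base change at the outset rather than at the end, which changes nothing of substance.
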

\begin{proof}
    We will prove this by induction on the number of endomorphisms $n$. This is proven in the ordinary case in \Cref{the: ordinary ideal gen}.
Now suppose that $n\geq2$, and assume that for any generalised $\s$\=/algebraic group $(G_i)_{i\in\mathbb{N}}$ with respect to $\G$ such that $|\s|=n-1$, $(G_i)_{i\in\mathbb{N}}$ has the ideal generation property.

      Let $\s=\basicset$,  and let $\G$ be an algebraic group over a $\s$\=/field $k$. Let $(G_i)_{i\in\mathbb{N}}$ be a generalised $\s$\=/algebraic group with respect to $\G$, and let $(H_i)_{i\in\mathbb{N}}$ be the kernels of $\pi$ restricted to $(G_i)_{i\in\mathbb{N}}$ (\Cref{def: kernels}). 
    We will prove that the sequence $(H_i)_{i\in\mathbb{N}}$ has the ideal generation property. Let us firstly suppose that $\s_n\colon k\rightarrow k$ is a bijection.

Let $\s'=\{\s_1',\dots,\s_{n-1}'\}$, where $\s_j'=\s_j{\s_n}^{-1}$ for each $1\leq j\leq n-1$, and let $\G'$ denote $\G$ considered as an algebraic group over the $\s'$-field $k$.
Using \Cref{prop: twisted kernels n-1 groups}, we can construct the sequence $(H'_i)_{i\in\mathbb{N}}$, which is a generalised $\s'$\=/algebraic group with respect to $\G'$. Since $|\s'|=n-1$, by the inductive hypothesis, the sequence $(H'_i)_{i\in\mathbb{N}}$ has the ideal generation property.
That is, for large enough $i\in\mathbb{N}$,
\begin{align}\label{eq: twisted kernels ideal gen}
    H'_{i+1}&=\widehat{H'_i}^{\G'[i+1]}\cap\widehat{{^{\s_1'}}H'_i}^{\G'[i+1]} \cap\cdots \cap \widehat{{^{\s_{n-1}'}}H'_i}^{\G'[i+1]} .
\end{align}
See \Cref{fig ideal: diagram of intersections h'[i+1]} for a visualisation of how this decomposition of $H'_{i+1}$ looks for the two endomorphisms case.

    \begin{figure}[!htbp]
\centering
\begin{tikzpicture}
\draw[step=0.5cm,lightgray,very thin] (-0.25,-1.95) grid (1.95,0.25);
\draw[darkgray,->] (0,0) -- (1.85,0) node[anchor=south west] {$\s_1$};
\draw[darkgray,->] (0,0) -- (0,-1.85) node[anchor=south east] {$\s_2\ $};
\fill[red] (1.5,-1.5) circle(2pt);
\fill[red] (0,0) circle(2pt);
\fill[red] (0.5,-0.5) circle(2pt);
\fill[red] (1,-1) circle(2pt);
\node at (0.75,-2.3) {{$H'_{i+1}$}};
\node at (2.5,-2.3) {$=$};
\draw[step=0.5cm,lightgray,very thin] (3.25,-1.95) grid (5.45,0.25);
\draw[darkgray,->] (3.5,0) -- (5.35,0) node[anchor=south west] {$\s_1$};
\draw[darkgray,->] (3.5,0) -- (3.5,-1.85) node[anchor=south east] {$\s_2\ $};
\fill (5,-1.5) circle(2pt);
\fill[red] (3.5,0) circle(2pt);
\fill[red] (4,-0.5) circle(2pt);
\fill[red] (4.5,-1) circle(2pt);
\node at (4.5,-2.3) {{$\widehat{{H'_i}}^{\G'[i+1]}$}};
\node at (6.15,-2.3) {$\cap$};
\draw[step=0.5cm,lightgray,very thin] (6.75,-1.95) grid (8.95,0.25);
\draw[darkgray,->] (7,0) -- (8.85,0) node[anchor=south west] {$\s_1$};
\draw[darkgray,->] (7,0) -- (7,-1.85) node[anchor=south east] {$\s_2\ $};
\fill[red] (8.5,-1.5) circle(2pt);
\fill (7,0) circle(2pt);
\fill[red] (7.5,-0.5) circle(2pt);
\fill[red] (8,-1) circle(2pt);
\node at (8,-2.3) {{$\widehat{{^{\s_1'}}{H'_i}}^{\G'[i+1]}$}};
\end{tikzpicture}
\caption{Decomposition of $H'_{i+1}$ when $n=2$}
\label{fig ideal: diagram of intersections h'[i+1]}
\end{figure}

Let 
    $\K_{i+1}=\widehat{H_i}^{\G[i+1]}\cap \widehat{{^{\s_1}}H_i}^{\G[i+1]}\cap\cdots\cap \widehat{{^{\s_n}}H_i}^{\G[i+1]}$
 for all $i\in\mathbb{N}$. Proving that $(H_i)_{i\in\mathbb{N}}$ has the ideal generation property is equivalent to proving that $\K_{i+1}=H_{i+1}$ for large enough $i\in\mathbb{N}$.
See \Cref{fig ideal: diagram of intersections h[i+1]} for a visualisation of this decomposition of $H_{i+1}$ for the two endomorphisms case. By \Cref{rem: one inclusion for gen grp}, we have $H_{i+1}\leq \K_{i+1}$ for any $i\in\mathbb{N}$.

\begin{figure}[!htbp]
\centering
\begin{tikzpicture}
\draw[step=0.5cm,lightgray,very thin] (-0.25,-0.25) grid (1.95,1.95);
\draw[darkgray,->] (0,0) -- (1.85,0) node[anchor=north west] {$\s_1$};
\draw[darkgray,->] (0,0) -- (0,1.85) node[anchor=south east] {$\s_2$};
\node (a) at (0,0) {\textcolor{red}{$1$}};
\node (b) at ([xshift=0.5cm]$(a)$) {\textcolor{red}{$1$}};
\node (c) at ([xshift=0.5cm]$(b)$) {\textcolor{red}{$1$}};
\node (d) at ([yshift=0.5cm]$(a)$) {\textcolor{red}{$1$}};
\node (e) at ([xshift=0.5cm]$(d)$) {\textcolor{red}{$1$}};
\node (f) at ([yshift=0.5cm]$(d)$) {\textcolor{red}{$1$}};
\fill[red] [radius=2pt] (1.5,0) circle[] (1,0.5) circle[] (0.5,1) circle[] (0,1.5) circle[];
\node at (1,-0.8) {{$H_{i+1}$}};
\node at (2.75,-0.8) {$=$};
\draw[step=0.5cm,lightgray,very thin] (3.25,-0.25) grid (5.45,1.95);
\draw[darkgray,->] (3.5,0) -- (5.35,0) node[anchor=north west] {$\s_1$};
\draw[darkgray,->] (3.5,0) -- (3.5,1.85) node[anchor=south east] {$\s_2$};
\node (a) at (3.5,0) {\textcolor{red}{$1$}};
\node (b) at ([xshift=0.5cm]$(a)$) {\textcolor{red}{$1$}};
\node (d) at ([yshift=0.5cm]$(a)$) {\textcolor{red}{$1$}};
\fill[red] [radius=2pt] (4.5,0) circle[] (4,0.5) circle[] (3.5,1) circle[];
\fill [radius=2pt] (5,0) circle[] (4.5,0.5) circle[] (4,1) circle[] (3.5,1.5) circle[];
\node at (4.5,-0.8) {{$\widehat{H_i}^{\G[i+1]}$}};
\node at (6.25,-0.8) {$\cap$};
\draw[step=0.5cm,lightgray,very thin] (6.75,-0.25) grid (8.95,1.95);
\draw[darkgray,->] (7,0) -- (8.85,0) node[anchor=north west] {$\s_1$};
\draw[darkgray,->] (7,0) -- (7,1.85) node[anchor=south east] {$\s_2$};
\node (a) at (7.5,0) {\textcolor{red}{$1$}};
\node (b) at ([xshift=0.5cm]$(a)$) {\textcolor{red}{$1$}};
\node (d) at ([yshift=0.5cm]$(a)$) {\textcolor{red}{$1$}};
\fill[red] [radius=2pt] (8.5,0) circle[] (8,0.5) circle[] (7.5,1) circle[];
\fill (7,0) circle(2pt) (7,0.5) circle(2pt) (7,1) circle(2pt) (7,1.5) circle(2pt); 
\node at (8,-0.8) {{$\widehat{{^{\s_1}}H_i}^{\G[i+1]}$}};
\node at (9.75,-0.8) {$\cap$};
\draw[step=0.5cm,lightgray,very thin] (10.25,-0.25) grid (12.45,1.95);
\draw[darkgray,->] (10.5,0) -- (12.35,0) node[anchor=north west] {$\s_1$};
\draw[darkgray,->] (10.5,0) -- (10.5,1.85) node[anchor=south east] {$\s_2$};
\node (a) at (10.5,0.5) {\textcolor{red}{$1$}};
\node (b) at ([xshift=0.5cm]$(a)$) {\textcolor{red}{$1$}};
\node (d) at ([yshift=0.5cm]$(a)$) {\textcolor{red}{$1$}};
\fill[red] [radius=2pt] (11.5,0.5) circle[] (11,1) circle[] (10.5,1.5) circle[];
\fill (10.5,0) circle(2pt) (11,0) circle(2pt) (11.5,0) circle(2pt) (12,0) circle(2pt);
\node at (11.5,-0.8) {{$\widehat{{^{\s_2}}H_i}^{\G[i+1]}$}};
\end{tikzpicture}
\caption{Decomposition of $H_{i+1}=\K_{i+1}$ when $n=2$}
\label{fig ideal: diagram of intersections h[i+1]}
\end{figure}

Let $i\geq 1$. Since $H_i$ is the kernel of $\pi_i$ restricted to $G_i$, for any $\tau\in T_\s[i-1]$, the projection of $H_i$ onto the $^\tau\G$ component is $1_{({^\tau} \G)}$. By \Cref{lem: extension commutes with}(c), ${^{\s_j}}H_i$ is the kernel of  ${^{\s_j}}\pi_i$ restricted to ${^{\s_j}}G_i$, so if for some $1\leq j\leq n$, $\tau\in \s_j(T_\s[i-1])$, then the projection of ${^{\s_j}}H_i$ onto the $^\tau \G$ component is also $1_{({^\tau} \G)}$. For any $\tau\in T_\s[i]$, either $\tau\in T_\s[i-1]$ or $\tau\in \s_j(T_\s[i-1])$ for some $1\leq j\leq n$, so for all $\tau\in T_\s[i]$, the projection of $\K_{i+1}$ onto ${^\tau}\G$ is $1_{({^\tau} \G)}$, and $\pi_{i+1}(\K_{i+1})=\prod_{\tau\in T_\s[i]}1_{({^\tau} \G)}=1_{\G[i]}$.

For $i\in\mathbb{N}$ and $1\leq j\leq n$,
\begin{align*}
     \rho_{i+1}\left(\widehat{{^{\s_j}}H_i}^{\G[i+1]}\right)=\widehat{{^{\s_j}}\left(\rho_i\bigl(H_i\bigr)\right)}^{\G(i+1)}
     = \widehat{{^{\s_j}}{^{{\s_n}^i}H'_i}}^{^{{\s_n}^{i+1}}\G'[i+1]}
     ={^{{\s_n}^{i+1}}}\left(\widehat{{^{\s_j\s_n^{-1}}}H'_i}^{\G'[i+1]} \right)
 \end{align*}
 where we have used \Cref{lem: projections commute with extension} for the first equality, \Cref{lem : G'[i] twist of G(i)} and \Cref{prop: twisted kernels n-1 groups} for the second, and \Cref{lem: base change commutes with extension} for the third. Further, by definition of the $\rho_{i+1}$ map (\cref{eq: def of rho}) and the extension of $H_i\leq G[i]$ to $\G[i+1]$ (\Cref{def: extension}), $\rho_{i+1}\left(\widehat{H_i}^{\G[i+1]}\right)=\G(i+1)$. Therefore \begin{align*}
    \rho_{i+1}\bigl(\K_{i+1}\bigr)
    &\leq\G(i+1)\cap \rho_{i+1}\left(\widehat{{^{\s_1}}H_i}^{\G[i+1]}\right)\cap\cdots\cap \rho_{i+1}\left(\widehat{{^{\s_n}}H_i}^{\G[i+1]}\right)\\
    &={^{{\s_n}^{i+1}}}\left(\widehat{{^{\s_1'}}H'_i}^{\G'[i+1]} \cap\cdots\cap\widehat{{^{\s_{n-1}'}}H'_i}^{\G'[i+1]} \cap\widehat{H'_i}^{\G'[i+1]} \right) \\
    &={^{{\s_n}^{i+1}}}H'_{i+1} &&\text{by (\ref{eq: twisted kernels ideal gen})}\\
    &=\rho_{i+1}\bigl(H_{i+1}\bigr) 
\end{align*}
for large enough $i\in\mathbb{N}$. Therefore, $\rho_{i+1}\bigl(\K_{i+1}\bigr)\leq \rho_{i+1}\bigl(H_{i+1}\bigr)$, and since \begin{align*}
    \pi_{i+1}(\K_{i+1})=1_{\G[i]}=\pi_{i+1}\bigl(H_{i+1}\bigr),
\end{align*} we have $\K_{i+1}\leq H_{i+1}$, hence $\K_{i+1}=H_{i+1}$.
That is, in the case where $\s_n\colon k\rightarrow k$ is a bijection, the sequence $(H_i)_{i\in\mathbb{N}}$ has the ideal generation property.

If we are working over a field where $\s_n\colon k\rightarrow k$ isn't a bijection, we can pass via base change to a field where it is a bijection. For example, we could again take the inversive closure of $k$ as described in the proof of \Cref{lem: H[i+1] isomorphic to H[i] (ordinary)}. After base change, the equality of groups will hold for large enough $i\in\mathbb{N}$. If the groups are equal after base change, they must be equal before base change, that is, for any $\s$\=/field $k$, the sequence $(H_i)_{i\in\mathbb{N}}$ has the ideal generation property.



We have shown that given any generalised $\s$\=/algebraic group $(G_i)_{i\in\mathbb{N}}$, that the kernels $(H_i)_{i\in\mathbb{N}}$ of $\pi$ restricted to $(G_i)_{i\in\mathbb{N}}$ have the ideal generation property. Then by \Cref{prop: ideal gen holds for kernels then holds for groups}, any generalised $\s$\=/algebraic group $(G_i)_{i\in\mathbb{N}}$ with respect to $\G$ satisfies the ideal generation property.
\end{proof}

\begin{example}\label[example]{ex: ideal gen prop ex}
    Consider the generalised $\s$\=/algebraic group $(G_i)_{i\in\mathbb{N}}$ with respect to $\mathbb{G}_m$ and its Zariski closures $(G[i])_{i\in\mathbb{N}}$ from \Cref{ex: zariski closures and gen group}.
    Notice that $\I(G_{i+1})=(\I(G_i),\s_1(\I(G_i)),\dots,\s_n(\I(G_i)))$ for all $i\geq 5$, and that $\I(G[i+1])=(\I(G[i]),\s_1(\I(G[i])),\dots,\s_n(\I(G[i])))$ for all $i\geq 4$.
\end{example}

\begin{corollary}\label[corollary]{cor: defining ideal fin generated}
    Let $\G$ be an algebraic group over $k$. For any $\s$\=/closed subgroup $G\leq \G$, the defining ideal $\I(G)\subseteq k\{\G\}$ of $G$ is finitely $\s$\=/generated. 
\end{corollary}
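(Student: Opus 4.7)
The plan is to derive this directly from the partial ideal generation property (Theorem \ref{the: partial ideal generation property}) applied to the Zariski closures of $G$. Given the $\s$\=/closed subgroup $G \leq \G$, form its Zariski closures $(G_i)_{i\in\mathbb{N}}$ with respect to $\G$, which form a generalised $\s$\=/algebraic group with respect to $\G$ and satisfy $\I(G) = \bigcup_{i\in\mathbb{N}} \I(G_i)$.

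By Theorem \ref{the: partial ideal generation property}, there exists $m \in \mathbb{N}$ such that for all $i \geq m$,
\begin{align*}
    \I(G_{i+1}) = (\I(G_i), \s_1(\I(G_i)), \dots, \s_n(\I(G_i))).
\end{align*}
Since $k[\G[m]]$ is a finitely generated $k$\=/algebra, it is Noetherian, so the ideal $\I(G_m) \subseteq k[\G[m]]$ is finitely generated as an ordinary ideal. Choose a finite set $F \subseteq \I(G_m)$ such that $\I(G_m) = (F)$ as an ideal of $k[\G[m]]$. The claim is then that $\I(G) = [F]$.

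The inclusion $[F] \subseteq \I(G)$ is immediate since $F \subseteq \I(G_m) \subseteq \I(G)$ and $\I(G)$ is a $\s$\=/ideal. For the reverse inclusion, it suffices to show $\I(G_i) \subseteq [F]$ for every $i \in \mathbb{N}$. For $i \leq m$ we have $\I(G_i) \subseteq \I(G_m) = (F) \subseteq [F]$. For $i \geq m$, I will proceed by induction: assuming $\I(G_i) \subseteq [F]$, the fact that $[F]$ is a $\s$\=/ideal gives $\s_j(\I(G_i)) \subseteq [F]$ for each $1 \leq j \leq n$, and hence by the ideal generation identity,
\begin{align*}
    \I(G_{i+1}) = (\I(G_i), \s_1(\I(G_i)), \dots, \s_n(\I(G_i))) \subseteq [F].
\end{align*}
Therefore $\I(G) = \bigcup_{i} \I(G_i) \subseteq [F]$, so $\I(G) = [F]$ is finitely $\s$\=/generated.

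There is no real obstacle here beyond invoking the earlier theorem; the content of the corollary is that the ideal generation property, together with Noetherianity of the coordinate rings of the finite-order pieces $\G[i]$, bootstraps ordinary finite generation of $\I(G_m)$ into finite $\s$\=/generation of $\I(G)$. The proof is essentially a two-line reduction.
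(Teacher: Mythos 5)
Your proof is correct and follows essentially the same route as the paper: form the Zariski closures, invoke \Cref{the: partial ideal generation property} to obtain the stabilisation index $m$, and conclude that $\I(G)=[\I(G_m)]$ with $\I(G_m)$ finitely generated. The paper states the final step as a one-line observation; you have simply spelled out the Noetherianity of $k[\G[m]]$ and the induction on $i\geq m$ that justifies it.
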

\begin{proof}
    Consider the Zariski closures $(G[i])_{i\in\mathbb{N}}$ of $G$ with respect to $\G$. As $(G[i])_{i\in\mathbb{N}}$ form a generalised $\s$\=/algebraic group with respect to $\G$, by \Cref{the: partial ideal generation property} there exists some $m\in\mathbb{N}$ such that for all $i\geq m$, we have
        $\I(G[i+1])=(\I(G[i]),\s_1(\I(G[i])),\dots,\s_n(\I(G[i])))$ . Notice that $\I(G)=[\I(G[m])]$, and since $\I(G[m])$ is a finitely generated ideal, $\I(G)$ is finitely $\s$\=/generated as required.
\end{proof}

\begin{corollary}\label[corollary]{cor: hopf ideal fin gen}
    Let $A$ be a finitely $\s$\=/generated $k$\=/$\s$\=/Hopf algebra. Then any $\s$\=/Hopf ideal $I\subseteq A$ is finitely $\s$\=/generated. 
\end{corollary}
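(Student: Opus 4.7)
The plan is to reduce the statement to \Cref{cor: defining ideal fin generated}, which already establishes finite $\s$\=/generation of defining ideals for $\s$\=/closed subgroups of algebraic groups. First, by \Cref{prop: one to one corresp groups and Hopf algs}, the finitely $\s$\=/generated $k$\=/$\s$\=/Hopf algebra $A$ represents some $\s$\=/algebraic group $G$ over $k$, and by \Cref{prop: one to one corresp subgroups and Hopf ideals}, the $\s$\=/Hopf ideal $I\subseteq A$ corresponds to a $\s$\=/closed subgroup $H\leq G$ with $A/I\cong k\{H\}$.

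Next I would invoke \Cref{prop: embedding into gln} to obtain a $\s$\=/closed embedding $G\hookrightarrow\operatorname{GL}_s$ for some $s\geq 1$. Composing with $H\hookrightarrow G$, we may regard $H$ as a $\s$\=/closed subgroup of $\operatorname{GL}_s$. Dually, by \Cref{lem: embedding iff surjective}, the embedding $G\hookrightarrow \operatorname{GL}_s$ yields a surjective morphism of $k$\=/$\s$\=/Hopf algebras $\varphi\colon k\{\operatorname{GL}_s\}\twoheadrightarrow A$, and the defining ideal of $H$ viewed as a $\s$\=/closed subgroup of $\operatorname{GL}_s$ is the preimage $\varphi^{-1}(I)\subseteq k\{\operatorname{GL}_s\}$.

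Now I apply \Cref{cor: defining ideal fin generated} to conclude that $\varphi^{-1}(I)$ is finitely $\s$\=/generated, say by elements $f_1,\dots,f_r\in k\{\operatorname{GL}_s\}$. Since $\varphi$ is a surjective morphism of $\s$\=/rings, it maps $\s$\=/generators to $\s$\=/generators, so $I=\varphi(\varphi^{-1}(I))$ is $\s$\=/generated by $\varphi(f_1),\dots,\varphi(f_r)\in A$, which gives the desired finite $\s$\=/generation.

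There is no real obstacle here: all of the substantive work has already been done in establishing \Cref{the: partial ideal generation property} and its consequence \Cref{cor: defining ideal fin generated}. The corollary is simply the translation of that result through the dictionary between $\s$\=/algebraic groups and $k$\=/$\s$\=/Hopf algebras, using the embedding into $\operatorname{GL}_s$ to pass from an abstract finitely $\s$\=/generated $k$\=/$\s$\=/Hopf algebra to the coordinate ring of an actual algebraic group.
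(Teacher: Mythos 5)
Your proposal is correct and follows essentially the same route as the paper: represent $A$ as the coordinate ring of a $\s$-algebraic group, embed into $\operatorname{GL}_s$ to obtain a surjection $k\{\operatorname{GL}_s\}\twoheadrightarrow A$, pull $I$ back to the defining ideal of a $\s$-closed subgroup of $\operatorname{GL}_s$, apply \Cref{cor: defining ideal fin generated}, and push the finite $\s$-generating set forward along the surjection. The only cosmetic difference is that you phrase the preimage as the defining ideal of the subgroup $H\leq\operatorname{GL}_s$, while the paper works directly with the preimage of $I$ under the projection $k\{\operatorname{GL}_s\}\rightarrow k\{\operatorname{GL}_s\}/J\cong A$; these are the same ideal.
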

\begin{proof}
    We know by \Cref{prop: one to one corresp groups and Hopf algs} that $A$ is the representing algebra for some $\s$\=/algebraic group $G$, which can be embedded into $\G=\operatorname{GL}_s$ for some $s\in\mathbb{N}$ by \Cref{prop: embedding into gln}. That is, $A\cong k\{\G\}/J$ for some $\s$\=/Hopf ideal $J\subseteq k\{\G\}$ by \Cref{prop: one to one corresp subgroups and Hopf ideals}. 
    Let $I\subseteq A$ be a $\s$\=/Hopf ideal of $A$. We can consider the preimage $I'$ of $I$ under the projection $k\{\G\}\rightarrow k\{\G\}/J$, which is a $\s$\=/Hopf ideal of $k\{\G\}$. That is, $I'$ defines a $\s$\=/closed subgroup of $\G$, and hence is finitely $\s$\=/generated by \Cref{cor: defining ideal fin generated}. Since $I$ is the image of $I'$ in $A$, $I$ is also finitely $\s$\=/generated as required.
\end{proof}

\subsection{Finite \s\=/Generation of Difference Hopf Subalgebras}

We will now see how we can use \Cref{cor: hopf ideal fin gen} to prove that any $k$\=/$\s$\=/Hopf subalgebra of a finitely $\s$\=/generated $k$\=/$\s$\=/Hopf algebra is finitely $\s$\=/generated.

\begin{lemma}\label[lemma]{lem: finite set contained}
    Let $A$ be a $k$\=/$\s$\=/Hopf algebra. Then every finite subset of $A$ is contained in a finitely $\s$\=/generated $k$\=/$\s$\=/Hopf subalgebra of $A$.
\end{lemma}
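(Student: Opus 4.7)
The plan is to reduce to the classical Waterhouse result by taking the $\s$\=/closure of an ordinary finitely generated Hopf subalgebra. Suppose $F\subseteq A$ is a finite subset. By the classical fact [\cite{waterhouse2012introduction}, Section 3.3, page 24] applied to the underlying $k$\=/Hopf algebra of $A$, there exists a finitely generated $k$\=/Hopf subalgebra $B\subseteq A$ containing $F$. Write $B=k[g_1,\dots,g_r]$ for some $g_1,\dots,g_r\in A$, and let
\begin{align*}
    C=k\{g_1,\dots,g_r\}=k[\{\tau(g_i)\ |\ \tau\in T_\s,\ 1\leq i\leq r\}]\subseteq A,
\end{align*}
the $k$\=/$\s$\=/subalgebra of $A$ $\s$\=/generated by $g_1,\dots,g_r$. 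By construction $C$ is finitely $\s$\=/generated and contains $F\subseteq B\subseteq C$.

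The crux is to show that $C$ is a $k$\=/$\s$\=/Hopf subalgebra of $A$. Since $A$ is a $k$\=/$\s$\=/Hopf algebra, the Hopf structure maps $\Delta\colon A\rightarrow A\otimes_k A$, $S\colon A\rightarrow A$ and $\epsilon\colon A\rightarrow k$ are morphisms of $k$\=/$\s$\=/algebras, so they commute with every $\tau\in T_\s$. Since $B$ is a Hopf subalgebra, $\Delta(g_i)\in B\otimes_k B$ and $S(g_i)\in B$ for every $i$. Applying $\tau\in T_\s$ gives
\begin{align*}
    \Delta(\tau(g_i))=(\tau\otimes\tau)(\Delta(g_i))\in \tau(B)\otimes_k\tau(B)\subseteq C\otimes_k C
\end{align*}
and similarly $S(\tau(g_i))=\tau(S(g_i))\in\tau(B)\subseteq C$, while $\epsilon(\tau(g_i))=\tau(\epsilon(g_i))\in k$. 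Because $\Delta$, $S$ are $k$\=/algebra morphisms and $C$ is the $k$\=/subalgebra of $A$ generated by the $\tau(g_i)$, it follows that $\Delta(C)\subseteq C\otimes_k C$ and $S(C)\subseteq C$, so $C$ is a $k$\=/Hopf subalgebra of $A$. As $C$ is by definition $\s$\=/closed, it is a $k$\=/$\s$\=/Hopf subalgebra as required.

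There is no serious obstacle here: the argument is essentially the observation that applying a $\s$\=/endomorphism to a Hopf-algebraic relation yields another such relation, so the $\s$\=/closure of any Hopf subalgebra is again a Hopf subalgebra. The only point requiring a little care is confirming that the generating set $\{\tau(g_i)\}$ suffices to verify $\Delta(C)\subseteq C\otimes_k C$ and $S(C)\subseteq C$, which follows because these maps are multiplicative.
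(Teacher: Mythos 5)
Your proof is correct and follows the same route as the paper: apply Waterhouse to obtain a finitely generated Hopf subalgebra $B\supseteq F$, pass to the $k$\=/$\s$\=/subalgebra $k\{B\}$ it $\s$\=/generates, and use the compatibility of the Hopf and difference structures on $A$ to conclude this is a $k$\=/$\s$\=/Hopf subalgebra. The paper states the last step in one line, whereas you verify it explicitly on generators; the content is the same.
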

\begin{proof}
Let $F$ be a finite subset of $A$. We know from \cite[Section 3.3, page 24]{waterhouse2012introduction}, that $F$ is contained in a finitely generated $k$\=/Hopf subalgebra $B$ of $A$. Then $k\{B\}$ is a finitely $\s$\=/generated $k$\=/$\s$\=/subalgebra of $A$ containing $F$. Since the Hopf and difference structures on $A$ are compatible, $k\{B\}$ is in fact a $k$\=/$\s$\=/Hopf subalgebra of $A$ as required.
\end{proof}

Given a $k$\=/Hopf algebra $A$, recall the definition of the \textbf{augmentation ideal}, $\mathfrak{m}_A$, the kernel of the counit $\epsilon_A\colon A\rightarrow k$ \cite[Section 9.2, page 154]{montgomery1993hopf}. This is a Hopf ideal of $A$. If $A$ is a $k$\=/$\s$\=/Hopf algebra, then $\mathfrak{m}_A$ is a $\s$\=/Hopf ideal of $A$.

\begin{theorem}\label[theorem]{the: subalgebra fin gen}
    Let $A$ be a finitely $\s$\=/generated $k$\=/$\s$\=/Hopf algebra and let $B$ be a $k$\=/$\s$\=/Hopf subalgebra of $A$. Then $B$ is finitely $\s$\=/generated over $k$.
\end{theorem}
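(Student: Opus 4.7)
The plan is to adapt the classical argument of Waterhouse (Chapter 14 of \cite{waterhouse2012introduction}) for the finite generation of commutative Hopf subalgebras, replacing the use of Noetherianity of finitely generated $k$-algebras with the $\s$\=/finiteness statement in \Cref{cor: hopf ideal fin gen}.

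First I would consider the augmentation ideal $\mathfrak{m}_B\subseteq B$, which is a $\s$\=/Hopf ideal of $B$, and form the extended ideal $\mathfrak{m}_B A\subseteq A$. A routine check shows that $\mathfrak{m}_B A$ is a $\s$\=/Hopf ideal of $A$: $\s$\=/closedness follows because $\mathfrak{m}_B$ is a $\s$\=/ideal of $B$, and the Hopf ideal property is inherited from $\mathfrak{m}_B$ via the fact that the structure maps of $A$ restrict to those of $B$. By \Cref{cor: hopf ideal fin gen}, $\mathfrak{m}_B A$ is finitely $\s$\=/generated. Writing each chosen $\s$\=/generator as a finite $A$-linear combination of elements of $\mathfrak{m}_B$, I replace the generators by the finite collection of $B$-coefficients appearing in these sums; this produces $f_1,\ldots,f_n\in\mathfrak{m}_B$ with $\mathfrak{m}_B A=[f_1,\ldots,f_n]$.

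Next, I would apply \Cref{lem: hopf subalg fin gen} to the $k$\=/$\s$\=/Hopf algebra $B$ and the finite subset $\{f_1,\ldots,f_n\}$, obtaining a finitely $\s$\=/generated $k$\=/$\s$\=/Hopf subalgebra $B_0\subseteq B$ containing all the $f_i$. Since the counit of $B_0$ is the restriction of that of $B$, we have $\mathfrak{m}_{B_0}=B_0\cap\mathfrak{m}_B$, and in particular $f_i\in\mathfrak{m}_{B_0}$. Therefore $\mathfrak{m}_{B_0}A$ is a $\s$\=/ideal of $A$ containing all the $f_i$, hence it contains $\mathfrak{m}_B A=[f_1,\ldots,f_n]$; combined with the obvious inclusion $\mathfrak{m}_{B_0}A\subseteq\mathfrak{m}_B A$, this yields $\mathfrak{m}_{B_0}A=\mathfrak{m}_B A$. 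It now suffices to show $B=B_0$.

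The final step, which I expect to be the main obstacle, is to promote the equality $\mathfrak{m}_{B_0}A=\mathfrak{m}_B A$ to the equality $B=B_0$. For this I would invoke the classical theorem (Takeuchi; see Waterhouse \cite{waterhouse2012introduction}, Chapter 14) that any commutative Hopf algebra over a field is faithfully flat over any Hopf subalgebra; forgetting the $\s$\=/structure, this applies to $B_0\subseteq B\subseteq A$ as $k$-Hopf algebras, and crucially does not require $A$ to be finitely generated as a $k$-algebra. Faithful flatness of $A$ over $B$ yields $\mathfrak{m}_B A\cap B=\mathfrak{m}_B$, while the same faithful flatness applied to the ideal $\mathfrak{m}_{B_0}B$ of $B$ gives $\mathfrak{m}_{B_0}A\cap B=\mathfrak{m}_{B_0}B$ (using the identity $\mathfrak{m}_{B_0}A=(\mathfrak{m}_{B_0}B)A$). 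Combining these equalities with $\mathfrak{m}_{B_0}A=\mathfrak{m}_B A$ gives $\mathfrak{m}_B=\mathfrak{m}_{B_0}B$, so $B=k\oplus\mathfrak{m}_B=B_0+\mathfrak{m}_{B_0}B$. Faithful flatness of $B$ over $B_0$ then forces $B=B_0$, which is finitely $\s$\=/generated by construction.
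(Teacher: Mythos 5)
Your proof follows the paper's argument exactly up to the last paragraph: form $\mathfrak{m}_BA$, apply \Cref{cor: hopf ideal fin gen} to get finitely many $\s$\=/generators which may be taken inside $\mathfrak{m}_B$, enclose them in a finitely $\s$\=/generated $k$\=/$\s$\=/Hopf subalgebra $B_0\subseteq B$ via \Cref{lem: hopf subalg fin gen}, and deduce $\mathfrak{m}_{B_0}A=\mathfrak{m}_BA$. All of that is correct and is what the paper does. Where you diverge is the final reduction from $\mathfrak{m}_{B_0}A=\mathfrak{m}_BA$ to $B=B_0$: the paper simply cites Takeuchi's theorem that $C\mapsto\mathfrak{m}_CA$ is an injective map from Hopf subalgebras of $A$ to Hopf ideals of $A$, whereas you attempt to re-derive this from faithful flatness. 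Your two intermediate identities $\mathfrak{m}_BA\cap B=\mathfrak{m}_B$ and $\mathfrak{m}_{B_0}A\cap B=\mathfrak{m}_{B_0}B$ are indeed correct consequences of faithful flatness of $A$ over $B$, and they give $\mathfrak{m}_B=\mathfrak{m}_{B_0}B$, hence $B=B_0+\mathfrak{m}_{B_0}B$.

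The gap is the very last sentence: ``faithful flatness of $B$ over $B_0$ then forces $B=B_0$'' is not a valid inference for a general faithfully flat extension of augmented $k$\=/algebras. Take $B_0=k\times k$ with $\epsilon(a,b)=a$ and $B=k\times k\times k$ with the inclusion $(a,b)\mapsto(a,b,b)$: this is faithfully flat (as a $B_0$\=/module, $B\cong B_0\oplus P$ with $P$ projective), $\mathfrak{m}_{B_0}B=0\times k\times k$, so $B=B_0+\mathfrak{m}_{B_0}B$, and yet $B\neq B_0$. The issue is that $B\otimes_{B_0}k\cong k$ only says the fibre over the augmentation point is trivial, and no Nakayama-type argument applies because $B/B_0$ need not be a finitely generated $B_0$\=/module. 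Closing the gap requires the Hopf structure, not flatness alone: either cite the injectivity of $C\mapsto\mathfrak{m}_CA$ directly (Takeuchi, Corollary 3.10, as the paper does), or use the coinvariant recovery formula $B_0=\{b\in B\ |\ \Delta(b)-b\otimes 1\in B\otimes_k\mathfrak{m}_{B_0}B\}$, valid for a faithfully flat Hopf subalgebra; since $\Delta(b)-b\otimes 1\in B\otimes_k\mathfrak{m}_B$ for every $b\in B$, the equality $\mathfrak{m}_{B_0}B=\mathfrak{m}_B$ then yields $B_0=B$ immediately. With either of these substituted for your final sentence, the proof is complete and coincides with the paper's.
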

\begin{proof}
Since $\mathfrak{m}_B$ is a $\s$\=/Hopf ideal of $B$, the ideal $\mathfrak{m}_BA$ of $A$ generated by $\mathfrak{m}_B$ is a $\s$\=/Hopf ideal of $A$. By \Cref{cor: hopf ideal fin gen}, there is some finite set $F\subseteq \mathfrak{m}_BA$ that $\s$\=/generates $\mathfrak{m}_BA$ as a $\s$\=/ideal of $A$. We can assume that $F\subseteq \mathfrak{m}_B\subseteq B$. By \Cref{lem: finite set contained}, there is a finitely $\s$\=/generated $k$\=/$\s$\=/Hopf subalgebra $C$ of $B$ that contains $F$. Since $F\subseteq \mathfrak{m}_C$, and $\mathfrak{m}_C\subseteq \mathfrak{m}_B$, as ideals of $A$, $\mathfrak{m}_BA=\mathfrak{m}_CA$. Corollary 3.10 in \cite[Section 3, page 9]{takeuchi1972correspondence}, tells us that the mapping taking a Hopf subalgebra $C$ of $A$ to the Hopf ideal $\mathfrak{m}_CA$ of $A$ is injective. Therefore, $B=C$, and hence $B$ is a finitely $\s$\=/generated $k$\=/$\s$\=/Hopf subalgebra of $A$ as required.
\end{proof}

\section{Quotients of Difference Algebraic Groups}\label{sec: quotients}

Now we will adapt arguments from \cite[Section 3, pages 489-490]{wibmer2021almost}, to generalise the existence of the quotient of a difference algebraic group by any normal difference closed subgroup to the partial case. 

This section uses concepts from \cite[Section 3]{montgomery1993hopf}, and results on augmentation ideals from \cite[Section 4]{takeuchi1972correspondence}, .

\subsection{Normal Subgroups and Normal Hopf Ideals}

\begin{definition}
    Let $G$ be a $\s$\=/algebraic group over $k$. A $\s$\=/closed subgroup $N$ of $G$ is called \textbf{normal} if for every $k$\=/$\s$\=/algebra $R$, $N(R)$ is a normal subgroup of $G(R)$.
\end{definition}

For an element $a$ of a $k$\=/$\s$\=/Hopf algebra $A$, we write $\Delta(a)=\sum a_1\otimes a_2$. Due to coassociaitivity, we can write $(\Delta\otimes \id)\circ \Delta=(\id\otimes \Delta)\circ \Delta=\sum a_1\otimes a_2\otimes a_3$ without confusion.

Consider, for a $\s$\=/algebraic group $G$, the adjoint mapping $\psi\colon  G\times G\rightarrow G$, where for each $k$\=/$\s$\=/algebra $R$, $\psi_R\colon G(R)\times G(R)\rightarrow G(R)$ is of the form $(g,h)\mapsto ghg^{-1}$. This has dual map $\psi^*\colon k\{G\}\rightarrow k\{G\}\otimes_k k\{G\}$, such that $a\mapsto\sum (a_1S(a_3)\otimes a_2)$, where $S\colon A\rightarrow A$ is the counit of the $k$\=/$\s$\=/Hopf algebra $A$. 

A $\s$\=/closed subgroup $N$ of $G$ is normal if and only if $\psi(G\times N)\leq N$. This happens if and only if $\psi^*(k\{G\}/I)\subseteq(k\{G\}\otimes_k k\{G\})/(k\{G\}\otimes_k I)$, where $I$ is the defining ideal of $N$ in $G$. This holds if and only if $\psi^*(I)\subseteq k\{G\}\otimes_k I$.
This motivates the definition for a normal difference Hopf ideal, which is the difference analogue to a normal Hopf ideal as defined in \cite[Section 3.4, page 35]{montgomery1993hopf}.
\begin{definition}
    A $\s$\=/Hopf ideal $I$ of a $k$\=/$\s$\=/Hopf algebra $A$ is \textbf{normal} if $\psi^*(a)=\sum (a_1S(a_3)\otimes a_2)\in A\otimes_k I$ for any $a\in I$.
\end{definition}

Therefore, a $\s$\=/closed subgroup $N\leq G$ is normal if and only if its defining ideal $I\subseteq k\{G\}$ is a normal $\s$\=/Hopf ideal.

\subsection{Quotients}

Given a morphism $\phi\colon G\rightarrow H$ of $\s$\=/algebraic groups over $k$, the \textbf{kernel} $\ker(\phi)$ of $\phi$ is the functor from $k$\=/$\s$\=/$\operatorname{Alg}$ to $\operatorname{Sets}$ that takes a $k$\=/$\s$\=/algebra $R$ to $\ker(\phi)(R)=\ker(\phi_R)\subseteq G(R)$. The kernel $\ker(\phi)$ of $\phi$ is in fact a $\s$\=/closed subgroup of $G$, defined by the $\s$\=/Hopf ideal $\phi^*\bigl(\mathfrak{m}_{k\{H\}}\bigr)k\{G\}$ of $k\{G\}$, that is, the $\s$\=/ideal of $k\{G\}$ that is generated by the image of the augmentation ideal $\mathfrak{m}_{k\{H\}}$ of $k\{H\}$ under the dual morphism to $\phi$ \cite[Section 2.1, page 14]{waterhouse2012introduction}.

\begin{theorem}\label[theorem]{the: existence of quotients}
    Let $G$ be a $\s$\=/algebraic group over $k$, and let $N$ be a normal $\s$\=/closed subgroup of $G$. Then there exists a $\s$\=/algebraic group $G/N$ and a morphism $\pi\colon G\rightarrow G/N$ of $\s$\=/algebraic groups with $N\leq \ker(\pi)$ such that for any morphism $\phi\colon G\rightarrow H$ of $\s$\=/algebraic groups with $N\leq \ker(\phi)$, there exists a unique morphism of $\s$\=/algebraic groups $\psi\colon G/N\rightarrow H$ such that $\phi'\circ \pi=\phi$.  
\end{theorem}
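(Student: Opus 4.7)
The plan is to construct $G/N$ via its coordinate $k$-$\s$-Hopf algebra on the dual side. Set $A = k\{G\}$ and $I = \I(N) \subseteq A$, the normal $\s$-Hopf ideal defining $N$, and write $q\colon A \to A/I$ for the Hopf algebra quotient. Forgetting the $\s$-structure, Takeuchi's correspondence \cite{takeuchi1972correspondence} applied to the commutative $k$-Hopf algebra $A$ produces a $k$-Hopf subalgebra $B \subseteq A$ with $\mathfrak{m}_B A = I$; concretely, one may take
\[
    B = \{a \in A \mid (\id_A \otimes q)\Delta(a) = a \otimes q(1)\}.
\]

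The key step is to verify that $B$ is $\s$-stable in $A$. Since $I$ is a $\s$-ideal, each $\s_j$ descends to a ring endomorphism $\overline{\s_j}$ of $A/I$ satisfying $q \circ \s_j = \overline{\s_j} \circ q$. The fact that $G$ is a $\s$-algebraic group means $\Delta$ is a morphism of $k$-$\s$-algebras, so $\Delta \circ \s_j = (\s_j \otimes \s_j) \circ \Delta$, and hence $(\id_A \otimes q)\Delta \circ \s_j = (\s_j \otimes \overline{\s_j}) \circ (\id_A \otimes q)\Delta$. For $a \in B$ this gives $(\id_A \otimes q)\Delta(\s_j(a)) = \s_j(a) \otimes \overline{\s_j}(q(1)) = \s_j(a) \otimes q(1)$, so $\s_j(a) \in B$. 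Thus $B$ is a $k$-$\s$-Hopf subalgebra of $A$, and is finitely $\s$-generated by \Cref{the: subalgebra fin gen}. By \Cref{prop: one to one corresp groups and Hopf algs}, $B$ represents a $\s$-algebraic group, which we define to be $G/N$, and the inclusion $B \hookrightarrow A$ is dual (by \Cref{cor: morphisms groups and hopf dual}) to a morphism $\pi\colon G \to G/N$ of $\s$-algebraic groups. Since $\pi^*$ is the inclusion, the $\s$-Hopf ideal defining $\ker(\pi)$ in $A$ is $\mathfrak{m}_B A = I$, and in particular $\ker(\pi) = N \geq N$.

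For the universal property, let $\phi\colon G \to H$ be a morphism of $\s$-algebraic groups with $N \leq \ker(\phi)$, so dually $\phi^*\colon k\{H\} \to A$ is a morphism of $k$-$\s$-Hopf algebras with $\phi^*(\mathfrak{m}_{k\{H\}}) \subseteq I$. For any $h \in k\{H\}$, writing $\Delta(h) = \sum h_1 \otimes h_2$, the identity $h = \sum h_1 \epsilon(h_2)$ together with $\phi^*(h_2 - \epsilon(h_2) \cdot 1) \in \phi^*(\mathfrak{m}_{k\{H\}}) \subseteq I$ gives $(\id_A \otimes q)\Delta(\phi^*(h)) = \phi^*(h) \otimes q(1)$, so $\phi^*(h) \in B$. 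Thus $\phi^*$ factors uniquely as $k\{H\} \to B \hookrightarrow A$, and dualising yields the required morphism $\psi\colon G/N \to H$ with $\psi \circ \pi = \phi$, uniquely determined because $\pi^*$ is injective. The main obstacle is precisely the $\s$-closure of Takeuchi's Hopf subalgebra $B$, where one must crucially exploit that $I$ is a $\s$-ideal rather than merely a Hopf ideal, and that the Hopf structure maps on $A$ are $\s$-equivariant; once $B$ is known to be a $k$-$\s$-Hopf subalgebra, finite $\s$-generation comes from \Cref{the: subalgebra fin gen} and the universal property is a formal computation on the Hopf side.
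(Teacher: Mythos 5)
Your proposal is correct and follows essentially the same route as the paper: the subalgebra $B=\{a\in A\mid (\id_A\otimes q)\Delta(a)=a\otimes q(1)\}$ is exactly the paper's $k\{G\}(I)=\{f\mid \Delta(f)-f\otimes 1\in k\{G\}\otimes_k I\}$ from Takeuchi's correspondence, and you verify its $\s$\=/stability, finite $\s$\=/generation via \Cref{the: subalgebra fin gen}, and the universal property in the same dual-Hopf-algebra manner. The only cosmetic difference is that you check $\phi^*(k\{H\})\subseteq B$ by a direct Sweedler-notation computation rather than invoking Takeuchi's maximality characterisation of $k\{G\}(I)$; both are fine.
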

\begin{proof}
    We know that $I=\I(N)$ is a normal $\s$\=/Hopf ideal of $k\{G\}$, so it is well defined to consider \begin{align*}
        k\{G\}(I)=\{f\in k\{G\}\ |\ \Delta(f)-f\otimes 1\in k\{G\}\otimes_k I\},
    \end{align*}
    which, by \cite[Lemma 4.4, page 11]{takeuchi1972correspondence},
    is a $k$\=/Hopf subalgebra of $k\{G\}$. Since the comultiplication $\Delta\colon k\{G\}\rightarrow k\{G\}\otimes_k k\{G\}$ is a morphism of $k$\=/$\s$\=/algebras, 
     $k\{G\}(I)$ is in fact a $k$\=/$\s$\=/Hopf subalgebra of $k\{G\}$. By \cite[Section 4, page 12]{takeuchi1972correspondence}, $k\{G\}(I)$ is the greatest $\s$\=/Hopf\=/subalgebra of $k\{G\}$ such that $\mathfrak{m}_{k\{G\}(I)}\subseteq I$, and in fact, $I=\mathfrak{m}_{k\{G\}(I)}k\{G\}$, the $\s$\=/Hopf ideal of $k\{G\}$ generated by the augmentation ideal $\mathfrak{m}_{k\{G\}(I)}$ of $k\{G\}(I)$. 
     By \Cref{the: subalgebra fin gen}, $k\{G\}(I)$ is finitely $\s$\=/generated and hence by  \Cref{prop: one to one corresp groups and Hopf algs}, is the representing algebra for some $\s$\=/algebraic group over $k$, which we call $G/N$. 
     
     Let $\pi\colon G\rightarrow G/N$ be the morphism of $\s$\=/algebraic groups over $k$ that is dual to the inclusion $\pi^*\colon k\{G/N\}\hookrightarrow k\{G\}$ of $k$\=/$\s$\=/Hopf algebras. Now, $\ker(\pi)$ is the $\s$\=/closed subgroup of $G$ defined by the $\s$\=/Hopf ideal $\mathfrak{m}_{k\{G/N\}}k\{G\}$ of $k\{G\}$. Since $k\{G/N\}=k\{G\}(I)$, we see that $\ker(\pi)$ is defined by $\mathfrak{m}_{k\{G\}(I)}k\{G\}=I$, that is, $\ker(\pi)=N$.
     Therefore, we have found a $\s$\=/algebraic group $G/N$ and a morphism $\pi\colon G\rightarrow G/N$ with $N\leq \ker(\pi)$, let us now show this satisfies the required property.

     Suppose that $\phi\colon  G\rightarrow H$ is a morphism of $\s$\=/algebraic groups with $N\leq \ker(\phi)$. As $\phi^*\colon k\{H\}\rightarrow k\{G\}$ is a morphism of $k$\=/$\s$\=/Hopf algebras, it respects the counit and hence $\phi^*\bigl(\mathfrak{m}_{k\{H\}}\bigr)k\{G\}=\mathfrak{m}_{\phi^*(k\{H\})}k\{G\}$. We know that $\I(\ker(\phi))=\phi^*\bigl(\mathfrak{m}_{k\{H\}}\bigr)k\{G\}\subseteq I$.  Since $k\{G/N\}$ is the largest Hopf subalgebra of $k\{G\}$ with $\mathfrak{m}_{k\{G/N\}}k\{G\}\subseteq I$, we see that $\phi^*(k\{H\})\subseteq k\{G/N\}$. That is, we can restrict the codomain $\phi'^*\colon k\{H\}\rightarrow k\{G/N\}$. Since $\pi^*\colon k\{G/N\}\rightarrow k\{G\}$ is the inclusion, $\pi^*\circ\phi'^*=\phi^*$, and further, $\phi'^*$ is the unique morphism of $k$\=/$\s$\=/Hopf algebras with this property. Hence, letting $\phi'\colon G/N\rightarrow H$ be the dual morphism to $\phi'^*$, $\phi'$ is the unique morphism of $\s$\=/algebraic groups such that $\phi'\circ\pi=\phi$ as required.
\end{proof}

For a $\s$\=/algebraic group $G$ over $k$ and a normal $\s$\=/closed subgroup $N$ of $G$, a morphism $\pi\colon G\rightarrow G/N$ satisfying the universal property in \Cref{the: existence of quotients} is called a \textbf{quotient} of $G \operatorname{mod} N$.

\begin{lemma}\label[lemma]{lem: quotient iff injective and kernel}
Let $G$ be a $\s$\=/algebraic group over $k$ and let $N$ be a normal $\s$\=/closed subgroup of $G$.
    A morphism of $\s$\=/algebraic groups $\pi'\colon G\rightarrow (G/N)'$ is a quotient of $G \operatorname{mod} N$ if and only if $\ker(\pi')=N$ and $\pi'^*\colon k\{(G/N)'\}\rightarrow k\{G\}$ is injective. 
\end{lemma}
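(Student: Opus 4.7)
The plan is to verify both directions using the universal property for the canonical quotient $\pi\colon G\to G/N$ built in \Cref{the: existence of quotients}, and its dual description of $k\{G/N\}$ as the $k$\=/$\s$\=/Hopf subalgebra $k\{G\}(I)$ of $k\{G\}$, where $I=\I(N)$.

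For the forward direction, suppose $\pi'\colon G\to (G/N)'$ is a quotient of $G \operatorname{mod} N$. I would first show that $\pi'$ is canonically isomorphic to the canonical $\pi$. Since the canonical $\pi$ kills $N$, the universal property of $\pi'$ yields a unique $\psi'\colon (G/N)'\to G/N$ with $\psi'\circ \pi'=\pi$. Conversely, the universal property of $\pi$ applied to $\pi'$ yields a unique $\psi\colon G/N\to (G/N)'$ with $\psi\circ \pi=\pi'$. A standard uniqueness argument (composing the two equations and invoking the uniqueness clauses of both universal properties) shows that $\psi'$ and $\psi$ are mutually inverse isomorphisms of $\s$\=/algebraic groups. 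From $\pi'=\psi\circ \pi$ with $\psi$ an isomorphism, I deduce at the level of points that $\ker(\pi')=\ker(\pi)=N$, and at the level of coordinate rings that $\pi'^*=\pi^*\circ \psi^*$ is a composition of an isomorphism with the injection $\pi^*\colon k\{G/N\}\hookrightarrow k\{G\}$ from \Cref{the: existence of quotients}, hence injective.

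For the reverse direction, assume $\ker(\pi')=N$ and $\pi'^*$ is injective. Since $N\leq \ker(\pi')$, the universal property of $\pi$ gives a morphism $\alpha\colon G/N\to (G/N)'$ with $\alpha\circ \pi=\pi'$. The main work is to prove that $\alpha$ is an isomorphism, equivalently that $\alpha^*\colon k\{(G/N)'\}\to k\{G/N\}$ is an isomorphism of $k$\=/$\s$\=/Hopf algebras. From $\pi^*\circ \alpha^*=\pi'^*$, with $\pi^*$ and $\pi'^*$ both injective, $\alpha^*$ is injective. For surjectivity I identify $k\{G/N\}$ with $k\{G\}(I)\subseteq k\{G\}$ via $\pi^*$. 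The argument in the proof of \Cref{the: existence of quotients} (applied to $\pi'$ in place of $\phi$) shows that $\pi'^*(k\{(G/N)'\})\subseteq k\{G\}(I)=k\{G/N\}$. The two $k$\=/$\s$\=/Hopf subalgebras $\pi'^*(k\{(G/N)'\})$ and $k\{G/N\}$ of $k\{G\}$ then both have the property that their augmentation ideal generates $I$ in $k\{G\}$ (the former because $\ker(\pi')=N$, the latter by construction). By the injectivity of the correspondence $C\mapsto \mathfrak{m}_C k\{G\}$ from $k$\=/$\s$\=/Hopf subalgebras of $k\{G\}$ to $\s$\=/Hopf ideals of $k\{G\}$ (Takeuchi's Corollary 3.10, already invoked in \Cref{the: subalgebra fin gen,the: existence of quotients}), these subalgebras coincide. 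Hence $\alpha^*$ is surjective, so $\alpha$ is an isomorphism. The universal property for $\pi'$ now follows immediately: given $\phi\colon G\to H$ with $N\leq \ker(\phi)$, let $\beta\colon G/N\to H$ be the unique morphism with $\beta\circ \pi=\phi$ and set $\psi:=\beta\circ \alpha^{-1}\colon (G/N)'\to H$; then $\psi\circ \pi'=\phi$, and uniqueness transfers from $\pi$ to $\pi'$ via the isomorphism $\alpha$.

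The main obstacle I anticipate is the surjectivity of $\alpha^*$: this is the only step that is not pure formal manipulation of the universal property, and it is precisely where the Hopf\=/theoretic injectivity of $C\mapsto \mathfrak{m}_C k\{G\}$ has to be imported. Once this is in place, the rest of the argument is bookkeeping.
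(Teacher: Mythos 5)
Your proposal is correct and follows essentially the same route as the paper: mutually inverse comparison maps from the two universal properties in the forward direction, and Takeuchi's injectivity of $C\mapsto \mathfrak{m}_C k\{G\}$ to identify $\pi'^*(k\{(G/N)'\})$ with $k\{G/N\}$ in the reverse direction. The only difference is organizational — you build the comparison map $\alpha$ first and verify it is an isomorphism, whereas the paper deduces the equality of Hopf subalgebras directly and leaves the final universal-property check as routine.
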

\begin{proof}
Let $G/N$ and $\pi\colon G\rightarrow G/N$ be the $\s$\=/algebraic group and the morphism of $\s$\=/algebraic groups defined in the proof of \Cref{the: existence of quotients}. Then $\ker(\pi)=N$ and $\pi^*\colon k\{G/N\}\rightarrow k\{G\}$ is injective.

Now, suppose that we have $\s$\=/algebraic group $(G/N)'$ and a morphism $\pi'\colon G\rightarrow (G/N)'$ such that $\pi'$ is a quotient of $G \operatorname{mod} N$. Then there is a unique morphism $\phi\colon (G/N)'\rightarrow G/N$ such that $\phi\circ\pi'=\pi$, and a unique morphism $\psi\colon G/N\rightarrow (G/N)'$ such that $\psi\circ\pi=\pi'$. As $(\psi\circ\phi)\circ\pi'=\pi'$, by uniqueness, $\psi\circ\phi=\id_{(G/N)'}$, and similarly we see that $\phi\circ\psi=\id_{G/N}$. That is, $\phi$ is an isomorphism of $\s$\=/algebraic groups. Therefore, the dual morphism $\phi^*\colon k\{G/N\}\rightarrow k\{(G/N)'\}$ is an isomorphism of $k$\=/$\s$\=/Hopf algebras such that $\pi'^*\circ\phi^*=\pi^*$. Since $\pi^*$ is injective, $\pi'^*$ is also injective. 
As $\phi^*$ is an isomorphism of $k$\=/$\s$\=/Hopf algebras, $\phi^*(\mathfrak{m}_{k\{G/N\}})=\mathfrak{m}_{k\{(G/N)'\}}$. Using this along with the fact that $\pi'^*\circ\phi^*=\pi^*$, we see that \begin{align*}
    \I(\ker(\pi'))=\pi'^*(\mathfrak{m}_{k\{(G/N)'\}})k\{G\}=\pi^*(\mathfrak{m}_{k\{G/N\}})k\{G\}=\I(N),
\end{align*} and therefore $\ker(\pi')=N$ as required. 

    Conversely, suppose that $\ker(\pi')=N$ and that $\pi'^*$ is injective. 
    Then \begin{align*}
        \I(N)=\I(\ker(\pi'))=\pi'^*(\mathfrak{m}_{k\{(G/N)'\}})k\{G\}.
    \end{align*} As $k\{G/N\}$ 
 is the unique $k$\=/$\s$\=/Hopf subalgebra of $k\{G\}$ such that $\mathfrak{m}_{k\{G/N\}}k\{G\}=\I(N)$ \cite[Section 4, page 12]{takeuchi1972correspondence},  $\pi'^*(k\{(G/N)'\})=k\{G/N\}$, hence $(G/N)'$ and $G/N$ are isomorphic as $\s$\=/algebraic groups. It is then straightforward to check that $\pi'\colon G\rightarrow (G/N)'$ is a quotient of $G \operatorname{mod} N$.
\end{proof}

Notice that in the proof of \Cref{lem: quotient iff injective and kernel} we have proven that given a $\s$\=/algebraic group $G$ and a normal $\s$\=/closed subgroup $N$ of $G$, the quotient of $G\operatorname{mod} N$ is unique up to isomorphism.

\section{Dimension Polynomials}\label{sec: dim polys}

Recall that the \textbf{dimension} $\dim(\H)$ of an algebraic group $\H$ over $k$ is equal to the Krull dimension $\dim(k[\H])$ of the coordinate ring $k[\H]$ \cite[Chapter II Section 3.2, page 86]{hartshorne1977algebraic}. 
If  $A$ is a finitely generated $k$\=/algebra and $I\subseteq A$ is an ideal, then $\dim(A/I)=\dim(A)-\hgt(I)$ \cite[Corollary 13.4, page 286]{eisenbud1995commutative}, where the \textbf{height} $\hgt(\mathfrak{p})$ of a prime ideal $\mathfrak{p}$ is the supremum of the lengths of descending chains of prime ideals contained in $\mathfrak{p}$, and for a general ideal $I$, the height $\hgt(I)$ is the minimum of the heights of the prime ideals containing $I$.

The dimension polynomials whose existence we will prove are of a certain form; they are numerical.
A polynomial $f\in\mathbb{Q}[t]$ is called \textbf{numerical} if for large enough $i\in\mathbb{N}$, $f(i)\in \mathbb{Z}$ \cite[Section 2.1, page 45]{kondratieva1999differential}.

\begin{definition}
    Let $\G$ be an algebraic group over $k$.
We say that there exists a \textbf{dimension polynomial} for a generalised $\s$\=/algebraic group $(G_i)_{i\in \mathbb{N}}$ if there exists a numerical polynomial $\phi(t)\in \mathbb{Q}[t]$ such that for large enough $i\in \mathbb{N}$, $\phi(i)=\dim(G_i)$. We call $\phi(t)\in\mathbb{Q}[t] $ the dimension polynomial for $(G_i)_{i\in \mathbb{N}}$. The degree of the dimension polynomial is the degree of $\phi(t)\in\mathbb{Q}[t]$.
\end{definition}

We will prove that there exists a dimension polynomial for any generalised $\s$\=/algebraic group. The next lemma will be a useful tool in this proof.

\begin{lemma}\label[lemma]{lem: gi+1-gi implies dimension poly}
    Let $(G_i)_{i\in\mathbb{N}}$ be a generalised $\s$\=/algebraic group with respect to an algebraic group $\G$ over $k$. Suppose that there exists a numerical polynomial $\phi(t)\in \mathbb{Q}[t]$ of degree at most $n-1$ such that \begin{align*}
        \dim(G_{i+1})-\dim(G_i)=\phi(i)
    \end{align*}
    for large enough $i\in \mathbb{N}$. Then there exists a dimension polynomial of degree at most $n$ for $(G_i)_{i\in\mathbb{N}}$.
\end{lemma}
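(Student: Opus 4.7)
The plan is to show that the difference operator relating $\dim(G[i+1]) - \dim(G[i])$ to a numerical polynomial can be inverted (up to an additive constant) to obtain a numerical polynomial computing $\dim(G[i])$ itself, with degree raised by exactly one.

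First I would invoke the standard characterization of numerical polynomials: every numerical polynomial $\phi(t) \in \mathbb{Q}[t]$ of degree at most $d$ admits a unique representation $\phi(t) = \sum_{k=0}^{d} a_k \binom{t}{k}$ with $a_k \in \mathbb{Z}$, where $\binom{t}{k} = t(t-1)\cdots(t-k+1)/k!$. This is Proposition 2.1.3 in Kondratieva--Levin--Mikhalev--Pankratiev (the reference \cite{mikhalev2013differential} cited in the excerpt). Writing $\phi(t) = \sum_{k=0}^{n-1} a_k \binom{t}{k}$, I would then use the Pascal-type identity $\binom{t+1}{k+1} - \binom{t}{k+1} = \binom{t}{k}$ to define the candidate polynomial
\[
\Phi(t) = \sum_{k=0}^{n-1} a_k \binom{t}{k+1},
\]
which is numerical and of degree at most $n$, and satisfies $\Phi(t+1) - \Phi(t) = \phi(t)$ identically in $\mathbb{Q}[t]$.

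Next I would choose $N \in \mathbb{N}$ large enough that $\dim(G[i+1]) - \dim(G[i]) = \phi(i)$ for all $i \geq N$, and define the constant $c = \dim(G[N]) - \Phi(N) \in \mathbb{Z}$. Setting $\psi(t) = \Phi(t) + c$, a straightforward induction on $i \geq N$ gives
\[
\psi(i) = \psi(N) + \sum_{j=N}^{i-1} (\Phi(j+1) - \Phi(j)) = \dim(G[N]) + \sum_{j=N}^{i-1} \phi(j) = \dim(G[i]),
\]
so $\psi(t) \in \mathbb{Q}[t]$ is a numerical polynomial of degree at most $n$ with $\psi(i) = \dim(G[i])$ for all sufficiently large $i$. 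This is exactly a dimension polynomial for $(G[i])_{i \in \mathbb{N}}$ in the sense of the definition above.

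No significant obstacle is expected; the argument is a routine application of the binomial-coefficient basis for numerical polynomials and discrete antidifferentiation. The only subtlety is ensuring the constant $c$ is an integer (so that $\psi$ remains numerical), which is automatic because both $\dim(G[N])$ and $\Phi(N) = \sum_k a_k \binom{N}{k+1}$ are integers for $N$ sufficiently large.
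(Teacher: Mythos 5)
Your proof is correct and follows essentially the same route as the paper: both arguments telescope the differences $\dim(G[i+1])-\dim(G[i])$ and use the fact that discrete antidifferentiation of a numerical polynomial raises the degree by exactly one. The only difference is that you prove this antidifferentiation step explicitly via the binomial-coefficient basis and Pascal's identity, whereas the paper cites it directly (Proposition 2.1.6 of the Kondratieva--Levin--Mikhalev--Pankratiev reference); your verification that the additive constant is an integer is a correct and worthwhile detail.
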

\begin{proof}
Let $m\in\mathbb{N}$ be such that  $\dim(G_{i+1})-\dim(G_i)=\phi(i)$ for all $i\geq m$, where $\phi(i)$ has degree $r\leq n-1$.  Then we have that 
        $\dim(G_{i+1})
        =\dim(G_m)+\sum_{j=m}^i\phi(j)$ for $i\geq m$.
    By \cite[Proposition 2.1.6, page 49]{kondratieva1999differential}, there exists a numerical polynomial $\psi(t)\in\mathbb{Q}[t]$ of degree $r+1$ such that \begin{align*}
        \psi(i)&=\phi(m+1)+\phi(m+2)+\cdots+\phi(m+(i-m))       =\dim(G_{i+1})-\dim(G_{m+1})
\end{align*}
for any $i\geq m$. Then, let $\psi'(t)=\psi(t-1)+\dim(G_{m+1})$ and notice that $\psi'(t)\in\mathbb{Q}[t]$ is a numerical polynomial of degree $r+1$ such that for $i\geq m+1$, $\psi'(i)=\dim(G_i)$. That is, there exists a dimension polynomial of degree at most $n$ for $(G_i)_{i\in\mathbb{N}}$.
\end{proof}

To prove the existence of the dimension polynomial, we will follow a similar method to the proof of the ideal generation property. Firstly, we prove that if there is a dimension polynomial for the kernels of any generalised $\s$\=/algebraic group, then there is a dimension polynomial for any generalised $\s$\=/algebraic group.

\begin{proposition}\label[proposition]{prop: kernels imply groups dim poly}
    Let $\G$ be an algebraic group over $k$. Suppose that for any generalised $\s$\=/algebraic group $(G_i)_{i\in\mathbb{N}}$ with respect to $\G$, there exists a dimension polynomial of degree at most $n-1$ for the kernels $(H_i)_{i\in\mathbb{N}}$ of $\pi$ restricted to $(G_i)_{i\in\mathbb{N}}$ (\Cref{def: kernels}).
Then there exists a dimension polynomial of degree at most $n$ for any generalised $\s$\=/algebraic group $(G_i)_{i\in\mathbb{N}}$ with respect to $\G$.
\end{proposition}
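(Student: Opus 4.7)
The plan is to induct on the Zariski indicator difference $j_m - m$, where $m$ is the smallest natural number beyond which the Zariski closures $(G_i)_{i\in\mathbb{N}}$ of $(G[i])_{i\in\mathbb{N}}$ satisfy the ideal generation property (whose existence is guaranteed by \Cref{the: partial ideal generation property}). The structural identity driving everything is as follows. Given $(G[i])_{i\in\mathbb{N}}$, let $(F[i])_{i\in\mathbb{N}}$ be the projections of $(G[i])_{i\in\mathbb{N}}$ under $\pi$, and let $(H[i])_{i\in\mathbb{N}}$ be the kernels of $\pi$ restricted to $(G[i])_{i\in\mathbb{N}}$. By the very definition of $F[i]$ the restriction $\pi[i+1]\colon G[i+1]\to F[i]$ is a quotient map of algebraic groups over $k$ with kernel $H[i+1]$, so
\[
\dim(G[i+1]) = \dim(F[i]) + \dim(H[i+1])
\]
for every $i\in\mathbb{N}$. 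The hypothesis of the proposition, applied to the generalised $\s$-algebraic group $(G[i])_{i\in\mathbb{N}}$, produces a numerical polynomial $\psi(t)\in\mathbb{Q}[t]$ of degree at most $n-1$ with $\dim(H[i]) = \psi(i)$ for all large $i$.

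For the base case $j_m - m = 0$, \Cref{lem: jm=m means I(G[i])=I(Gi)} gives $\I(G[i]) = \I(G_i)$ for every $i\geq m$, and since each $\pi[i+1]\colon G_{i+1}\to G_i$ is a quotient map (see the discussion in \Cref{sec: zariski closures def}), it follows that $F[i] = G[i]$ for all large $i$. The displayed identity then collapses to
\[
\dim(G[i+1]) - \dim(G[i]) = \dim(H[i+1]) = \psi(i+1),
\]
a numerical polynomial in $i$ of degree at most $n-1$. \Cref{lem: gi+1-gi implies dimension poly} then yields a dimension polynomial of degree at most $n$ for $(G[i])_{i\in\mathbb{N}}$.

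For the inductive step, suppose $j_m - m = p\geq 1$ and that the result holds for every generalised $\s$-algebraic group whose Zariski indicator difference is strictly less than $p$. By \Cref{lem: projections also generalised grps} the sequence $(F[i])_{i\in\mathbb{N}}$ is itself a generalised $\s$-algebraic group with respect to $\G$, and its Zariski closures coincide with those of $(G[i])_{i\in\mathbb{N}}$, so the same $m$ is admissible. By \Cref{cor: projections closer to zariskis} the Zariski indicator difference $f_m - m$ of $(F[i])_{i\in\mathbb{N}}$ is strictly smaller than $p$, so the inductive hypothesis supplies a numerical polynomial $a(t)\in\mathbb{Q}[t]$ of degree at most $n$ with $\dim(F[i]) = a(i)$ for large $i$. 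Substituting into the dimension identity gives $\dim(G[i+1]) = a(i) + \psi(i+1)$ for large $i\in\mathbb{N}$, which is a numerical polynomial in $i$ of degree at most $n$; this is a dimension polynomial for $(G[i])_{i\in\mathbb{N}}$, completing the induction.

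The main conceptual obstacle is merely checking that the induction is well-founded and that its step-down is genuine: both are handled by the observation that $j_m - m$ lies in $\mathbb{N}$, together with \Cref{cor: projections closer to zariskis}, which delivers exactly the strict decrease needed; everything else is elementary dimension counting for quotients of algebraic groups combined with the ideal generation property already established in \Cref{the: partial ideal generation property}.
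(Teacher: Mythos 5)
Your proposal is correct and follows essentially the same route as the paper: the same induction on $j_m-m$, the same use of \Cref{cor: projections closer to zariskis} to step down, and the same dimension identity $\dim(G[i+1])=\dim(F[i])+\dim(H[i+1])$ coming from the quotient map $\pi[i+1]\colon G[i+1]\to F[i]$ with kernel $H[i+1]$. The only (cosmetic) difference is that the paper first establishes the result for Zariski closures as a standalone step via $\dim(G_i)-\dim(G_{i-1})=\dim(H_i)$ and then reduces the base case to it, whereas you fold the base case directly into the same identity by observing $F[i]=G[i]$ for large $i$.
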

\begin{proof}

Let $G$ be a $\s$\=/closed subgroup of $\G$. We will prove that there is a dimension polynomial for the Zariski closures (\Cref{def: zariski closures}) $(G[i])_{i\in\mathbb{N}}$ of $G$ with respect to $\G$ .   
    Let $(H_i)_{i\in\mathbb{N}}$ be the kernels of $\pi$ restricted to $(G[i])_{i\in\mathbb{N}}$. By assumption, there exists a dimension polynomial $\phi(t)\in\mathbb{Q}[t]$ of degree at most $n-1$ for $(H_i)_{i\in\mathbb{N}}$. 
    For every $i\geq 1$, since $\pi_i\colon  G[i]\rightarrow G[i-1]$ is a quotient map, by \cite[Section 5c, page 104]{milne2017algebraic}, we have that 
        $\dim(G[i])-\dim(G[i-1])=\dim(H_i)$.
    That is, there exists a numerical polynomial $\phi(t)\in\mathbb{Q}[t]$ of degree at most $n-1$ such that
        $\dim(G[i])-\dim(G[i-1])=\phi(i)$
    for large enough $i\in\mathbb{N}$. Therefore, by \Cref{lem: gi+1-gi implies dimension poly}, there exists a dimension polynomial of degree at most $n$ for $(G[i])_{i\in\mathbb{N}}$. 

    Now, let $(G_i)_{i\in\mathbb{N}}$ be a generalised $\s$\=/algebraic group with respect to $\G$, and let $(G[i])_{i\in\mathbb{N}}$ be the Zariski closures of $(G_i)_{i\in\mathbb{N}}$ with respect to $\G$ (\Cref{def: zariski closures of gen grp}). We have already shown that there exists a dimension polynomial of degree at most $n$ for the Zariski closures $(G[i])_{i\in\mathbb{N}}$. Let $m\in\mathbb{N}$ be the smallest natural number such that 
    $\I(G[i+1])=(\I(G[i]),\s_1(\I(G[i])),\dots,\s_n(\I(G[i])))$
for all $i\geq m$. Such an $m$ exists by \Cref{the: partial ideal generation property}. Let $(g_i)_{i\in\mathbb{N}}$ be the Zariski indicators of $(G_i)_{i\in\mathbb{N}}$ with respect to $\G$ (\Cref{def: zariski indicators}). 
If $g_m=0$, then $\I(G_i)=\I(G[i])=(0)$ for all $i\in\mathbb{N}$, and so $(G_i)_{i\in\mathbb{N}}$ has the same dimension polynomial as its Zariski closures. Hence, we can suppose $g_m> 0$, and so by \Cref{lem: gi=0 or i}, $g_m\geq m$. We will prove that there exists a dimension polynomial for generalised $\s$\=/algebraic groups by induction on the value of $g_m-m\geq 0$.

If $(G_i)_{i\in\mathbb{N}}$ is such that $g_m-m=0$, for every $i\geq m$, $\I(G_i)=\I(G[i])$ by \Cref{lem: jm=m means I(G[i])=I(Gi)}, and hence the dimension polynomial for the sequence $(G[i])_{i\in\mathbb{N}}$ is also a dimension polynomial for the sequence $(G_i)_{i\in\mathbb{N}}$. Suppose that there exists a dimension polynomial of degree at most $n$ for any generalised $\s$\=/algebraic group $(G_i)_{i\in\mathbb{N}}$ with respect to $\G$ such that $g_m-m<p$ for some $p\geq 1$.

Let $(G_i)_{i\in\mathbb{N}}$ be a generalised $\s$\=/algebraic group with respect to $\G$ such that $g_m-m=p\geq 1$ and let $(F_i)_{i\in\mathbb{N}}$ be the projections of $(G_i)_{i\in\mathbb{N}}$ under $\pi$ (\Cref{def: projections}). By \Cref{lem: projections also generalised grps}, $(F_i)_{i\in\mathbb{N}}$ is a generalised $\s$\=/algebraic group with respect to $\G$, and by \Cref{lem: zariskis equal ji>i means tji leq ji-1}, we can apply the inductive hypothesis to $(F_i)_{i\in\mathbb{N}}$. That is, there exists a dimension polynomial $\phi(t)\in\mathbb{Q}[t]$ of degree at most $n$ for $(F_i)_{i\in\mathbb{N}}$.
By assumption, there exists a dimension polynomial $\psi(t)\in\mathbb{Q}[t]$ of degree at most $n-1$ for the kernels $(H_i)_{i\in\mathbb{N}}$.
Since for every $i\in\mathbb{N}$, $\pi_{i+1}\colon G_{i+1}\rightarrow F_i$ is a quotient map with kernel $H_{i+1}$, 
    $\dim(G_{i+1})=\dim(F_i) +\dim(H_{i+1})$
for all $i\in\mathbb{N}$.
That is, 
    $\dim(G_{i+1})=\phi(i) +\psi(i+1)$
for large enough $i\in\mathbb{N}$.
Letting $\Phi(t)=\phi(t-1)+\psi(t)$, we see that $\Phi(t)$ is a dimension polynomial for $(G_i)_{i\in\mathbb{N}}$ of degree at most $n$.
\end{proof}

\subsection{Dimension Polynomials in the Ordinary Case}

\begin{theorem}\label[theorem]{the: ordinary dimension polynomial}
    Let $\G$ be an algebraic group over an ordinary $\s$\=/field $k$. There exists a dimension polynomial of degree at most 1 for any generalised $\s$\=/algebraic group $(G_i)_{i\in\mathbb{N}}$ with respect to $\G$.
\end{theorem}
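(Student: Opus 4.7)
The plan is to reduce the theorem to a statement about the kernels $(H[i])_{i \in \mathbb{N}}$ of $\pi$ restricted to $(G[i])_{i \in \mathbb{N}}$ via \Cref{prop: kernels imply groups dim poly}. Specifically, taking $n = 1$ in that proposition, it suffices to show that for every generalised $\s$\=/algebraic group $(G[i])_{i \in \mathbb{N}}$ with respect to $\G$, the kernels $(H[i])_{i \in \mathbb{N}}$ admit a dimension polynomial of degree at most $0$; equivalently, that $\dim(H[i])$ is eventually constant.

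First, I would invoke \Cref{lem: H[i+1] isomorphic to H[i] (ordinary)}: since $(H[i])_{i \in \mathbb{N}}$ consists of closed subgroups of the $\G[i]$ obtained from a generalised $\s$\=/algebraic group in the ordinary case, the restriction $\s[i] \colon H[i] \to {^\s}H[i-1]$ is an isomorphism of algebraic groups over $k$ for all sufficiently large $i \in \mathbb{N}$. Isomorphisms of algebraic groups preserve dimension, so $\dim(H[i]) = \dim({^\s}H[i-1])$ for such $i$. Moreover, since ${^\s}H[i-1]$ is obtained from $H[i-1]$ by base change along the field morphism $\s \colon k \to k$, and the Krull dimension of a finitely generated algebra over a field is preserved under such base change, we have $\dim({^\s}H[i-1]) = \dim(H[i-1])$. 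Combining these equalities yields $\dim(H[i]) = \dim(H[i-1])$ for all large enough $i$.

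It follows that the sequence $(\dim(H[i]))_{i \in \mathbb{N}}$ stabilises at some constant $d \in \mathbb{N}$, and the constant numerical polynomial $\psi(t) = d \in \mathbb{Q}[t]$ is a dimension polynomial of degree at most $0$ for $(H[i])_{i \in \mathbb{N}}$. Since the generalised $\s$\=/algebraic group $(G[i])_{i \in \mathbb{N}}$ was arbitrary, \Cref{prop: kernels imply groups dim poly} then produces a dimension polynomial of degree at most $1$ for $(G[i])_{i \in \mathbb{N}}$, completing the proof.

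The argument is essentially a one-step reduction, with no significant obstacle once the two cited results are in place: \Cref{lem: H[i+1] isomorphic to H[i] (ordinary)} supplies the isomorphism that forces $\dim(H[i])$ to stabilise, and \Cref{prop: kernels imply groups dim poly} packages the passage from the kernels to the group itself (via \Cref{lem: gi+1-gi implies dimension poly} in its proof). The only minor subtlety is noting that dimension is unchanged by base change along $\s \colon k \to k$, which is immediate from the identity ${^\s}k[H[i-1]] = k[H[i-1]] \otimes_k k$ and standard facts about Krull dimension under flat extensions of fields.
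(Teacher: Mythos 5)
Your proposal is correct and follows essentially the same route as the paper: invoke \Cref{lem: H[i+1] isomorphic to H[i] (ordinary)} to get that $\s[i]\colon H[i]\rightarrow {^\s}H[i-1]$ is eventually an isomorphism, conclude via invariance of dimension under isomorphism and base change that $\dim(H[i])$ stabilises, and then apply \Cref{prop: kernels imply groups dim poly} with $n=1$. No substantive differences from the paper's argument.
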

\begin{proof}
Let $(G_i)_{i\in\mathbb{N}}$ be a generalised ordinary $\s$\=/algebraic group with respect to $\G$, and let $(H_i)_{i\in\mathbb{N}}$ be the kernels of $\pi$ with respect to $(G_i)_{i\in\mathbb{N}}$. By \Cref{lem: H[i+1] isomorphic to H[i] (ordinary)}, for large enough $i\in\mathbb{N}$, $(\s)_i\colon H_i\rightarrow{^\s}H_{i-1}$ is an isomorphism. That is, for large enough $i\in\mathbb{N}$, $\dim(H_i)=\dim({^\s}H_{i-1})$, and as base change does not change dimension \cite[Section 3, page 95]{hartshorne1977algebraic}, $\dim(H_i)=\dim(H_{i-1})$.  That is, there exists some constant $d\in\mathbb{N}$, such that for all large enough $i\in\mathbb{N}$, $\dim(H_i)=d$. Therefore, there exists a dimension polynomial of degree at most $0$ for $(H_i)_{i\in\mathbb{N}}$.
    By \Cref{prop: kernels imply groups dim poly}, there exists a dimension polynomial of degree at most $1$ for any (ordinary) generalised $\s$\=/algebraic group $(G_i)_{i\in\mathbb{N}}$ with respect to $\G$.
\end{proof}

\subsection{Dimension Polynomials in the Partial Case}

\begin{theorem}\label[theorem]{the: partial dim poly}
    Let $\G$ be an algebraic group over a $\s$\=/field $k$. Let $(G_i)_{i\in\mathbb{N}}$ be a generalised $\s$\=/algebraic group with respect to $\G$. Then there exists a numerical polynomial $\Phi(t)\in\mathbb{Q}[t]$ of degree at most $n$ such that for large enough $i\in\mathbb{N}$, $\dim(G_i)=\Phi(i)$. 
\end{theorem}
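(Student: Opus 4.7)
The plan is to prove this by induction on $n$, the number of endomorphisms, mirroring the inductive structure used for \Cref{the: partial ideal generation property}. The base case $n=1$ is exactly \Cref{the: ordinary dimension polynomial}, giving a dimension polynomial of degree at most $1$ for any ordinary generalised $\s$\=/algebraic group. The strategy for the inductive step is to reduce from $(G[i])_{i\in\mathbb{N}}$ to its kernels $(H[i])_{i\in\mathbb{N}}$ via \Cref{prop: kernels imply groups dim poly}, and from there to pass to a sequence with one fewer endomorphism using the twisted kernel construction of \Cref{prop: twisted kernels n-1 groups}.

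In detail, for the inductive step I would assume that every generalised $\s'$\=/algebraic group with $|\s'|=n-1$ admits a dimension polynomial of degree at most $n-1$. By \Cref{prop: kernels imply groups dim poly}, it then suffices to produce such a polynomial for the kernels $(H[i])_{i\in\mathbb{N}}$. First assume that $\s_n\colon k\rightarrow k$ is a bijection; the general case reduces to this by passing to the inversive closure of $k$ as in the proof of \Cref{lem: H[i+1] isomorphic to H[i] (ordinary)}, using the fact that dimension of algebraic groups is preserved under base change. Under this assumption, form the sequence $H'[i]={^{\s_n^{-i}}}(\rho[i](H[i]))$, which by \Cref{prop: twisted kernels n-1 groups} is a generalised $\s'$\=/algebraic group with respect to $\G'$, where $|\s'|=n-1$. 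By \Cref{lem: rho restricted to H is isom}, the restriction $\rho[i]\colon H[i]\rightarrow \rho[i](H[i])$ is an isomorphism of algebraic groups, and since base change does not change the dimension of an algebraic group, we obtain $\dim(H[i])=\dim(H'[i])$ for every $i\in\mathbb{N}$. The inductive hypothesis applied to $(H'[i])_{i\in\mathbb{N}}$ yields a numerical polynomial $\psi(t)\in\mathbb{Q}[t]$ of degree at most $n-1$ with $\psi(i)=\dim(H'[i])=\dim(H[i])$ for large $i\in\mathbb{N}$, which is the required dimension polynomial for the kernels. \Cref{prop: kernels imply groups dim poly} then upgrades this to a dimension polynomial of degree at most $n$ for $(G[i])_{i\in\mathbb{N}}$.

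The main obstacle is not technical but structural: arranging the induction so that the twisted kernels honestly record the dimensions of the original kernels, and ensuring the base change argument does not interfere. Both points are handled by \Cref{lem: rho restricted to H is isom} together with invariance of dimension under base change. The substantive work has already been done in establishing the ordinary case, the reduction to kernels in \Cref{prop: kernels imply groups dim poly}, and the construction of $(H'[i])_{i\in\mathbb{N}}$ as a generalised $\s'$\=/algebraic group; with these ingredients the induction runs cleanly.
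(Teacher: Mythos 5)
Your proposal is correct and follows essentially the same route as the paper: induction on $n$ with \Cref{the: ordinary dimension polynomial} as base case, reduction to the kernels via \Cref{prop: kernels imply groups dim poly}, passage to the twisted kernels $(H'[i])_{i\in\mathbb{N}}$ of \Cref{prop: twisted kernels n-1 groups} with $\dim(H[i])=\dim(H'[i])$ justified by \Cref{lem: rho restricted to H is isom} and base-change invariance of dimension, and the inversive-closure argument when $\s_n$ is not bijective. No gaps.
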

\begin{proof}
    We will prove this by induction on the number of endomorphisms $n$, following similar steps to the proof of \Cref{the: partial ideal generation property}. This statement is true in the ordinary case by \Cref{the: ordinary dimension polynomial}. Let $\s=\basicset$ with $n\geq2$, and assume that for any generalised $\s$\=/algebraic group $(G_i)_{i\in\mathbb{N}}$ with respect to $\G$ such that $|\s|=n-1$, there exists a dimension polynomial of degree at most $n-1$ for $(G_i)_{i\in\mathbb{N}}$.

     Let $(G_i)_{i\in\mathbb{N}}$ be a generalised $\s$\=/algebraic group with respect to $\G$. 
    Firstly assume that $\s_n\colon k\rightarrow k$ is a bijection. Then we can form the generalised $\s'$\=/algebraic group $(H'_i)_{i\in\mathbb{N}}$ as per \Cref{prop: twisted kernels n-1 groups}, and apply the inductive hypothesis to say there exists a numerical polynomial $\phi(t)\in \mathbb{Q}[t]$ of degree at most $n-1$ such that for large enough $i\in\mathbb{N}$, $\phi(i)=\dim(H'_i)$. We can see that $\dim(H_i)=\dim(H'_i)$, by \Cref{lem: rho restricted to H is isom}, \Cref{prop: twisted kernels n-1 groups} and the fact that base change doesn't change dimension. So $\phi(t)$ is a dimension polynomial of degree at most $n-1$ for $(H_i)_{i\in\mathbb{N}}$. We can follow a similar argument as in the proof of \Cref{the: partial ideal generation property} to say that since such a dimension polynomial exists when $\s_n\colon k\rightarrow k$ is a bijection, by properties of base change it must exist over any $\s$\=/field $k$.
    Therefore, by \Cref{prop: kernels imply groups dim poly}, there exists a dimension polynomial of degree $n$ for any generalised $\s$\=/algebraic group $(G_i)_{i\in\mathbb{N}}$ with respect to $\G$.
\end{proof}

\begin{corollary}\label[corollary]{cor: partial dim poly zariskis}
    Let $G$ be a $\s$\=/closed subgroup of an algebraic group $\G$ over $k$, and let $(G[i])_{i\in\mathbb{N}}$ be the Zariski closures of $G$ with respect to $\G$ (\Cref{def: zariski closures}). Then there exists a numerical polynomial $\Phi(t)\in\mathbb{Q}[t]$ of degree at most $n$ such that for large enough $i\in\mathbb{N}$, $\dim(G[i])=\Phi[i]$. 
 \end{corollary}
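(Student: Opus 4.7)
The plan is to show that this corollary follows almost immediately by specialising \Cref{the: partial dim poly} to the particular generalised $\s$\=/algebraic group given by the Zariski closures. The main work has already been done at this stage: the theorem establishes the existence of a dimension polynomial of degree at most $n$ for \emph{any} generalised $\s$\=/algebraic group with respect to $\G$, not just for the Zariski closures.

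First I would recall that, by the discussion just after the definition of Zariski closures in \Cref{sec: zariski closures def} (combined with \Cref{lem: restrictions well defined}), the sequence $(G_i)_{i\in\mathbb{N}}$ satisfies each of the three conditions of \Cref{def: gen diff alg grp def ideal theoretic}: each $G_i$ is a closed subgroup of $\G[i]$ defined by the Hopf ideal $\I(G_i)=\I(G)\cap k[\G[i]]$, and the inclusions $\I(G_{i-1})\subseteq\I(G_i)$ and $\s_j(\I(G_{i-1}))\subseteq\I(G_i)$ for $1\leq j\leq n$ follow directly from the fact that $\I(G)$ is a $\s$\=/ideal of $k\{\G\}$. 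Hence $(G_i)_{i\in\mathbb{N}}$ is a generalised $\s$\=/algebraic group with respect to $\G$.

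Having verified this, I would then simply apply \Cref{the: partial dim poly} to the generalised $\s$\=/algebraic group $(G_i)_{i\in\mathbb{N}}$ to produce a numerical polynomial $\Phi(t)\in\mathbb{Q}[t]$ of degree at most $n$ such that $\dim(G_i)=\Phi(i)$ for large enough $i\in\mathbb{N}$, which is exactly the conclusion of the corollary.

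There is no real obstacle here; the corollary is a direct specialisation of the theorem, and indeed one of the motivations for introducing the more flexible notion of a generalised $\s$\=/algebraic group (rather than restricting to Zariski closures from the outset) was precisely so that the inductive arguments in \Cref{the: partial dim poly} would go through. The only thing to be slightly careful about is to explicitly note that the Zariski closures fit into this more general framework, which is immediate from the definitions.
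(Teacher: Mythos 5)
Your proposal is correct and matches the paper's intended argument exactly: the paper has already observed (in \Cref{sec: gen dif alg grp def}) that the Zariski closures form a generalised $\s$\=/algebraic group with respect to $\G$, and the corollary is then an immediate specialisation of \Cref{the: partial dim poly}, which is why the paper gives no separate proof.
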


 Using combinatorics, we can see that $|T_\s[i]|=\binom{i+n}{n}$. This allows us to do the computations in the following examples.

 \begin{example}\label[example]{ex: dim of G[i]}
    Suppose that $\G$ is an algebraic group over a $\s$\=/field $k$. Recall that $(\G[i])_{i\in\mathbb{N}}$ as defined in (\cref{eq: G[i]}) are the Zariski closures of $\G$ with respect to itself. Further, \begin{align*}
        \dim(\G[i])=\dim(\G)|T_\s[i]|=\dim(\G)\binom{i+n}{n} 
    \end{align*}
    for each $i\in\mathbb{N}$. That is, $\phi(t)=\dim(\G)\binom{t+n}{n}$ is a dimension polynomial for the Zariski closures of $\G$ with respect to itself.
\end{example}

 \begin{example}\label[example]{ex: dim polys for gen and zar}
    Consider the generalised $\s$\=/algebraic group $(G_i)_{i\in\mathbb{N}}$ with respect to $\mathbb{G}_m$ and its Zariski closures $(G[i])_{i\in\mathbb{N}}$ from \Cref{ex: zariski closures and gen group}.
    Firstly, recall that $\dim(\mathbb{G}_m)=1$, and hence for each $i\in\mathbb{N}$, $\dim(\mathbb{G}_m[i])=|T_\s[i]|=\binom{i+2}{2}$ by \Cref{ex: dim of G[i]}. 

    For the Zariski closures $(G[i])_{i\in\mathbb{N}}$, $\dim(G[i])=\dim(\mathbb{G}_m[i])$, for all $i\geq 4$, 
    \begin{align*}
        \dim(G[i])=\dim(\mathbb{G}_m[i])-\hgt(\I(G[i]))
        =|T_\s[i]|-|T_\s[i-4]|
        =4i-2,
    \end{align*}
    and hence $\phi(t)=4t-2$ is the dimension polynomial for the Zariski closures $(G[i])_{i\in\mathbb{N}}$ of $(G_i)_{i\in\mathbb{N}}$ in $\mathbb{G}_m$.
    
    Recall that $\I(G_i)=\I(G[i-1])k[\mathbb{G}_m[i]]$ for each $i\geq 1$, and hence $\hgt(\I(G_i))=\hgt(\I(G[i-1]))$ \cite[Exercise 10.2, page 242]{eisenbud1995commutative}. Therefore, for $i\geq 5$, we see that \begin{align*}
        \dim(G_i)=\dim(\mathbb{G}_m[i])-\hgt(\I(G[i-1]))
        =|T_\s[i]|-|T_\s[i-5]|
        =5i-5,
    \end{align*}
and hence $\psi(t)=5t-5$ is the dimension polynomial for the generalised $\s$\=/algebraic group $(G_i)_{i\in\mathbb{N}}$ with respect to $\mathbb{G}_m$.
\end{example}

 \subsection{Dimension Polynomial Invariants}

 We have seen (in \Cref{cor: partial dim poly zariskis}) that we can find a dimension polynomial $\phi(t)\in\mathbb{Q}[t]$ for the Zariski closures of a $\s$\=/closed subgroup $G$ of some algebraic group $\G$. 
We know from \cite[Section 1.4, page 48]{levin2008difference}, that if $\phi(t)$ is of degree $d$, then $\phi$ can be written in the form \begin{align*}
         \phi(t)=\sum_{j=0}^dc_j\binom{t+j}{j}
     \end{align*}
     where $c_0,\dots,c_d\in\mathbb{Z}$ are uniquely defined by $\phi(t)$, and $c_d\neq 0$. We will prove that $\phi$ holds some invariants of the $\s$\=/algebraic group $G$. Firstly, we make a useful remark.

     \begin{remark}\label[remark]{rem: zariski closures as image}
         Suppose that $(G[i])_{i\in\mathbb{N}}$ are the Zariski closures of a $\s$\=/closed subgroup $G$ of an algebraic group $\G$ (\Cref{def: zariski closures}). Rather than put $k[G[i]]=k[\G[i]]/(\I(G)\cap k[\G[i]])$, notice that we can consider $k[G[i]]$ to be the image of $k[\G[i]]$ in $k\{G\}$. Then, \Cref{ex: tau A and A[i],example: sk A} and \Cref{rem: tau constructions agree} tell us a lot about the structure of the $k[\G[i]]$ and $k\{\G\}$ for an algebraic group $\G$. 
From this, we can induce some properties of the representing algebras $k[G[i]]$ and $k\{G\}$. Firstly, if $k[G[0]]=k[A]$ for some $A\subseteq k\{G\}$, then for all $i\in\mathbb{N}$, $k[G[i]]=k[\tau(A)\ |\ \tau\in T_\s[i]]$, and if $B\subseteq k[G[m]]$ for some $m\in\mathbb{N}$, then for any $i\in\mathbb{N}$, $\tau\in T_\s[i]$, $\tau(B)\in k[G[m+i]]$. Finally, we see that $k\{G\}=\cup_{i\in\mathbb{N}}k[G[i]]$ and $k\{G\}=k\{A\}$.
     \end{remark}

 \begin{lemma}\label[lemma]{lem: dim poly invariants}
   Let $G$ be a $\s$\=/closed subgroup of an algebraic group $\G$ over $k$. By \Cref{cor: partial dim poly zariskis}, we can find a dimension polynomial for the Zariski closures of $G$ in $\G$. The degree and leading coefficient of this polynomial depend only on $G$ and not the choice of $\G$ or $\s$\=/closed embedding $G\hookrightarrow \G$.
 \end{lemma}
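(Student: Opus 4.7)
The plan is to compare two embeddings directly through their subalgebra structure inside $k\{G\}$, exploiting finite $\s$-generation to obtain a bounded-order comparison.

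Let $G \hookrightarrow \G_1$ and $G \hookrightarrow \G_2$ be two $\s$-closed embeddings with Zariski closures $(G_i^{(1)})_{i \in \mathbb{N}}$ and $(G_i^{(2)})_{i \in \mathbb{N}}$ and associated dimension polynomials $\phi_1(t), \phi_2(t)$. For each $j \in \{1,2\}$, the coordinate ring $k[G_i^{(j)}] = k[\G_j[i]]/(\mathbb{I}(G) \cap k[\G_j[i]])$ is naturally identified with the image of $k[\G_j[i]]$ in $k\{G\}$, so the two sequences $(k[G_i^{(j)}])_{i \in \mathbb{N}}$ become ascending chains of finitely generated $k$-subalgebras of $k\{G\}$ with union $k\{G\}$. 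Choosing finite $k$-algebra generators $F_j$ of $k[\G_j]$ and letting $\bar F_j$ be their images in $k\{G\}$, one checks that $k[G_i^{(j)}]$ is the $k$-subalgebra of $k\{G\}$ generated by $\{\tau(\bar F_j) : \tau \in T_\s[i]\}$, and consequently $\bar F_j$ $\s$-generates $k\{G\}$ as a $k$-$\s$-algebra.

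The key step is a bounded comparison of the two chains. Since $\bar F_2$ is a finite subset of $k\{G\} = \bigcup_i k[G_i^{(1)}]$, there exists $d \in \mathbb{N}$ with $\bar F_2 \subseteq k[G_d^{(1)}]$. Applying any $\tau \in T_\s[i]$ to this inclusion as a ring endomorphism of $k\{G\}$ yields $\tau(\bar F_2) \subseteq k[G_{i+d}^{(1)}]$, whence the $k$-subalgebra generated satisfies $k[G_i^{(2)}] \subseteq k[G_{i+d}^{(1)}]$ for every $i$. By symmetry there is also a $d' \in \mathbb{N}$ with $k[G_i^{(1)}] \subseteq k[G_{i+d'}^{(2)}]$ for every $i$.

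To convert these subalgebra inclusions into dimension inequalities I would invoke that for any inclusion $A \subseteq B$ of finitely generated $k$-algebras one has $\dim A \leq \dim B$; this follows by picking a minimal prime $\mathfrak{p}$ of $A$ realising $\dim A = \dim A/\mathfrak{p}$, localising $B$ at $A \setminus \mathfrak{p}$ to produce a prime $\mathfrak{q}$ of $B$ with $\mathfrak{q} \cap A = \mathfrak{p}$, and comparing transcendence degrees for the induced inclusion of integral domains $A/\mathfrak{p} \hookrightarrow B/\mathfrak{q}$. Applied to our chains, this gives $\phi_2(i) \leq \phi_1(i+d)$ and $\phi_1(i) \leq \phi_2(i+d')$ for all sufficiently large $i$. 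Since shifting the argument of a numerical polynomial by a constant preserves both its degree and its leading coefficient, these two inequalities immediately force $\deg \phi_1 = \deg \phi_2$ and equality of the leading coefficients. The only substantive technical ingredient is the dimension inequality for inclusions of finitely generated $k$-algebras; the remainder is bookkeeping that rests on finite $\s$-generation of $k\{G\}$.
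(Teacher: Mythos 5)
Your proposal is correct and follows essentially the same route as the paper: identify the coordinate rings of the two chains of Zariski closures as ascending chains of finitely generated subalgebras of $k\{G\}$, use finite $\s$\=/generation to trap one chain inside a constant shift of the other (and vice versa), and deduce equality of degree and leading coefficient from the resulting two-sided inequalities $\phi_2(i)\leq\phi_1(i+d)$, $\phi_1(i)\leq\phi_2(i+d')$. The only addition is your explicit justification of the dimension inequality $\dim A\leq\dim B$ for an inclusion of finitely generated $k$\=/algebras, which the paper uses implicitly; your argument for it is sound.
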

 \begin{proof}
This is the partial analogue to the ordinary case in \cite[Section 3, page 529]{wibmer2022finiteness}. 
Suppose that we have two $\s$\=/closed embeddings $G\hookrightarrow \G$ and $G\hookrightarrow \G'$, we can consider the Zariski closures $(G[i])_{i\in\mathbb{N}}$ and $(G'[i])_{i\in\mathbb{N}}$ of $G$ with respect to $\G$ and $\G'$ respectively, and by \Cref{the: partial dim poly}, we can find dimension polynomials $\phi(t)\in\mathbb{Q}[t]$ and $\phi'(t)\in\mathbb{Q}[t]$ for $(G[i])_{i\in\mathbb{N}}$ and $(G'[i])_{i\in\mathbb{N}}$ respectively.

We can find a finite subset $A\subseteq k\{G\}$ that generates $k[G[0]]$ as a $k$\=/algebra, and as $k\{G\}=\cup_{i\in\mathbb{N}}k[G'[i]]$, there exists some $m\in\mathbb{N}$ such that $A\subseteq k[G'[m]]$. Then, 
    $k[G[i]]=k[\tau(A)\ |\ \tau\in T_\s[i]]\subseteq k[G'[m+i]]$
for all $i\in\mathbb{N}$ by \Cref{rem: zariski closures as image}. That is, $\phi(i)\leq \phi'(i+m)$ for large $i\in\mathbb{N}$, and letting $i$ tend to infinity tells us $\deg(\phi)\leq \deg(\phi')$. Following the same argument, we can see that $\deg(\phi')\leq \deg(\phi)$, hence $\deg(\phi)= \deg(\phi')$. Repeating these arguments again using $\deg(\phi)= \deg(\phi')$ tells us that $\phi$ and $\phi'$ have the same leading coefficient.
 \end{proof}

As the degree and leading coefficient of the dimension polynomial for the Zariski closures of a $\s$\=/algebraic group $G$ (with respect to some embedding) are invariants of $G$, it is well defined to adapt the concepts of difference dimension, difference type and typical difference dimension from difference modules and difference field extensions \cite{levin2008difference} to partial difference algebraic groups.

 \begin{definition}\label[definition]{def: diff type diff dim etc}
    Let $G$ be a $\s$\=/closed subgroup of an algebraic group $\G$ over $k$, and let $\phi(t)\in\mathbb{Q}[t]$ be the dimension polynomial of the Zariski closures of $G$ with respect to $\G$, that is, $\phi(t)=\sum_{j=0}^dc_j\binom{t+j}{j}$ for some $d\leq n$, $c_0,\dots,c_d\in\mathbb{Z}$ and $c_d\neq 0$. The integer $d$ is called the \textbf{difference type} ($\s$\=/type) of $G$, and $c_d$ is the \textbf{typical difference dimension} (typical $\s$\=/dimension) of $G$. Finally, we define the \textbf{difference dimension} ($\s$\=/dimension) of $G$ to be $c_d$ if $d=n$, and $0$ if $d<n$. 
\end{definition}

By \Cref{lem: dim poly invariants}, the $\s$\=/type, typical $\s$\=/dimension and $\s$\=/dimension of a $\s$\=/closed subgroup $G$ of an algebraic group $\G$ depend only on $G$ and not the choice of $\G$ or $\s$\=/closed embedding $G\hookrightarrow \G$.

\begin{example}\label[example]{ex: invariants for algebraic group}
    Let $\G$ be an algebraic group over a $\s$\=/field $k$. In \Cref{ex: dim of G[i]}, we found that the dimension polynomial for the Zariski closures $(\G[i])_{i\in\mathbb{N}}$ of $\G$ with respect to itself is $\phi(t)=\dim(\G)\binom{t+n}{n}$. Therefore, we see that $G$ has $\s$\=/dimension $\dim(\G)$, $\s$\=/type $n$, and typical $\s$\=/dimension $\dim(\G)$.
\end{example}

\begin{example}\label[example]{ex: invariants for our G}
    Consider the $\s$\=/closed subgroup $G$ of $\mathbb{G}_m$ introduced in \Cref{ex:introducing example}. In \Cref{ex: dim polys for gen and zar}, we found that the dimension polynomial for the Zariski closures $(G[i])_{i\in\mathbb{N}}$ of $G$ in $\mathbb{G}_m$ is $\phi(t)=4t-2$. Therefore, we see that $G$ has $\s$\=/dimension $0$, $\s$\=/type $1$, and typical $\s$\=/dimension $4$.
\end{example}

\section{Comparison with Classical Results}\label{sec: relating polys to levin}

We will now see how the difference dimension, difference type and typical difference dimension defined for a difference algebraic group in \Cref{def: diff type diff dim etc} relate to the analogous definitions for difference modules and difference field extensions in \cite[Sections 3 and 4]{levin2008difference}. As in this reference, by definition, all basic endomorphisms on $\s$\=/rings are injective, we restrict to the case where our endomorphisms are injective. 

\subsection{Difference Modules}

Given a $\s$\=/ring $R$, we define a \textbf{difference operator} ($\s$\=/operator) over $R$ to be a sum of the form $\sum_{\tau\in T_\s}a_\tau\tau$, where $a_\tau\in R$ for all $\tau\in T_\s$ and all but finitely many of the $a_\tau$ are zero. The set $\D$ of all $\s$\=/operators over $R$ has a natural ring structure, and is called the 
\textbf{ring of $\s$\=/operators} over $R$. 
Further, we can assign a natural order to a $\s$\=/operator $A=\sum_{\tau\in T_\s}a_\tau\tau$, that is, $\ord(A)=\max\{\ord(\tau)\ |\ a_\tau\neq 0\}$. This allows us to define the \textbf{standard filtration} of $\D$, where $\D_i=0$ for $i<0$ and $\D_i=\{A\in\D\ |\ \ord(A)\leq i\}$ for all $i\in\mathbb{N}$. Notice that $\D_0=R$.

We call a $\D$\=/module a \textbf{difference $R$\=/module} ($\s$\=/$R$\=/module). A $\s$\=/$R$\=/module is \textbf{finitely $\s$\=/generated} if it is a finitely generated $\D$\=/module, and is a free $\s$\=/$R$\=/module with $\s$\=/basis $B$ if it is a free $\D$\=/module with $\D$\=/basis $B$. We can also consider a $\s$\=/$R$\=/module $M$ to be an $R$\=/module $M$ with pairwise commuting additive maps $\s_j\colon M\rightarrow M$ for each $1\leq j\leq n$ such that for all $r\in R$, $x\in M$, $\s_j(rx)=\s_j(r)\s_j(x)$. 

A \textbf{filtration} of a $\s$\=/$R$\=/module $M$ is an ascending chain $(M_i)_{i\in\mathbb{N}}$ of $R$\=/submodules of $M$ such that $\bigcup_{i\in\mathbb{Z}}M_i=M$, $\D_rM_i\subseteq M_{r+i}$ for all $r,i\in\mathbb{Z}$, and $M_i=0$ for small enough $i\in\mathbb{Z}$, where $(\D_i)_{i\in\mathbb{Z}}$ is the standard filtration of the ring $\D$ of $\s$\=/operators over $R$ \cite{levin2008difference}. A filtration $(M_i)_{i\in\mathbb{N}}$ of a $\s$\=/$R$\=/module $M$ is called \textbf{excellent} if for all $i\in\mathbb{Z}$, $M_i$ is a finitely generated $R$\=/module, and if there exists some $m\in\mathbb{Z}$ such that $M_i=\D_{i-m}M_{m}$ for all $i\geq m$. Notice that if a $\s$\=/$R$\=/module $M$ admits an excellent filtration, it is finitely $\s$\=/generated (as it is $\s$\=/generated by $M_{m}$, which is finitely generated).

We recall \cite[Theorem 3.2.3, page 161]{levin2008difference}, stating the existence of dimension polynomials for excellent filtrations of difference modules. In this reference, the theorem is proven for $\s$\=/$R$\=/modules where $R$ is an Artinian $\s$\=/ring, but here we will only state the result for $\s$\=/$k$\=/modules for a $\s$\=/field $k$. Therefore, while the dimension polynomials in this reference give the length of each $R$\=/module $M_i$ in a filtration $(M_i)_{i\in\mathbb{Z}}$, here we can instead consider the dimension of each $k$\=/module $M_i$ as a $k$\=/vector space.

\begin{theorem}\label[theorem]{the: levin dim poly modules}
    Let $k$ be a $\s$\=/field and let $M$ be a finitely $\s$\=/generated $\s$\=/$k$\=/module. Given an excellent filtration $(M_i)_{i\in\mathbb{Z}}$ of $M$, there exists some numerical polynomial $\phi(t)\in\mathbb{Q}[t]$ of degree at most $n$, such that for large enough $i\in\mathbb{Z}$, $\phi(i)=\dim_k(M_i)$.
\end{theorem}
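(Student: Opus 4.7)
The plan is to reduce this classical theorem to Corollary \ref{cor: partial dim poly zariskis} via the correspondence alluded to in the introduction between $\s$\=/$k$\=/modules and additive $\s$\=/algebraic groups. Given an excellent filtration $(M_i)_{i\in\mathbb{Z}}$, fix $m\in\mathbb{Z}$ with $M_{m+i}=\D_i M_m$ for all $i\geq 0$, and choose a $k$\=/spanning set $m_1,\dots,m_s$ of the finite\=/dimensional $k$\=/vector space $M_m$. These elements $\s$\=/generate $M$, producing a surjective morphism $\pi\colon k\{y_1,\dots,y_s\}\twoheadrightarrow M$ of $\s$\=/$k$\=/modules (where $k\{y_1,\dots,y_s\}$ is regarded as a free $\s$\=/$k$\=/module via its submodule of linear polynomials). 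Let $N=\ker(\pi)$; this is a $T_\s$\=/stable $k$\=/subspace of the linear polynomials in $k\{y_1,\dots,y_s\}$.

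Next, identify $k\{y_1,\dots,y_s\}$ with $k\{\mathbb{G}_a^s\}$, and let $I=(N)$ be the ideal of $k\{\mathbb{G}_a^s\}$ generated by $N$. Since each element of $N$ is primitive for the additive Hopf algebra structure on $k\{\mathbb{G}_a^s\}$ and $N$ is stable under the basic endomorphisms, a direct check shows that $I$ is a $\s$\=/Hopf ideal; it therefore defines a $\s$\=/closed subgroup $G\leq\mathbb{G}_a^s$ by \Cref{prop: one to one corresp subgroups and Hopf ideals}. Form the Zariski closures $(G_i)_{i\in\mathbb{N}}$ of $G$ with respect to $\mathbb{G}_a^s$. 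By \Cref{cor: partial dim poly zariskis}, there exists a numerical polynomial $\psi(t)\in\mathbb{Q}[t]$ of degree at most $n$ with $\psi(i)=\dim(G_i)$ for large $i\in\mathbb{N}$.

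The remaining step is to match the two dimension functions. Each $G_i$ is a closed linear subgroup of $\mathbb{G}_a^s[i]$, so its algebraic\=/group dimension equals its dimension as a $k$\=/vector subspace of $k^{s|T_\s[i]|}$. By duality this equals $s|T_\s[i]|-\dim_k(N\cap V_i)$, where $V_i$ is the $k$\=/span of the indeterminates $\tau(y_j)$ with $\tau\in T_\s[i]$ and $1\leq j\leq s$. On the other side, $M_{m+i}=\D_i M_m$ is the image under $\pi$ of $V_i$, so $\dim_k(M_{m+i})=s|T_\s[i]|-\dim_k(N\cap V_i)$ as well. Hence $\dim_k(M_{m+i})=\dim(G_i)=\psi(i)$, and the polynomial $\phi(t)=\psi(t-m)$ satisfies $\phi(i)=\dim_k(M_i)$ for large $i$, as required.

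The main obstacle will be verifying cleanly that $\pi(V_i)=M_{m+i}$, i.e., that the order filtration on $k\{y_1,\dots,y_s\}$ matches the excellent filtration on $M$ after the shift by $m$. This uses the defining property of excellent filtrations together with the fact that $\D_i$ acts on $M_m$ exactly as $V_i$ acts linearly on the chosen generators. Once this identification is made, the $k$\=/vector space computation is automatic and the theorem follows from the existence of the algebraic\=/group dimension polynomial already established in \Cref{cor: partial dim poly zariskis}.
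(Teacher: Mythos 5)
Your proposal is correct, but it takes a different route from the paper: the paper does not prove this statement at all, it simply recalls it as Theorem 3.2.3 of \cite{levin2008difference}, where it is established by module-theoretic methods. You instead derive it from \Cref{cor: partial dim poly zariskis} via the dictionary between finitely $\s$\=/generated $\s$\=/$k$\=/modules and $\s$\=/closed subgroups of $\mathbb{G}_a^s$ --- which is essentially the correspondence the paper itself sets up in \Cref{sec: relating polys to levin}, run in the opposite direction (the paper uses Levin's theorem to compare invariants; you use the group-theoretic theorem to reprove Levin's). This is legitimate and not circular: the proofs of \Cref{the: partial dim poly} and its corollary rely only on the ideal generation property, the kernel decomposition, and the elementary summation lemma for numerical polynomials (Proposition 2.1.6 of \cite{mikhalev2013differential}), never on the module dimension polynomial. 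Your reduction of an arbitrary excellent filtration to the shifted standard one via $M_{m+i}=\D_iM_m=\pi(V_i)$ is the right move and handles the generality of the statement. Two points deserve slightly more care than your "by duality": first, that $\I(G_i)=I\cap k[\mathbb{G}_a^s[i]]$ is generated by $N\cap V_i$ (the contraction of an ideal generated by linear forms to a coordinate subring is generated by the linear forms it contains --- cleanest via $k\{y\}/(N)\cong\operatorname{Sym}(V/N)$ and injectivity of $\operatorname{Sym}$ applied to $V_i/(N\cap V_i)\hookrightarrow V/N$); second, that $\dim(G_i)$ should be computed as the Krull dimension of $k[\mathbb{G}_a^s[i]]/(N\cap V_i)$, a polynomial ring in $s|T_\s[i]|-\dim_k(N\cap V_i)$ variables, rather than as a count of $k$\=/points, which only agrees for $k$ infinite. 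With those justifications filled in, the argument is complete, and it has the pleasant feature of exhibiting Levin's theorem as the special case of the paper's main dimension polynomial result for additive groups.
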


We know that for an excellent filtration $(M_i)_{i\in\mathbb{N}}$ of a finitely $\s$\=/generated $\s$\=/$k$\=/module $M$, its dimension polynomial is of the form  $\psi(t)=\sum_{i=0}^dc_i\binom{t+i}{i}$, where $c_0,\dots,c_d\in\mathbb{Z}$, $c_d\neq 0$, and $d\leq n$. Further, by \cite[Theorem 3.2.9, page 164]{levin2008difference}, the degree $d$ of the polynomial and the integer $c_d$ are invariant of the choice of excellent filtration, they only rely on the $\s$\=/$k$\=/module $M$.
The integers $d$ and $c_d$ are called the \textbf{difference type} and \textbf{typical difference dimension} of the $\s$\=/$k$\=/module $M$ respectively. Further, we define the \textbf{difference dimension} of $M$ to be $c_d$ if $d=n$, and $0$ if $d<n$.

We will now see how given a finitely $\s$\=/generated $\s$\=/$k$\=/module $M$, we can find a corresponding $\s$\=/closed subgroup $G$ of the additive group $\mathbb{G}_a^s$ over $k$ for some $s\geq 1$. Then the difference dimension, difference type and typical difference dimension of $M$ and $G$ coincide.

\subsection{Relating Difference Modules to Difference Subgroups}

In this section, we will be considering the correspondence between finitely $\s$\=/generated $\s$\=/$k$\=/modules and $\s$\=/closed subgroups of the additive group $\mathbb{G}_a^s$ over $k$, for some $s\geq1$. We know from \Cref{ex: tau A and A[i]} that $k\{\mathbb{G}_a^s\}=k\{y_1,\dots,y_s\}$, and that $k\{\mathbb{G}_a^s\}$ is a $k$\=/$\s$\=/Hopf algebra, where the Hopf structure maps are the unique $k$\=/$\s$\=/algebra morphisms such that $\Delta(y_j)=y_j\otimes 1+1\otimes y_j$, $\epsilon(y_j)=0$, and $S(y_j)=-y_j$
for $1\leq j\leq s$. This structure is inherited from that of the algebraic group $\mathbb{G}_a$ as stated in \cite[Section 1.4, page 9]{waterhouse2012introduction}. 
Notice that the $\s$\=/ideal $[F]$ generated by a set $F\subseteq k\{y_1,\dots,y_s\}$ of homogeneous linear $\s$\=/polynomials is a $\s$\=/Hopf ideal of $k\{\mathbb{G}_a^s\}$.
For brevity, let $y=\{y_1,\dots,y_s\}$, $T_\s(y)=\{\tau (y_j)\ |\ 1\leq j\leq s,\ \tau\in T_\s\}$ and $T_\s[i](y)=\{\tau (y_j)\ |\ 1\leq j\leq s,\ \tau\in T_\s[i]\}$.

Let $M$ be a finitely $\s$\=/generated $\s$\=/$k$\=/module. Then $M$ is isomorphic to $N/K$, where $N$ the free $\s$\=/$k$\=/module with $\s$\=/basis $y=\{y_1,\dots,y_s\}$ for some $s\geq1$ and $K$ is a $\s$\=/$k$\=/submodule of $N$ that is $\s$\=/generated by some set $F\subseteq k\{y\}$ of homogeneous linear $\s$\=/polynomials.  
We know that $M$ can be considered as a $k$\=/module, generated by the images of the elements of $T_\s(y)$ in $M$. For each $i\in\mathbb{N}$, let $M_i$ be the $k$\=/submodule of $M$ generated by the images of $T_\s[i](y)$ in $M$, and for each $i<0$, let $M_i=0$. Notice that $(M_i)_{i\in\mathbb{Z}}$ is an excellent filtration of $M$, which we call the \textbf{standard excellent filtration} of $M$. Therefore, by \Cref{the: levin dim poly modules}, there exists a numerical polynomial $\psi(t)\in \mathbb{Q}[t]$ of degree at most $n$ such that for large enough $i\in\mathbb{Z}$, $\psi(i)=\dim_k(M_i)$. Notice that if for each $i\in\mathbb{N}$, we let $N_i$ be the free $k$\=/module with basis $T_\s[i](y)$, and let $K_i=N_i\cap H$, then $M_i$ is isomorphic to $N_i/K_i$ as a $k$\=/module.

Given a finitely $\s$\=/generated $\s$\=/$k$\=/module $M$ and the constructions above, by \Cref{prop: one to one corresp subgroups and Hopf ideals}, we can consider the corresponding $\s$\=/closed subgroup $G$ of $\mathbb{G}_a^s$ defined by the $\s$\=/Hopf ideal $I=[F]=(K)$, and we can define the Zariski closures $(G[i])_{i\in\mathbb{N}}$ of $G$ with respect to $\mathbb{G}_a^s$ (\Cref{def: zariski closures}). 
That is, $k\{G\}=k\{y\}/I$, and for each $i\in\mathbb{N}$, $k[G[i]]$ is the $k$\=/Hopf\=/subalgebra of $k\{G\}$ that is generated by the images of the elements of $T_\s[i](y)$ in $k\{G\}$ (see remark \Cref{rem: zariski closures as image}).

\begin{theorem}\label[theorem]{the: relating modules and groups}
Let $k$ be a $\s$\=/field.
    Let $M$ be a finitely $\s$\=/generated $\s$\=/$k$\=/module (with $s\geq 1$ generators), and let $G$ be the corresponding $\s$\=/closed subgroup of $\mathbb{G}_a^s$ as described above. 
    Then the dimension polynomials for the standard excellent filtration of $M$ and the Zariski closures of $G$ with respect to $\mathbb{G}_a^s$ whose existence are proven in \Cref{the: levin dim poly modules} and \Cref{the: partial dim poly} respectively are equal.
    Therefore, $M$ and $G$ have the same $\s$\=/dimension, $\s$\=/type and typical $\s$\=/dimension.
\end{theorem}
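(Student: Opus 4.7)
The plan is to reduce the comparison of the two dimension polynomials to a pointwise identification of values, by showing directly that $\dim(G_i) = \dim_k(M_i)$ for every $i \in \mathbb{N}$. Since both quantities agree with a numerical polynomial of degree at most $n$ for large enough $i$, and these polynomials are uniquely determined by their values, this pointwise equality implies equality of the two polynomials, which then yields agreement of the $\s$-dimension, $\s$-type and typical $\s$-dimension.

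The first step is to identify the defining ideal of $G_i$ in the polynomial ring $k[\mathbb{G}_a^s[i]] = k[T_\s[i](y)] = \operatorname{Sym}(N_i)$. By definition $\I(G) = [F] = (H)$ in $k\{y\} = \operatorname{Sym}(N)$, and $\I(G_i) = (H) \cap \operatorname{Sym}(N_i)$. Writing $N = N_i \oplus W_i$ where $W_i$ is the $k$-span of $T_\s(y) \setminus T_\s[i](y)$, the key algebraic lemma is that for any $k$-subspace $H \subseteq N = N_i \oplus W_i$ one has
\begin{align*}
    (H) \cap \operatorname{Sym}(N_i) = (H \cap N_i) \cdot \operatorname{Sym}(N_i).
\end{align*}
This I would prove by identifying the composition $\operatorname{Sym}(N_i) \hookrightarrow \operatorname{Sym}(N) \twoheadrightarrow \operatorname{Sym}(N/H)$ (whose kernel is precisely $(H) \cap \operatorname{Sym}(N_i)$) with the map $\operatorname{Sym}(N_i) \to \operatorname{Sym}(N_i/(H \cap N_i)) \hookrightarrow \operatorname{Sym}(N/H)$ induced by $N_i \to N/H$, whose kernel is $(H \cap N_i) \cdot \operatorname{Sym}(N_i)$; injectivity of the second arrow follows from the fact that $N_i/(H \cap N_i) \hookrightarrow N/H$ is an inclusion of vector spaces. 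Applied to $H_i = H \cap N_i$, this gives $\I(G_i) = (H_i)$.

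The second step is then immediate:
\begin{align*}
    k[G_i] = \operatorname{Sym}(N_i)/(H_i) \cong \operatorname{Sym}(N_i/H_i) \cong \operatorname{Sym}(M_i),
\end{align*}
which is a polynomial ring over $k$ in $\dim_k(M_i)$ variables, and hence has Krull dimension $\dim_k(M_i)$. Therefore $\dim(G_i) = \dim_k(M_i)$ for all $i \in \mathbb{N}$. Since the dimension polynomial $\phi(t)$ of $(G_i)_{i\in\mathbb{N}}$ provided by Corollary~\ref{cor: partial dim poly zariskis} satisfies $\phi(i) = \dim(G_i)$ for large $i$, and the dimension polynomial $\psi(t)$ from Theorem~\ref{the: levin dim poly modules} satisfies $\psi(i) = \dim_k(M_i)$ for large $i$, we conclude $\phi = \psi$. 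As the $\s$-dimension, $\s$-type and typical $\s$-dimension are invariants extracted from the degree and leading coefficient of these dimension polynomials, the equality of the polynomials transfers the equality of all three invariants.

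The only genuinely non-formal ingredient is the intersection identity for $(H)$, so that is where I expect to spend the most care; once it is in hand, the rest of the argument is a direct unwinding of definitions together with the standard fact that $\dim(\operatorname{Spec}(\operatorname{Sym}(U))) = \dim_k(U)$ for a finite-dimensional $k$-vector space $U$.
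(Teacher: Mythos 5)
Your proposal is correct, and its overall strategy coincides with the paper's: establish the pointwise equality $\dim(G_i)=\dim_k(M_i)$ for all $i$, then conclude that the two numerical polynomials, agreeing at all large integers, must coincide, so their degrees and leading coefficients (hence the three invariants) agree. Where you differ is in how the pointwise equality is obtained. The paper asserts $\I(G_i)=(H_i)$ without proof and then computes $\dim(G_i)=\dim(k[\mathbb{G}_a^s[i]])-\hgt(\I(G_i))$, identifying $\hgt(\I(G_i))$ with the number of linearly independent linear forms generating $H_i$, and separately $\dim_k(M_i)=\dim_k(N_i)-\dim_k(H_i)$. You instead prove the intersection identity $(H)\cap\operatorname{Sym}(N_i)=(H\cap N_i)\cdot\operatorname{Sym}(N_i)$ by comparing the two factorizations of $\operatorname{Sym}(N_i)\to\operatorname{Sym}(N/H)$, which supplies the justification for $\I(G_i)=(H_i)$ that the paper omits, and then read off the dimension from the isomorphism $k[G_i]\cong\operatorname{Sym}(M_i)$ rather than via the height formula. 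Your route is slightly cleaner and more self-contained: it avoids invoking the height of an ideal generated by linear forms and replaces it with the elementary fact that $\operatorname{Sym}$ of an injection of $k$\=/vector spaces is injective, while delivering the same conclusion. Both arguments are valid; yours makes explicit the one step the paper treats as obvious.
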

\begin{proof}
Using the constructions above, for each $i\in\mathbb{N}$, $\dim_k(M_i)=\dim_k(N_i/H_i)=\dim_k(N_i)-\dim_k(K_i)$ by standard results on the dimension of vector spaces \cite[Section 6, page 77]{atiyah1969introduction}. Further, for each $i\in\mathbb{N}$, $\dim_k(N_i)=|T_\s[i](y)|$, and $\dim_k(K_i)$ is the number of $k$\=/linearly independent generators of $K_i$.
For each $i\in\mathbb{N}$, $\dim(G[i])$ is the Krull dimension of the $k$\=/algebra $k[\mathbb{G}_a^s[i]]/\I(G[i])$, where $\I(G[i])=\I(G)\cap k[\mathbb{G}_a^s[i]]$. Then $\dim(k[G[i]])=\dim(k[\mathbb{G}_a^s[i]])-\hgt(\I(G[i]))$.
Since for all $i\in\mathbb{N}$, $\I(G[i])=(K_i)$, the height of $\I(G[i])$ is the number of linearly independent linear relations that generate $K_i$. Hence $\hgt(\I(G[i]))=\dim_k(K_i)$, and since $\dim(k[\mathbb{G}_a^s[i]])=|T_\s[i](y)|$, for all $i\in\mathbb{N}$, $\dim(G[i])=\dim_k(M_i)$.

Therefore, the dimension polynomials $\phi(t)$ and $\psi(t)$ for the standard excellent filtration $(M_i)_{i\in\mathbb{Z}}$ of $M$ and the Zariski closures $(G[i])_{i\in\mathbb{N}}$ of $G$ with respect to $\G$ respectively are two polynomials such that $\phi(i)=\psi(i)$ for all large enough $i\in\mathbb{N}$, and hence are the same polynomial. That is, $\phi(t)$ and $\psi(t)$ have the same degree and leading coefficient, and hence $M$ and $G$ have the same $\s$\=/dimension, $\s$\=/type and typical $\s$\=/dimension respectively.
\end{proof}

Due to the correspondence between finitely $\s$\=/generated $\s$\=/$k$\=/modules and additive $\s$\=/algebraic groups over $k$ described above, the existence of dimension polynomials for $\s$\=/$k$\=/modules proven in \cite[Theorem 3.2.3, page 161]{levin2008difference} can be considered to be a result on additive $\s$\=/algebraic groups. Therefore, \Cref{the: partial dim poly} in this paper is a generalisation of this result, extending it to hold for any $\s$\=/algebraic group over a $\s$\=/field $k$.

\begin{example}
    Consider \cite[Example 3.2.5, page 162]{levin2008difference}, where $k$ is a $\s$\=/field and $\D$ is the ring of $\s$\=/operators over $k$, which can be considered as a $\s$\=/$k$\=/module with excellent filtration $(\D_i)_{i\in\mathbb{Z}}$, the standard filtration on $\D$. This reference finds that the dimension polynomial for $\D$ is $\phi(t)=\binom{t+n}{n}$. We can see that $\D$ is generated as a $\s$\=/$k$\=/module by just one free $\s$\=/generator, hence $\D$ corresponds to the $\s$\=/algebraic subgroup of $\mathbb{G}_a$ defined by $\s$\=/Hopf ideal $(0)\subseteq k\{y\}$. That is, the $\s$\=/$k$\=/module $\D$ corresponds to the additive group $\mathbb{G}_a$ over $k$, and further, the excellent filtration of $\D$ corresponding to the Zariski closures of $\mathbb{G}_a$ in itself is exactly the standard filtration of $\D$. 
    Noting that $\dim(\mathbb{G}_a)=1$, \Cref{ex: dim of G[i]} tells us that the dimension polynomial for the Zariski closures of $\mathbb{G}_a$ in itself is $\psi(t)=\binom{t+n}{n}$, and $\phi=\psi$ as required. In particular, we see that the $\s$\=/dimension, $\s$\=/type and typical $\s$\=/dimensions of both $\D$ as a $\s$\=/$k$\=/module and $\mathbb{G}_a$ as a $\s$\=/algebraic group are $1$, $n$ and $1$ respectively.
\end{example}

\subsection{Relating Difference Field Extensions to Difference Algebraic Groups}

In this section, we will show that given a difference algebraic group $G$ over a $\s$\=/field $k$ (where $\s_j\colon k\{G\}\rightarrow k\{G\}$ is injective for each $1\leq j\leq n$) such that $k\{G\}$ is an integral domain, we can find a corresponding $\s$\=/field extension $k\langle G\rangle$ of $k$. Then we will prove that $k\langle G\rangle$ and $G$ have the same $\s$\=/dimension, $\s$\=/type and typical $\s$\=/dimension respectively. Firstly we recall some concepts and results from \cite[Section 4]{levin2008difference}.

Let $k$ be a field and let $L$ be an overfield of $k$. Recall that a set $B\subseteq L$ is called a transcendence basis of $L$ over $k$ if $B$ is algebraically independent over $k$ and $L$ is algebraic over $k(B)$ (the smallest subfield of $L$ containing both $k$ and $B$). Any two transcendence bases of $L$ over $k$ have the same cardinality, called the transcendence degree of $L$ over $k$, which we denote $trdeg_kL$.

For a $\s$\=/field $k$ and a $\s$\=/overfield $L$, given a subset $A\subseteq L$, denote by $k\langle A\rangle $ the smallest $\s$\=/field containing both $k$ and $A$. We call $k\langle A\rangle$ the \textbf{difference field extension} ($\s$\=/field extension) of $k$ generated by $A$. We say that a $\s$\=/field extension $L$ of $k$ is \textbf{finitely $\s$\=/generated} if $L=k\langle A\rangle$ for some finite set $A\subseteq L$. We recall \cite[Theorem 4.2.1, page 225]{levin2008difference} proving the existence of dimension polynomials for finitely difference generated difference field extensions.

\begin{theorem}\label[theorem]{the: dim poly field ext}
    Let $k$ be a $\s$\=/field, and let $k\langle A\rangle$ be the $\s$\=/field extension of $k$ $\s$\=/generated by a finite set $A=\{a_1,\dots,a_s\}$ for some $s\geq 1$. Then there exists a numerical polynomial $\phi(t)\in\mathbb{Q}[t]$ of degree at most $n$ such that for large enough $i\in\mathbb{N}$, $\phi(i)=trdeg_kk(\{\tau a_j\ |\ \tau\in T_\s[i],\ 1\leq j\leq s\})$.
\end{theorem}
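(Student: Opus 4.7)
The plan is to reduce the statement to a Krull-dimension calculation on a difference integral subvariety of $\mathbb{A}^s$ and then adapt the inductive scheme used to prove \Cref{the: partial dim poly} for $\s$-algebraic groups. First I would set up the geometric model. The surjective $\s$-$k$-algebra morphism $\varphi\colon k\{y_1,\dots,y_s\}\to k\langle A\rangle$ defined by $y_j\mapsto a_j$ has kernel $P$, which is a reflexive prime $\s$-ideal. Setting $P_i=P\cap k[\tau(y_j)\ |\ \tau\in T_\s[i],\ 1\leq j\leq s]$ and $R_i=k[\tau(y_j)\ |\ \tau\in T_\s[i]]/P_i$, each $R_i$ is a finitely generated $k$-integral domain whose fraction field equals $k(\tau a_j\ |\ \tau\in T_\s[i],\ 1\leq j\leq s)$. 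Since the transcendence degree over $k$ coincides with the Krull dimension for a finitely generated $k$-integral domain, the theorem reduces to showing that $i\mapsto\dim(R_i)$ agrees with a numerical polynomial of degree at most $n$ for large $i$.

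The second step is to establish an ideal generation property for the chain $(P_i)_{i\in\mathbb{N}}$ analogous to \Cref{the: partial ideal generation property}, namely that $P_{i+1}=(P_i,\s_1(P_i),\dots,\s_n(P_i))$ for $i\gg 0$. I would follow the inductive scheme of that proof, inducting on the number $n$ of basic endomorphisms. The twisted construction of \Cref{sec: kernels being n-1 groups}, in which $\s$ is replaced by $\s'=\{\s_1\s_n^{-1},\dots,\s_{n-1}\s_n^{-1}\}$, applies with closed subvarieties in place of closed subgroups once $\s_n\colon k\to k$ is a bijection, and one reduces to this case by passing to the inversive closure exactly as in the proof of \Cref{lem: H[i+1] isomorphic to H[i] (ordinary)}. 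The role played in the group case by the kernel $H[i+1]=\ker(\pi[i+1]|_{G[i+1]})$ is now taken by the generic fibre of the dominant morphism $V(P_{i+1})\to V(P_i)$; primality of $P$ guarantees that this fibre is an irreducible variety whose dimension equals $\dim(R_{i+1})-\dim(R_i)$.

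Once the ideal generation property is in hand, the dimension polynomial follows by the same induction as in the proof of \Cref{the: partial dim poly}. The inductive hypothesis, applied to the twisted generic fibre, yields a polynomial $\psi(t)$ of degree at most $n-1$ satisfying $\dim(R_{i+1})-\dim(R_i)=\psi(i)$ for large $i$, and \Cref{lem: gi+1-gi implies dimension poly}, which is a purely numerical statement, then produces the desired numerical polynomial $\phi(t)\in\mathbb{Q}[t]$ of degree at most $n$ with $\phi(i)=\dim(R_i)$ for $i\gg 0$.

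The main obstacle will be verifying that the generic fibres behave well enough under the twisting construction to support the induction on $n$. In the group case treated in the paper, the kernels assemble into a generalised $\s$-algebraic group and, after twisting, into a generalised $\s'$-algebraic group of lower endomorphism count to which the inductive hypothesis applies cleanly; in the present setting no ambient group is available, and one must instead track the sequence of generic fibres as irreducible varieties, checking that the restrictions of the projection and $\s_j[i]$ maps descend to the analogous structure on the twisted side. This is essentially the technical content of Levin's proof in \cite[Section 4.2]{levin2008difference}, to which we defer for the remaining bookkeeping.
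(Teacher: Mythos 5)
The paper does not actually prove this statement: it is quoted as Theorem 4.2.1 of Levin's book and is used only for comparison with \Cref{the: partial dim poly}, so there is no in\-/paper argument to measure yours against. Judged on its own terms, your proposal has a genuine gap at its central step, the claimed ideal generation property $P_{i+1}=(P_i,\s_1(P_i),\dots,\s_n(P_i))$ for $i\gg 0$. That assertion would imply that the reflexive prime $\s$\-/ideal $P$ is finitely $\s$\-/generated as a $\s$\-/ideal, which is far stronger than the Ritt--Raudenbush basis theorem (which only yields generation as a \emph{perfect} ideal) and is precisely the kind of statement the introduction warns is unavailable outside special classes of ideals; the point of \Cref{cor: hopf ideal fin gen} is that the Hopf structure is what rescues finite $\s$\-/generation. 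The mechanism by which the paper proves the ideal generation property is irreducibly group\-/theoretic: in \Cref{prop: ideal gen holds for kernels then holds for groups} one concludes $G_{i+1}=\H_{i+1}$ from the fact that both map onto $G_i$ by quotient maps with the same kernel, i.e.\ from the fact that a surjection of algebraic groups is determined by its kernel. A dominant morphism of irreducible varieties is not determined by its generic fibre, so substituting ``generic fibre'' for ``kernel'' destroys exactly the step that closes the argument. There is a further structural obstruction to your induction on $n$: the kernels $H[i]$ are closed subgroups of $\G[i]$ over $k$, all sitting over the identity, and hence assemble after twisting into a generalised $\s'$\-/algebraic group over $k$ to which the inductive hypothesis applies; the generic fibre of $V(P_{i+1})\rightarrow V(P_i)$ lives over the function field of $V(P_i)$, a base that changes with $i$, so there is no fixed $\s'$\-/structure over $k$ on which to run the induction.

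Deferring the missing verification to Levin does not repair this, because Levin's proof of Theorem 4.2.1 takes an entirely different route: it works with characteristic sets --- equivalently, with the observation that for an orderly ranking the set of $\tau\in T_\s$ for which $\tau(a_j)$ is algebraic over the preceding terms is stable under multiplication by each $\s_l$ (using injectivity of the endomorphisms) --- and then invokes the combinatorial theorem on numerical polynomials of subsets of $\mathbb{N}^n$; no finite $\s$\-/generation of $P$ is claimed or needed there. Your opening reduction, replacing transcendence degree by the Krull dimension of $R_i$, and your final numerical step via \Cref{lem: gi+1-gi implies dimension poly} are both sound, but the bridge between them is neither supplied by the paper's methods nor contained in Levin's argument.
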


We know that for a finitely $\s$\=/generated $\s$\=/field extension $L=k\langle A\rangle$ of $k$, its dimension polynomial is of the form  $\psi(t)=\sum_{i=0}^dc_i\binom{t+i}{i}$, where $c_0,\dots,c_d\in\mathbb{Z}$, $c_d\neq 0$, and $d\leq n$. By \cite[Theorem 4.2.1, page 255]{levin2008difference}, the degree $d$ of the polynomial and the integer $c_d$ are invariant of the choice of $\s$\=/generators $A$.
The integers $d$ and $c_d$ are called the \textbf{difference type} and \textbf{typical difference dimension} of the $\s$\=/field extension $L$ over $k$ respectively. Further, the \textbf{difference dimension} of $L$ over $k$ is $c_d$ if $d=n$, and $0$ if $d<n$.

We will now see the partial analogue to \cite[Proposition 3.17, page 531]{wibmer2022finiteness}. Note that if a $k$\=/$\s$\=/algebra $R$ is an integral domain, and $\s_j\colon R\rightarrow R$ is injective for each $1\leq j\leq n$, the field of fractions of $R$ is also a $k$\=/$\s$\=/algebra.

\begin{theorem}
    Let $G$ be a $\s$\=/closed subgroup of an algebraic group $\G$ over a $\s$\=/field $k$ such that for each $1\leq j\leq n$, $\s_j\colon  k\{G\}\rightarrow k\{G\}$ is injective. Suppose that the $k$\=/$\s$\=/Hopf algebra $k\{G\}$ is an integral domain, and therefore its field of fractions $k\langle G\rangle$ is a $\s$\=/field extension of $k$. There exists a finite $\s$\=/generating set $A$ of the $\s$\=/field extension $k\langle G\rangle$ such that the dimension polynomials for the $\s$\=/field extension $k\langle A\rangle$ of $k$ and the Zariski closures $(G[i])_{i\in\mathbb{N}}$ of $G$ with respect to $\G$ whose existence are proven in \Cref{the: dim poly field ext} and \Cref{the: partial dim poly} respectively are equal.
    Therefore, $k\langle G\rangle$ and $G$ have the same $\s$\=/dimension, $\s$\=/type and typical $\s$\=/dimension respectively.
\end{theorem}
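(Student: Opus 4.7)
The plan is to choose the $\s$\=/generating set $A$ so that for every $i\in\mathbb{N}$, the $k$\=/subalgebra of $k\{G\}$ generated by the $T_\s[i]$\=/transforms of $A$ is exactly the coordinate ring $k[G_i]$, and then compare dimensions via the classical equality $\dim_{\mathrm{Krull}}=\mathrm{trdeg}_k\mathrm{Frac}(-)$ for finitely generated integral domains over a field.

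First, I would fix finitely many generators $b_1,\dots,b_s$ of $k[\G]$ as a $k$\=/algebra, and let $A=\{a_1,\dots,a_s\}$, where $a_j$ is the image of $b_j$ under the surjection $k\{\G\}\twoheadrightarrow k\{G\}=k\{\G\}/\I(G)$. Since $k\{\G\}$ is $\s$\=/generated over $k$ by $\{b_1,\dots,b_s\}$, the set $A$ $\s$\=/generates $k\{G\}$ as a $k$\=/$\s$\=/algebra, and hence $\s$\=/generates $k\langle G\rangle$ as a $\s$\=/field extension of $k$. For each $i\in\mathbb{N}$, the coordinate ring $k[\G[i]]=\bigotimes_{\tau\in T_\s[i]}{}^\tau k[\G]$ is generated as a $k$\=/algebra by $\{\tau(b_j)\mid \tau\in T_\s[i],\ 1\leq j\leq s\}$, so its image $k[G_i]=k[\G[i]]/\I(G_i)$ in $k\{G\}$ is generated as a $k$\=/algebra by the finite set $B_i=\{\tau(a_j)\mid \tau\in T_\s[i],\ 1\leq j\leq s\}$.

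Next, I would observe that because $k\{G\}$ is an integral domain, so is each $k[G_i]$, and therefore its fraction field $\mathrm{Frac}(k[G_i])$ sits inside $k\langle G\rangle$ and coincides with the subfield $k(B_i)$ generated by $B_i$ over $k$. By the standard result that the Krull dimension of a finitely generated integral domain over a field equals the transcendence degree of its fraction field, I obtain
\begin{align*}
\dim(G_i)=\dim_{\mathrm{Krull}}k[G_i]=\mathrm{trdeg}_k\,k(B_i).
\end{align*}
The right hand side is, for large $i$, the value of the Levin dimension polynomial $\phi(t)\in\mathbb{Q}[t]$ for the $\s$\=/field extension $k\langle A\rangle=k\langle G\rangle$ from \Cref{the: dim poly field ext}, while the left hand side is, for large $i$, the value of the dimension polynomial $\psi(t)\in\mathbb{Q}[t]$ of the Zariski closures $(G_i)_{i\in\mathbb{N}}$ from \Cref{cor: partial dim poly zariskis}. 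Two numerical polynomials agreeing at all sufficiently large integers are equal, so $\phi=\psi$; comparing degrees and leading coefficients then yields the equality of $\s$\=/type, typical $\s$\=/dimension, and $\s$\=/dimension.

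The main obstacle I foresee is not conceptual but bookkeeping: checking carefully that $B_i$ really is a generating set of $k[G_i]$ as a $k$\=/algebra (as opposed to just sitting inside it), which requires tracing the surjection $k[\G[i]]\twoheadrightarrow k[G_i]$ and using that $\I(G_i)=\I(G)\cap k[\G[i]]$ and $k\{\G\}=\bigcup_{i\in\mathbb{N}}k[\G[i]]$. Once this identification is in place, the equality of the polynomials falls out immediately from the classical Krull\=/dimension/transcendence\=/degree correspondence and the uniqueness of numerical polynomials agreeing eventually. The injectivity hypothesis on each $\s_j\colon k\{G\}\rightarrow k\{G\}$ is needed precisely to guarantee that $k\langle G\rangle$ inherits a well\=/defined $\s$\=/field structure, so that \Cref{the: dim poly field ext} applies to $k\langle A\rangle$.
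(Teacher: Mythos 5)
Your proposal is correct and follows essentially the same route as the paper: the paper takes $A$ to be a finite generating set of $k[G_0]$ (which is exactly the image of $k[\G]$ in $k\{G\}$, so your choice is a particular instance of it), observes $k[G_i]=k[T_\s[i](A)]$, and concludes via the Krull-dimension/transcendence-degree equality and eventual agreement of the two numerical polynomials. No substantive difference.
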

\begin{proof}
Let $A\subseteq k\{G\}$ be a finite generating set of the $k$\=/algebra $k[G[0]]$. By \Cref{rem: zariski closures as image}, $k\{G\}$ is generated as a $k$\=/$\s$\=/algebra by $A$, and hence $k\langle G\rangle $ is $\s$\=/generated as a $\s$\=/field extension of $k$ by $A$.
Let $i\in\mathbb{N}$, and put $T_\s[i](A)=\{\tau(a)\ |\ \tau\in T_\s[i], a\in A\}$. Notice (again by \Cref{rem: zariski closures as image}) that $k[G[i]]=k[T_\s[i](A)]$, and hence $k(T_\s[i](A))$ is the field of fractions of $k[G[i]]$. By \cite[Section 13.1, page 286]{eisenbud1995commutative}, the Krull dimension of an integral domain is equal to the transcendence degree of its field of fractions, that is, $\dim(G[i])$ is equal to $trdeg_kk(T_\s[i](A))$ for each $i\in\mathbb{N}$.
    Therefore, the dimension polynomials $\phi(t)$ and $\psi(t)$ for the $\s$\=/field extension $k\langle G\rangle=k\langle A\rangle$ of $k$ and the Zariski closures $(G[i])_{i\in\mathbb{N}}$ of $G$ with respect to $\G$ respectively are polynomials such that $\phi(i)=\psi(i)$ for large enough $i\in\mathbb{N}$, and hence are the same polynomial. That is, $\phi(t)$ and $\psi(t)$ have the same degree and leading coefficient, and hence $k\langle G\rangle$ and $G$ have the same $\s$\=/dimension, $\s$\=/type and typical $\s$\=/dimension respectively.
\end{proof}


\textbf{Acknowledgements}: The author would like to thank her PhD supervisor Dr Michael Wibmer for his guidance and feedback on the content of this paper, and the anonymous referee for their helpful comments and suggestions. 
\printbibliography

@article{wibmer2022finiteness,
  title="Finiteness Properties of Affine Difference Algebraic Groups",
  author="Wibmer, M.",
  journal="Int. Math. Res. Not.",
  volume="2022",
  number="1",
  pages="506--555",
  year="2022",
  publisher="Oxford University Press",
}

@book{levin2008difference,
  title="Difference Algebra",
  author="Levin, A.",
series= "Algebra Appl.",
  volume="8",
  year="2008",
  publisher="Springer New York",
}

@book{waterhouse2012introduction,
  title="Introduction to Affine Group Schemes",
  author="Waterhouse, W. C.",
  volume="66",
  year="1979",
  publisher="Springer New York",
series= "Grad. Texts in Math.",
}

@book {milne2017algebraic,
    AUTHOR = {Milne, J. S.},
     TITLE = {Algebraic groups},
    SERIES = {Cambridge Studies in Advanced Mathematics},
    VOLUME = {170},
      NOTE = {The theory of group schemes of finite type over a field},
 PUBLISHER = {Cambridge University Press, Cambridge},
      YEAR = {2017},
}

@book{kondratieva1999differential,
  title="Differential and Difference Dimension Polynomials",
  author="Kondratieva, M. V. and Levin, A. B. and Mikhalev, A. V. and Pankratiev, E. V.",
  volume="461",
  year="1999",
series= "Math. Appl.",
  publisher="Springer Dordrecht"
}

@book{hartshorne1977algebraic,
  title="Algebraic Geometry",
  author="Hartshorne, R.",
  volume="52",
  year="1977",
  publisher="Springer New York",
series="Grad. Texts in Math."
}

@book{sweedler1969hopf,
  title="Hopf Algebras",
  author="Sweedler, M. E.",
  year="1969",
  publisher="New York: W. A. Benjamin."
}

@book{atiyah1969introduction,
  title="Introduction to Commutative Algebra",
  author="Atiyah, M. F. and MacDonald, I. G.",
  year="1969",
  publisher="Westview Press",
series= "Addison-Wesley Ser. Math."
}

@article{cassidy2011ajordan,
 author = "Cassidy, P. J. and Singer, M. F.",
title = "A Jordan–Hölder Theorem for differential algebraic groups",
 journal = "J. Algebra",
volume = "328",
 pages = "190-217",
year = "2011",
}

@article{moosa2014model,
 author = "Moosa, R. and Scanlon, T.",
title = "Model theory of fields with free operators in characteristic zero",
 journal = "J. Math. Logic",
volume = "14",
number= "2",
 pages = "1450009",
year = "2014",
}

@book{eisenbud1995commutative,
  title="Commutative Algebra: with a View Toward Algebraic Geometry",
  author="Eisenbud, D.",
series="Grad. Texts Math.",
  volume="150",
  year="1995",
  publisher="Springer New York"
}

@article{wibmer2021almost,
  title="Almost-simple affine difference algebraic groups",
  author="Wibmer, M.",
  journal="Math. Z.",
  volume="299",
  number= "1-2",
  pages="473--526",
  year="2021",
  publisher="Springer"
}

@article{takeuchi1972correspondence,
  title="A correspondence between Hopf ideals and sub-Hopf algebras",
  author="Takeuchi, M.",
  journal="Manuscripta Math.",
  volume="7",
  pages="251--270",
  year="1972",
  publisher="Springer"
}

@book{montgomery1993hopf,
  title="Hopf Algebras and Their Actions on Rings",
  author="Montgomery, S.",
  number="82",
series= "CBMS Regional Conference Series in Mathematics",
  year="1993",
  publisher="American Mathematical Soc."
}

@book {ritt1932differential,
    AUTHOR = {Ritt, J. F.},
     TITLE = {Differential equations from the algebraic standpoint},
    SERIES = {American Mathematical Society Colloquium Publications},
    VOLUME = {14},
 PUBLISHER = {American Mathematical Society, New York},
      YEAR = {1932},
}

@article{ritt1939ideal,
  title="Ideal theory and algebraic difference equations",
  author="Raudenbush, H. and Ritt, J. F.",
  journal="Trans. Amer. Math. Soc.",
  volume="46",
  pages="445--452",
  year="1939",
  publisher="JSTOR"
}

@article{doob1933systems,
title = "Systems of Algebraic Difference Equations",
 author = "Doob, J. L. and Ritt, J. F.  ",
 journal = "Amer. J. Math.",
volume = "55",
 number = "1-4",
 pages = "505--514", 
 year = "1933",
publisher = "Johns Hopkins University Press"
}

@book{kolchin1973differential,
author = "Kolchin, E. R.",
title = "Differential algebra and algebraic groups",
series = "Pure Appl. Math.",
volume= "54",
year = "1973",
publisher = "Academic Press"
}

@article{cassidy1972differential,
 author = "Cassidy, P. J.",
title = "Differential Algebraic Groups",
 journal = "Amer. J. Math.",
volume = "94",
 number = "3",
 pages = "891--954",
year = "1972",
 publisher = "Johns Hopkins University Press",
}

@book{kolchin1985differential,
author = "Kolchin, E. R.",
title = "Differential Algebraic Groups",
series = "Pure Appl. Math.",
volume= "114",
year = "1985",
publisher = "Academic Press"
}

@book{buium1992differential,
author = "Buium, A.",
title = "Differential Algebraic Groups of Finite Dimension",
series = "Lecture Notes in Math.",
year = "1992",
volume= "1506",
publisher = "Springer Berlin"
}

@book {cohn1965difference,
    AUTHOR = {Cohn, R. M.},
     TITLE = {Difference algebra},
 PUBLISHER = {Interscience Publishers John Wiley \& Sons, New
              York-London-Sydney},
      YEAR = {1965},
}

@book{singer1997galois,
author = "Singer, M. F. and Van der Put, M. ",
title = "Galois Theory of Difference Equations",
series = "Lecture Notes in Math.",
year = "1997",
volume = "1666",
publisher = "Springer Berlin"
}

@article{hardouin2008differential,
 author = "Hardouin, C. and Singer, M. F.",
title = "Differential Galois theory of linear difference equations",
 journal = "Math. Ann.",
volume = "342",
number= "2",
 pages = "333-377",
year = "2008",
 publisher = "Springer",
}

@article{divizio2014difference,
 author = "Di Vizio, L. and Hardouin, C. and Wibmer, M.",
title = "Difference Galois theory of linear differential equations",
 journal = "Adv. Math.",
volume = "260",
 pages = "1-58",
year = "2014",
}

@article{bachmayr2022algebraic,
 author = "Bachmayr, A. and Wibmer, M.",
title = "Algebraic groups as difference Galois groups of linear differential equations",
 journal = "J. Pure Appl. Algebra",
volume = "226",
number ="2",
 pages = "106854",
year = "2022",
}

@article{ovchinnikov2015sigma,
 author = "Ovchinnikov, A. and Wibmer, M.",
title = "$\sigma$-Galois Theory of Linear Difference Equations",
 journal = "Int. Math. Res. Not.",
volume = "2015",
number= "12",
 pages = "3962-4018",
year = "2015",
}

@article{wibmer2024etale,
 author = "Wibmer, M.",
title = "{\'E}tale difference algebraic groups",
 journal = "Ann. Inst. Fourier",
volume = "74",
number = "4",
 pages = "1451-1519",
year = "2024",
}

@article{bachmayr2022torsors,
 author = "Bachmayr, A. and Wibmer, M.",
title = "Torsors for difference algebraic groups",
 journal = "Comm. Contemp. Math.",
volume = "24",
number = "9",
 pages = "2150068",
year = "2022",
}

@article{phung2022dynamical,
 author = "Phung, X. K.",
title = "On dynamical finiteness properties of algebraic group shifts",
 journal = "Israel J. Math.",
volume = "252",
number = "1",
 pages = "355-398",
year = "2022",
}

@article{hrushovski2004elem,
 author = "Hrushovski, E.",
title = "The Elementary Theory of the Frobenius Automorphisms",
 journal = "preprint",
year = "2004",
}

@article{hrushovski2001manin,
 author = "Hrushovski, E.",
title = "The Manin–Mumford conjecture and the model theory of difference fields",
 journal = "Ann. Pure Appl. Logic",
volume = "112",
number = "1",
 pages = "43-115",
year = "2001",
}

@article{levin2015acc,
 author = "Levin, A.",
title = "On the Ascending Chain Condition for Mixed Difference Ideals",
 journal = "Int. Math. Res. Not.",
volume = "2015",
number = "10",
 pages = "2830-2840",
year = "2015",
}

@article{levin2021bivariate,
 author = "Levin, A.",
title = "Bivariate Kolchin-type dimension polynomials of non-reflexive prime difference-differential ideals. The case of one translation",
 journal = "J. Symbolic Computat.",
volume = "102",
 pages = "173-188",
year = "2021",
}

@article{wang2017monomial,
 author = "Wang, J.",
title = "Monomial Difference Ideals.",
 journal = "Proc. Amer. Math. Soc.",
volume = "145",
number = "4",
 pages = "1481-1496",
year = "2017",
}

@article{wang2018binomial,
 author = "Wang, J.",
title = "Finite basis for radical well-mixed difference ideals generated by binomials",
 journal = "Comm. Algebra",
 pages = "2589-2599",
year = "2018",
volume= "46",
number= "6",
}

@article {medvedev2014invariant,
    AUTHOR = {Medvedev, A. and Scanlon, T.},
     TITLE = {Invariant varieties for polynomial dynamical systems},
   JOURNAL = {Ann. of Math.},
    VOLUME = {179},
      YEAR = {2014},
    NUMBER = {1},
     PAGES = {81--177},
}

@inproceedings {chen2017criteria,
    AUTHOR = {Chen, Y. and Gao, X.},
     TITLE = {Criteria for finite difference {G}r\"obner bases of normal
              binomial difference ideals},
 BOOKTITLE = {I{SSAC}'17---{P}roceedings of the 2017 {ACM} {I}nternational
              {S}ymposium on {S}ymbolic and {A}lgebraic {C}omputation},
     PAGES = {93--100},
 PUBLISHER = {ACM, New York},
      YEAR = {2017},
}

@book{schmidt1995dynamical,
  title="Dynamical Systems of Algebraic Origin",
  author="Schmidt, K.",
  year="1995",
  publisher="Birkhäuser Basel"
}

@book{kitchens1998symbolic,
  title="Symbolic Dynamics: One-sided, Two-sided and Countable State Markov Shifts",
  author="Kitchens, B. P",
  year="1998",
  publisher="Springer Berlin"
}

@article{hyttinen2023AEC,
 author = "Hyttinen, T. and Kangas, K.",
title = "An AEC framework for fields with commuting automorphisms",
 journal = "Arch. Math. Logic",
 volume = "62",
 pages = "1001-1032",
 number = "7-8",
year = "2023",
}

@article{chatzidakis2008difference,
 author = "Chatzidakis, Z. and Hrushovski, E.",
title = "Difference fields and descent in algebraic dynamics. I",
 journal = "J. Inst. Math. Jussieu",
 volume = "7",
 number="4",
 pages = "653-686",
year = "2008",
}

@article{chatzidakis2008iidifference,
 author = "Chatzidakis, Z. and Hrushovski, E.",
title = "Difference fields and descent in algebraic dynamics. II",
 journal = "J. Inst. Math. Jussieu",
 volume = "7",
 number= "4",
 pages = "687-704",
year = "2008",
}

@article{chatzidakis1999model,
 author = "Chatzidakis, Z. and Hrushovski, E.",
title = "Model Theory of Difference Fields",
 journal = "Trans. Amer. Math. Soc.",
 volume = "351",
 pages = "2997-3071",
 number= "8",
year = "1999",
}

@incollection {chatzidakis1997groups,
    AUTHOR = {Chatzidakis, Z.},
     TITLE = {Groups definable in {ACFA}},
 BOOKTITLE = {Algebraic model theory ({T}oronto, {ON}, 1996)},
    SERIES = {NATO Adv. Sci. Inst. Ser. C: Math. Phys. Sci.},
    VOLUME = {496},
     PAGES = {25--52},
 PUBLISHER = {Kluwer Acad. Publ., Dordrecht},
      YEAR = {1997},
}

@article{kowalski2007algebraic,
 author = "Kowalski, P. and Pillay, A.",
title = "On Algebraic $\s$-Groups",
 journal = "Trans. Amer. Math. Soc.",
 volume = "359",
 number = "3",
 pages = "1325-1337",
year = "2007",
}

@article{kowalski2002anote,
 author = "Kowalski, P. and Pillay, A.",
title = "A note on groups definable in difference fields",
 journal = "Proc. Amer. Math. Soc.",
 volume = "130",
 number = "1",
 pages = "205-212",
year = "2002",
}

@article{antieu2014galois,
 author = "Antieau, B. and Ovchinnikov, A. and Trushin, D.",
title = "Galois theory of difference equations with periodic
              parameters",
 journal = "Comm. Algebra",
 volume = "42",
 number = "9",
 pages = "3902-3943",
year = "2014",
}

@article{kamensky2013tannakian,
 author = "Kamensky, M.",
title = "Tannakian formalism over fields with operators",
 journal = "Int. Math. Res. Not.",
 volume = "24",
 pages = "5571-5622",
year = "2013",
}

@article{ovchinnikov2017tannakian,
 author = "Ovchinnikov, A. and Wibmer, M.",
title = "Tannakian Categories With Semigroup Actions",
 journal = "Canad. J. Math.",
 volume = "69",
 number= "1",
 pages = "687-720",
year = "2017",
}

@article{ledrappier1978unchamp,
 author = "Ledrappier, F.",
title = "Un champ markovien peut \^etre d'entropie nulle et
              m\'elangeant",
 journal = "C. R. Acad. Sci. Paris S\'er. A-B",
 volume = "287",
 pages = "A561--A563",
year = "1978",
}

@book{beals2010special,
  title="Special Functions: A Graduate Text",
  author="Beals, R. and Wong, R.",
  year="2010",
  publisher="Cambridge University Press"
}

@article{alexanian2023onthebarnes,
 author = "Alexanian, S. and Kuznetsov, A.",
title = "On the Barnes double gamma function",
 journal = "Integral Transforms and Spec. Func.",
 volume = "34",
 pages = "891 - 914",
year = "2023",
number= "12"
}

@article{barnes1901thetheory,
 author = "Barnes, E. W.",
title = "The Theory of the Double Gamma Function",
 journal = "Philos. Trans. Roy. Soc. London",
 volume = "196",
 pages = "8265-387",
year = "1901",
}
\end{document}